
\documentclass[preprint,12pt]{elsarticle}
\usepackage{amsmath}
\usepackage{amsfonts}
\usepackage{amssymb}
\usepackage{verbatim}
\usepackage[
bookmarks=true,         bookmarksnumbered=true,                        colorlinks=true,
pdfstartview=FitV,
linkcolor=blue, citecolor=blue, urlcolor=blue]{hyperref}
\usepackage{amssymb}

\setcounter{MaxMatrixCols}{10}

\topmargin -1.5cm
 \oddsidemargin -0.06cm
 \evensidemargin -0.06cm
 \textwidth 16.60cm
 \textheight 23.95cm
\allowdisplaybreaks[4]
\newtheorem{theorem}{Theorem}

\newtheorem{corollary}[theorem]{Corollary}

\newtheorem{example}[theorem]{Example}

\newtheorem{lemma}[theorem]{Lemma}

\newtheorem{proposition}[theorem]{Proposition}
\newtheorem{remark}[theorem]{Remark}

\numberwithin{theorem}{section}
\numberwithin{equation}{section}

\newenvironment{proof}[1][Proof]{\noindent\textit{#1.} }{\ \rule{0.5em}{0.5em}}

\begin{document}

\begin{frontmatter}



\title{Convergence of densities of some functionals of Gaussian processes}


\author{Yaozhong Hu\footnote{
Y.  Hu is
partially supported by a grant from the Simons Foundation
\#209206.}, \ Fei Lu\corref{cor1} \ and \ David Nualart
\footnote{
D. Nualart is supported by the NSF grant  DMS1208625}}
\ead{hu@math.ku.edu; flu@lbl.gov; nualart@math.ku.edu}

\address{Department of Mathematics,
University of Kansas,
Lawrence, Kansas, 66045 USA}

\begin{abstract}
The aim of this paper is to establish the uniform convergence of the
densities of a sequence of random variables, which are functionals of an
underlying Gaussian process, to a normal density. Precise estimates for the
uniform distance are derived by using the techniques of Malliavin calculus,
combined with Stein's method for normal approximation. We need to assume
some non-degeneracy conditions. First, the study is focused on random
variables in a fixed Wiener chaos, and later, the results are extended to
the uniform convergence of the derivatives of the densities and to the case
of random vectors in some fixed chaos, which are uniformly non-degenerate in
the sense of Malliavin calculus. Explicit upper bounds for the uniform norm
are obtained for random variables in the second Wiener chaos, and an
application to the convergence of densities of the least square estimator
for the drift parameter in Ornstein-Uhlenbeck processes is discussed.
\end{abstract}

\begin{keyword}
Multiple Wiener-It\^{o} integrals; Wiener chaos;
Malliavin calculus; integration by parts; Stein's method; convergence of
densities; Ornstein-Uhlenbeck process; least squares estimator; small
deviation.
\MSC[2010] 60F05 \sep 60H07
\end{keyword}

\end{frontmatter}


\tableofcontents
\section{Introduction}

There has been a recent interest in studying normal approximations for
sequences of multiple stochastic integrals. Consider a sequence of multiple
stochastic integrals of order $q\geq 2$, $F_{n}=I_{q}(f_{n})$, with variance
$\sigma ^{2}>0$, with respect to an isonormal Gaussian process $%
X=\{X(h),h\in \mathfrak{H}\}$ associated with a Hilbert space $\mathfrak{H}$%
. It was proved by Nualart and Peccati \cite{NuPe05} and Nualart and
Ortiz-Latorre \cite{NuOL08} that $F_{n}$ converges in distribution to the
normal law $N(0,\sigma ^{2})$ as $n\rightarrow \infty $ if and only if one
of the following three equivalent conditions holds:

\begin{itemize}
\item[(i)] $\lim_{n\rightarrow \infty }E[F_{n}^{4}]=3\sigma ^{4}$
(convergence of the fourth moments).

\item[(ii)] For all $1\leq r\leq q-1$, $f_{n}\otimes _{r}f_{n}$ converges to
zero, where $\otimes _{r}$denotes the contraction of order $r$ (see equation
(\ref{cntt2})).

\item[(iii)] $\left\Vert DF_{n}\right\Vert _{\mathfrak{H}}^{2}$ (see
definition in Section 2) converges to $q\sigma ^{2}$ in $L^{2}(\Omega )$ as $%
n$ tends to infinity.
\end{itemize}

A new methodology to study normal approximations and to derive quantitative
results combining Stein's method with Malliavin calculus was introduced by
Nourdin and Peccati \cite{NP09ptrf} (see also Nourdin and Peccati \cite{NP12}%
). As an illustration of the power of this method, let us mention the
following estimate for the total variation distance between the law $%
\mathcal{L}(F)$ of $F=I_{q}(f)$ and distribution $\gamma =N(0,\sigma ^{2})$,
where $\sigma ^{2}=E[F^{2}]$:
\begin{equation*}
d_{TV}(\mathcal{L}(F),\gamma )\leq \frac{2}{q\sigma ^{2}}\sqrt{\mathrm{Var}%
\left( \Vert DF\Vert _{\mathfrak{H}}^{2}\right) }\leq \frac{2\sqrt{q-1}}{%
\sigma ^{2}\sqrt{3q}}\sqrt{E[F^{4}]-3\sigma ^{4}}.
\end{equation*}%
This inequality can be used to show the above equivalence (i)-(iii). A
recent result of Nourdin and Poly \cite{NoPo12} says that the convergence in
law for a sequence of multiple stochastic integrals of order $q\geq 2$ is
equivalent to the convergence in total variation if the limit is not
constant. As a consequence, for a sequence $F_{n}$ of nonzero multiple
stochastic integrals of order $q\geq 2$, the limit in law to is equivalent
to the limit of the densities in $L^{1}(\mathbb{R})$, provided the limit is
not constant. A multivariate extension of this result has been derived in
\cite{NNP12}.

\medskip The aim of this paper is to study the uniform convergence of the
densities of a sequence of random vectors $F_{n}$ to the normal density
using the techniques of Malliavin calculus, combined with Stein's method for
normal approximation. It is well-known that to guarantee that each $F_{n}$
has a density we need to assume that the norm of the Malliavin derivative of
$F_{n}$ has negative moments. Thus, a natural assumption to obtain uniform
convergence of densities is to assume uniform boundedness of the negative
moments of the corresponding Malliavin derivatives. Our first result
(Theorem \ref{qRateThm}) says that if $F$ is a multiple stochastic integral
of order $q\geq 2$ such that $E[F^{2}]=\sigma ^{2}$ and $M:=E(\Vert DF\Vert
_{\mathfrak{H}}^{-6})<\infty $, we have
\begin{equation}
\sup_{x\in \mathbb{R}}|f_{F}(x)-\phi (x)|\leq C\sqrt{E[F^{4}]-3\sigma ^{4}},
\label{z1}
\end{equation}%
where $f_{F}$ is the density of $F$, $\phi $ is the density of the normal
law $N(0,\sigma ^{2})$ and the constant $C$ depends on $q$, $\sigma $ and $M$%
. We can also replace the expression in the right-hand side of (\ref{z1}) by
$\sqrt{\mathrm{Var}\left( \Vert DF\Vert _{\mathfrak{H}}^{2}\right) }$. The
main idea to prove this result is to express the density of $F$ using
Malliavin calculus:
\begin{equation*}
f_{F}(x)=E[\mathbf{1}_{\{F>x\}}q\Vert DF\Vert _{\mathfrak{H}}^{-2}F]-E[%
\mathbf{1}_{\{F>x\}}\langle DF,D(\Vert DF\Vert _{\mathfrak{H}}^{-2})\rangle
_{\mathfrak{H}}].
\end{equation*}%
Then, one can find an estimate of the form (\ref{z1}) for the terms $%
E[|\langle DF,D(\Vert DF\Vert _{\mathfrak{H}}^{-2})\rangle _{\mathfrak{H}}|]$
and $E[|q\Vert DF\Vert _{\mathfrak{H}}^{-2}-\sigma ^{-2}|]$. On the other
hand, taking into account that
\begin{equation*}
\phi (x)=\sigma ^{-2}E[\mathbf{1}_{\{N>x\}}N],
\end{equation*}%
it suffices to estimate the difference
\begin{equation*}
E[\mathbf{1}_{\{F>x\}}F]-E[\mathbf{1}_{\{N>x\}}N],
\end{equation*}%
which can be done by Stein's method. The estimate (\ref{z1}) leads to the
uniform convergence of the densities in the above equivalence of conditions
(i) to (iii) if we assume that $\sup_{n}E(\Vert DF_{n}\Vert _{\mathfrak{H}%
}^{-6})<\infty $.

\medskip This methodology is extended in the paper in several directions. We
consider the uniform approximation of the $m$th derivative of the density of
$F$ by the corresponding densities $\phi^{(m)}$, in the case of random
variables in a fixed chaos of order $q\ge 2$. In Theorem \ref{qDeriRate} we
obtain an inequality similar to (\ref{z1}) assuming that $E(\|DF\|^{-\beta}
_{\mathfrak{H}}) <\infty$ for some $\beta >6m +6\left( \lfloor \frac m2
\rfloor \vee 1 \right)$. Again the proof is obtained by a combination of
Malliavin calculus and the Stein's method. Here we need to consider Stein's
equation for functions of the form of $h(x)=\mathbf{1}_{\left\{ x>a\right\}
}p(x)$, where $p$ is a polynomial.

\medskip For a $d$ dimensional random vector $F=(F^{1},\dots ,F^{d})$ whose
components are multiple stochastic integrals of orders $q_{1},\dots ,q_{d}$,
$q_{i}\geq 2$, we assume non degeneracy condition $E[\det \gamma
_{F}^{-p}]<\infty $ for all $p\geq 1$, where $\gamma _{F}=\left( \langle
DF,,DF\rangle \right) _{1\leq i,j\leq d}$ denotes the Malliavin matrix of $F$%
. Then, for any multi-index $\beta =(\beta _{1},\dots ,\beta _{k})$, $1\leq
\beta _{i}\leq d$, we obtain the estimate (see Theorem \ref{MultiThm0})
\begin{equation}
\sup_{x\in \mathbb{R}^{d}}\left\vert \partial _{\beta }f_{F}(x)-\partial
_{\beta }\phi (x)\right\vert \leq C\left( |V-I|^{\frac{1}{2}}+\sum_{j=1}^{d}%
\sqrt{E[F_{j}^{4}]-3(E[F_{j}^{2}])^{2}}\right) ,  \label{multi-main}
\end{equation}
where $V$ is the covariance matrix of $F$, $\phi $ is the standard $d$
dimensional normal density, and $\partial _{\beta }=\frac{\partial ^{k}}{%
\partial x_{\beta _{1}}\cdots \partial x_{\beta _{k}}}$. As a consequence,
we derive the uniform convergence of the densities and their derivatives for
a sequence of vectors of multiple stochastic integrals, under the
assumption $\sup_{n}E[\det \gamma _{F_{n}}^{-p}]<\infty $ for all $p\geq 1$.
A multivariate extension of Stein's method is required for noncontinuous
functions with polynomial growth (see Proposition \ref{Stein-prop}). While
univariate Stein's equations with non-smooth test functions have been
extensively studied, relatively few results are available for the
multivariate case, see \cite{CGS11,CM08,Mec06,NPRv10,Raic04,RR09}, so this
result has its own interest.

\medskip
We also consider the case of random variables $F$ such that $E[F]=0$
and $E[F^{2}]=\sigma ^{2}$, belonging to the Sobolev space $\mathbb{D}^{2,s}$
for some $s>4$. In this case, under a non-degeneracy assumption of the form $%
E[|\langle DF,-DL^{-1}F\rangle _{\mathfrak{H}}|^{-r}|]<\infty $ for some $%
r>2 $, we derive an estimate for the uniform distance between the density of
$F$ and the density of the normal law $N(0,\sigma ^{2})$.

\bigskip

In a recent paper \cite{NPS13}, Nourdin, Peccati and Swan have obtained an
upper bound on the total variation distance between the law of a vector of
multiple stochastic integrals and a normal distribution, using a combination
of entropy techniques and Malliavin calculus. Their main result can be
briefly stated as follow. Let $F=(F^{1},\dots ,F^{d})$ be a $d$ dimensional
random vector whose components are multiple stochastic integrals of orders $%
q_{1},\dots ,q_{d}$, $q_{i}\geq 2$, respectively. Suppose the covariance of $%
F$ is the identity matrix. Denote $\phi (x)$ the density of $N\sim N(0,%
\mathrm{Id})$. Then the relative entropy $D(F||N)$ of $F$ satisfies
$D(F||N):=E[\log f_{F}(F)-\log \phi (F)]\leq C\Delta \left\vert \log \Delta
\right\vert$,
where $C>0$ is a constant and $\Delta =E[\left\vert F\right\vert
^{4}-\left\vert N\right\vert ^{4}]$. This leads to the bound
\begin{equation*}
\left\Vert f_{F}-\phi \right\Vert _{L^{1}(\mathbb{R}^{d})}\leq \sqrt{2D(F||N)%
}\leq C\sqrt{\Delta \left\vert \log \Delta \right\vert }\,. \label{e.tv}
\end{equation*}%
This result refines some estimates obtained in \cite{NNP12}. In the case $d=1
$, this result also refines our estimate (\ref{f-Lp}), where by taking $p=1$
and $\alpha $ close to $\frac{1}{2}$ we can only get $\Delta ^{\frac{1%
}{4}-\epsilon}$ with $\epsilon>0$ arbitrarily small.

Convergence of densities in uniform distance (also in total variation) has
also been studied using Fisher information theory via Shimizu's inequality (see
for instance, \cite{Sh75,BCG12,BJ04,Joh04})
\begin{equation}
\sup_{x\in \mathbb{R}}|f_{F}(x)-\phi (x)|\leq C\sqrt{I(F||N)},
\label{e.fisher}
\end{equation}
where $F$ is a random variable with density $f\in C^{1}(\mathbb{R})$, $\phi $
is the density of $N\sim N(0,1)$, and $I(F||N):=\int_{\mathbb{R}}\left(
\frac{f_{F}^{\prime }(x)}{f_{F}(x)}-\frac{\phi ^{\prime }(x)}{\phi (x)}%
\right) ^{2}f(x)dx$ is the relative Fisher information. Recently, Bobkov,
Chistyakov and G\"otze \cite{BCG12} studied the rate of convergence to $0$ of
$I(F_{n}||N)$ for $F_{n}=\frac{1}{\sqrt{n}}\sum_{i=1}^{n}X_{i}$, where $%
\left\{ X_{i}\right\} _{i\geq a}$ are i.i.d. random variables with mean $0$
and variance $1$, assuming that $f_{F_{n_{0}}}^{\prime
}\in L^{1}(\mathbb{R})$ for some $n_{0}$.

In general, when studying uniform convergence of densities, one is necessarily
led to introduce some stringent assumptions on the regularity of the laws of
the underlying random variables. Here we showed that these assumptions can
be reduced to requirements about the finiteness of the negative moments of
Malliavin matrices.

\bigskip

The paper is organized as follows. Section \ref{Prelim} introduces some
preliminary results of Gaussian analysis, Malliavin calculus and Stein's
method for normal approximations. Section \ref{DFmla} is devoted to density
formulae with elementary estimates using Malliavin calculus. The density
formulae themselves are well-known results, but we present explicit formulae
with useful estimates, such as the H\"{o}lder continuity and boundedness
estimates in theorems $\ref{density}$ and $\ref{density2}$. The boundedness
estimates enable us to prove the $L^{p}$ convergence of the densities (see (%
\ref{f-Lp})). The H\"{o}lder continuity estimates can be used to provide a
short proof for the convergence of densities based on a compactness
argument, assuming convergence in law (see Theorem \ref{gThm}). Section \ref%
{ChaosConv} proves the convergence of densities of random variables in a
fixed Wiener chaos, and Section \ref{MultiConv} discusses convergence of
densities for random vectors. In Section \ref{GenrlConv}, the convergence of
densities for sequences of general centered square integrable random
variables are studied.

\medskip The main difficulty in the application of the above results is to
verify the existence of negative moments for the determinant of the
Malliavin matrix. We provide explicit sufficient conditions for this
condition for random variables in the second Wiener chaos in Section \ref%
{Application}. As an application we derive the uniform convergence of the
densities and their derivatives to the normal distribution, as time goes to
infinity, for the least squares estimator of the parameter $\theta $ in the
Ornstein-Uhlenbeck process: $dX_{t}=-\theta X_{t}dt+\gamma dB_{t}$, where $%
B=\{B_{t},t\geq 0\}$ is a standard Brownian motion. Some technical results
and proofs are included in Section \ref{Appdx}.

\medskip Along this paper, we denote by $C$ (maybe with subindexes) a
generic constant that might depend on quantities such as the order of
multiple stochastic integrals $q$, the order of the derivatives $m$, the
variance $\sigma^2$ or the negative moments of the Malliavin derivative. We
denote by $\left\Vert \cdot \right\Vert _{p}$ the norm in the space $%
L^{p}(\Omega )$.

\section{Preliminaries\label{Prelim}}

In the first two subsections, we introduce some basic elements of Gaussian
analysis and Malliavin calculus, for which we refer to \cite{Nu06,NP12} for
further details. In the last subsection, we shall introduce some basic
estimates from the univariate Stein's method.

\subsection{Isonormal Gaussian process and multiple integrals}

Let $\mathfrak{H}$ be a real separable Hilbert space (with its inner product
and norm denoted by\thinspace $\left\langle \cdot ,\cdot \right\rangle _{%
\mathcal{\mathfrak{H}}}$ and $\left\Vert \cdot \right\Vert _{\mathfrak{H}}$,
respectively). For any integer $q\geq 1$, let $\mathfrak{H}^{\otimes q}$($%
\mathfrak{H}^{\odot q}$) be the $q$th tensor product (symmetric tensor
product) of $\mathfrak{H}$. Let $X=\{X(h),h\in \mathfrak{H}\}$ be an
isonormal Gaussian process associated with the Hilbert space $\mathfrak{H}$,
defined on a complete probability space $\left( \Omega ,\mathcal{F},P\right)
$. That is, $X$ is a centered Gaussian family of random variables such that $%
E[X(h)X(g)]=\left\langle h,g\right\rangle _{\mathfrak{H}}$ for all $h,g\in
\mathfrak{H}$. We assume that the $\sigma $-field $\mathcal{F}$ is generated
by $X$.

For every integer $q\geq 0$, the \emph{$q$th Wiener chaos} (denoted by $%
\mathcal{H}_{q}$) of $X$ is the closed linear subspace of $L^{2}(\Omega )$
generated by the random variables $\left\{ H_{q}(X(h)):h\in \mathfrak{H}%
,\left\Vert h\right\Vert _{\mathfrak{H}}=1\right\} $, where $H_{q}$ is the $%
q $th \emph{Hermite polynomial} recursively defined by $H_{0}(x)=1$, $%
H_{1}(x)=x$ and
\begin{equation}
H_{q+1}(x)=xH_{q}(x)-qH_{q-1}(x),\quad q\geq 1.  \label{HmtP}
\end{equation}%
For every integer $q\geq 1$, the mapping $I_{q}(h^{\otimes q})=H_{q}(X(h))$,
where $\Vert h\Vert _{\mathfrak{H}}=1$, can be extended to a linear isometry
between $\mathfrak{H}^{\odot q}$ (equipped with norm $\sqrt{q!}\left\Vert
\cdot \right\Vert _{\mathfrak{H}^{\otimes q}}$) and $\mathcal{H}_{q}$
(equipped with $L^{2}(\Omega )$ norm). For $q=0$, $\mathcal{H}_{0}=\mathbb{R}
$, and $I_{0}$ is the identity map. The mapping $I_{q}$ is called the
multiple stochastic integral of order $q$.

It is well-known (Wiener chaos expansion) that $L^{2}(\Omega )$ can be
decomposed into the infinite orthogonal sum of the spaces $\mathcal{H}_{q}$.
That is, any random variable $F\in L^{2}(\Omega )$ has the following chaos
expansion:%
\begin{equation}
F=\sum_{q=0}^{\infty }I_{q}(f_{q}),  \label{Chaos}
\end{equation}%
where $f_{0}=E[F]$, and $f_{q}\in \mathfrak{H}^{\odot q},q\geq 1$, are
uniquely determined by $F$. \ For every $q\geq 0$ we denote by $J_{q}$ the
orthogonal projection on the $q$th Wiener chaos $\mathcal{H}_{q}$, so $%
I_{q}(f_{q})=J_{q}F$.

Let $\left\{ e_{n},n\geq 1\right\} $ be a complete orthonormal basis of $%
\mathfrak{H}$. Given $f\in \mathfrak{H}^{\odot q}$ and $g\in \mathfrak{H}%
^{\odot p}$, for $r=0,\dots ,p\wedge q$ the $r$--th contraction of $f$
and $g$ is the element of $\mathfrak{H}^{\otimes \left( p+q-2r\right) }$
defined by
\begin{equation}
f\otimes _{r}g=\sum_{i_{1},\dots ,i_{r}=1}^{\infty }\left\langle
f,e_{i_{1}}\otimes \cdots \otimes e_{i_{r}}\right\rangle _{\mathfrak{H}%
^{\otimes r}}\otimes \left\langle g,e_{i_{1}}\otimes \cdots \otimes
e_{i_{r}}\right\rangle _{\mathfrak{H}^{\otimes r}}.  \label{cntt1}
\end{equation}%
Notice that $f\otimes _{r}g$ is not necessarily symmetric. We denote by $f%
\widetilde{\otimes }_{r}g$ its symmetrization. Moreover, $f\otimes _{0}g=$ $%
f\otimes g$, and for $p=q$, $f\otimes _{q}g=\left\langle f,g\right\rangle _{%
\mathfrak{H}^{\otimes q}}$. For the product of two multiple stochastic
integrals we have the multiplication formula%
\begin{equation}
I_{p}(f)I_{q}(g)=\sum_{r=0}^{p\wedge q}r!\binom{p}{r}\binom{q}{r}%
I_{p+q-2r}(f\otimes _{r}g).  \label{MltFml}
\end{equation}

In the particular case $\mathfrak{H}=L^{2}(A,\mathcal{A},\mu )$, where $(A,%
\mathcal{A)}$ is a measurable space and $\mu $ is a $\sigma $-finite and
nonatomic measure, one has that $\mathfrak{H}^{\otimes q}=L^{2}(A^{q},%
\mathcal{A}^{\otimes q},\mu ^{\otimes q})$ and $\mathfrak{H}^{\odot q}$ is
the space of symmetric and square-integrable functions on $A^{q}$. Moreover,
for every $f\in \mathfrak{H}^{\odot q}$, $I_{q}(f)$ coincides with the $q$th%
\textit{\ multiple Wiener--It\^{o} integral} of $f$ with respect to $X$, and
($\ref{cntt1}$) can be written as
\begin{eqnarray}
f\otimes _{r}g\left( t_{1},\dots ,t_{p+q-2r}\right) &=&\int_{A^{r}}f\left(
t_{1},\dots ,t_{q-r},s_{1},\dots ,s_{r}\right)  \label{cntt2} \\
&&\times g\left( t_{1+q-r},\dots ,t_{p+q-r},s_{1},\dots ,s_{r}\right) d\mu
(s_{1})\dots d\mu (s_{r}).  \notag
\end{eqnarray}

\subsection{Malliavin operators}

We introduce some basic facts on Malliavin calculus with respect to the
Gaussian process $X$. Let $\mathcal{S}$ denote the class of smooth random
variables of the form $F=f(X(h_{1}),\dots ,X(h_{n}))$, where $h_{1},\dots
,h_{n}$ are in $\mathfrak{H}$, $n\geq 1$, and $f$ $\in $ $C_{p}^{\infty }(%
\mathbb{R}^{n})$, the set of smooth functions $f$ such that $f$ itself and
all its partial derivatives have at most polynomial growth. Given $%
F=f(X(h_{1}),\dots ,X(h_{n}))$ in $\mathcal{S}$, its Malliavin derivative $%
DF $ is the $\mathfrak{H}$--valued random variable given by
\begin{equation*}
DF=\sum_{i=1}^{n}\frac{\partial f}{\partial x_{i}}(X(h_{1}),\dots
,X(h_{n}))h_{i}.
\end{equation*}%
The derivative operator $D$ is a closable and unbounded operator on $%
L^{2}(\Omega )$ taking values in $L^{2}(\Omega ;\mathfrak{H})$. By iteration
one can define higher order derivatives $D^{k}F$ $\in L^{2}(\Omega ;%
\mathfrak{H}^{\odot k})$. For any integer $k\geq 0$ and any $p\geq 1$ and we
denote by $\mathbb{D}^{k,p}$ the closure of $\mathcal{S}$ with respect to
the norm $\left\Vert \cdot \right\Vert _{k,p}$ given by:
\begin{equation*}
\left\Vert F\right\Vert _{k,p}^{p}=\sum_{i=0}^{k}E(\left\vert \left\vert
D^{i}F\right\vert \right\vert _{\mathfrak{H}^{\otimes i}}^{p}).
\end{equation*}%
For $k=0$ we simply write $\Vert F\Vert _{0,p}=\Vert F\Vert _{p}$. For any $%
p\geq 1$ and $k\geq 0$, we set $\mathbb{D}^{\infty ,p}=\cap _{k\geq 0}%
\mathbb{D}^{k,p}$, $\mathbb{D}^{k,\infty }=\cap _{p\geq 1}\mathbb{D}^{k,p}$
and $\mathbb{D}^{\infty }=\cap _{k\geq 0}\mathbb{D}^{k,\infty }$.

We denote by $\delta $ (the $divergence$ operator) the adjoint operator of $%
D $, which is an unbounded operator from a domain in $L^{2}(\Omega ;%
\mathfrak{H})$ to $L^{2}(\Omega )$. An element $u\in L^{2}(\Omega ;\mathfrak{%
H})$ belongs to the domain of $\delta $ if and only if it verifies%
\begin{equation*}
\left\vert E[\left\langle DF,u\right\rangle _{\mathfrak{H}}]\right\vert \leq
c_{u}\sqrt{E[F^{2}]}
\end{equation*}%
for any $F\in \mathbb{D}^{1,2}$, where $c_{u}$ is a constant depending only
on $u$. In particular, if $u\in \mathrm{Dom~}\delta $, then $\delta (u)$ is
characterized by the following duality relationship
\begin{equation}
E(\delta (u)F)=E(\langle DF,u\rangle _{\mathfrak{H}})  \label{duality}
\end{equation}%
for any $F\in \mathbb{D}^{1,2}$. This formula extends to the multiple
integral $\delta ^{q}$, that is, for $u\in \mathrm{Dom~}\delta ^{q}$ and $%
F\in \mathbb{D}^{q,2}$ we have%
\begin{equation*}
E(\delta ^{q}(u)F)=E(\langle D^{q}F,u\rangle _{\mathfrak{H}^{\otimes q}}).
\end{equation*}

We can factor out a scalar random variable in the divergence in the
following sense. Let $F\in \mathbb{D}^{1,2}$ and $u\in \mathrm{Dom}~\delta $
such that $Fu\in L^{2}(\Omega ;\mathfrak{H})$. Then $Fu\in \mathrm{Dom}%
~\delta $ and
\begin{equation}
\delta \left( Fu\right) =F\delta (u)-\left\langle DF,u\right\rangle _{%
\mathfrak{H}},  \label{factorOut}
\end{equation}%
provided the right-hand side is square integrable. The operators $\delta $
and $D$ have the following commutation relationship
\begin{equation}
D\delta (u)=u+\delta (Du)  \label{DeltaD}
\end{equation}%
for any $u\in \mathbb{D}^{2,2}(\mathfrak{H})$ (see \cite[page 37]{Nu06}).

The following version of Meyer's inequality (see \cite[Proposition 1.5.7]%
{Nu06}) will be used frequently in this paper. Let $V$ be a real separable
Hilbert space. We can also introduce Sobolev spaces $\mathbb{D}^{k,p}(V)$ of
$V$-valued random variables for $p\geq 1$ and integer $k\geq 1$. Then, for
any $p>1$ and $k\geq 1$, the operator $\delta $ is continuous from $\mathbb{D%
}^{k,p}(V\otimes \mathfrak{H})$ into $\mathbb{D}^{k-1,p}(V)$. That is,
\begin{equation}
\left\Vert \delta (u)\right\Vert _{k-1,p}\leq C_{k,p}\left\Vert u\right\Vert
_{k,p}.  \label{Meyer}
\end{equation}

The operator $L$ defined on the Wiener chaos expansion as $%
L=\sum_{q=0}^{\infty }(-q)J_{q}$ is the infinitesimal generator of the
Ornstein--Uhlenbeck semigroup $T_{t}=\sum_{q=0}^{\infty }e^{-qt}J_{q}$. Its
domain in $L^{2}(\Omega )$ is
\begin{equation*}
\mathrm{Dom}\ L=\left\{ F\in L^{2}(\Omega ):\sum_{q=1}^{\infty
}q^{2}\left\Vert J_{q}F\right\Vert _{2}^{2}<\infty \right\} =\mathbb{D}%
^{2,2}.
\end{equation*}%
The relation between the operators $D$, $\delta $ and $L$ is explained in
the following formula (see \cite[Proposition 1.4.3]{Nu06}). For $F\in
L^{2}(\Omega )$, $F\in \mathrm{Dom}~L$ if and only if $F\in \mathrm{Dom}%
(\delta D)$ (i.e., $F\in \mathbb{D}^{1,2}$ and $DF\in \mathrm{Dom}~\delta $%
), and in this case

\begin{equation}
\delta DF=-LF.  \label{DeltaDL}
\end{equation}

For any $F\in L^{2}(\Omega )$, we define $L^{-1}F=-\sum_{q=1}^{\infty
}q^{-1}J_{q}(F)$. The operator $L^{-1}$ is called the pseudo-inverse of $L$.
Indeed, for any $F\in L^{2}(\Omega )$, we have that $L^{-1}F\in \mathrm{Dom}%
\ L$, and
\begin{equation*}
LL^{-1}F=L^{-1}LF=F-E[F].
\end{equation*}

We list here some properties of multiple integrals which will be used in
Section 4. Fix $q\geq 1$ and let $f\in \mathfrak{H}^{\odot q}$. We have $%
I_{q}(f)=\delta ^{q}(f)$ and $DI_{q}(f)=qI_{q-1}(f)$, and hence $I_{q}(f)\in
\mathbb{D}^{\infty ,2}$. The multiple stochastic integral $I_{q}(f)$
satisfies \emph{hypercontractivity} property:
\begin{equation}
\left\Vert I_{q}(f)\right\Vert _{p}\leq C_{q,p}\left\Vert
I_{q}(f)\right\Vert _{2}\text{ for any }p\in \lbrack 2,\infty ).
\label{Hyper}
\end{equation}%
This easily implies that $I_{q}(f)\in \mathbb{D}^{\infty }$ and for any $%
1\leq k\leq q$ and $p\geq 2$,
\begin{equation*}
\Vert I_{q}(f)\Vert _{k,p}\leq C_{q,k,p}\left\Vert I_{q}(f)\right\Vert _{2}.
\end{equation*}%
As a consequence, for any $F\in \oplus _{l=1}^{q}\mathcal{H}_{l}$, we have
\begin{equation}
\left\Vert F\right\Vert _{k,p}\leq C_{q,k,p}\left\Vert F\right\Vert _{2}.
\label{HyperMeyer}
\end{equation}
For any random variable $F$ in the chaos of order $q\ge 2$, we have (see
\cite{NP12}, Equation (5.2.7))
\begin{equation}  \label{equi}
\frac 1 {q^2} \mathrm{Var} \left(\|DF \|^2_{\mathfrak{H}} \right) \le \frac {%
q-1}{3q} \left( E[F^4]- (E[F^2])^2 \right) \le (q-1) \mathrm{Var} \left(\|DF
\|^2_{\mathfrak{H}} \right).
\end{equation}

In the case where $\mathfrak{H}$ is $L^{2}(A,\mathcal{A},\mu )$, for an
integrable random variable $F=\sum_{q=0}^{\infty }I_{q}(f_{q})\in \mathbb{D}%
^{1,2}$, its derivative can be represented as an element in of $L^{2}\left(
A\times \Omega \right) $ given by
\begin{equation*}
D_{t}F=\sum_{q=1}^{\infty }qI_{q}(f_{q}(\cdot ,t)),~t\in A.
\end{equation*}

\subsection{Stein's method of normal approximation\label{UnivStein}}

We shall now give a brief account of Stein's method of univariate normal
approximation and its connection with Malliavin calculus. For a more
detailed exposition we refer to \cite{CGS11,NP12,St72}.

Let $F$ be an arbitrary random variable and let $N$ be a $N(0,\sigma ^{2})$
distributed random variable, where $\sigma^2>0$. Consider the distance
between the law of $F$ and the law of $N$ given by
\begin{equation}
d_{\mathcal{H}}(F,N)=\sup_{h\in \mathcal{H}}\left\vert
E[h(F)-h(N)]\right\vert  \label{distance}
\end{equation}%
for a class of functions $\mathcal{H}$ such that $E[h(F)]$ and $E[h(N)]$ are
well-defined for $h\in \mathcal{H}$. Notice first the following fact (which
is usually referred as \emph{Stein's lemma}): a random variable $N$ is $%
N(0,\sigma ^{2})$ distributed if and only if $E[\sigma ^{2}f^{\prime
}(N)-Nf(N)]=0$ for all absolutely continuous functions $f$ such that $%
E[\left\vert f^{\prime }(N)\right\vert ]<\infty $. This suggests that the
distance of $E[\sigma ^{2}f^{\prime }(F)-Ff(F)]$ from zero may quantify the
distance between the law of $F$ and the law of $N$. To see this, for each
function $h$ such that $E[\left\vert h(N)\right\vert ]<\infty $, Stein \cite%
{St72} introduced the \emph{Stein's equation}:%
\begin{equation}
f^{\prime }(x)-\frac{x}{\sigma ^{2}}f(x)=h(x)-E[h(N)]  \label{Stein's equ}
\end{equation}%
for all $x\in \mathbb{R}$. For a random variable $F$ such that $E[\left\vert
h(F)\right\vert ]<\infty $, any solution $f_{h}$ to Equation (\ref{Stein's
equ}) verifies
\begin{equation}
\frac{1}{\sigma ^{2}}E[\sigma ^{2}f_{h}^{\prime }(F)-Ff_{h}(F)]=E[h(F)-h(N)],
\label{S-equ}
\end{equation}%
and the distance defined in (\ref{distance}) can be written as
\begin{equation}
d_{\mathcal{H}}(F,N)=\frac{1}{\sigma ^{2}}\sup_{h\in \mathcal{H}}\left\vert
E[\sigma ^{2}f_{h}^{\prime }(F)-Ff_{h}(F)]\right\vert .  \label{p-metric}
\end{equation}%
The unique solution to (\ref{Stein's equ}) verifying $\lim_{x\rightarrow \pm
\infty }$ $e^{-x^{2}/(2\sigma ^{2})}f(x)=0$ is
\begin{equation}
f_{h}(x)=e^{x^{2}/(2\sigma ^{2})}\int_{-\infty
}^{x}\{h(y)-E[h(N)]\}e^{-y^{2}/(2\sigma ^{2})}dy.  \label{solu}
\end{equation}

From (\ref{p-metric}) and (\ref{solu}), one can get bounds for probability
distances like the total variation distance, where we let $\mathcal{H}$
consist of all indicator functions of measurable sets, Kolmogorov distance,
where we consider all the half-line indicator functions and Wasserstein
distance, where we take $\mathcal{H}$\ to be the set of all
Lipschitz-continuous functions with Lipschitz constant equal to 1.

In the present paper, we shall consider the case when $h:$ $\mathbb{%
R\rightarrow R}$ is given by $h(x)=\mathbf{1}_{\left\{ x>z\right\} }H_{k}(x)$
for any integer $k\geq 1$ and $z\in \mathbb{R}$, where $H_{k}(x)$ is the $k$%
th Hermite polynomial. More generally, we have the following lemma whose
proof can be found in the Appendix. it should be pointed out that the
univariate Stein's equations have been extensively studied. For example, we
refer to \cite[Section 2.2]{CGS11} and \cite[Lemma 8.2]{NPR10} when the test
functions have sub-polynomial growth.

\begin{lemma}
\label{solu-ctrl} Suppose $\left\vert h(x)\right\vert \leq a\left\vert
x\right\vert ^{k}+b$ for some integer $k\geq 0$ and some nonnegative numbers
$a,b$. Then, the solution $f_{h}$ to the Stein's equation (\ref{Stein's equ}%
) given by (\ref{solu}) satisfies%
\begin{equation*}
\left\vert f_{h}^{\prime }(x)\right\vert \leq aC_{k}\sum_{i=0}^{k}\sigma
^{k-i}\left\vert x\right\vert ^{i}+4b
\end{equation*}%
for all $x\in \mathbb{R}$, where $C_{k}$ is a constant depending only on $k$.
\end{lemma}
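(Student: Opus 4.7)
The plan is to leverage Stein's equation itself,
\begin{equation*}
f_h'(x) = \tilde h(x) + \frac{x}{\sigma^2} f_h(x), \qquad \tilde h(x) := h(x) - E[h(N)],
\end{equation*}
so that bounding $f_h'$ reduces to controlling $f_h$ by a polynomial, combined with the elementary pointwise bound $|\tilde h(x)| \le a|x|^k + a c_k \sigma^k + 2b$, where $c_k = E[|N/\sigma|^k]$. The direct $2b$ appearing here will eventually pair with a matching $2b$ coming from $f_h$ to produce exactly the ``$+4b$'' in the claim.

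Since $\int_{-\infty}^{\infty} \tilde h(y) e^{-y^2/(2\sigma^2)}\, dy = 0$, the solution in (\ref{solu}) may be rewritten as $f_h(x) = -e^{x^2/(2\sigma^2)} \int_x^\infty \tilde h(y) e^{-y^2/(2\sigma^2)}\, dy$. I would use this alternative representation for $x > 0$ and the original one for $x < 0$; the symmetric bound on $|\tilde h|$ together with the change of variables $y \mapsto -y$ reduces the latter case to the former. Thus it suffices to estimate, for $x > 0$, the two quantities
\begin{equation*}
I_k(x) := e^{x^2/(2\sigma^2)} \int_x^\infty y^k e^{-y^2/(2\sigma^2)}\, dy, \qquad I_0(x) := e^{x^2/(2\sigma^2)} \int_x^\infty e^{-y^2/(2\sigma^2)}\, dy,
\end{equation*}
and then assemble $|f_h(x)| \le a\, I_k(x) + (a c_k \sigma^k + 2b)\, I_0(x)$.

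For $I_k$ an integration by parts (taking $du = y e^{-y^2/(2\sigma^2)}\, dy$) yields the recursion $I_k(x) = \sigma^2 x^{k-1} + (k-1)\sigma^2 I_{k-2}(x)$, with base values $I_1(x) = \sigma^2$ and $I_0(x) \le \sigma\sqrt{\pi/2}$. Iterating produces a polynomial in $x$ of degree $k-1$ whose coefficients are powers of $\sigma$, plus at worst a constant multiple of $\sigma^{k+1}$; multiplying through by $x/\sigma^2$ reshapes the result into the target form $C_k \sum_{i=0}^{k} \sigma^{k-i} x^i$. For the $I_0$ piece the essential input is the sharp Mills-ratio inequality $(x/\sigma^2) I_0(x) \le 1$ for $x > 0$, obtained from the elementary bound $\int_x^\infty e^{-y^2/(2\sigma^2)}\, dy \le \int_x^\infty (y/x) e^{-y^2/(2\sigma^2)}\, dy$. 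Plugging everything back into Stein's equation then delivers the stated inequality. The hard part is not the analysis but the bookkeeping: one must track the integration-by-parts recursion carefully to confirm that every term really fits into the announced polynomial shape, and use the sharp Mills constant $1$ (rather than $\sqrt{\pi/2}$) at the decisive step, since otherwise the clean constant $4b$ would have to be weakened.
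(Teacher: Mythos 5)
Your proposal is correct and follows essentially the same route as the paper: use Stein's equation to reduce to bounding $f_h$, rewrite the integral as a tail integral via $\int_{-\infty}^{\infty}\tilde h\,e^{-y^2/(2\sigma^2)}\,dy=0$, control the tail moments $\int_{|x|}^{\infty}y^k e^{-y^2/(2\sigma^2)}\,dy$ by an integration-by-parts recursion, and use the fact that $\frac{1}{\sigma^2}\int_{|x|}^{\infty}y\,e^{-y^2/(2\sigma^2)}\,dy = e^{-x^2/(2\sigma^2)}$ (equivalently your Mills-ratio step) to get the clean factor of $2$ that turns $2b$ into $4b$. The only cosmetic difference is that you pull the factor $x/\sigma^2$ outside the integral and close with the Mills inequality, whereas the paper replaces $|x|\le y$ inside the integrand to the same effect.
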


Nourdin and Peccati \cite{NP09ptrf, NP12} combined Stein's method with
Malliavin calculus to estimate the distance between the distributions of
regular functionals of an isonormal Gaussian process and the normal
distribution $N(0,\sigma ^{2})$. The basic ingredient is the following
integration by parts formula. For $F\in \mathbb{D}^{1,2}$ with $E[F]=0$ and
any function $f\in C^{1}$ such that $E[|f^{\prime }(F)|]<\infty $, using (%
\ref{DeltaDL}) and (\ref{duality}) we have
\begin{eqnarray*}
E[Ff(F)] &=&E[LL^{-1}Ff(F)]=E[-\delta DL^{-1}Ff(F)] \\
&=&E[\left\langle -DL^{-1}F,Df(F)\right\rangle ]=E[f^{\prime
}(F)\left\langle -DL^{-1}F,DF\right\rangle _{\mathfrak{H}}].
\end{eqnarray*}%
Then, it follows that
\begin{equation}
E[\sigma ^{2}f^{\prime }(F)-Ff(F)]=E[f^{\prime }(F)(\sigma ^{2}-\left\langle
DF,-DL^{-1}F\right\rangle _{\mathfrak{H}})].  \label{MS-equ}
\end{equation}%
Combining Equation (\ref{MS-equ}) with (\ref{S-equ}) and Lemma \ref%
{solu-ctrl} we obtain the following result.

\begin{lemma}
\label{MS-ctrl}Suppose $h:$ $\mathbb{R\rightarrow R}$ verifies $\left\vert
h(x)\right\vert \leq a\left\vert x\right\vert ^{k}+b$ for some $a,b\geq 0$
and some integer $k\geq 0$. Let $N\sim N(0,\sigma ^{2})$ and let $F\in
\mathbb{D}^{1,2k}$ with $\left\Vert F\right\Vert _{2k}\leq c\sigma $ for
some $c>0$. Then there exists a constant $C_{k,c}$ depending only on $k$ and
$c$\ such that%
\begin{equation*}
\left\vert E[h(F)-h(N)]\right\vert \leq \sigma ^{-2}[aC_{k,c}\sigma
^{k}+4b]\left\Vert \sigma ^{2}-\left\langle DF,-DL^{-1}F\right\rangle _{%
\mathfrak{H}}\right\Vert _{2}.
\end{equation*}
\end{lemma}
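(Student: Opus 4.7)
The plan is to combine the Malliavin–Stein identity \eref{MS-equ} with the bound from Lemma \ref{solu-ctrl} via Cauchy–Schwarz, then use the $L^{2k}$ moment bound on $F$ to control the resulting expectation of $|f_h'(F)|^2$.

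First I would assume $E[F]=0$ (implicit in the use of $L^{-1}$, cf.\ the derivation of \eref{MS-equ}) and write, using \eref{S-equ} and \eref{MS-equ},
\begin{equation*}
E[h(F)-h(N)] = \frac{1}{\sigma^{2}} E\!\left[ f_h'(F)\bigl(\sigma^{2}-\langle DF,-DL^{-1}F\rangle_{\mathfrak{H}}\bigr)\right],
\end{equation*}
where $f_h$ is the Stein solution given by \eref{solu}. Note that $f_h$ exists in $L^2$ because the growth hypothesis $|h(x)|\le a|x|^k+b$ guarantees $E[|h(N)|]<\infty$ and $E[|h(F)|]<\infty$ since $F\in\mathbb D^{1,2k}$. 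Applying Cauchy–Schwarz yields
\begin{equation*}
\bigl|E[h(F)-h(N)]\bigr| \le \sigma^{-2}\,\|f_h'(F)\|_{2}\,\bigl\|\sigma^{2}-\langle DF,-DL^{-1}F\rangle_{\mathfrak{H}}\bigr\|_{2}.
\end{equation*}

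Next I would estimate $\|f_h'(F)\|_{2}$ using Lemma \ref{solu-ctrl}, which gives $|f_h'(x)|\le aC_k\sum_{i=0}^{k}\sigma^{k-i}|x|^{i}+4b$. Taking $L^{2}(\Omega)$ norms via Minkowski,
\begin{equation*}
\|f_h'(F)\|_{2} \le aC_k\sum_{i=0}^{k}\sigma^{k-i}\bigl(E[|F|^{2i}]\bigr)^{1/2}+4b.
\end{equation*}
By Jensen's inequality (monotonicity of $L^p$ norms on a probability space), $(E[|F|^{2i}])^{1/(2i)}\le\|F\|_{2k}\le c\sigma$ for $0\le i\le k$, so $(E[|F|^{2i}])^{1/2}\le (c\sigma)^{i}$. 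Substituting,
\begin{equation*}
\|f_h'(F)\|_{2} \le aC_k\sigma^{k}\sum_{i=0}^{k}c^{i}+4b =: aC_{k,c}\sigma^{k}+4b,
\end{equation*}
with $C_{k,c}:=C_k\sum_{i=0}^{k}c^{i}$. Plugging this bound into the Cauchy–Schwarz estimate above produces exactly the claimed inequality.

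The argument is essentially bookkeeping once Lemma \ref{solu-ctrl} and \eref{MS-equ} are in hand, so there is no real obstacle; the only subtle point is ensuring the moment interpolation is carried out correctly so that every term of the form $\sigma^{k-i}\|F\|_{L^{2i}}^{i}$ collapses uniformly to a multiple of $\sigma^{k}$, which is what allows the prefactor in the final bound to be written as $aC_{k,c}\sigma^{k}+4b$ with a constant depending only on $k$ and $c$.
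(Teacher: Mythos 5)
Your proof is correct and follows essentially the same route as the paper: combine \eref{S-equ} and \eref{MS-equ}, apply Cauchy--Schwarz, bound $\|f_h'(F)\|_2$ via Lemma \ref{solu-ctrl} together with Minkowski and the monotonicity $\|F\|_{2i}\le\|F\|_{2k}$ of $L^p$ norms. You have simply spelled out the bookkeeping that the paper compresses into the single observation $\bigl\|\sum_{i=0}^{k}\sigma^{k-i}|F|^{i}\bigr\|_{2}\le\sum_{i=0}^{k}\|F\|_{2k}^{i}\sigma^{k-i}\le C_{k,c}\sigma^{k}$.
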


\begin{proof}
From (\ref{S-equ}), (\ref{MS-equ}) and Lemma (\ref{solu-ctrl}), it suffices
to notice that $\left\Vert \sum_{i=0}^{k}\sigma ^{k-i}\left\vert
F\right\vert ^{i}\right\Vert _{2}\leq \sum_{i=0}^{k}\left\Vert F\right\Vert
_{2k}^{i}\sigma ^{k-i}\leq C_{k,c}\sigma ^{k}\,.$
\end{proof}

\section{Density formulae\label{DFmla}}

In this section, we present explicit formulae for the density of a random
variable and its derivatives, using the techniques of Malliavin calculus.

\subsection{Density formulae}

We shall present two explicit formulae for the density of a random variable,
with estimates of its uniform and H\"{o}lder norms.

\begin{theorem}
\label{density} Let $F\in \mathbb{D}^{2,s}$ such that $E[\left\vert
F\right\vert ^{2p}]<\infty $ and $E[\left\Vert DF\right\Vert _{\mathfrak{H}%
}^{-2r}]<\infty $ for $p,r,s>1$ satisfying $\frac{1}{p}+\frac{1}{r}+\frac{1}{%
s}=1$. Denote
\begin{equation*}
w=\left\Vert DF\right\Vert _{\mathfrak{H}}^{2},~u=w^{-1}DF.
\end{equation*}%
Then $u\in \mathbb{D}^{1,p^{\prime }}$ with $p^{\prime }=\frac{p}{p-1}$ and $%
F$ has a density given by%
\begin{equation}
f_{F}\left( x\right) =E\left[ \mathbf{1}_{\{F>x\}}\delta \left( u\right) %
\right] .  \label{Fmla1}
\end{equation}%
Furthermore, $f_{F}\left( x\right) $ is bounded and H\"{o}lder continuous of
order $\frac{1}{p}$, that is%
\begin{equation}
f_{F}\left( x\right) \leq C_{p}\left\Vert w^{-1}\right\Vert _{r}\left\Vert
F\right\Vert _{2,s}\left( 1\wedge (\left\vert x\right\vert ^{-2}\left\Vert
F\right\Vert _{2p}^{2})\right) ,  \label{fBd}
\end{equation}%
\begin{equation}
\left\vert f_{F}\left( x\right) -f_{F}\left( y\right) \right\vert \leq
C_{p}\left\Vert w^{-1}\right\Vert _{r}^{1+\frac{1}{p}}\left\Vert
F\right\Vert _{2,s}^{1+\frac{1}{p}}\left\vert x-y\right\vert ^{\frac{1}{p}}
\label{fHld}
\end{equation}%
for any $x,y\in \mathbb{R}$, where $C_{p}$ is a constant depending only on $%
p $.
\end{theorem}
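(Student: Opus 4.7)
The plan is to prove the four assertions (regularity of $u$, the density formula, the bound, and the Hölder estimate) in sequence, each building on the previous one.

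First I would establish that $u = w^{-1}DF$ belongs to $\mathbb{D}^{1,p'}$. This requires a chain-rule computation for $w^{-1}$, justified by approximating $w^{-1}$ by $(w+\varepsilon)^{-1}$, invoking the standard chain rule for composition with smooth functions, and letting $\varepsilon \downarrow 0$ using the negative-moment hypothesis $E[w^{-r}]<\infty$ together with dominated convergence. With $Dw = 2\langle D^{2}F,DF\rangle_{\mathfrak{H}}$ and $D(w^{-1})=-w^{-2}Dw$, one gets
\begin{equation*}
Du = -w^{-2}\,(Dw)\otimes DF + w^{-1}D^{2}F,
\end{equation*}
and Cauchy--Schwarz on the first term yields the pointwise bound $\|Du\|_{\mathfrak{H}^{\otimes 2}}\le 3 w^{-1}\|D^{2}F\|_{\mathfrak{H}^{\otimes 2}}$. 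Hölder's inequality with exponents $r$ and $s$ (recall $\frac{1}{r}+\frac{1}{s}=\frac{1}{p'}$) then gives $\|u\|_{1,p'}\le C \|w^{-1}\|_{r}\|F\|_{2,s}$, and Meyer's inequality (\ref{Meyer}) yields $\|\delta(u)\|_{p'}\le C_{p}\|w^{-1}\|_{r}\|F\|_{2,s}$.

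Next, to establish the formula (\ref{Fmla1}), I would test it against an arbitrary $h\in C_{c}^{\infty}(\mathbb{R})$. Setting $H(t)=\int_{-\infty}^{t}h(x)\,dx$, Fubini gives
\begin{equation*}
\int_{\mathbb{R}} h(x)\,E[\mathbf{1}_{\{F>x\}}\delta(u)]\,dx
 = E\bigl[\delta(u)\,H(F)\bigr]
 = E[\langle DH(F),u\rangle_{\mathfrak{H}}]
 = E\bigl[h(F)\langle DF,u\rangle_{\mathfrak{H}}\bigr] = E[h(F)],
\end{equation*}
where the second equality uses duality (\ref{duality}) (legitimate since $H(F)\in\mathbb{D}^{1,p}$ with $\frac{1}{p}+\frac{1}{p'}=1$), and the last uses the pointwise identity $\langle DF,u\rangle_{\mathfrak{H}}=\langle DF,w^{-1}DF\rangle_{\mathfrak{H}}=1$ on $\{w>0\}$, which has full probability. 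This identifies $x\mapsto E[\mathbf{1}_{\{F>x\}}\delta(u)]$ as the density of $F$.

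For the bound (\ref{fBd}), the trivial estimate is $|f_{F}(x)|\le \|\delta(u)\|_{p'}\le C_{p}\|w^{-1}\|_{r}\|F\|_{2,s}$. For the decay, I would use $E[\delta(u)]=0$ (duality against the constant $1$) to write $f_{F}(x)=E[\mathbf{1}_{\{F>x\}}\delta(u)]$ for $x\ge 0$ and $f_{F}(x)=-E[\mathbf{1}_{\{F\le x\}}\delta(u)]$ for $x<0$; Hölder with exponents $p,p'$ together with Chebyshev's inequality $P(|F|>|x|)\le |x|^{-2p}\|F\|_{2p}^{2p}$ gives $|f_{F}(x)|\le C_{p}\|w^{-1}\|_{r}\|F\|_{2,s}\,|x|^{-2}\|F\|_{2p}^{2}$, and combining yields (\ref{fBd}). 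Finally, for the Hölder estimate (\ref{fHld}), for $x<y$ write
\begin{equation*}
|f_{F}(x)-f_{F}(y)| = \bigl|E[\mathbf{1}_{\{x<F\le y\}}\delta(u)]\bigr|
 \le P(x<F\le y)^{1/p}\,\|\delta(u)\|_{p'},
\end{equation*}
and bound $P(x<F\le y)=\int_{x}^{y}f_{F}(t)\,dt\le M\,|y-x|$ with $M:=C_{p}\|w^{-1}\|_{r}\|F\|_{2,s}$ obtained from (\ref{fBd}), producing $|f_{F}(x)-f_{F}(y)|\le M^{1+1/p}|y-x|^{1/p}$.

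The main obstacle I anticipate is the first step: making the chain-rule derivation of $D(w^{-1}DF)$ fully rigorous, because $w$ is not bounded away from $0$. The cleanest workaround is the $\varepsilon$-regularization $w_{\varepsilon}:=w+\varepsilon$, showing via Hölder and the assumption $E[w^{-r}]<\infty$ that $w_{\varepsilon}^{-1}DF\to w^{-1}DF$ in $\mathbb{D}^{1,p'}$; the closedness of $D$ then places $u$ in $\mathbb{D}^{1,p'}$ with the claimed derivative. Once this is settled, the remainder of the argument is an almost mechanical application of Meyer's inequality, Hölder, Chebyshev, and the density formula.
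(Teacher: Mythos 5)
Your proof is correct and follows essentially the same route as the paper: compute $Du$, bound $\|\delta(u)\|_{p'}$ via Meyer and Hölder with exponents $r,s$, then use $E[\delta(u)]=0$, Chebyshev, and Hölder for the decay and Hölder-continuity estimates. The one place you deviate is the density formula itself: the paper simply invokes \cite[Proposition 2.1.1]{Nu06} once $u\in\mathbb{D}^{1,p'}$ is established, whereas you give a self-contained derivation by testing against $h\in C_c^\infty(\mathbb{R})$, applying Fubini and duality (\ref{duality}), and using $\langle DF,u\rangle_{\mathfrak{H}}=1$ a.s.; this is a legitimate and slightly more self-contained version of the same argument, and your attention to the $\varepsilon$-regularization needed to justify the chain rule for $w^{-1}$ is appropriate (the paper leaves this implicit).
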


\begin{proof}
Note that
\begin{equation*}
Du=w^{-1}D^{2}F-2w^{-2}\left( D^{2}F\otimes _{1}DF\right) \otimes DF.
\end{equation*}%
Applying Meyer's inequality (\ref{Meyer}) and H\"{o}lder's inequality we
have
\begin{eqnarray}
\left\Vert \delta \left( u\right) \right\Vert _{p^{\prime }} &\leq
&C_{p}\left\Vert u\right\Vert _{1,p^{\prime }}\leq C_{p}(\left\Vert
u\right\Vert _{p^{\prime }}+\left\Vert Du\right\Vert _{p^{\prime }})  \notag
\\
&\leq &C_{p}(\left\Vert w^{-1}\left\Vert DF\right\Vert _{\mathfrak{H}%
}\right\Vert _{p^{\prime }}+3\left\Vert w^{-1}\left\Vert D^{2}F\right\Vert _{%
\mathfrak{H}\otimes \mathfrak{H}}\right\Vert _{p^{\prime }})  \notag \\
&\leq &3C_{p}\left\Vert w^{-1}\right\Vert _{r}(\left\Vert DF\right\Vert
_{s}+\left\Vert D^{2}F\right\Vert _{s}).  \label{deltaBd}
\end{eqnarray}%
Then $u\in \mathbb{D}^{1,p^{\prime }}$ and the density formula (\ref{Fmla1})
holds (see, for instance, Nualart \cite[Proposition 2.1.1]{Nu06}). From $%
E[\delta (u)]=0$ and H\"{o}lder's inequality it follows that
\begin{equation}
\left\vert E\left[ \mathbf{1}_{\left\{ F>x\right\} }\delta \left( u\right) %
\right] \right\vert \leq P\left( \left\vert F\right\vert >\left\vert
x\right\vert \right) ^{\frac{1}{p}}\left\Vert \delta \left( u\right)
\right\Vert _{^{p^{\prime }}}\leq \left( 1\wedge (\left\vert x\right\vert
^{-2p}\left\Vert F\right\Vert _{2p}^{2p})\right) ^{\frac{1}{p}}\left\Vert
\delta \left( u\right) \right\Vert _{^{p^{\prime }}}.  \label{dens1-1}
\end{equation}%
Then (\ref{fBd}) follows from (\ref{dens1-1}) and (\ref{deltaBd}).

Finally, for $x<y$ $\in \mathbb{R}$, noticing that $\mathbf{1}_{\{F>x\}}-%
\mathbf{1}_{\{F>y\}}=\mathbf{1}_{\{x<F\leq y\}}$, we have
\begin{equation*}
\left\vert f_{F}\left( x\right) -f_{F}\left( y\right) \right\vert \leq
\left( E[\mathbf{1}_{\{x<F\leq y\}}]\right) ^{\frac{1}{p}}\left\Vert \delta
\left( u\right) \right\Vert _{p^{\prime }}.
\end{equation*}%
Applying (\ref{fBd}) and (\ref{deltaBd}) with the fact that $E[\mathbf{1}%
_{\{x<F\leq y\}}]=$ $\int_{x}^{y}f_{F}\left( z\right) dz$ one gets (\ref%
{fHld}).
\end{proof}

With the exact proof of \cite[Propositions 2.1.1]{Nu06}, one can prove the
following slightly more general result.

\begin{proposition}
Let $F\in \mathbb{D}^{1,p}$ and $h:\Omega \rightarrow \mathfrak{H}$, and
suppose that $\left\langle DF,h\right\rangle _{\mathfrak{H}}\neq 0$ a.s. and
$\frac{h}{\left\langle DF,h\right\rangle _{\mathfrak{H}}}\in \mathbb{D}%
^{1,q}(\mathfrak{H})$ for some $p,q>1$. Then the law of $F$ has a density
given by%
\begin{equation}
f_{F}\left( x\right) =E\left[ \mathbf{1}_{\left\{ F>x\right\} }\delta \left(
\frac{h}{\left\langle DF,h\right\rangle _{\mathfrak{H}}}\right) \right] .
\label{Fmla2}
\end{equation}
\end{proposition}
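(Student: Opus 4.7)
The strategy is to mimic the proof of Proposition 2.1.1 in Nualart \cite{Nu06}, replacing the specific vector field $u = w^{-1}DF$ used in Theorem \ref{density} with the general $\frac{h}{\langle DF,h\rangle_{\mathfrak{H}}}$. The two ingredients are the chain rule for the Malliavin derivative (which requires $F\in \mathbb{D}^{1,p}$) together with the duality formula (\ref{duality}) between $D$ and $\delta$, which is available because $\frac{h}{\langle DF,h\rangle_{\mathfrak{H}}}\in \mathbb{D}^{1,q}(\mathfrak{H})\subset \mathrm{Dom}\,\delta$ by Meyer's inequality (\ref{Meyer}).

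The main computation is this: pick $\varphi \in C_c^{\infty}(\mathbb{R})$ and set $\psi(x)=\int_{-\infty}^{x}\varphi(y)\,dy$, which is bounded and smooth with bounded derivatives. By the chain rule, $D\psi(F)=\varphi(F)\,DF$, so pairing with $h$ and using the non-degeneracy assumption $\langle DF,h\rangle_{\mathfrak{H}}\neq 0$ a.s. yields
\begin{equation*}
\varphi(F) \;=\; \left\langle D\psi(F),\,\frac{h}{\langle DF,h\rangle_{\mathfrak{H}}}\right\rangle_{\mathfrak{H}}.
\end{equation*}
Taking expectations and applying the duality formula (\ref{duality}) gives
\begin{equation*}
E[\varphi(F)] \;=\; E\!\left[\psi(F)\,\delta\!\left(\tfrac{h}{\langle DF,h\rangle_{\mathfrak{H}}}\right)\right].
\end{equation*}
Writing $\psi(F)=\int_{\mathbb{R}}\varphi(x)\,\mathbf{1}_{\{F>x\}}\,dx$ and swapping expectation and integral via Fubini, one obtains
\begin{equation*}
E[\varphi(F)] \;=\; \int_{\mathbb{R}}\varphi(x)\,E\!\left[\mathbf{1}_{\{F>x\}}\,\delta\!\left(\tfrac{h}{\langle DF,h\rangle_{\mathfrak{H}}}\right)\right]dx.
\end{equation*}
Since this holds for every test $\varphi$, the law of $F$ is absolutely continuous and its density is given by (\ref{Fmla2}).

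The two steps that require care are the applicability of the chain rule in this generality and the justification of Fubini. For the former, the assumption $F\in\mathbb{D}^{1,p}$ with $p>1$ suffices, since $\psi$ is Lipschitz and $C^1$ and the chain rule in $\mathbb{D}^{1,p}$ extends to such functions (Proposition 1.2.3 in \cite{Nu06}). For Fubini, note that $\delta(h/\langle DF,h\rangle_{\mathfrak{H}})\in L^{q}(\Omega)$ by Meyer's inequality, and since $\psi$ is bounded, the product $\psi(F)\,\delta(\cdot)$ is integrable; the same bound lets us interchange expectation and integration in $x$.

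The only mild obstacle is bookkeeping with the exponents: one must verify that no further integrability hypothesis on $F$ or on $h$ is needed beyond what is stated, i.e.\ that $F\in\mathbb{D}^{1,p}$ and $\frac{h}{\langle DF,h\rangle_{\mathfrak{H}}}\in\mathbb{D}^{1,q}(\mathfrak{H})$ are enough to close the duality argument. This is transparent once the test function $\varphi$ is taken bounded with compact support, so the integrand $\psi(F)\,\delta(h/\langle DF,h\rangle_{\mathfrak{H}})$ is in $L^1(\Omega)$ without any additional moment assumption. No continuity estimates on $f_F$ (analogous to (\ref{fBd})--(\ref{fHld})) are being claimed here, so the proof is strictly shorter than that of Theorem \ref{density}.
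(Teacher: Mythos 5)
Your proof is correct and is exactly what the paper intends: the text preceding the proposition says the result follows ``with the exact proof of \cite[Proposition 2.1.1]{Nu06},'' and you have reproduced that argument faithfully, replacing $w^{-1}DF$ by $h/\langle DF,h\rangle_{\mathfrak{H}}$ at the point where the inner product collapses to $\varphi(F)$. The only caveat worth flagging is that the duality step $E[\langle D\psi(F),u\rangle_{\mathfrak{H}}]=E[\psi(F)\delta(u)]$ with $u\in\mathbb{D}^{1,q}(\mathfrak{H})$ requires $\psi(F)\in\mathbb{D}^{1,q'}$ for the conjugate exponent $q'$, which since $D\psi(F)=\varphi(F)DF$ and $\varphi$ is bounded comes down to needing $p\ge q'$, i.e.\ $\tfrac1p+\tfrac1q\le 1$; the paper's statement leaves this implicit and so does your write-up, but it is a genuine hypothesis rather than something that holds for arbitrary $p,q>1$.
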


Our next goal is to take $h$ to be $-DL^{-1}F$ in formula (\ref{Fmla2}) and
get a result similar to Theorem \ref{density}. First, to get a sufficient
condition for $\frac{-DL^{-1}F}{\left\langle DF,-DL^{-1}F\right\rangle _{%
\mathfrak{H}}}\in \mathbb{D}^{1,p^{\prime }}$ for some $p^{\prime }>1$, we
need some technical estimates on $DL^{-1}F$ and $D^{2}L^{-1}F$. Estimates of
this type have been obtained by Nourdin, Peccati and Reinert \cite{NPR09}
(see also Nourdin and Peccati's book \cite[Lemma 5.3.8]{NP12}), when proving
an infinite-dimensional Poincar\'{e} inequality. More precisely, by using
Mehler's formula, they proved that for any $p\geq 1$, if $F\in \mathbb{D}%
^{2,p}$, then
\begin{equation}
E[\left\Vert DL^{-1}F\right\Vert _{\mathfrak{H}}^{p}]\leq E[\left\Vert
DF\right\Vert _{\mathfrak{H}}^{p}].  \label{NP1}
\end{equation}%
\begin{equation}
E[\left\Vert D^{2}L^{-1}F\right\Vert _{op}^{p}]\leq 2^{-p}E[\left\Vert
D^{2}F\right\Vert _{op}^{p}],  \label{NP2}
\end{equation}%
where $\left\Vert D^{2}F\right\Vert _{op}$ denotes the operator norm of the
Hilbert-Schmidt operator from $\mathfrak{H}$ to $\mathfrak{H}:f\mapsto
f\otimes _{1}D^{2}F$. Furthermore, the operator norm $\left\Vert
D^{2}F\right\Vert _{op}$ satisfies the following \textquotedblleft \emph{%
random contraction inequality}\textquotedblright\
\begin{equation}
\left\Vert D^{2}F\right\Vert _{op}^{4}\leq \left\Vert D^{2}F\otimes
_{1}D^{2}F\right\Vert _{\mathfrak{H}^{\otimes 2}}^{2}\leq \left\Vert
D^{2}F\right\Vert _{\mathfrak{H}^{\otimes 2}}^{4}.  \label{rci}
\end{equation}

Sometimes in application, the use of $L^{-1}$ in the integration by parts
formula is useful. The next proposition gives a density formula with
estimates similar to Theorem \ref{density} with the use of $L^{-1}$. Let
\begin{equation*}
\bar{w}=\left\langle DF,-DL^{-1}F\right\rangle _{\mathfrak{H}},~\bar{u}=-%
\bar{w}^{-1}DL^{-1}F.
\end{equation*}

\begin{proposition}
\label{density2} Let $F\in \mathbb{D}^{2,s}$, $E[\left\vert F\right\vert
^{2p}]<\infty $ and suppose that $E[\left\vert \bar{w}\right\vert
^{-r}]<\infty $, where $p>1$, $r>2$, $s>3$ satisfy $\frac{1}{p}+\frac{2}{r}+%
\frac{3}{s}=1$. Then $\bar{u}\in \mathbb{D}^{1,p^{\prime }}$ with $p^{\prime
}=\frac{p}{p-1}$ and the law of $F$ has a density given by%
\begin{equation}
f_{F}\left( x\right) =E\left[ \mathbf{1}_{\left\{ F>x\right\} }\delta \left(
\bar{u}\right) \right] .  \label{Fmla3}
\end{equation}%
Furthermore, $f_{F}\left( x\right) $ is bounded and H\"{o}lder continuous of
order $\frac{1}{p}$, that is%
\begin{equation}
f_{F}\left( x\right) \leq K_{0}\left( 1\wedge (\left\vert x\right\vert
^{-2}\left\Vert F\right\Vert _{2p}^{2})\right) ,  \label{fBd2}
\end{equation}%
\begin{equation}
\left\vert f_{F}\left( x\right) -f_{F}\left( y\right) \right\vert \leq
K_{0}^{1+\frac{1}{p}}\left\vert x-y\right\vert ^{\frac{1}{p}}  \label{fHld2}
\end{equation}%
for any $x,y\in \mathbb{R}$, where $K_{0}=C_{p}\left\Vert \bar{w}%
^{-1}\right\Vert _{r}\left\Vert F\right\Vert _{2,s}(\left\Vert \bar{w}%
^{-1}\right\Vert _{r}\left\Vert DF\right\Vert _{s}^{2}+1)$, and $C_{p}$
depends only on $p$.
\end{proposition}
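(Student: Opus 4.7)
The plan is to mimic the proof of Theorem \ref{density} essentially line by line, replacing the role of $u=w^{-1}DF$ by $\bar{u}=-\bar{w}^{-1}DL^{-1}F$ and using the Nourdin--Peccati--Reinert bounds (\ref{NP1})--(\ref{rci}) to convert $DL^{-1}F$ and $D^{2}L^{-1}F$ estimates back into estimates on $DF$ and $D^{2}F$. Once I show $\bar{u}\in\mathbb{D}^{1,p'}$ with a good bound on $\|\delta(\bar{u})\|_{p'}$, the density formula (\ref{Fmla3}) follows directly from the preceding Proposition applied with $h=-DL^{-1}F$, and the estimates (\ref{fBd2}) and (\ref{fHld2}) are obtained by Hölder's inequality exactly as (\ref{fBd}) and (\ref{fHld}) were in Theorem \ref{density}.

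The main calculation is to compute and bound $D\bar{u}$. Differentiating gives
\[
D\bar{u}=\bar{w}^{-2}\,D\bar{w}\otimes DL^{-1}F-\bar{w}^{-1}D^{2}L^{-1}F,
\]
with
\[
D\bar{w}=\langle D^{2}F,-DL^{-1}F\rangle_{\mathfrak{H}}+\langle DF,-D^{2}L^{-1}F\rangle_{\mathfrak{H}}.
\]
Therefore $\|D\bar{u}\|_{\mathfrak{H}^{\otimes 2}}$ is controlled by a sum of terms of the schematic form $\bar{w}^{-2}\|DL^{-1}F\|_{\mathfrak{H}}^{2}\|D^{2}F\|_{op}$, $\bar{w}^{-2}\|DL^{-1}F\|_{\mathfrak{H}}\|DF\|_{\mathfrak{H}}\|D^{2}L^{-1}F\|_{op}$ and $\bar{w}^{-1}\|D^{2}L^{-1}F\|_{op}$. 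Using (\ref{NP1}), (\ref{NP2}) and the random contraction inequality (\ref{rci}) I can replace $DL^{-1}F$ by $DF$ and $D^{2}L^{-1}F$ by $D^{2}F$ in the appropriate $L^{p}$-norms. Then Hölder's inequality, applied with the triple exponents dictated by $\tfrac{1}{p}+\tfrac{2}{r}+\tfrac{3}{s}=1$ (so that $\tfrac{1}{p'}=\tfrac{2}{r}+\tfrac{3}{s}$ for the quadratic-in-$\bar{w}^{-1}$ terms and $\tfrac{1}{p'}\geq\tfrac{1}{r}+\tfrac{1}{s}$ for the linear one), yields $\|\bar{u}\|_{1,p'}<\infty$, and Meyer's inequality (\ref{Meyer}) then gives
\[
\|\delta(\bar{u})\|_{p'}\leq C_{p}\,\|\bar{w}^{-1}\|_{r}\,\|F\|_{2,s}\bigl(\|\bar{w}^{-1}\|_{r}\|DF\|_{s}^{2}+1\bigr)=K_{0}.
\]

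With this master estimate in hand, the density formula (\ref{Fmla3}) is an immediate consequence of the density formula (\ref{Fmla2}) with $h=-DL^{-1}F$, since $\langle DF,h\rangle_{\mathfrak{H}}=\bar{w}$ is nonzero a.s.\ by the assumption $E[|\bar{w}|^{-r}]<\infty$. The boundedness (\ref{fBd2}) follows from $E[\delta(\bar{u})]=0$ and Chebyshev:
\[
|f_{F}(x)|=\bigl|E[(\mathbf{1}_{\{F>x\}}-\tfrac12)\delta(\bar{u})]\bigr|\leq P(|F|>|x|)^{1/p}\|\delta(\bar{u})\|_{p'},
\]
and the Hölder continuity (\ref{fHld2}) follows by applying $|f_{F}(x)-f_{F}(y)|\leq P(x<F\leq y)^{1/p}\|\delta(\bar{u})\|_{p'}$ together with the boundedness (\ref{fBd2}) to estimate $P(x<F\leq y)\leq K_{0}|x-y|$, exactly as in the derivation of (\ref{fHld}).

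The main obstacle is purely bookkeeping: tracking which powers of $\|\bar{w}^{-1}\|_{r}$, $\|DF\|_{s}$ and $\|D^{2}F\|_{s}$ appear in each of the three schematic terms in $D\bar{u}$, and checking that the Hölder exponents selected are admissible under the constraint $\tfrac{1}{p}+\tfrac{2}{r}+\tfrac{3}{s}=1$. The exponent $2$ in front of $1/r$ reflects the factor $\bar{w}^{-2}$ coming from differentiating $\bar{w}^{-1}$, and the exponent $3$ in front of $1/s$ reflects the worst-case product $\|DF\|\cdot\|DF\|\cdot\|D^{2}F\|$; these are exactly the right bookkeeping numbers, so the hypothesis is sharp for the argument to go through.
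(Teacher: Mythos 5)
Your proposal follows essentially the same route as the paper: compute $D\bar{u}$ in terms of $D\bar{w}$, use the Nourdin--Peccati--Reinert bounds (\ref{NP1})--(\ref{rci}) to pass from $DL^{-1}F$, $D^{2}L^{-1}F$ back to $DF$, $D^{2}F$, apply Meyer's inequality to get $\|\delta(\bar{u})\|_{p'}\le K_0$, invoke the density formula (\ref{Fmla2}) with $h=-DL^{-1}F$, and finish by Chebyshev exactly as in Theorem \ref{density}. Your bookkeeping of the Hölder exponents (the factor $2/r$ from $\bar{w}^{-2}$ and $3/s$ from the three Malliavin-derivative factors) matches the paper's.

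One small slip: in your boundedness step you center by subtracting $\tfrac12$, writing $f_F(x)=E[(\mathbf{1}_{\{F>x\}}-\tfrac12)\delta(\bar u)]$ and then claiming the Hölder bound $P(|F|>|x|)^{1/p}\|\delta(\bar u)\|_{p'}$. But $|\mathbf{1}_{\{F>x\}}-\tfrac12|\equiv\tfrac12$ has no small support, so this choice of centering only yields $\tfrac12\|\delta(\bar u)\|_{p'}$ without the $|x|^{-2}$ decay. The correct use of $E[\delta(\bar u)]=0$ (as in the paper) is to write $f_F(x)=E[\mathbf{1}_{\{F>x\}}\delta(\bar u)]=-E[\mathbf{1}_{\{F\le x\}}\delta(\bar u)]$ and pick whichever indicator has $P\le P(|F|>|x|)$ depending on the sign of $x$. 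This is a one-line fix and does not affect the rest of the argument.
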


\begin{proof}
Note that $D\bar{w}=-D^{2}F\otimes _{1}DL^{-1}F-DF\otimes _{1}D^{2}L^{-1}F$.
Then, applying (\ref{NP1}) and (\ref{NP2}) we obtain%
\begin{equation}
\left\Vert D\bar{w}\right\Vert _{\frac{s}{2}}\leq (1+2^{-s})\left\Vert
\left\Vert D^{2}F\right\Vert _{op}\right\Vert _{s}\left\Vert DF\right\Vert
_{s}.  \label{Dw-}
\end{equation}%
From $\bar{u}=-\bar{w}^{-1}DL^{-1}F$ we get $D\bar{u}=-\bar{w}%
^{-1}D^{2}L^{-1}F+w^{-2}D\bar{w}\otimes DL^{-1}F$. Then, using (\ref{NP1})--(%
\ref{rci}) we have for $t>0$ satisfying $\frac{1}{p^{\prime }}=\frac{1}{r}+%
\frac{1}{t}$,
\begin{equation*}
\left\Vert \bar{u}\right\Vert _{p^{\prime }}\leq \left\Vert \bar{w}%
^{-1}\left\Vert DL^{-1}F\right\Vert _{\mathfrak{H}}\right\Vert _{p^{\prime
}}\leq \left\Vert \bar{w}^{-1}\right\Vert _{r}\left\Vert DF\right\Vert _{t},
\end{equation*}%
and
\begin{eqnarray*}
\left\Vert D\bar{u}\right\Vert _{p^{\prime }} &\leq &\left\Vert \bar{w}%
^{-1}\left\Vert D^{2}L^{-1}F\right\Vert _{\mathfrak{H}\otimes \mathfrak{H}%
}\right\Vert _{p^{\prime }}+\left\Vert \bar{w}^{-2}\left\Vert D\bar{w}%
\right\Vert _{\mathfrak{H}}\left\Vert DL^{-1}F\right\Vert _{\mathfrak{H}%
}\right\Vert _{p^{\prime }} \\
&\leq &\left\Vert \bar{w}^{-1}\right\Vert _{r}\left\Vert D^{2}F\right\Vert
_{t}+\left\Vert \bar{w}^{-2}\right\Vert _{r}\left\Vert D\bar{w}\right\Vert _{%
\frac{s}{2}}\left\Vert DF\right\Vert _{s},.
\end{eqnarray*}%
Noticing that $\left\Vert D^{2}F\right\Vert _{t}\leq \left\Vert
D^{2}F\right\Vert _{s}$ because $t<s$, and applying Meyer's inequality (\ref%
{Meyer}) with (\ref{Dw-}) and (\ref{rci}) we obtain
\begin{equation}
\left\Vert \delta \left( \bar{u}\right) \right\Vert _{p^{\prime }}\leq
C_{p}\left\Vert \bar{u}\right\Vert _{1,p^{\prime }}\leq K_{0}.
\label{deltaBd2}
\end{equation}%
Then $u\in \mathbb{D}^{1,p^{\prime }}$ and the density formula (\ref{Fmla3})
holds. As in the proof of Theorem \ref{density}, (\ref{fBd2}) and (\ref%
{fHld2}) follow from (\ref{deltaBd2}) and
\begin{equation*}
\left\vert E\left[ \mathbf{1}_{\left\{ F>x\right\} }\delta \left( \bar{u}%
\right) \right] \right\vert \leq P\left( \left\vert F\right\vert >\left\vert
x\right\vert \right) ^{\frac{1}{p}}\left\Vert \delta \left( \bar{u}\right)
\right\Vert _{^{p^{\prime }}}\leq \left( 1\wedge (\left\vert x\right\vert
^{-2}\left\Vert F\right\Vert _{2p}^{2})\right) \left\Vert \delta \left( \bar{%
u}\right) \right\Vert _{^{p^{\prime }}},
\end{equation*}%
\begin{equation*}
\left\vert f_{F}\left( x\right) -f_{F}\left( y\right) \right\vert \leq
\left( E[\mathbf{1}_{\{x<F\leq y\}}]\right) ^{\frac{1}{p}}\left\Vert \delta
\left( u\right) \right\Vert _{p^{\prime }}.
\end{equation*}
\end{proof}

\subsection{Derivatives of the density}

Next we present a formula for the derivatives of the density function, under
additional conditions. A sequence of recursively defined random variables
given by $G_{0}=1$ and $G_{k+1}=\delta (G_{k}u)$ where $u$ is an $\mathfrak{H%
}$-valued process, plays an essential role in the formula. The following
technical lemma gives an explicit formula for the sequence $G_{k}$, relating
it to Hermite polynomials. To simplify the notation, for an $\mathfrak{H}$%
--valued random variable $u$, we denote%
\begin{equation}
\delta _{u}=\delta (u),~D_{u}G=\left\langle DG,u\right\rangle _{\mathfrak{H}%
},~D_{u}^{k}G=\left\langle D\left( D_{u}^{k-1}G\right) ,u\right\rangle _{%
\mathfrak{H}}.  \label{notation}
\end{equation}

Recall $H_{k}(x)$ denotes the $k$th Hermite polynomial. For $\lambda >0$ and
$x\in \mathbb{R}$, we define the generalized $k$th Hermite polynomial as
\begin{equation}
H_{k}\left( \lambda ,x\right) =\lambda ^{\frac{k}{2}}H_{k}(\frac{x}{\sqrt{%
\lambda }}).  \label{H-lamda}
\end{equation}%
From the property $H_{k}^{\prime }(x)=kH_{k-1}(x)$ it follows by induction
that the $k$th Hermite polynomials has the form $H_{k}(x)=\sum_{0\leq i\leq
\lfloor k/2\rfloor }c_{k,i}x^{k-2i}$, where we denote by $\lfloor k/2\rfloor
$ the largest integer less than or equal to $k/2$. Then (\ref{H-lamda})
implies
\begin{equation}
H_{k}(\lambda ,x)=\sum_{0\leq i\leq \lfloor k/2\rfloor
}c_{k,i}x^{k-2i}\lambda ^{i}.  \label{gHermite}
\end{equation}

\begin{lemma}
\label{Gk}Fix an integer $m\geq 1$ and a number $p>m$. Suppose $u\in \mathbb{%
D}^{m,p}(\mathfrak{H})$. We define recursively a sequence $\left\{
G_{k}\right\} _{k=0}^{m}$ by $G_{0}=1$ and $G_{k+1}=\delta (G_{k}u)$. Then,
these variables are well-defined and for $k=1,2,\dots ,m$, $G_{k}\in \mathbb{%
D}^{m-k,\frac{p}{k}}$ and
\begin{equation}
G_{k}=H_{k}(D_{u}\delta _{u},\delta _{u})+T_{k}\text{,}  \label{GkFmla}
\end{equation}%
where we denote by $T_{k}$ the higher order derivative terms which can be
defined recursively as follows: $T_{1}=T_{2}=0$ and for $k\geq 2$,
\begin{equation}
T_{k+1}=\delta _{u}T_{k}-D_{u}T_{k}-\partial _{\lambda }H_{k}(D_{u}\delta
_{u},\delta _{u})D_{u}^{2}\delta _{u}.  \label{tk}
\end{equation}
\end{lemma}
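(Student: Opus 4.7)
The plan is to prove the formula for $G_k$ by induction on $k$, with the base cases $k=1$ and $k=2$ serving as the anchor. For $k=1$: directly $G_1=\delta(u)=\delta_u$, and since $H_1(\lambda,x)=x$, we have $H_1(D_u\delta_u,\delta_u)=\delta_u$, so $T_1=0$. For $k=2$: apply the factoring formula \eqref{factorOut} to obtain
\[
G_2=\delta(\delta_u\,u)=\delta_u\,\delta_u-\langle D\delta_u,u\rangle_{\mathfrak{H}}=\delta_u^{2}-D_u\delta_u,
\]
which matches $H_2(\lambda,x)=x^2-\lambda$, so $T_2=0$ as claimed.

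For the inductive step, assume $G_k=H_k(D_u\delta_u,\delta_u)+T_k$ and apply \eqref{factorOut} again:
\[
G_{k+1}=\delta(G_k u)=G_k\,\delta_u-\langle DG_k,u\rangle_{\mathfrak{H}}=G_k\,\delta_u-D_uG_k.
\]
The key algebraic identity is a recursion for the generalized Hermite polynomials of \eqref{H-lamda}: from $H_{k+1}(x)=xH_k(x)-H_k'(x)$ one obtains, by a straightforward scaling computation,
\[
H_{k+1}(\lambda,x)=x\,H_k(\lambda,x)-\lambda\,\partial_x H_k(\lambda,x).
\]
Write $\lambda=D_u\delta_u$ and $x=\delta_u$. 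Applying the chain rule to $H_k(\lambda,x)$ as a function of these two $\mathfrak{H}$-valued-derivative-composed variables gives
\[
D_u H_k(\lambda,x)=\partial_\lambda H_k(\lambda,x)\,D_u^{2}\delta_u+\partial_x H_k(\lambda,x)\,D_u\delta_u.
\]
Combining the two displays and using $D_u\delta_u=\lambda$ yields
\[
\delta_u H_k(\lambda,x)-D_u H_k(\lambda,x)=H_{k+1}(\lambda,x)-\partial_\lambda H_k(\lambda,x)\,D_u^{2}\delta_u.
\]
Substituting $G_k=H_k(\lambda,x)+T_k$ into $G_{k+1}=G_k\delta_u-D_uG_k$ and collecting terms produces exactly the recursion \eqref{tk} for $T_{k+1}$, closing the induction on the algebraic identity.

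It remains to verify that the iteration makes sense, that is, $G_k\in\mathbb{D}^{m-k,p/k}$ for $k=1,\dots,m$. This is also done by induction, using Meyer's inequality \eqref{Meyer} at each step: given $G_k\in\mathbb{D}^{m-k,p/k}$ and $u\in\mathbb{D}^{m,p}(\mathfrak{H})\subset\mathbb{D}^{m-k,p}(\mathfrak{H})$, Hölder's inequality in Sobolev norms (the exponents satisfy $\tfrac{k+1}{p}=\tfrac{k}{p}+\tfrac{1}{p}$) gives $G_ku\in\mathbb{D}^{m-k,p/(k+1)}(\mathfrak{H})$, and Meyer's inequality then yields $G_{k+1}=\delta(G_ku)\in\mathbb{D}^{m-k-1,p/(k+1)}$, justifying the application of $\delta$ at the next stage. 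The main technical obstacle is carrying out the Sobolev-norm estimates of products cleanly: one has to keep track of how derivatives of $G_k$, which involve $D^jG_k$ for $j\le m-k$, combine with derivatives of $u$ up to order $m-k$ so that the Leibniz expansion of $D^{m-k}(G_ku)$ lies in $L^{p/(k+1)}$, and this is where the condition $p>m$ is used to absorb all the factors.
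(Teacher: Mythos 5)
Your proposal is correct and takes essentially the same route as the paper: induction on $k$ with Meyer's inequality plus the Sobolev Hölder inequality for the regularity, and the identity $G_{k+1}=G_k\delta_u-D_uG_k$ combined with the chain rule for $D_uH_k(D_u\delta_u,\delta_u)$ and the Hermite recurrence to obtain (\ref{tk}). The only cosmetic difference is that you work directly with $H_{k+1}(\lambda,x)=xH_k(\lambda,x)-\lambda\partial_xH_k(\lambda,x)$ whereas the paper uses the equivalent pair $\partial_xH_k(\lambda,x)=kH_{k-1}(\lambda,x)$ and $H_{k+1}(\lambda,x)=xH_k(\lambda,x)-k\lambda H_{k-1}(\lambda,x)$.
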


The following lemma is proved in the Appendix.

\begin{lemma}
\label{rem1} From $(\ref{tk})$ we can deduce that for $k\geq 3$
\begin{equation}
T_{k}=\sum_{(i_{0},\dots ,i_{k-1})\in J_{k}}a_{i_{0},i_{1},\dots
,i_{k-1}}\delta _{u}^{i_{0}}\left( D_{u}\delta _{u}\right) ^{i_{1}}\left(
D_{u}^{2}\delta _{u}\right) ^{i_{2}}\cdots \left( D_{u}^{k-1}\delta
_{u}\right) ^{i_{k-1}},  \label{HODT}
\end{equation}%
where the coefficients $a_{i_{0},i_{1},\dots ,i_{k-1}}$ are real numbers and
$J_{k}$ is the set of multi-indices $(i_{0},i_{1},\dots ,i_{k-1})\in \mathbb{%
N}^{k}$ satisfying the following three conditions
\begin{equation*}
(a)~i_{0}+\sum_{j=1}^{k-1}ji_{j}\leq k-1;~(b)~i_{2}+\cdots +i_{k-1}\geq
1;~(c)~\sum_{j=1}^{k-1}i_{j}\leq \lfloor \frac{k-1}{2}\rfloor .
\end{equation*}%
From $(b)$ we see that every term in $T_{k}$ contains at least one factor of
the form $D_{u}^{j}\delta _{u}$ with some $j\geq 2$. We shall show this type
of factors will converge to zero. For this reason we call these terms high
order terms.
\end{lemma}

\begin{proof}[Proof of Lemma \protect\ref{Gk}]
First, we prove by induction on $k$ that the above sequence $G_{k}$ is
well-defined and $G_{k}\in \mathbb{D}^{m-k,\frac{p}{k}}$. Suppose first that
$k=1$. Then, Meyer's inequality implies that $G_{1}=\delta _{u}\in \mathbb{D}%
^{m-1,p}$. Assume now that for $k\leq m-1$, $G_{k}\in \mathbb{D}^{m-k,\frac{p%
}{k}}$. Then it follows from Meyer's and H\"{o}lder's inequalities (see \cite%
[Proposition 1.5.6]{Nu06}) that
\begin{equation*}
\left\Vert G_{k+1}\right\Vert _{m-k-1,\frac{p}{k+1}}\leq C_{m,p}\left\Vert
G_{k}u\right\Vert _{m-k,\frac{p}{k+1}}\leq C_{m,p}^{\prime }\left\Vert
G_{k}\right\Vert _{m-k,\frac{p}{k}}\left\Vert u\right\Vert _{m-k,p}<\infty .
\end{equation*}%
Let us now show, by induction, the decomposition (\ref{GkFmla}). When $k=1$ (%
\ref{GkFmla}) is true because $G_{1}=\delta _{u}$ and $T_{1}=0$. Assume now (%
\ref{GkFmla}) holds for $k\leq m-1$. Noticing that $\partial
_{x}H_{k}(\lambda ,x)=kH_{k-1}(\lambda ,x)$ (since $H_{k}^{\prime
}(x)=kH_{k-1}(x)$), we get%
\begin{equation*}
D_{u}H_{k}(D_{u}\delta _{u},\delta _{u})=kH_{k-1}(D_{u}\delta _{u},\delta
_{u})D_{u}\delta _{u}+\partial _{\lambda }H_{k}(D_{u}\delta _{u},\delta
_{u})D_{u}^{2}\delta _{u}.
\end{equation*}%
Hence, applying the operator $D_{u}$ to both sides of (\ref{GkFmla}),
\begin{equation*}
D_{u}G_{k}=kH_{k-1}(D_{u}\delta _{u},\delta _{u})D_{u}\delta _{u}+\widetilde{%
T}_{k+1},
\end{equation*}%
where
\begin{equation}
\widetilde{T}_{k+1}=D_{u}T_{k}+\partial _{\lambda }H_{k}(D_{u}\delta
_{u},\delta _{u})D_{u}^{2}\delta _{u}.  \label{Tk}
\end{equation}%
From the definition of $G_{k+1}$ and using (\ref{factorOut}) we obtain
\begin{eqnarray*}
G_{k+1} &=&\delta (uG_{k})=G_{k}\delta _{u}-D_{u}G_{k} \\
&=&\delta _{u}H_{k}(D_{u}\delta _{u},\delta _{u})+\delta
_{u}T_{k}-kH_{k-1}(D_{u}\delta _{u},\delta _{u})D_{u}\delta _{u}-\widetilde{T%
}_{k+1}\text{.}
\end{eqnarray*}%
Note that $H_{k+1}(x)=xH_{k}(x)-kH_{k-1}(x)$ implies $xH_{k}(\lambda
,x)-k\lambda H_{k-1}(\lambda ,x)=H_{k+1}(\lambda ,x)$. Hence,
\begin{equation*}
G_{k+1}=H_{k+1}(D_{u}\delta _{u},\delta _{u})+\delta _{u}T_{k}-\widetilde{T}%
_{k+1}\text{.}
\end{equation*}%
The term $T_{k+1}=\delta _{u}T_{k}-\widetilde{T}_{k+1}$ has the form given
in (\ref{tk}). This completes the proof.
\end{proof}

Now we are ready to present some formulae for the derivatives of the density
function under certain sufficient conditions on the random variable $F$. For
a random variable $F$ in $\mathbb{D}^{1,2}$ and for any $\beta \geq 1$ we
are going to use the notation
\begin{equation}
M_{\beta }(F)=\left( E[\left\Vert DF\right\Vert _{\mathfrak{H}}^{-\beta
}]\right) ^{\frac{1}{\beta }}.  \label{Mbeta}
\end{equation}

\begin{proposition}
\label{kthD}Fix an integer $m\geq 1$. Let $F$ be a random variable in $%
\mathbb{D}^{m+2,\infty }$ such that $M_{\beta }(F)<\infty $ for some $\beta
>3m+3(\lfloor \frac{m}{2}\rfloor \vee 1)$. Denote $w=\left\Vert
DF\right\Vert _{\mathfrak{H}}^{2}$ and $u=\frac{DF}{w}$. Then, $u\in \mathbb{%
D}^{m+1,p}(\mathfrak{H})$ for some $p>1$, and the random variables $\left\{
G_{k}\right\} _{k=0}^{m+1}$ introduced in Lemma $\ref{Gk}$ are well-defined.
Under these assumptions, $F$ has a density $f$ of class $C^{m}$ with
derivatives given by
\begin{equation}
f_{F}^{(k)}(x)=(-1)^{k}E[\mathbf{1}_{\{F>x\}}G_{k+1}]  \label{kthFml}
\end{equation}%
for $k=1,\dots ,m$.
\end{proposition}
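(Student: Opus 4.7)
The strategy is to verify that $u$ has enough Malliavin regularity so that Lemma \ref{Gk} applies at level $m+1$, and then derive (\ref{kthFml}) by induction using Malliavin integration by parts. For the first stage, I would check that $u = w^{-1}DF \in \mathbb{D}^{m+1,p}(\mathfrak{H})$ for some $p > m+1$. Expanding each $D^{j}u$ for $j \le m+1$ by the Leibniz rule produces a finite sum of tensor products of iterated derivatives $D^{i}F$ (which lie in every $L^{q}$ because $F \in \mathbb{D}^{m+2,\infty}$) with inverse powers $w^{-\ell}$. The random contraction inequality (\ref{rci}) together with the elementary bound $\|Dw\|_{\mathfrak{H}} \le 2\|D^{2}F\|_{\mathfrak{H}^{\otimes 2}} w^{1/2}$ lets one cancel a factor $w^{1/2}$ against each occurrence of $Dw$, so the effective negative power of $w$ is roughly half of what the naive chain rule suggests. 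A careful accounting shows that the threshold $\beta > 3m + 3(\lfloor m/2 \rfloor \vee 1)$ is exactly what is needed for H\"older's inequality to deliver a finite $\|u\|_{m+1,p}$ with $p > m+1$. Lemma \ref{Gk} (applied with $m$ replaced by $m+1$) then produces well-defined $G_{k} \in \mathbb{D}^{m+1-k,p/k}$ for $k=1,\dots,m+1$; in particular $G_{m+1} \in L^{p/(m+1)} \subset L^{1}$, so the right-hand side of (\ref{kthFml}) is meaningful.

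For the induction, the base case $k=0$ is Theorem \ref{density}, since $G_{1}=\delta(u)$; note that $x \mapsto E[\mathbf{1}_{\{F>x\}}G_{1}]$ is continuous because $F$ has a density (so $P(F=x)=0$) and $G_{1} \in L^{1}$. Assume $f_{F}^{(k-1)}(x) = (-1)^{k-1}E[\mathbf{1}_{\{F>x\}}G_{k}]$ for some $1 \le k \le m$. Set $g(x) := (-1)^{k}E[\mathbf{1}_{\{F>x\}}G_{k+1}]$; the same dominated convergence argument shows $g$ is continuous. To identify $g$ as the classical derivative of $f_{F}^{(k-1)}$, it suffices to verify the distributional identity $\int \psi(x)g(x)\,dx = -\int \psi'(x)f_{F}^{(k-1)}(x)\,dx$ for every $\psi \in C_{c}^{\infty}(\mathbb{R})$. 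Writing $\Psi(y) = \int_{-\infty}^{y}\psi(x)\,dx$, Fubini converts both sides to moments of $F$: the right-hand side equals $(-1)^{k}E[G_{k}\psi(F)]$, and the left-hand side equals $(-1)^{k}E[G_{k+1}\Psi(F)]$. Using $\langle u,DF\rangle_{\mathfrak{H}} = 1$, the duality (\ref{duality}), and the definition $G_{k+1} = \delta(G_{k}u)$,
\begin{equation*}
E[G_{k+1}\Psi(F)] = E[\langle G_{k}u, D\Psi(F)\rangle_{\mathfrak{H}}] = E[G_{k}\psi(F)\langle u,DF\rangle_{\mathfrak{H}}] = E[G_{k}\psi(F)],
\end{equation*}
where $G_{k}u \in \mathrm{Dom}\,\delta$ follows from $G_{k} \in \mathbb{D}^{m+1-k,p/k}$, $u \in \mathbb{D}^{m+1,p}$, H\"older in Malliavin spaces, and Meyer's inequality. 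Combined with continuity, this gives $f_{F}^{(k-1)} \in C^{1}$ with $(f_{F}^{(k-1)})' = g$, closing the induction and showing $f_{F} \in C^{m}$.

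The hard part is the first stage. The naive estimate $\|D^{j}u\|_{\mathfrak{H}^{\otimes j}} = O(w^{-j-1})$ would force $\beta$ of order $2(m+2)(m+1)$, quadratically worse than the stated linear threshold $3m + 3(\lfloor m/2 \rfloor \vee 1)$. Obtaining the correct linear exponent requires systematically tracking how every occurrence of $Dw$ (or more generally, derivatives of $w$) is contracted against a $DF$, producing a $\sqrt{w}$ that eats one unit of the negative power. The factor $\lfloor m/2 \rfloor \vee 1$ in the threshold reflects the high-order correction terms $T_{k}$ identified in Remark \ref{rem1}, whose combinatorial structure (the bound $\sum_{j}i_{j} \le \lfloor (k-1)/2 \rfloor$) governs how the worst surviving power of $w^{-1}$ grows. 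Once this bookkeeping is carried out, the rest of the proof reduces to the clean Malliavin IBP argument above.
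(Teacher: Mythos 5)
Your Stage~2 (the induction via Malliavin integration by parts, using $\langle u,DF\rangle_{\mathfrak H}=1$, duality, and $G_{k+1}=\delta(G_k u)$) is correct and is essentially the argument the paper outsources to Exercise~2.1.4 of Nualart's book; writing it out is a small but welcome improvement in self-containedness. The gap is entirely in Stage~1.

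You claim that the hypothesis $\beta>3m+3(\lfloor m/2\rfloor\vee 1)$ suffices to establish $u\in\mathbb D^{m+1,p}(\mathfrak H)$ for some $p>m+1$, which is what is needed for Lemma~\ref{Gk} (applied at level $m+1$) to give $G_{m+1}\in L^{p/(m+1)}\subset L^1$. This is false for $m\ge 2$. Your $\sqrt w$ cancellation does work: every occurrence of $Dw=2D^2F\otimes_1 DF$, and more generally every contraction $D^\alpha F\otimes_1 DF$ with one leg equal to $DF$, soaks up one factor $w^{1/2}$. Tracking this carefully (for instance through the explicit expansion of $D^iw^{-1}$ as a sum of $w^{-(l+1)}\bigotimes_{j}(D^{\alpha_j}F\otimes_1 D^{\beta_j}F)$ with $\alpha_j+\beta_j\ge 3$, $\sum(\alpha_j+\beta_j)=i+2l$) gives $\|D^{j}u\|_{\mathfrak H^{\otimes(j+1)}}=O(w^{-(j+1)/2})$. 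That halves the naive exponent, but it is still a single negative power of $w$ sitting inside a single norm: to get $E[\|D^{m+1}u\|^p]<\infty$ with $p>m+1$ you need $M_{(m+2)p}<\infty$, i.e. $\beta>(m+2)(m+1)$, which is quadratic in $m$. Already at $m=2$ this demands $\beta>12$, strictly more than the stated $\beta>9$, and the gap widens thereafter. So the route ``prove $u\in\mathbb D^{m+1,p}$ with $p>m+1$, then invoke Lemma~\ref{Gk} as a black box'' cannot recover the linear threshold.

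The paper never proves $u\in\mathbb D^{m+1,p}$ for $p>m+1$; the statement records only $u\in\mathbb D^{m+1,p}$ for \emph{some} $p>1$, which does not feed into the quantitative conclusion of Lemma~\ref{Gk}. Instead it works directly with the explicit decomposition $G_k=H_k(D_u\delta_u,\delta_u)+T_k$ from Lemma~\ref{Gk}. The point is that the singular factors are not concentrated in one tensor norm but spread across a \emph{product} of scalar quantities $\delta_u,\,D_u\delta_u,\dots,D_u^{k-1}\delta_u$, where $D_u=w^{-1}D_{DF}$ is a scalar directional derivative. Each $D_u^j\delta_u$ has effective singularity of order roughly $w^{-(j+2)/2}$, and H\"older can then distribute the integrability requirement across the factors. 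It is the combinatorial constraints on the exponents in $T_k$ (properties (a) and (c) of Remark~\ref{rem1}, giving $i_0+\sum_j ji_j\le k-1$ and $\sum_j i_j\le\lfloor (k-1)/2\rfloor$) that produce exactly the linear bound $\beta>3m+3(\lfloor m/2\rfloor\vee 1)$ via the computation
\begin{equation*}
\frac{3i_0}{\beta}+\sum_{j\ge 1}\frac{(3j+3)i_j}{\beta}\leq\frac{3(k-1)+3\lfloor\tfrac{k-1}{2}\rfloor}{\beta}.
\end{equation*}
Your closing paragraph gestures at this (you correctly say the $\lfloor m/2\rfloor\vee 1$ comes from the $T_k$ combinatorics), but those $T_k$ terms never appear in the argument you actually propose: Stage~1 estimates $\|u\|_{m+1,p}$ alone and Lemma~\ref{Gk} is then applied as a black box, so the decomposition that makes the bookkeeping linear is never used. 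To repair the proof you would need to replace Stage~1 by a direct verification that each $G_k$, written via $(\ref{GkFmla})$ and $(\ref{HODT})$, lies in $L^1$, using bounds on $\|\delta_u\|_{r_0}$ and $\|D_u^j\delta_u\|_{r_j}$ of the type given in Lemma~\ref{Dudelta-u}; this is what the paper does, and it is what produces the linear threshold.
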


\begin{proof}
It is enough to show that $\left\{ G_{k}\right\} _{k=0}^{m+1}$ are
well-defined, since it follows from \cite[Exercise 2.1.4]{Nu06} that the $k$%
th derivative of the density of $F$ is given by (\ref{kthFml}). To do this
we will show that $G_{k}$ defined in (\ref{GkFmla}) are in $L^{1}(\Omega )$
for all $k=1,\dots ,m+1$. From (\ref{GkFmla}) we can write
\begin{equation*}
E[\left\vert G_{k}\right\vert ]\leq E[\left\vert H_{k}(D_{u}\delta
_{u},\delta _{u})\right\vert ]+E[\left\vert T_{k}\right\vert ].
\end{equation*}%
Recall the explicit expression of $H_{k}(\lambda ,x)$ in (\ref{gHermite}).
Since $\beta >3(m+1)$, we can choose $r_{0}<\frac{\beta }{3},r_{1}<\frac{%
\beta }{6}$ such that
\begin{equation*}
1\geq \frac{k-2i}{r_{0}}+\frac{i}{r_{1}}>\frac{3\left( k-2i\right) }{\beta }+%
\frac{6i}{\beta }=\frac{3k}{\beta },
\end{equation*}%
for any $0\leq i\leq \lfloor k/2\rfloor $ and $1\le k\leq m+1$. Then,
applying H\"{o}lder's inequality with (\ref{gHermite}), (\ref{Dudelta-u0})
and (\ref{Dudelta-uka}) we have
\begin{equation*}
E[\left\vert H_{k}(D_{u}\delta _{u},\delta _{u})\right\vert ]\leq
C_{k}\sum_{0\leq i\leq \lfloor k/2\rfloor }\left\Vert \delta _{u}\right\Vert
_{r_{0}}^{k-2i}\left\Vert D_{u}\delta _{u}\right\Vert _{r_{1}}^{i}<\infty .
\end{equation*}

To prove that $E\left[ \left\vert T_{k}\right\vert \right] <\infty $,
applying H\"{o}lder's inequality to the expression (\ref{HODT}) and choosing
$r_{j}>0$ for $0\leq j\leq k-1$ such that
\begin{equation*}
1\geq \frac{i_{0}}{r_{0}}+\sum_{j=1}^{k-1}\frac{i_{j}}{r_{j}}>\frac{3i_{0}}{%
\beta }+\sum_{j=1}^{k-1}\frac{(3j+3)i_{j}}{\beta },
\end{equation*}%
we obtain that, (assuming $k\geq 3$, otherwise $T_{k}=0$)
\begin{equation*}
E\left[ \left\vert T_{k}\right\vert \right] \leq C\sum_{(i_{0},\dots
,i_{k})\in J_{k}}\left\Vert \delta _{u}\right\Vert
_{r_{0}}^{i_{0}}\prod_{j=1}^{k-1}\left\Vert D_{u}^{j}\delta _{u}\right\Vert
_{r_{j}}^{i_{j}}.
\end{equation*}%
Due to (\ref{Dudelta-u0}) and (\ref{Dudelta-uka}), this expression is
finite, provided $r_{j}<\frac{\beta }{3j+3}$ for $0\leq j\leq k-1$. We can
choose $(r_{j},0\leq j\leq k-1)$ satisfying the above conditions because $%
\beta >3(k-1)+3\lfloor \frac{k-1}{2}\rfloor $ for all $1\le k \le m+1$, and
from properties (a) and (c) of $J_{k}$ in Lemma \ref{rem1} we have
\begin{equation*}
\frac{3i_{0}}{\beta }+\sum_{j=1}^{k-1}\frac{(3j+3)i_{j}}{\beta }\leq \frac{%
3(k-1)+3\lfloor \frac{k-1}{2}\rfloor }{\beta }.
\end{equation*}%
This completes the proof.
\end{proof}

\begin{example}
Consider a random variable in the first Wiener chaos $N=I_{1}(h)$, where $%
\in \mathfrak{H}$ with $\Vert h\Vert _{\mathfrak{H}}=\sigma $. Then $N$ has
the normal distribution $N\sim N(0,\sigma ^{2})$ with density denoted by $%
\phi (x)$. Clearly $\left\Vert DN\right\Vert _{\mathfrak{H}}=\sigma $, $u=%
\frac{h}{\sigma ^{2}}$, $\delta _{u}=\frac{N}{\sigma ^{2}}$ and $D_{u}\delta
_{u}=\frac{h}{\sigma ^{2}}$. Then $G_{k}=H_{k}(\frac{1}{\sigma ^{2}},\frac{N%
}{\sigma ^{2}})$ and from (\ref{kthFml}) we obtain the formula
\begin{equation}
\phi ^{(k)}(x)=(-1)^{k}E\left[ \mathbf{1}_{\{N>x\}}H_{k+1}\left( \frac{1}{%
\sigma ^{2}},\frac{N}{\sigma ^{2}}\right) \right] ,  \label{kthN}
\end{equation}%
which can also be obtained by analytic arguments.
\end{example}

\begin{remark}
Let $g$ be a function differentiable of order $m$, and denote $%
u_{j}(g,x)=\sup_{|t|\leq x}|g^{(j)}(t)-g^{(j)}(0)|$. Let $F$ be a random
variable with $E(F)=0$, $E(F^{2})=1$ and $E\left\{
|F|^{m+1}u_{m}(g,F)\right\} <\infty $. It is proved in {\rm \cite{barbour}} that
the following expansion holds:
\begin{equation*}
E(Fg(F))-E(g^{\prime }(F))=\sum_{j=2}^{m}\frac{\gamma _{j+1}}{j!}%
Eg^{(j)}(F)+R\,,
\end{equation*}%
where $\gamma _{j}$ is the $j$-th cumulant of $X$ and $|R|\leq CE\left\{
|F|^{m+1}u_{m}(g,F)\right\} $ for some constant $C>0$. For any function $h$,
let $f$ be the solution of the Stein's equation $(\ref{Stein's equ})$ given by $(\ref{solu})$. Then
\begin{eqnarray*}
E[h(F)]-E[h(N)] &=&E[f^{\prime }(F)]-E[Ff(F)] \\
&=&-\sum_{j=2}^{m}\frac{\gamma _{j+1}}{j!}E[f^{(j)}(F)]-R\,.
\end{eqnarray*}%
This is the so-called Edgeworth expansion (see also \cite{rinott} and
references therein). Equation $(\ref{kthFml})$ can also be used to compute $%
E[f^{(j)}(F)]$. We have easily
\begin{equation}
E[h(F)]-E[h(N)]=-\sum_{j=2}^{m}\frac{\gamma _{j+1}}{j!}E\left[ f(F)G_{k}%
\right] -R\,,
\end{equation}%
where $G_{k}$ is given in Lemma $\ref{Gk}$. Thus, it is possible to use
Malliavin calculus to obtain the full Edgeworth expansion without assuming
the differentiability of $f$. However, we shall not pursue this aspect in
the present work.
\end{remark}

\begin{remark}
\label{rem_Privault} The recursive algorithms used in Lemma $\ref{Gk}$ have
some similarities with the recursive formula developed by Privault in {\rm \cite
{Pr13}} to compute $E(F[\delta (u)]^{n})$.
\end{remark}

\section{Random variables in the $q$th Wiener chaos\label{ChaosConv}}

In this section we establish our main results on uniform estimates and
uniform convergence of densities and their derivatives. We shall deal first
with the convergence of densities and later we consider their derivatives.

\subsection{Uniform estimates of densities}

Let $F=I_{q}(f)$ for some $f\in \mathfrak{H}^{\odot q}$ and $q\ge 2$. To
simplify the notation, along this section we denote%
\begin{equation*}
w=\left\Vert DF\right\Vert _{\mathfrak{H}}^{2},~u=w^{-1}DF.
\end{equation*}%
Note that $LF=-qF$ and using (\ref{factorOut}) and (\ref{DeltaDL}) we can
write
\begin{equation}
\delta _{u}=\delta (u)=qFw^{-1}-\left\langle Dw^{-1},DF\right\rangle _{%
\mathfrak{H}}.  \label{delta-u}
\end{equation}

\begin{theorem}
\label{qRateThm}Let $F=I_{q}(f)$, $q\geq 2$, for some $f\in \mathfrak{H}%
^{\odot q}$ be a random variable in the $q$th Wiener chaos with $%
E[F^{2}]=\sigma ^{2}$. Assume that $M_{6}(F)<\infty $, where $M_{6}(F)$ is
defined in $(\ref{Mbeta})$. Let $\phi (x)$ be the density of $N\sim
N(0,\sigma ^{2})$. Then $F$ has a density $f_{F}(x)$ given by $(\ref{Fmla1})$%
. Furthermore,
\begin{equation}
\sup_{x\in \mathbb{R}}\left\vert f_{F}(x)-\phi (x)\right\vert \leq C\sqrt{%
E[F^{4}]-3\sigma ^{4}},  \label{qRate4M}
\end{equation}%
where the constant $C$ has the form $C=C_{q}\left( \sigma
^{-1}M_{6}(F)^{2}+M_{6}(F)^{3}+\sigma ^{-3}\right) $ and $C_{q}$ depends
only on $q$.
\end{theorem}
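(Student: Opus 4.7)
The plan is to derive an integral representation for $f_F(x) - \phi(x)$ and control it term by term by combining the density formula of Theorem \ref{density} with Stein's method (Lemma \ref{MS-ctrl}), Malliavin-calculus identities for elements of a fixed chaos, hypercontractivity, and the equivalence (\ref{equi}). First I would verify that Theorem \ref{density} applies with, e.g., $p=r=s=3$: the hypothesis $M_6(F)<\infty$ gives $\|w^{-1}\|_3<\infty$, and all moments of $F$, $DF$, $D^{2}F$ are finite by hypercontractivity. This yields $f_F(x)=E[\mathbf{1}_{\{F>x\}}\delta(u)]$. Using (\ref{delta-u}) and splitting $qFw^{-1}=\sigma^{-2}F+(qw^{-1}-\sigma^{-2})F$, together with the identity $\phi(x)=\sigma^{-2}E[\mathbf{1}_{\{N>x\}}N]$, I obtain the decomposition
\begin{equation*}
f_F(x)-\phi(x)=A_1(x)+A_2(x)+A_3(x),
\end{equation*}
where $A_1=\sigma^{-2}\bigl(E[\mathbf{1}_{\{F>x\}}F]-E[\mathbf{1}_{\{N>x\}}N]\bigr)$, $A_2=E[\mathbf{1}_{\{F>x\}}(qw^{-1}-\sigma^{-2})F]$, and $A_3=-E[\mathbf{1}_{\{F>x\}}\langle Dw^{-1},DF\rangle_{\mathfrak{H}}]$.

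For $A_1$ I apply Lemma \ref{MS-ctrl} to $h(y)=y\mathbf{1}_{\{y>x\}}$ (so $a=1$, $b=0$, $k=1$), noting that $-DL^{-1}F=q^{-1}DF$ and $\langle DF,-DL^{-1}F\rangle_{\mathfrak{H}}=q^{-1}w$; this yields $|A_1|\le C_q\sigma^{-3}\|q\sigma^2-w\|_2$ uniformly in $x$. For $A_2$ I use Cauchy--Schwarz, $qw^{-1}-\sigma^{-2}=w^{-1}\sigma^{-2}(q\sigma^{2}-w)$, and a H\"older split $\tfrac12=\tfrac13+\tfrac16$ to get $|A_2|\le\sigma\cdot\sigma^{-2}\|w^{-1}\|_3\|q\sigma^2-w\|_6\le C_q\sigma^{-1}M_6(F)^{2}\|q\sigma^2-w\|_2$, where the last step uses the hypercontractivity bound (\ref{HyperMeyer}) applied to $w-E[w]$, which lies in a finite sum of Wiener chaoses of orders at most $2q-2$.

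The delicate piece is $A_3$. Using $Dw^{-1}=-w^{-2}Dw$ with $Dw=2(D^{2}F\otimes_{1}DF)$, the Cauchy--Schwarz inequality in $\mathfrak{H}$ gives the pointwise bound $|\langle Dw^{-1},DF\rangle_{\mathfrak{H}}|\le w^{-2}\|Dw\|_{\mathfrak{H}}\|DF\|_{\mathfrak{H}}=w^{-3/2}\|Dw\|_{\mathfrak{H}}$. Cauchy--Schwarz in $\Omega$ then yields
\begin{equation*}
|A_3|\le M_6(F)^{3}\bigl(E[\|Dw\|_{\mathfrak{H}}^{2}]\bigr)^{1/2}.
\end{equation*}
The key observation is that $w-E[w]$ belongs to the sum of chaoses $\mathcal{H}_{2},\dots,\mathcal{H}_{2q-2}$, so
\begin{equation*}
E[\|Dw\|_{\mathfrak{H}}^{2}]=\sum_{k=2}^{2q-2}kE[(J_{k}w)^{2}]\le(2q-2)\,\mathrm{Var}(w).
\end{equation*}

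Finally, applying the left-hand estimate in (\ref{equi}) gives $\|q\sigma^{2}-w\|_2=\sqrt{\mathrm{Var}(w)}\le C_q\sqrt{E[F^{4}]-3\sigma^{4}}$, and summing the three bounds produces
\begin{equation*}
\sup_{x\in\mathbb{R}}|f_F(x)-\phi(x)|\le C_q\bigl(\sigma^{-3}+\sigma^{-1}M_6(F)^{2}+M_6(F)^{3}\bigr)\sqrt{E[F^{4}]-3\sigma^{4}},
\end{equation*}
which is (\ref{qRate4M}). The main obstacle is Step involving $A_3$: one must express the gradient of $w^{-1}$ explicitly through $D^{2}F$ and then recognize that $E[\|Dw\|_{\mathfrak{H}}^{2}]$ is controlled by $\mathrm{Var}(w)$ because $w$ sits in finitely many chaoses. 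Once this observation is in place, the remaining estimates are routine consequences of H\"older, hypercontractivity, and Lemma \ref{MS-ctrl}.
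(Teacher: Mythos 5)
Your proof is correct, and the overall strategy (density formula $\to$ decomposition $\to$ Stein bound for the linear piece $\to$ H\"older plus hypercontractivity for the remainder $\to$ conversion via (\ref{equi})) is the same as the paper's. The one genuinely different ingredient is your treatment of the term $A_3 = -E[\mathbf{1}_{\{F>x\}}\langle Dw^{-1},DF\rangle_{\mathfrak{H}}]$, and it is worth highlighting. The paper bounds $\|D^2F\otimes_1 DF\|_2$ by $C_q\|q\sigma^2-w\|_2$ via Lemma 4.3, whose proof expands $D^kF\otimes_1 D^lF$ with the product formula, reads off the coefficients of $\|f\widetilde\otimes_{r+1}f\|^2$, and compares them with those appearing in $\mathrm{Var}(\|DF\|^2)$. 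You instead observe that $w-E[w]$ lives in the finite sum of chaoses $\mathcal{H}_2\oplus\cdots\oplus\mathcal{H}_{2q-2}$, use the exact identity
\begin{equation*}
E\left[\|Dw\|_{\mathfrak{H}}^2\right]=\sum_{k\ge 1}kE\left[(J_k w)^2\right]\le (2q-2)\,\mathrm{Var}(w),
\end{equation*}
and conclude via Cauchy--Schwarz. This is shorter and more conceptual: you never touch the contraction kernels. The price is a loss of generality --- the paper's Lemma 4.3 controls $\|D^kF\otimes_1 D^lF\|_2$ for all $k+l\ge 3$ and is reused later (in the derivative estimates and in Lemma \ref{DDFw}), whereas your argument only directly covers $Dw=2D^2F\otimes_1 DF$, i.e.\ $(k,l)=(2,1)$. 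For Theorem \ref{qRateThm} alone, that is all that is needed, so your shortcut is perfectly adequate.
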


We begin with a lemma giving an estimate for the contraction $D^{k}F\otimes
_{1}D^{l}F$ with $k+l\geq 3$.

\begin{lemma}
Let $F=I_{q}(f)$ be a random variable in the $q$th Wiener chaos with $%
E[F^{2}]=\sigma ^{2}$. Then for any integers $k\geq l\geq 1$ satisfying $%
k+l\geq 3$, there exists a constant $C_{k,l,q}$ depending only on $k,l,q$
such that
\begin{equation}
\left\Vert D^{k}F\otimes _{1}D^{l}F\right\Vert _{2}\leq C_{k,l,q}\left\Vert
q\sigma ^{2}-\left\Vert DF\right\Vert _{\mathfrak{H}}^{2}\right\Vert _{2}.
\label{contrtnBd}
\end{equation}
\end{lemma}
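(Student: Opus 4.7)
The plan is to expand $D^kF$ and $D^lF$ as tensor-valued multiple integrals, apply the multiplication formula (\ref{MltFml}) to their $\otimes_1$ contraction, and reduce the left-hand side to a sum of squared norms of contractions $f \otimes_s f$ with $1 \le s \le q-1$. The final step is to invoke the classical chaos expansion of $\|DF\|_{\mathfrak{H}}^2 - q\sigma^2$, which expresses $\operatorname{Var}(\|DF\|_{\mathfrak{H}}^2)$ as a positive linear combination of exactly these quantities.

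First, I would write $D^j F = \tfrac{q!}{(q-j)!}\, I_{q-j}(f)$, viewing $I_{q-j}(f)$ as an $\mathfrak{H}^{\otimes j}$-valued multiple integral of order $q-j$ (the integration is over the first $q-j$ arguments of $f$, while the remaining $j$ arguments carry the tensor indices). The $\otimes_1$ contraction pairs one tensor index from each factor, so after applying (\ref{MltFml}) variable-by-variable,
\begin{equation*}
D^k F \otimes_1 D^l F = \frac{q!}{(q-k)!}\,\frac{q!}{(q-l)!} \sum_{r=0}^{(q-k)\wedge(q-l)} r!\binom{q-k}{r}\binom{q-l}{r}\, I_{2q-k-l-2r}(h_r),
\end{equation*}
where each kernel $h_r$ is, up to a reordering of arguments, the contraction $f \otimes_{r+1} f$ (the extra contraction coming from the $\otimes_1$ linking the two derivatives).

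Next, taking $L^2(\Omega;\mathfrak{H}^{\otimes(k+l-2)})$ norms and using the orthogonality of Wiener chaoses together with the isometry property, one finds
\begin{equation*}
\|D^k F \otimes_1 D^l F\|_2^2 = \sum_{r} a_{k,l,q,r}\, (2q-k-l-2r)!\, \|\widetilde h_r\|^2_{\mathfrak{H}^{\otimes(2q-2r-2)}},
\end{equation*}
with explicit positive constants $a_{k,l,q,r}$, and $\|\widetilde h_r\|^2 \le C_{k,l,q}\,\|f\widetilde\otimes_{r+1} f\|^2$. The crucial point is that the hypothesis $k+l \ge 3$ together with $k \ge l \ge 1$ forces $k \ge 2$, so $r \le (q-k)\wedge(q-l) \le q-2$, hence the index $s := r+1$ in the sum satisfies $1 \le s \le q-1$. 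In particular, the ``trivial'' term $s=q$, for which $f\otimes_q f = \|f\|_{\mathfrak{H}^{\otimes q}}^2$ would be comparable to $\sigma^2$ rather than vanishing, never appears.

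To close, I would use the Wiener chaos expansion of $\|DF\|_{\mathfrak{H}}^2$, which gives
\begin{equation*}
\|DF\|_{\mathfrak{H}}^2 - q\sigma^2 = q^2 \sum_{s=1}^{q-1}(s-1)!\binom{q-1}{s-1}^2 I_{2q-2s}(f\widetilde\otimes_s f),
\end{equation*}
so by orthogonality $\operatorname{Var}(\|DF\|_{\mathfrak{H}}^2) = \sum_{s=1}^{q-1} c_{q,s} \|f\widetilde\otimes_s f\|^2$ with strictly positive $c_{q,s}$. In particular each $\|f\widetilde\otimes_s f\|^2$ is bounded by $C_q' \operatorname{Var}(\|DF\|_{\mathfrak{H}}^2) = C_q'\|q\sigma^2 - \|DF\|_{\mathfrak{H}}^2\|_2^2$, and combining this with the previous display yields the claimed bound. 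The main obstacle is the combinatorial bookkeeping: tracking the precise kernels $h_r$ (which are not symmetric in their $2q-k-l-2r$ free arguments) and controlling $\|\widetilde h_r\|$ by $\|f\widetilde\otimes_{r+1} f\|$ with constants depending only on $k,l,q$. The boundary cases $k=q$ or $l=q$, in which $D^qF$ is deterministic, require a short separate check, but the same range $1 \le s \le q-1$ applies and the argument goes through unchanged.
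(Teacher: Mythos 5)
Your proposal is correct and follows essentially the same route as the paper's own proof: write $D^{j}F=\frac{q!}{(q-j)!}I_{q-j}(f)$, use the multiplication formula (\ref{MltFml}) to expand $D^{k}F\otimes_{1}D^{l}F$ as a sum of multiple integrals with kernels built from $f\widetilde\otimes_{r+1}f$, apply orthogonality, observe that $k\geq l\geq1$ and $k+l\geq3$ force $k\geq2$ so the range of $s=r+1$ is confined to $\{1,\dots,q-1\}$, and then bound each $\Vert f\widetilde\otimes_{s}f\Vert^{2}$ with $s\leq q-1$ by $\mathrm{Var}(\Vert DF\Vert_{\mathfrak{H}}^{2})$. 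The only presentational difference is that you invoke the chaos expansion of $\Vert DF\Vert_{\mathfrak{H}}^{2}-q\sigma^{2}$ directly, whereas the paper routes through the identity $E[\Vert DF\Vert_{\mathfrak{H}}^{4}]-q^{2}\sigma^{4}=\mathrm{Var}(\Vert DF\Vert_{\mathfrak{H}}^{2})$ (its equations (\ref{w1})--(\ref{w2})); these are the same computation.
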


\begin{proof}
Note that $D^{k}F=q(q-1)\cdots (q-k+1)I_{q-k}(f)$. Applying (\ref{MltFml}),
we get%
\begin{eqnarray*}
D^{k}F\otimes _{1}D^{l}F &=&q^{2}(q-1)^{2}\cdots \cdots
(q-l+1)^{2}(q-l)\cdots (q-k+1) \\
&&\times \sum_{r=0}^{q-k}r!\binom{q-k}{r}\binom{q-l}{r}I_{2q-k-l-2r}(f%
\widetilde{\otimes }_{r+1}f).
\end{eqnarray*}%
Taking into account the orthogonality of multiple integrals of different
orders, we obtain
\begin{eqnarray}
&&E[\left\Vert D^{k}F\otimes _{1}D^{l}F\right\Vert _{\mathfrak{H}^{\otimes
\left( k+l-2\right) }}^{2}]=\frac{\left( q!\right) ^{4}}{\left( q-l\right)
!^{2}\left( q-k\right) !^{2}}  \notag \\
&&~~~~\times \sum_{r=0}^{q-k}r!^{2}\binom{q-k}{r}^{2}\binom{q-l}{r}%
^{2}\left( 2q-k-l-2r\right) !\left\Vert f\widetilde{\otimes }%
_{r+1}f\right\Vert _{\mathfrak{H}^{\otimes 2q-2-2r}}^{2}.
\label{contractionE}
\end{eqnarray}%
Applying (\ref{contractionE}) with $k=l=1$, we obtain
\begin{eqnarray}
&&E[\left\Vert DF\right\Vert _{\mathfrak{H}}^{4}]=E[\left\vert DF\otimes
_{1}DF\right\vert ^{2}]  \label{w1} \\
&=&q^{4}\sum_{r=0}^{q-1}r!^{2}\binom{q-1}{r}^{4}\left( 2q-2-2r\right)
!\left\Vert f\widetilde{\otimes }_{r+1}f\right\Vert _{\mathfrak{H}^{\otimes
2q-2-2r}}^{2}  \notag \\
&=&q^{4}\sum_{r=0}^{q-2}r!^{2}\binom{q-1}{r}^{4}\left( 2q-2-2r\right)
!\left\Vert f\widetilde{\otimes }_{r+1}f\right\Vert _{\mathfrak{H}^{\otimes
2q-2-2r}}^{2}+q^{2}q!^{2}\left\Vert f\right\Vert _{\mathfrak{H}^{\otimes
q}}^{4}.  \notag
\end{eqnarray}%
Taking into account that $\sigma ^{2}=E[F^{2}]=q!\left\Vert f\right\Vert _{%
\mathfrak{H}^{\otimes q}}^{2}$, we obtain that for any $k+l\geq 3$, there
exists a constant $C_{k,l,q}$ such that
\begin{equation*}
E[\left\Vert D^{k}F\otimes _{1}D^{l}F\right\Vert _{\mathfrak{H}^{\otimes
\left( k+l-2\right) }}^{2}]\leq C_{k,l,q}^{2}E[\left\Vert DF\right\Vert _{%
\mathfrak{H}}^{4}-q^{2}\sigma ^{4}].
\end{equation*}%
Meanwhile, it follows from $E[\left\Vert DF\right\Vert _{\mathfrak{H}%
}^{2}]=q\left\Vert f\right\Vert _{\mathfrak{H}^{\otimes q}}^{2}=q\sigma ^{2}$
that
\begin{equation}
E[\left\Vert DF\right\Vert _{\mathfrak{H}}^{4}-q^{2}\sigma
^{4}]=E[\left\Vert DF\right\Vert _{\mathfrak{H}}^{4}-2q\sigma ^{2}\left\Vert
DF\right\Vert _{\mathfrak{H}}^{2}+q^{2}\sigma ^{4}]=E[(\left\Vert
DF\right\Vert _{\mathfrak{H}}^{2}-q\sigma ^{2})^{2}].  \label{w2}
\end{equation}%
Combining (\ref{contractionE}), (\ref{w1}) and (\ref{w2}) we have
\begin{equation*}
E[\left\Vert D^{k}F\otimes _{1}D^{l}F\right\Vert _{\mathfrak{H}^{\otimes
\left( k+l-2\right) }}^{2}]\leq C_{k,l,q}^{2}E[(\left\Vert DF\right\Vert _{%
\mathfrak{H}}^{2}-q\sigma ^{2})^{2}],
\end{equation*}%
which completes the proof.
\end{proof}

\begin{proof}[\textbf{Proof of Theorem} $\protect\ref{qRateThm}$]
It follows from Theorem \ref{density} that $F$ admits a density $f_{F}(x)=E%
\left[ \mathbf{1}_{\{F>x\}}\delta \left( u\right) \right] $. By (\ref{kthN})
with $k=1$ we can write $\phi (x)=\frac{1}{\sigma ^{2}}E[\mathbf{1}%
_{\{N>x\}}N]$. Then, using (\ref{delta-u}), for all $x\in \mathbb{R}$ we
obtain
\begin{eqnarray}
&&f_{F}(x)-\phi (x)=E\left[ \mathbf{1}_{\{F>x\}}\delta \left( u\right) %
\right] -\sigma ^{-2}E[\mathbf{1}_{\{N>x\}}N]  \notag \\
&=&E[\mathbf{1}_{\{F>x\}}(F(\frac{q}{w}-\sigma ^{-2})-\left\langle
Dw^{-1},DF\right\rangle _{\mathfrak{H}})]+\sigma ^{-2}E\left[ F\mathbf{1}%
_{\{F>x\}}-N\mathbf{1}_{\{N>x\}}\right]  \notag \\
&=&A_{1}+A_{2}.  \label{qRate0}
\end{eqnarray}%
For the first term $A_{1}$, H\"{o}lder's inequality implies
\begin{eqnarray*}
\left\vert A_{1}\right\vert &=&\left\vert E[\mathbf{1}_{\{F>x\}}(F(\frac{q}{w%
}-\sigma ^{-2})-\left\langle Dw^{-1},DF\right\rangle _{\mathfrak{H}%
})]\right\vert \\
&\leq &\sigma ^{-2}E\left[\left\vert Fw^{-1}(w-q\sigma ^{2})\right\vert %
\right]+2E[w^{-\frac{3}{2}}\left\Vert D^{2}F\otimes _{1}DF\right\Vert _{%
\mathfrak{H}}] \\
&\leq &\sigma ^{-2}\left\Vert w^{-1}\right\Vert _{3}\left\Vert F\right\Vert
_{3}\left\Vert w-q\sigma ^{2}\right\Vert _{3}+2\left\Vert w^{-\frac{3}{2}%
}\right\Vert _{2}\left\Vert \left\Vert D^{2}F\otimes _{1}DF\right\Vert _{%
\mathfrak{H}}\right\Vert _{2}.
\end{eqnarray*}
Note that (\ref{HyperMeyer}) implies
\begin{equation*}
\left\Vert w-q\sigma ^{2}\right\Vert _{3}\leq C\left\Vert w-q\sigma
^{2}\right\Vert _{2}
\end{equation*}%
and $\left\Vert F\right\Vert _{3}\leq C\left\Vert F\right\Vert _{2}=C\sigma $%
. Combining these estimates with (\ref{contrtnBd}) we obtain%
\begin{equation}
\left\vert A_{1}\right\vert \leq C(\sigma ^{-1}\left\Vert w^{-1}\right\Vert
_{3}+\left\Vert w^{-1}\right\Vert _{3}^{\frac{3}{2}})\left\Vert w-q\sigma
^{2}\right\Vert _{2}.  \label{qRate1}
\end{equation}

For the second term $A_{2}$, applying Lemma \ref{MS-ctrl} to the function $%
h(z)=z\mathbf{1}_{\{z>x\}}$, which satisfies $\left\vert h(z)\right\vert
\leq \left\vert z\right\vert $, we have%
\begin{eqnarray}
\left\vert A_{2}\right\vert &=&\sigma ^{-2}\left\vert E\left[ F\mathbf{1}%
_{\{F>x\}}-N\mathbf{1}_{\{N>x\}}\right] \right\vert  \notag \\
&\leq &C\sigma ^{-3}\left\Vert \sigma ^{2}-\left\langle
DF,-DL^{-1}F\right\rangle _{\mathfrak{H}}\right\Vert _{2}\leq C\sigma
^{-3}\left\Vert q\sigma ^{2}-w\right\Vert _{2}.  \label{qRate2}
\end{eqnarray}%
Combining (\ref{qRate0}) with (\ref{qRate1})--(\ref{qRate2}) we obtain
\begin{equation}
\sup_{x\in \mathbb{R}}\left\vert f_{F}(x)-\phi (x)\right\vert \leq C(\sigma
^{-1}\left\Vert w^{-1}\right\Vert _{3}+\left\Vert w^{-1}\right\Vert _{3}^{%
\frac{3}{2}}+\sigma ^{-3})\left\Vert w-q\sigma ^{2}\right\Vert _{2}.  \notag
\end{equation}%
Then (\ref{qRate4M}) follows from (\ref{equi}). This completes the proof.
\end{proof}

Using the estimates shown in Theorem \ref{qRateThm} we can deduce the
following uniform convergence and convergence in $L^p$ of densities for a
sequence of random variables in a fixed $q$th Wiener chaos.

\begin{corollary}
\label{qConv}Let $\left\{ F_{n}\right\} _{n\in \mathbb{N}}$ be a sequence of
random variables in the $q$th Wiener chaos with $q\geq 2$. Set $\sigma
_{n}^{2}=E[F_{n}^{2}]$ and assume that $\lim_{n\rightarrow \infty }\sigma
_{n}^{2}=\sigma ^{2}$, $0<\delta \le \sigma_n^2 \le K$ for all $n$, $%
\lim_{n\rightarrow \infty }E[F_{n}^{4}]=3\sigma ^{4}$ and
\begin{equation}
M:=\sup_{n}\left( \mathbb{E}[\left\Vert DF_{n}\right\Vert _{\mathfrak{H}%
}^{-6}]\right) ^{1/6}<\infty .  \label{qNegM}
\end{equation}%
Let $\phi (x)$ be the density of the law $N(0,\sigma ^{2})$. Then, each $%
F_{n}$ admits a density $f_{F_{n}}\in C(\mathbb{R})$ and there exists a
constant $C $ depending only on $q,\sigma ,\delta $ and $M$ such that
\begin{equation}
\sup_{x\in \mathbb{R}}\left\vert f_{F_{n}}(x)-\phi (x)\right\vert \leq C
\left( \left\vert E[F_{n}^{4}]-3\sigma _{n}^{4}\right\vert ^{\frac{1}{2}%
}+|\sigma _{n}-\sigma |\right) .  \label{qRateF4}
\end{equation}%
Furthermore, for any $p\geq 1$ and $\alpha \in (\frac{1}{2},p)$,
\begin{equation}
\left\Vert f_{F_{n}}-\phi \right\Vert _{L^{p}(\mathbb{R})}\leq C \left(
\left\vert E[F_{n}^{4}]-3\sigma _{n}^{4}\right\vert ^{\frac{1}{2}%
}+\left\vert \sigma _{n}-\sigma \right\vert \right) ^{\frac{p-\alpha }{p}},
\label{f-Lp}
\end{equation}%
where $C $ is a constant depending on $q,\sigma ,M,p ,\alpha $ and $K$.
\end{corollary}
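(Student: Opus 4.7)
The plan is to derive Corollary \ref{qConv} from Theorem \ref{qRateThm} in two steps: first establishing the uniform bound (\ref{qRateF4}), and then interpolating against an $n$-independent decay estimate to obtain the $L^p$ bound (\ref{f-Lp}).

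For (\ref{qRateF4}), I would first apply Theorem \ref{qRateThm} to each $F_n$ to obtain
$$\sup_{x\in\mathbb{R}} |f_{F_n}(x) - \phi_n(x)| \leq C_q\bigl(\sigma_n^{-1}M_6(F_n)^2 + M_6(F_n)^3 + \sigma_n^{-3}\bigr)\sqrt{E[F_n^4] - 3\sigma_n^4},$$
where $\phi_n$ denotes the density of $N(0,\sigma_n^2)$. The uniform hypotheses $\delta \leq \sigma_n^2 \leq K$ and $M_6(F_n) \leq M$ make the prefactor bounded by a constant depending only on $q,\delta,M$. To replace $\phi_n$ by $\phi$ I use the mean value theorem in the variance parameter: since $\partial_\sigma \phi_\sigma(x) = \sigma^{-2}\phi_\sigma(x)(x^2/\sigma^2 - 1)$ is bounded uniformly in $x$ for $\sigma$ in a compact subinterval of $(0,\infty)$, we have $\sup_{x}|\phi_n(x) - \phi(x)| \leq C_{\delta,K}|\sigma_n - \sigma|$. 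The triangle inequality then delivers (\ref{qRateF4}).

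For (\ref{f-Lp}), the central ingredient is an $n$-independent pointwise decay bound. I would apply the boundedness estimate (\ref{fBd}) of Theorem \ref{density} with $p=r=s=3$ (which satisfies $1/p+1/r+1/s=1$), together with hypercontractivity (\ref{HyperMeyer}) to control $\|F_n\|_{2,3}$ and $\|F_n\|_6$ by $\|F_n\|_2 \leq K^{1/2}$, and the hypothesis $\|w_n^{-1}\|_3 \leq M^2$ (where $w_n = \|DF_n\|_{\mathfrak{H}}^2$). This yields
$$f_{F_n}(x) \leq C\bigl(1 \wedge |x|^{-2}\bigr)$$
with $C$ independent of $n$. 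Since also $\phi(x) \leq Ce^{-x^2/(2K)} \leq C(1\wedge |x|^{-2})$, we obtain $|f_{F_n}(x)-\phi(x)| \leq C(1\wedge |x|^{-2})$ uniformly in $n$ and $x$.

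Then, for $\alpha \in (1/2,p)$, I factor
$$|f_{F_n}(x)-\phi(x)|^p \leq \|f_{F_n}-\phi\|_\infty^{\,p-\alpha}\,|f_{F_n}(x)-\phi(x)|^{\alpha} \leq \|f_{F_n}-\phi\|_\infty^{\,p-\alpha}\,C^\alpha (1\wedge |x|^{-2})^{\alpha}.$$
Since $2\alpha > 1$, the function $(1\wedge |x|^{-2})^\alpha$ is integrable over $\mathbb{R}$. Integrating and taking $p$-th roots gives $\|f_{F_n}-\phi\|_{L^p} \leq C\|f_{F_n}-\phi\|_\infty^{(p-\alpha)/p}$, and combining with (\ref{qRateF4}) yields (\ref{f-Lp}). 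The only delicate point, and the main bookkeeping obstacle, is verifying that every constant in Theorem \ref{qRateThm} and Theorem \ref{density} can be taken uniform in $n$; this reduces to the uniform control $\sigma_n^2 \in [\delta,K]$, $M_6(F_n) \leq M$, and hypercontractivity within the fixed chaos $\mathcal{H}_q$.
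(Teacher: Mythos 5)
Your proposal is correct and follows essentially the same route as the paper's own proof: apply Theorem \ref{qRateThm} and a triangle inequality through $\phi_n$ for the uniform bound, then combine the decay estimate $f_{F_n}(x)\leq C(1\wedge|x|^{-2})$ from (\ref{fBd}) with the interpolation $|f_{F_n}-\phi|^p\leq\|f_{F_n}-\phi\|_\infty^{p-\alpha}|f_{F_n}-\phi|^\alpha$ for the $L^p$ bound. The extra detail you supply (explicit choice $p=r=s=3$ in (\ref{fBd}), hypercontractivity to control $\|F_n\|_{2,3}$, mean value in the variance for $\sup_x|\phi_n-\phi|$) is exactly the bookkeeping the paper leaves implicit.
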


\begin{proof}
Let $\phi _{n}(x)$ be the density of $N(0,\sigma _{n}^{2})$. Then Theorem %
\ref{qRateThm} implies that
\begin{equation*}
\sup_{x\in \mathbb{R}}\left\vert f_{F_{n}}(x)-\phi _{n}(x)\right\vert \leq C
\left\vert E[F_{n}^{4}]-3\sigma _{n}^{4}\right\vert ^{\frac{1}{2}}.
\end{equation*}%
On the other hand, if $N_{n}\sim N(0,\sigma _{n}^{2})$, it is easy to see
that
\begin{equation*}
\sup_{x\in \mathbb{R}}\left\vert \phi _{n}(x)-\phi (x)\right\vert \leq C
\left\vert \sigma _{n}-\sigma \right\vert .
\end{equation*}%
Then (\ref{qRateF4}) follows from triangle inequality. To show (\ref{f-Lp}),
first notice that (\ref{fBd}) implies
\begin{equation*}
f_{F_{n}}(x)\leq C (1\wedge \left\vert x\right\vert ^{-2}).
\end{equation*}%
Therefore, if $\alpha >\frac{1}{2}$ the function $(f_{F_{n}}(x)+\phi
(x))^{\alpha }$ is integrable. Then, (\ref{f-Lp}) follows from (\ref{qRateF4}%
) and the inequality
\begin{equation*}
\left\vert f_{F_{n}}(x)-\phi (x)\right\vert ^{p}\leq \left\vert
f_{F_{n}}(x)-\phi (x)\right\vert ^{p-\alpha }(f_{F_{n}}(x)+\phi (x))^{\alpha
}.
\end{equation*}
\end{proof}

\subsection{Uniform estimation of derivatives of densities}

In this subsection, we establish the uniform convergence for derivatives of
densities of random variables to a normal distribution. We begin with the
following theorem which estimates the uniform distance between the
derivatives of the densities of a random variable $F$ in the $q$th Wiener
chaos and the normal law $N(0,E[F^{2}])$.

\begin{theorem}
\label{qDeriRate} Let $m\geq 1$ be an integer. Let $F$ be a random variable
in the $q$th Wiener chaos, $q\ge 2$, with $E[F^{2}]=\sigma ^{2}$ and $%
M_{\beta }:=M_{\beta }(F)<\infty $ for some $\beta >6m+6(\lfloor \frac{m }{2}%
\rfloor \vee 1)$ (Recall the definition of $M_{\beta }(F)$ in $(\ref{Mbeta})$%
). Let $\phi (x)$ be the density of $N\sim N(0,\sigma ^{2})$. Then $F$ has a
density $f_{F}(x)\in C^{m }(\mathbb{R})$ with derivatives given by $(\ref%
{kthFml})$. Moreover, for any $k=1,\dots ,m $
\begin{equation*}
\sup_{x\in \mathbb{R}}\left\vert f_{F}^{(k)}(x)-\phi ^{(k)}(x)\right\vert
\leq \sigma ^{-k-3}C \sqrt{ E[F^{4}]-3\sigma ^{2}},
\end{equation*}
where the constant $C $ depends on $q$, $\beta $, $m$, $\sigma $ and $%
M_{\beta }$ with polynomial growth in $\sigma $ and $M_{\beta }$.
\end{theorem}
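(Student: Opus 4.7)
The plan is to mimic the strategy of Theorem \ref{qRateThm}, but now starting from the $k$th-derivative formula of Proposition \ref{kthD}. By that proposition, the assumption $\beta>6m+6(\lfloor m/2\rfloor\vee 1)$ is in particular much stronger than what is needed for $f_{F}\in C^{m}(\mathbb{R})$, so we may freely write
\begin{equation*}
f_{F}^{(k)}(x)=(-1)^{k}E[\mathbf{1}_{\{F>x\}}G_{k+1}],\qquad
\phi^{(k)}(x)=(-1)^{k}E[\mathbf{1}_{\{N>x\}}H_{k+1}(\sigma^{-2},N\sigma^{-2})],
\end{equation*}
where the second identity is $(\ref{kthN})$. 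Using the decomposition $G_{k+1}=H_{k+1}(D_{u}\delta_{u},\delta_{u})+T_{k+1}$ from Lemma \ref{Gk}, I write $f_{F}^{(k)}(x)-\phi^{(k)}(x)=(-1)^{k}(A(x)+B(x)+C(x))$ with
\begin{align*}
A(x)&=E\bigl[\mathbf{1}_{\{F>x\}}\bigl(H_{k+1}(D_{u}\delta_{u},\delta_{u})-H_{k+1}(\sigma^{-2},F\sigma^{-2})\bigr)\bigr],\\
B(x)&=E[\mathbf{1}_{\{F>x\}}H_{k+1}(\sigma^{-2},F\sigma^{-2})]-E[\mathbf{1}_{\{N>x\}}H_{k+1}(\sigma^{-2},N\sigma^{-2})],\\
C(x)&=E[\mathbf{1}_{\{F>x\}}T_{k+1}].
\end{align*}

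For $B(x)$, the test function $h(y)=\mathbf{1}_{\{y>x\}}H_{k+1}(\sigma^{-2},y\sigma^{-2})$ has polynomial growth $|h(y)|\le a|y|^{k+1}+b$ with $a,b$ controlled by powers of $\sigma$, so Lemma \ref{MS-ctrl} applied with this $h$ and $F$ in the $q$th chaos gives $|B(x)|\lesssim \sigma^{-k-3}\|\sigma^{2}-\langle DF,-DL^{-1}F\rangle_{\mathfrak{H}}\|_{2}=\sigma^{-k-3}q^{-1}\|q\sigma^{2}-w\|_{2}$, which by $(\ref{equi})$ is bounded by the desired $\sigma^{-k-3}C\sqrt{E[F^{4}]-3\sigma^{4}}$. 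For $A(x)$, I expand $H_{k+1}(\lambda,y)$ via $(\ref{gHermite})$, use the elementary algebraic identity that $H_{k+1}(D_{u}\delta_{u},\delta_{u})-H_{k+1}(\sigma^{-2},F\sigma^{-2})$ is a polynomial expression in $(D_{u}\delta_{u}-\sigma^{-2})$, $(\delta_{u}-F\sigma^{-2})$, $\delta_{u}$, $F\sigma^{-2}$, $D_{u}\delta_{u}$, $\sigma^{-2}$, and apply H\"older's inequality. The two \emph{small} factors in $L^{p}$ can be estimated by rewriting
\begin{equation*}
\delta_{u}-\sigma^{-2}F=F(qw^{-1}-\sigma^{-2})-\langle Dw^{-1},DF\rangle_{\mathfrak{H}},
\end{equation*}
exactly as in the proof of Theorem \ref{qRateThm}, and by a parallel computation for $D_{u}\delta_{u}-\sigma^{-2}$ which reduces to a linear combination of terms $w^{-a}F^{b}\langle D^{i}F\otimes_{1}D^{j}F,\cdot\rangle$ with $i+j\ge 3$; the key ingredient is the contraction bound $(\ref{contrtnBd})$ which produces the factor $\|w-q\sigma^{2}\|_{2}$. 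All remaining factors are controlled in $L^{p}$ by the hypercontractivity bound $(\ref{HyperMeyer})$ together with the negative-moment assumption $M_{\beta}(F)<\infty$; the choice $\beta>6m+6(\lfloor m/2\rfloor\vee 1)$ ensures enough room to fit the H\"older exponents that appear when $k$ copies of $\delta_{u}$ (cubic in $w^{-1}$) and $\lfloor k/2\rfloor$ copies of $D_{u}\delta_{u}$ (needing six powers of $w^{-1}$ per copy) are multiplied together.

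For $C(x)$, Remark \ref{rem1} writes $T_{k+1}$ as a finite sum of monomials
$\delta_{u}^{i_{0}}(D_{u}\delta_{u})^{i_{1}}(D_{u}^{2}\delta_{u})^{i_{2}}\cdots(D_{u}^{k}\delta_{u})^{i_{k}}$
in which at least one factor $D_{u}^{j}\delta_{u}$ with $j\ge2$ appears. I will show that each such factor has $L^{p}$-norm bounded by a constant multiple of $\|w-q\sigma^{2}\|_{2}$: expanding $D_{u}^{j}\delta_{u}$ by the chain rule produces rational expressions in $w^{-1}$ multiplied by contractions of derivatives of $F$ where the total contraction/derivation order is at least three, so $(\ref{contrtnBd})$ again supplies the required gain, and the remaining factors are controlled in $L^{p}$ by $(\ref{HyperMeyer})$ and $M_{\beta}(F)$. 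Combining with $(\ref{equi})$, the bound on $C(x)$ falls within the stated right-hand side.

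The main obstacle I anticipate is purely bookkeeping: organizing the algebraic expansion of $H_{k+1}(D_{u}\delta_{u},\delta_{u})$ and of each monomial in $T_{k+1}$ so that every summand is expressed as (a product of powers of $w^{-1}$, $F$ and lower-order chaos elements, whose $L^{p}$-norms are controlled via hypercontractivity and the hypothesis $M_{\beta}(F)<\infty$) $\times$ (a contraction term that carries the small factor $\|w-q\sigma^{2}\|_{2}$). Once this accounting is done, choosing H\"older exponents and applying $(\ref{equi})$ yields $|f_{F}^{(k)}(x)-\phi^{(k)}(x)|\le \sigma^{-k-3}C\sqrt{E[F^{4}]-3\sigma^{4}}$ uniformly in $x$, with $C$ a polynomial in $\sigma$ and $M_{\beta}$ as claimed.
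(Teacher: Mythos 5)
Your proposal takes essentially the same approach as the paper: the decomposition of $G_{k+1}=H_{k+1}(D_u\delta_u,\delta_u)+T_{k+1}$, the three-way split (your $B$ is the paper's Stein-method term, your $A$ and $C$ together form the paper's $A_1$), the contraction bound $(\ref{contrtnBd})$ as the source of the small factor $\|q\sigma^2-w\|_2$, and hypercontractivity plus negative moments of $\|DF\|_{\mathfrak{H}}$ to close the H\"older estimates. The "bookkeeping" you defer is precisely the content of Lemma \ref{Dw} and of Lemmas \ref{DDFw}--\ref{Dudelta-u} in the appendix (where, as you anticipate, the small-factor bound $(\ref{Dudelta-uk})$ is applied to exactly one factor $D_u^j\delta_u$ with $j\ge 2$, and the cheaper bound $(\ref{Dudelta-uka})$ to the rest), so the argument is correct and aligned with the paper's.
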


To prove Theorem \ref{qDeriRate}, we need some technical results. Recall the
notation we introduced in (\ref{notation}), where we denote $\delta
_{u}=\delta (u)$, $D_{u}\delta _{u}=\left\langle D\delta _{u},u\right\rangle
_{\mathfrak{H}}$.

\begin{lemma}
\label{Dw} Let $F$ be a random variable in the $q$th Wiener chaos with $%
E[F^{2}]=\sigma ^{2}$. Let $w=\left\Vert DF\right\Vert _{\mathfrak{H}}^{2}$
and$~u=w^{-1}DF$.

\begin{itemize}
\item[(i)] If $M_{\beta }(F)<\infty $ for some $\beta >6$, then for any $%
1<r\leq \frac{2\beta }{\beta +6}$
\begin{equation}
\left\Vert \delta _{u}-\sigma ^{-2}F\right\Vert _{r}\leq C\sigma
^{-1}(M_{\beta }^{3}\vee 1)\left\Vert q\sigma ^{2}-w\right\Vert _{2}.
\label{dl-u1}
\end{equation}

\item[(ii)] If $M_{\beta }(F)<\infty $ for some $\beta >12$, then for any $%
1<r<\frac{2\beta }{\beta +12}$
\begin{equation}
\left\Vert D_{u}\delta _{u}-\sigma ^{-2}\right\Vert _{r}\leq C \sigma
^{-2}(M_{\beta }^{6}\vee 1)\left\Vert q\sigma ^{2}-w\right\Vert _{2},
\label{dudl-u1}
\end{equation}
where the constant $C$ depends on $\sigma$.
\end{itemize}
\end{lemma}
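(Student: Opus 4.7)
\medskip
\noindent\textbf{Proof proposal.}
The general strategy is to decompose $\delta_u - \sigma^{-2}F$ and $D_u\delta_u - \sigma^{-2}$ algebraically into finitely many terms, each of the form $w^{-a}\cdot P$, where $P$ is a polynomial in various iterated Malliavin derivatives of $F$ (and possibly $F$ itself). Each such term is then bounded via H\"older's inequality using three ingredients: hypercontractivity (\ref{HyperMeyer}) to control $L^{p}$-norms of positive-moment factors (which always live in a fixed finite sum of Wiener chaoses), the negative-moment hypothesis $M_{\beta}(F)<\infty$ to control norms of $w^{-a}$, and the contraction inequality (\ref{contrtnBd}) to extract the factor $\|q\sigma^{2}-w\|_{2}$ from contractions $D^{k}F\otimes_{1}D^{l}F$ with $k+l\geq 3$.

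For part (i), identity (\ref{delta-u}) gives the decomposition
\begin{equation*}
\delta_{u}-\sigma^{-2}F = -\sigma^{-2}Fw^{-1}(w-q\sigma^{2}) - \langle Dw^{-1},DF\rangle_{\mathfrak{H}}.
\end{equation*}
The first summand is estimated in $L^{r}$ by H\"older with exponents $(p_{1},p_{2},p_{3})=(\infty^{-},\beta/2,2^{+})$, where $\|F\|_{p_{1}}\leq C_{q}\sigma$ and $\|w-q\sigma^{2}\|_{p_{3}}\leq C_{q}\|w-q\sigma^{2}\|_{2}$ by hypercontractivity, while $\|w^{-1}\|_{p_{2}}\leq M_{\beta}^{2}$. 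The second summand is rewritten as $-2w^{-2}\langle D^{2}F\otimes_{1}DF,DF\rangle_{\mathfrak{H}}$, hence pointwise bounded by $2w^{-3/2}\|D^{2}F\otimes_{1}DF\|_{\mathfrak{H}}$. H\"older with exponents $(p_{2},p_{3})=(\beta/3,2^{+})$ then yields the factor $\|w^{-3/2}\|_{\beta/3}=M_{\beta}^{3}$ together with $C_{q}\|D^{2}F\otimes_{1}DF\|_{2}\leq C_{q}\|q\sigma^{2}-w\|_{2}$. The critical exponent $\beta/3$ is exactly what forces the range $r\leq 2\beta/(\beta+6)$. Combining the two bounds and using $M_{\beta}^{2}\leq M_{\beta}^{3}\vee 1$ gives (\ref{dl-u1}).

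For part (ii), compute $D\delta_{u}$ directly from (\ref{delta-u}) and take the inner product with $u=w^{-1}DF$. The key algebraic simplification is
\begin{equation*}
\langle D(F/w),DF\rangle_{\mathfrak{H}} = w^{-1}\|DF\|_{\mathfrak{H}}^{2} + F\langle Dw^{-1},DF\rangle_{\mathfrak{H}} = 1 + F\langle Dw^{-1},DF\rangle_{\mathfrak{H}},
\end{equation*}
which produces the clean identity
\begin{equation*}
D_{u}\delta_{u}-\sigma^{-2} = \sigma^{-2}w^{-1}(q\sigma^{2}-w) + qFw^{-1}\langle Dw^{-1},DF\rangle_{\mathfrak{H}} - w^{-1}\langle D\langle Dw^{-1},DF\rangle_{\mathfrak{H}},DF\rangle_{\mathfrak{H}}.
\end{equation*}
Writing $A=\langle D^{2}F,DF\otimes DF\rangle_{\mathfrak{H}^{\otimes 2}}$ so that $\langle Dw^{-1},DF\rangle_{\mathfrak{H}}=-2Aw^{-2}$, a further application of the chain rule yields
\begin{equation*}
\langle D\langle Dw^{-1},DF\rangle_{\mathfrak{H}},DF\rangle_{\mathfrak{H}} = 8A^{2}w^{-3} - 2w^{-2}D_{DF}A,
\end{equation*}
where $D_{DF}A = \langle D^{3}F\otimes_{1}DF,DF\otimes DF\rangle + 2\langle D^{2}F,(D^{2}F\otimes_{1}DF)\otimes DF\rangle$ involves only contractions with $k+l\geq 3$. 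Each resulting term is then bounded by H\"older: the first overall summand gives $C\sigma^{-2}M_{\beta}^{2}\|q\sigma^{2}-w\|_{2}$; the term $qFAw^{-3}$ gives $C_{q}\sigma^{2}M_{\beta}^{6}\|q\sigma^{2}-w\|_{2}$ at exponents $(\infty^{-},\infty^{-},\beta/6)$ using $\|A\|_{p}\leq C_{q}\sigma\|q\sigma^{2}-w\|_{2}$; the term $w^{-3}D_{DF}A$ is controlled by $M_{\beta}^{5}$ times a contraction norm via (\ref{contrtnBd}).

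The main obstacle is the quadratic-looking term $8A^{2}w^{-4}$, which at first glance produces a bound scaling like $\|q\sigma^{2}-w\|_{2}^{2}$. The trick is to write $A^{2}\leq \|D^{2}F\otimes_{1}DF\|_{\mathfrak{H}}^{2}\cdot w$, so that $w^{-4}A^{2}\leq w^{-3}\|D^{2}F\otimes_{1}DF\|_{\mathfrak{H}}^{2}$, which pins down the crucial exponent $\|w^{-3}\|_{\beta/6}=M_{\beta}^{6}$ and hence the admissible range $r<2\beta/(\beta+12)$. The remaining extra factor $\|q\sigma^{2}-w\|_{2}$ is absorbed using the trivial a priori bound $\|q\sigma^{2}-w\|_{2}\leq C_{q}\sigma^{2}$ (which follows from hypercontractivity applied to $w$), producing a linear estimate with a $\sigma$-dependent constant, exactly as required in the statement. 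Collecting all contributions and using $M_{\beta}^{j}\leq M_{\beta}^{6}\vee 1$ for $j\leq 6$ yields (\ref{dudl-u1}).
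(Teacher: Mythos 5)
Your proof is correct and follows the same essential route as the paper: both decompose $\delta_u-\sigma^{-2}F$ and $D_u\delta_u-\sigma^{-2}$ algebraically via $(\ref{delta-u})$ (your three-term identity for $D_u\delta_u-\sigma^{-2}$ is the correct one and matches what the paper actually estimates, even though the paper's displayed formula for $D_u\delta_u$ carries a spurious fourth term), and both extract $\|q\sigma^2-w\|_2$ from contractions $D^kF\otimes_1 D^lF$ with $k+l\geq 3$ through $(\ref{contrtnBd})$ and hypercontractivity $(\ref{HyperMeyer})$. The difference is organizational: the paper defers the bounds on $\|D_{DF}w^{-1}\|_s$ and $\|D_{DF}^2w^{-1}\|_s$ to the auxiliary Lemma $\ref{DDFw}$ (proved in the Appendix via a general inductive formula for $D^iw^{-1}$), whereas you expand $D_{DF}^2w^{-1}$ in closed form through $A=\langle D^2F,DF\otimes DF\rangle_{\mathfrak{H}^{\otimes 2}}$, arriving at $D_{DF}^2w^{-1}=8A^2w^{-3}-2w^{-2}D_{DF}A$ and the clean identity $\langle D^2F,(D^2F\otimes_1 DF)\otimes DF\rangle=\|D^2F\otimes_1 DF\|_{\mathfrak{H}}^2$. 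Your pointwise bound $|A|\leq\|D^2F\otimes_1 DF\|_{\mathfrak{H}}\,w^{1/2}$ together with the a priori bound $\|q\sigma^2-w\|_2\leq C_q\sigma^2$ is exactly the mechanism used inside the paper's proof of $(\ref{DDFw-b})$: keep one contraction factor, bound the remaining ones by $C\sigma^2$. One small imprecision in your commentary: the exponent $\|w^{-3}\|_{\beta/6}=M_\beta^6$ does not by itself force the range $r<2\beta/(\beta+12)$; with a flexible choice of the conjugate H\"older exponent on the contraction factor your decomposition actually covers the wider range $r<\beta/6$, and the paper's tighter threshold comes from systematically pairing the contraction norm with $L^2$ so that $(\ref{contrtnBd})$ applies without an extra hypercontractivity constant. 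Since the lemma only asserts $r<2\beta/(\beta+12)$, this costs nothing.
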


\begin{proof}
Recall that $\delta _{u}=qFw^{-1}-D_{DF}w^{-1}$. Using H\"{o}lder's
inequality and (\ref{DDFw-a}) we can write
\begin{eqnarray*}
\left\Vert \delta _{u}-\sigma ^{-2}F\right\Vert _{r} &\leq &\left\Vert
\sigma ^{-2}Fw^{-1}(q\sigma ^{2}-w)\right\Vert _{r}+\left\Vert
D_{DF}w^{-1}\right\Vert _{r} \\
&\leq &C\left( \sigma ^{-2}\Vert Fw^{-1}\Vert _{s}+(M_{\beta }^{3}\vee
1)\right) \left\Vert q\sigma ^{2}-w\right\Vert _{2},
\end{eqnarray*}%
provided $\frac{1}{r}=\frac{1}{s}+\frac{1}{2}$. By the hypercontractivity
property (\ref{Hyper}) $\Vert F\Vert _{\gamma }\leq C_{q,\gamma }\Vert
F\Vert _{2}$ for any $\gamma \geq 2$. Thus, by H\"{o}lder's inequality, if $%
\frac{1}{s}=\frac{1}{\gamma }+\frac{1}{p}$
\begin{equation*}
\Vert Fw^{-1}\Vert _{s}\leq \Vert F\Vert _{\gamma }\Vert w^{-1}\Vert
_{p}\leq C_{q,\gamma }\sigma M_{2p}^{2}.
\end{equation*}%
Choosing $p$ such that $2p<\beta $ we get (\ref{dl-u1}).

We can compute $D_{u}\delta _{u}$ as%
\begin{equation*}
D_{u}\delta
_{u}=qw^{-1}+qFw^{-1}D_{DF}w^{-1}-w^{-1}D_{DF}^{2}w^{-1}-w^{-1}\left\langle
D^{2}F,DF\otimes Dw^{-1}\right\rangle _{\mathfrak{H}}.
\end{equation*}%
Applying H\"{o}lder's inequality we obtain
\begin{eqnarray*}
&&\left\Vert D_{u}\delta _{u}-\sigma ^{-2}\right\Vert _{r}\leq \left\Vert
w^{-1}\left[ \sigma ^{-2}\left( q\sigma ^{2}-w\right)
+qFD_{DF}w^{-1}-D_{DF}^{2}w^{-1}\right] \right\Vert _{r} \\
&\leq &\sigma ^{-2}\Vert w^{-1}\Vert _{\frac{2r}{2-r}}\left\Vert q\sigma
^{2}-w\right\Vert _{2}+C_{\sigma }\Vert w^{-1}\Vert _{p}\left( \left\Vert
D_{DF}w^{-1}\right\Vert _{s}+\left\Vert D_{DF}^{2}w^{-1}\right\Vert
_{s}\right) ,
\end{eqnarray*}%
if $\frac{1}{r}>\frac{1}{p}+\frac{1}{s}$. Then, using (\ref{DDFw-a}) and (%
\ref{DDFw-b}) with $k=2$ and assuming that $s<\frac{2\beta }{\beta +8}$ and
that $2p<\beta $ we obtain (\ref{dudl-u1}).
\end{proof}

\begin{proof}[\textbf{Proof of Theorem }$\protect\ref{qDeriRate}$]
Proposition \ref{kthD} implies that $f_{F}(x)\in C^{m-1}(\mathbb{R})$ and
for $k=0,1,\dots ,m-1$,%
\begin{equation*}
f_{F}^{(k)}(x)=(-1)^{k}E[\mathbf{1}_{\left\{ F>x\right\} }G_{k+1}],
\end{equation*}%
where $G_{0}=1$ and $G_{k+1}=\delta (G_{k}u)=G_{k}\delta (u)-\left\langle
DG_{k},u\right\rangle _{\mathfrak{H}}$. From (\ref{kthN}),
\begin{equation*}
\phi ^{(k)}(x)=(-1)^{k}E[\mathbf{1}_{\left\{ N>x\right\} }H_{k+1}(\sigma
^{-2},\sigma ^{-2}N)].
\end{equation*}%
Then, the identity $G_{k+1}=H_{k+1}(D_{u}\delta _{u},\delta _{u})+T_{k+1}$
(see formula (\ref{GkFmla})), suggests the following triangle inequality%
\begin{eqnarray*}
\left\vert f_{F}^{(k)}(x)-\phi ^{(k)}(x)\right\vert &=&\left\vert E[\mathbf{1%
}_{\left\{ F>x\right\} }G_{k+1}-\mathbf{1}_{\left\{ N>x\right\}
}H_{k+1}(\sigma ^{-2},\sigma ^{-2}N)]\right\vert \\
&\leq &\left\vert E[\mathbf{1}_{\left\{ F>x\right\} }G_{k+1}-\mathbf{1}%
_{\left\{ F>x\right\} }H_{k+1}(\sigma ^{-2},\sigma ^{-2}F)]\right\vert \\
&&+\left\vert E[\mathbf{1}_{\left\{ F>x\right\} }H_{k+1}(\sigma ^{-2},\sigma
^{-2}F)-\mathbf{1}_{\left\{ N>x\right\} }H_{k+1}(\sigma ^{-2},\sigma
^{-2}N)]\right\vert \\
&=&A_{1}+A_{2}\,.
\end{eqnarray*}

We first estimate the term $A_{2}$. Note that $\left\Vert F\right\Vert
_{2k+2}\leq C_{q,k}\left\Vert F\right\Vert _{2}=C_{q,k}\sigma $ by the
hypercontractivity property (\ref{Hyper}). Applying Lemma \ref{MS-ctrl} with
$h(z)=\mathbf{1}_{\left\{ z>x\right\} }H_{k+1}(\sigma ^{-2},\sigma ^{-2}z)$,
which satisfies $\left\vert h(z)\right\vert \leq C_{k}(\left\vert
z\right\vert ^{k+1}+\sigma ^{-k-1})$, we obtain%
\begin{eqnarray}
A_{2} &=&\left\vert E[h(F)-h(N)]\right\vert  \notag \\
&\leq &C_{q,k}\sigma ^{-2}\left\vert \sigma ^{k}+4\sigma ^{-k-1}\right\vert
\left\Vert \sigma ^{2}-\left\langle DF,-DL^{-1}F\right\rangle _{\mathfrak{H}%
}\right\Vert _{2}  \notag \\
&\leq &C_{q,k,\sigma }\sigma ^{-k-3}\left\Vert q\sigma ^{2}-w\right\Vert
_{2},  \label{deri-A2}
\end{eqnarray}%
where in the second inequality we used the fact that $\left\langle
DF,-DL^{-1}F\right\rangle _{\mathfrak{H}}=\frac{w}{q}$.

For the term $A_{1}$, Lemma \ref{Gk} implies
\begin{equation}
A_{1}\leq E[\left\vert H_{k+1}(D_{u}\delta _{u},\delta _{u})-H_{k+1}(\sigma
^{-2},\sigma ^{-2}F)\right\vert ]+E[\left\vert T_{k+1}\right\vert ].
\label{deri-A1}
\end{equation}%
To proceed with the first term above, applying (\ref{gHermite}) we have
\begin{eqnarray}
&&\left\vert H_{k+1}(D_{u}\delta _{u},\delta _{u})-H_{k+1}(\sigma
^{-2},\sigma ^{-2}F)\right\vert  \label{H-pln} \\
&\leq &\sum_{0\leq i\leq \lfloor (k+1)/2\rfloor }|c_{k,i}|\left\vert \delta
_{u}^{k+1-2i}\left( D_{u}\delta _{u}\right) ^{i}-\left( \sigma ^{-2}F\right)
^{k+1-2i}\sigma ^{-2i}\right\vert  \notag \\
&\leq &\sum_{0\leq i\leq \lfloor (k+1)/2\rfloor }|c_{k,i}|\left[ \left\vert
\delta _{u}^{k+1-2i}-\left( \sigma ^{-2}F\right) ^{k+1-2i}\right\vert
\left\vert D_{u}\delta _{u}\right\vert ^{i}+\left\vert \sigma
^{-2}F\right\vert ^{k+1-2i}\left\vert \left( D_{u}\delta _{u}\right)
^{i}-\sigma ^{-2i}\right\vert \right] .  \notag
\end{eqnarray}%
Using the fact that $\left\vert x^{k}-y^{k}\right\vert \leq C_{k}\left\vert
x-y\right\vert \sum_{0\leq j\leq k-1}\left\vert x\right\vert
^{k-1-j}\left\vert y\right\vert ^{j}$ and applying H\"{o}lder's inequality
and the hypercontractivity property (\ref{Hyper}) we obtain
\begin{eqnarray}
&&E\left[ \left\vert \delta _{u}^{k+1-2i}-\left( \sigma ^{-2}F\right)
^{k+1-2i}\right\vert \left\vert D_{u}\delta _{u}\right\vert ^{i}\right]
\notag \\
&\leq &C_{k}E\left[ \left\vert \delta _{u}-\sigma ^{-2}F\right\vert
\left\vert D_{u}\delta _{u}\right\vert ^{i}\sum_{0\leq j\leq k-2i}\left\vert
\delta _{u}\right\vert ^{k-2i-j}\left\vert \sigma ^{-2}F\right\vert ^{j}%
\right]  \notag \\
&\leq &C_{q,k,\sigma }\left\Vert \delta _{u}-\sigma ^{-2}F\right\Vert
_{r}\left\Vert D_{u}\delta _{u}\right\Vert _{s}^{i}\sum_{0\leq j\leq
k-2i}\left\Vert \delta _{u}\right\Vert _{p}^{k-2i-j}\sigma ^{-j},
\label{deri-A1-1}
\end{eqnarray}%
provided $1\geq \frac{1}{r}+\frac{i}{s}+\frac{k-2i-j}{p}$, which is implied
by $1\geq \frac{1}{r}+\frac{i}{s}+\frac{k-2i}{p}$. In order to apply the
estimates (\ref{dl-u1}), (\ref{Dudelta-uka}) (with $k=1$) and (\ref%
{Dudelta-u0}) we need $\frac{1}{r}>\frac{3}{\beta }+\frac{1}{2}$, $\frac{1}{s%
}>\frac{6}{\beta }$ and $\frac{1}{p}>\frac{3}{\beta }$, respectively. These
are possible because $\beta >6k+6$. Then we obtain an estimate of the form
\begin{equation}
E\left[ \left\vert \delta _{u}^{k+1-2i}-\left( \sigma ^{-2}F\right)
^{k+1-2i}\right\vert \left\vert D_{u}\delta _{u}\right\vert ^{i}\right] \leq
C_{q,k,\sigma }\sigma ^{-k}(M_{\beta }^{3k+3}\vee 1)\Vert q\sigma
^{2}-w\Vert _{2}.  \label{b1}
\end{equation}%
Similarly,
\begin{eqnarray}
&&E\left[ \left\vert \sigma ^{-2}F\right\vert ^{k+1-2i}\left\vert \left(
D_{u}\delta _{u}\right) ^{i}-\sigma ^{-2i}\right\vert \right]  \notag \\
&\leq &C_{q,k,\sigma }E\left[ \left\vert \sigma ^{-2}F\right\vert
^{k+1-2i}\left\vert D_{u}\delta _{u}-\sigma ^{-2}\right\vert \sum_{0\leq
j\leq i-1}\left\vert D_{u}\delta _{u}\right\vert ^{j}\sigma ^{-2(i-1-j)}%
\right]  \notag \\
&\leq &C_{q,k,\sigma }\sigma ^{-(k-1)}\left\Vert D_{u}\delta _{u}-\sigma
^{-2}\right\Vert _{r}\sum_{0\leq j\leq i-1}\left\Vert D_{u}\delta
_{u}\right\Vert _{s}^{j},  \label{deri-A1-2}
\end{eqnarray}%
provided $1>\frac{1}{r}+\frac{j}{s}$. In order to apply the estimates (\ref%
{dudl-u1}) and (\ref{Dudelta-uka}) (with $k=1$) we need $\frac{1}{r}>\frac{6%
}{\beta }+\frac{1}{2}$ and $\frac{1}{s}>\frac{6}{\beta }$, respectively.
This implies
\begin{equation*}
\frac{1}{r}+\frac{j}{s}>\frac{6+6j}{\beta }+\frac{1}{2}.
\end{equation*}%
Notice that $6+6j\leq 6i\leq 3k+3$. So, we need $1>\frac{1}{2}+\frac{3k+3}{%
\beta }$. The above $r,s$ and $p$ exist because $\beta >6k+6$. Thus, we
obtain an estimate of the form
\begin{equation}
E\left[ \left\vert \sigma ^{-2}F\right\vert ^{k+1-2i}\left\vert \left(
D_{u}\delta _{u}\right) ^{i}-\sigma ^{-2i}\right\vert \right] \leq
C_{q,k,\sigma ,\beta }\sigma ^{-(k-1)}(M_{\beta }^{3k+3}\vee 1)\Vert q\sigma
^{2}-w\Vert _{2}.  \label{b2}
\end{equation}%
Combining (\ref{b1}) and (\ref{b2}) we have%
\begin{equation}
E\left[ \left\vert H_{k+1}(D_{u}\delta _{u},\delta _{u})-H_{k+1}(\sigma
^{-2},\sigma ^{-2}F)\right\vert \right] \leq C_{q,k,\sigma ,\beta }\sigma
^{-k}(M_{\beta }^{3k+3}\vee 1)\Vert q\sigma ^{2}-w\Vert _{2}.
\label{deri-A1a}
\end{equation}

Applying H\"{o}lder's inequality to the expression (\ref{HODT}) we obtain,
(assuming $k\geq 2$ , otherwise $T_{k+1}=0$)
\begin{equation*}
E\left[ \left\vert T_{k+1}\right\vert \right] \leq C_{q,k,\sigma ,\beta
}\sum_{(i_{0},\dots ,i_{k})\in J_{k+1}}\left\Vert \delta _{u}\right\Vert
_{r_{0}}^{i_{0}}\prod_{j=1}^{k}\left\Vert D_{u}^{j}\delta _{u}\right\Vert
_{r_{j}}^{i_{j}},
\end{equation*}%
where $1=\frac{i_{0}}{r_{0}}+\sum_{j=1}^{k}\frac{i_{j}}{r_{j}}$. From
property (b) in Lemma \ref{rem1} there is at least one factor of the form $%
\left\Vert D_{u}^{j}\delta _{u}\right\Vert _{s_{j}}$ with $j\geq 2$. We
apply the estimate (\ref{Dudelta-uk}) to one of these factors, and the
estimate (\ref{Dudelta-uka}) to all the remaining factors. We also use the
estimate (\ref{Dudelta-u0}) to control $\left\Vert \delta _{u}\right\Vert
_{r_{0}}$. Notice that
\begin{equation*}
1=\frac{i_{0}}{r_{0}}+\sum_{j=1}^{k}\frac{i_{j}}{r_{j}}>\frac{3i_{0}}{\beta }%
+\sum_{j=1}^{k}\frac{i_{j}(3j+3)}{\beta }+\frac{1}{2},
\end{equation*}%
and, on the other hand, using properties (a) and (c) in Lemma \ref{rem1}
\begin{equation*}
\frac{3i_{0}}{\beta }+\sum_{j=1}^{k}\frac{i_{j}(3j+3)}{\beta }+\frac{1}{2}%
\leq \frac{3k+3\lfloor \frac{k}{2}\rfloor }{\beta }+\frac{1}{2}.
\end{equation*}%
We can choose the $r_{j}$'s satisfying the above properties because $\beta
>6k+6\lfloor \frac{k}{2}\rfloor $, and we obtain
\begin{equation}
E\left\vert T_{k+1}\right\vert \leq C_{q,k,\sigma ,\beta }(M_{\beta
}^{3k+3\lfloor \frac{k}{2}\rfloor }\vee 1)\left\Vert q\sigma
^{2}-w\right\Vert _{2}.  \label{b3}
\end{equation}%
Combining (\ref{deri-A1a}) and (\ref{b3}) we complete the proof.
\end{proof}

\begin{corollary}
\label{qDeriConv} Fix an integer $m\geq 1$. Let $\left\{ F_{n}\right\}
_{n\in \mathbb{N}}$ be a sequence of random variables in the $q$th Wiener
chaos with $q\geq 2$ and $E[F_{n}^{2}]=\sigma _{n}^{2}$. Assume $%
\lim_{n\rightarrow \infty }\sigma _{n}=\sigma $, $0<\delta \leq \sigma
_{n}^{2}\leq K$ for all $n$, $\lim_{n\rightarrow \infty
}E[F_{n}^{4}]=3\sigma ^{4}$ and%
\begin{equation}
M:=\sup_{n}\left( \mathbb{E}[\left\Vert DF_{n}\right\Vert _{\mathfrak{H}%
}^{-\beta }]\right) ^{\frac{1}{\beta }}<\infty  \label{negMmts}
\end{equation}%
for some $\beta >6(m)+6(\lfloor \frac{m}{2}\rfloor \vee 1)$. Let $\phi (x)$
be the density of $N(0,\sigma ^{2})$. Then, each $F_{n}$ admits a
probability density function $f_{F_{n}}\in C^{m}(\mathbb{R})$ with
derivatives given by $\left( \ref{kthFml}\right) $ and for any $k=1,\dots ,m$%
,
\begin{equation*}
\sup_{x\in \mathbb{R}}\left\vert f_{F_{n}}^{(k)}(x)-\phi
^{(k)}(x)\right\vert \leq C\left( \sqrt{E[F_{n}^{4}]-3\sigma _{n}^{4}}%
+\left\vert \sigma _{n}-\sigma \right\vert \right) ,
\end{equation*}%
where the constant $C$ depends only on $q,m,\beta ,M,\sigma ,\delta $ and $K$%
.
\end{corollary}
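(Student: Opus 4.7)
The plan is to mimic the proof of Corollary \ref{qConv}, applying Theorem \ref{qDeriRate} pointwise to each $F_n$ (which gives control against the density of $N(0,\sigma_n^2)$) and then passing to the fixed normal law $N(0,\sigma^2)$ by a direct estimate on Gaussian densities and their derivatives, all tied together with the triangle inequality.

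First, I would verify that the hypotheses of Theorem \ref{qDeriRate} hold uniformly in $n$. Each $F_n$ lies in the $q$th chaos with $E[F_n^2]=\sigma_n^2\in[\delta,K]$ and $M_\beta(F_n)\le M$ for the same $\beta>6m+6(\lfloor m/2\rfloor\vee 1)$. Hence, for each $n$, Proposition \ref{kthD} guarantees that $f_{F_n}\in C^m(\mathbb{R})$ with derivatives given by (\ref{kthFml}), and Theorem \ref{qDeriRate} yields, for each $k=1,\dots,m$,
\begin{equation*}
\sup_{x\in\mathbb{R}}\left\vert f_{F_n}^{(k)}(x)-\phi_n^{(k)}(x)\right\vert\le \sigma_n^{-k-3}\,C_n\sqrt{E[F_n^4]-3\sigma_n^4},
\end{equation*}
where $\phi_n$ denotes the density of $N(0,\sigma_n^2)$ and the constant $C_n$ depends on $q,\beta,m,\sigma_n,M_\beta(F_n)$ with polynomial dependence in $\sigma_n$ and $M_\beta(F_n)$. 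Because $\sigma_n\in[\delta,K]$ and $M_\beta(F_n)\le M$, this polynomial dependence gives a bound by a single constant $C$ depending only on $q,m,\beta,M,\sigma,\delta,K$; the factor $\sigma_n^{-k-3}$ is likewise bounded by $\delta^{-(k+3)/2}$.

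Next, I would estimate $\sup_x|\phi_n^{(k)}(x)-\phi^{(k)}(x)|$. Writing $\phi_\tau(x)=(2\pi\tau^2)^{-1/2}e^{-x^2/(2\tau^2)}$, the $k$th derivative has the form $\phi_\tau^{(k)}(x)=\tau^{-k-1}Q_k(x/\tau)e^{-x^2/(2\tau^2)}$ for a fixed polynomial $Q_k$ (indeed, from (\ref{kthN}) one gets $\phi_\tau^{(k)}(x)=(-1)^k\tau^{-k}H_k(x/\tau)\phi_\tau(x)$). A direct computation of $\partial_\tau\phi_\tau^{(k)}(x)$ shows that it is of the form $\tau^{-k-2}R_k(x/\tau)e^{-x^2/(2\tau^2)}$ for another fixed polynomial $R_k$, hence uniformly bounded in $x$ for $\tau\in[\sqrt{\delta},\sqrt{K}]$. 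The mean value theorem then gives
\begin{equation*}
\sup_{x\in\mathbb{R}}\left\vert\phi_n^{(k)}(x)-\phi^{(k)}(x)\right\vert\le C\,|\sigma_n-\sigma|,
\end{equation*}
with $C$ depending only on $k,\delta,K$.

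Finally, combining the two displays via the triangle inequality yields the stated bound. No serious obstacle is anticipated; the main thing to check carefully is that the polynomial dependence of the constant in Theorem \ref{qDeriRate} on $\sigma$ and $M_\beta$, together with the uniform bounds $\sigma_n\in[\delta,K]$ and $M_\beta(F_n)\le M$, genuinely delivers a single constant $C$ independent of $n$, and that the smoothness of the normal density $\phi_\tau^{(k)}$ in $\tau$ is Lipschitz uniformly in $x$ on the compact range $[\sqrt{\delta},\sqrt{K}]$, which is the routine computation sketched above.
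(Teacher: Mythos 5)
Your proposal is correct and follows essentially the same route as the paper: apply Theorem \ref{qDeriRate} to each $F_n$ against $\phi_n$, bound $\sup_x|\phi_n^{(k)}-\phi^{(k)}|$ by the mean value theorem using smoothness of $\tau\mapsto\phi_\tau^{(k)}$ on a compact $\tau$-interval, and conclude by the triangle inequality. (If anything, your choice of interval $[\sqrt{\delta},\sqrt{K}]$ is slightly cleaner than the paper's $[\sigma/2,2\sigma]$, which strictly speaking covers $\sigma_n$ only for $n$ large.)
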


\begin{proof}
Let $\phi _{n}(x)$ be the density of $N(0,\sigma _{n}^{2})$. Then Theorem %
\ref{qDeriRate} implies that
\begin{equation*}
\sup_{x\in \mathbb{R}}\left\vert f_{F_{n}}^{(k)}(x)-\phi
_{n}^{(k)}(x)\right\vert \leq C_{q,m,\beta ,M,\sigma }\sqrt{
E[F_{n}^{4}]-3\sigma _{n}^{4}}.
\end{equation*}
On the other hand, by the mean value theorem we can write
\begin{equation*}
\left\vert \phi _{n}^{(k)}(x)-\phi ^{(k)}(x)\right\vert \leq |\sigma
_{n}-\sigma |\sup_{\gamma \in \lbrack \frac{\sigma }{2},2\sigma ]}\left\vert
\partial _{\gamma }\phi _{\gamma }^{(k)}(x)\right\vert =\frac{1}{2}|\sigma
_{n}-\sigma |\sup_{\gamma \in \lbrack \frac{\sigma }{2},2\sigma ]}\gamma
\left\vert \phi _{\gamma }^{(k+2)}(x)\right\vert ,
\end{equation*}%
where $\phi _{\gamma }(x)$ is the density of the law $N(0,\gamma ^{2})$.
Then, using the expression
\begin{equation*}
\phi _{\gamma }^{(k+2)}(x)=E[\mathbf{1}_{N>x}H_{k+3}(\gamma ^{-2},\gamma
^{-2}Z)],
\end{equation*}%
where $Z\sim N(0,\gamma ^{2})$ and the explicit form of $H_{k+3}(\lambda ,x)$%
, we obtain
\begin{equation*}
\sup_{\gamma \in \lbrack \frac{\sigma }{2},2\sigma ]}\gamma \left\vert \phi
_{\gamma }^{(k+2)}(x)\right\vert \leq C_{k,\sigma }.
\end{equation*}%
Therefore,
\begin{equation*}
\sup_{x\in \mathbb{R}}\left\vert \phi _{n}^{(k)}(x)-\phi
^{(k)}(x)\right\vert \leq C_{k,\sigma }\left\vert \sigma _{n}-\sigma
\right\vert .
\end{equation*}%
This completes the proof.
\end{proof}

\section{Random vectors in Wiener chaos\label{MultiConv}}

\subsection{Main result}


In this section, we study the multidimensional counterpart of Theorem $\ref%
{qDeriConv}$. We begin with a density formula for a smooth random vector.

A random vector $F=(F_{1},\dots ,F_{d})$ in $\mathbb{D}^{\infty }$ is called
\textit{non-degenerate} if its \textit{Malliavin matrix} $\gamma
_{F}=(\left\langle DF_{i},DF_{j}\right\rangle _{\mathfrak{H}})_{1\leq
i,j\leq d}$ is invertible a.s. and $(\det \gamma _{F})^{-1}\in \mathbb{\cap }%
_{p\geq 1}L^{p}(\Omega )$. For any multi-index
\begin{equation*}
\beta =\left( \beta _{1},\beta _{2},\dots ,\beta _{k}\right) \in \left\{
1,2,\dots ,d\right\} ^{k}
\end{equation*}%
of length $k\geq 1$, the symbol $\partial _{\beta }$ stands for the partial
derivative $\frac{\partial ^{k}}{\partial x_{\beta _{1}}\dots \partial
x_{\beta _{k}}}$. For $\beta $ of length $0$ we make the convention that $%
\partial _{\beta }f=f$. We denote by $\mathcal{S}(\mathbb{R}^{d})$ the
Schwartz space of rapidly decreasing smooth functions, that is, the space of
all infinitely differentiable functions $f:\mathbb{R}^{d}\rightarrow \mathbb{%
R}$ such that $\sup_{x\in \mathbb{R}^{d}}\left\vert x\right\vert
^{m}\left\vert \partial _{\beta }f(x)\right\vert <\infty $ for any
nonnegative integer $m$ and for all multi-index $\beta $. The following
lemma (see Nualart \cite[Proposition 2.1.5]{Nu06}) gives an explicit formula
for the density of $F$.

\begin{lemma}
\label{V-den}Let $F=(F_{1},\dots ,F_{d})$ be a non-degenerate random vector.
Then, $F$ has a density $f_{F}\in \mathcal{S}(\mathbb{R}^{d})$, and $f_{F}$
and its partial derivative $\partial _{\beta }f_{F}$, for any multi-index $%
\beta =\left( \beta _{1},\beta _{2},\dots ,\beta _{k}\right) $ of length $%
k\geq 0$, are given by%
\begin{equation}
f_{F}(x)=E[\mathbf{1}_{\left\{ F>x\right\} }H_{(1,2,\dots ,d)}(F)],
\label{denV1}
\end{equation}%
\begin{equation}
\partial _{\beta }f_{F}(x)=\left( -1\right) ^{k}E[\mathbf{1}_{\left\{
F>x\right\} }H_{\left( 1,2,\dots ,d,\beta _{1},\beta _{2},\dots ,\beta
_{k}\right) }(F)],  \label{denV2}
\end{equation}%
where $\mathbf{1}_{\left\{ F>x\right\} }=\prod_{i=1}^{d}\mathbf{1}_{\left\{
F_{i}>x_{i}\right\} }$ and the elements $H_{\beta }(F)$ are recursively
defined by
\begin{equation}
\left\{
\begin{array}{ll}
H_{\beta }(F)=1, & \text{if }k=0; \\
&  \\
H_{\left( \beta _{1},\beta _{2},\dots ,\beta _{k}\right)
}(F)=\sum_{j=1}^{d}\delta \left( H_{\left( \beta _{1},\beta _{2},\dots
,\beta _{k-1}\right) }(F)\left( \gamma _{F}^{-1}\right) ^{\beta
_{1}j}DF_{j}\right) , & \text{if }k\geq 1.%
\end{array}%
\right.  \label{H1}
\end{equation}
\end{lemma}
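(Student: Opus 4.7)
The plan is to mimic the standard proof of the one-dimensional density formula (Proposition 2.1.1 of \cite{Nu06} referenced just before the statement), iterating Malliavin's integration by parts one coordinate at a time. First I would establish, by induction on $k=|\beta|$, the extended integration by parts identity
\begin{equation*}
E[\partial_\beta \phi(F)\,G] = E[\phi(F)\, H_\beta(F,G)]
\end{equation*}
for $\phi \in C_p^\infty(\mathbb{R}^d)$ and $G \in \mathbb{D}^\infty$, where $H_\beta(F,G)$ is built by the same recursion as (\ref{H1}) but with $G$ replacing the initial constant $1$. The base case $k=1$ uses the chain rule $D(\phi(F)) = \sum_i \partial_i\phi(F)\,DF_i$, inverts the Malliavin matrix to obtain $\partial_i\phi(F) = \sum_j (\gamma_F^{-1})^{ij}\langle D\phi(F),DF_j\rangle_{\mathfrak{H}}$, and then applies the duality formula (\ref{duality}) to move the divergence onto $\phi(F)$; the inductive step peels off one coordinate and applies the base case to a new scalar weight. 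To make sense of each divergence one needs $(\gamma_F^{-1})^{ij} \in \mathbb{D}^\infty$, which follows from the nondegeneracy hypothesis $(\det\gamma_F)^{-1} \in \cap_{p\geq 1}L^p(\Omega)$ combined with Cramer's rule and $F \in \mathbb{D}^\infty$; then Meyer's inequality (\ref{Meyer}) applied iteratively shows $H_\beta(F,G) \in \cap_{p\geq 1}L^p(\Omega)$ for any $G \in \mathbb{D}^\infty$.

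Next I would deduce the density formula (\ref{denV1}). For $\psi \in C_c^\infty(\mathbb{R}^d)$, define
\begin{equation*}
\Psi(y) = \int_{\mathbb{R}^d} \mathbf{1}_{\{y>x\}}\,\psi(x)\,dx = \int_{-\infty}^{y_1}\!\!\cdots\!\int_{-\infty}^{y_d}\psi(z_1,\dots,z_d)\,dz_d\cdots dz_1,
\end{equation*}
which is smooth and bounded with $\partial_{(1,\dots,d)}\Psi = \psi$. Applying the identity above with $\phi = \Psi$, $G = 1$, and multi-index $(1,2,\dots,d)$ yields
\begin{equation*}
E[\psi(F)] = E[\Psi(F)\,H_{(1,\dots,d)}(F)] = \int_{\mathbb{R}^d}\psi(x)\,E[\mathbf{1}_{\{F>x\}}\,H_{(1,\dots,d)}(F)]\,dx
\end{equation*}
after a Fubini interchange, which identifies the density as in (\ref{denV1}). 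The derivative formula (\ref{denV2}) then follows by applying the same identity with the extended multi-index $(1,\dots,d,\beta_1,\dots,\beta_k)$ and $G=1$, and comparing with Lebesgue integration by parts in the form $\int \partial_\beta\psi(x)\,f_F(x)\,dx = (-1)^k\int \psi(x)\,\partial_\beta f_F(x)\,dx$, which explains both the sign $(-1)^k$ and the appearance of the extended list of indices in $H_{(1,\dots,d,\beta_1,\dots,\beta_k)}(F)$.

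Finally I would verify the Schwartz decay. Using $\mathbf{1}_{\{F>x\}} \leq \mathbf{1}_{\{F_i>x_i\}}$ for any coordinate $i$, together with Cauchy--Schwarz, Chebyshev's inequality, and the fact that each $F_i \in \mathbb{D}^\infty$ has moments of all orders, one obtains
\begin{equation*}
|x|^m\,|\partial_\beta f_F(x)| \leq C_{m,\beta}\,\|H_{(1,\dots,d,\beta_1,\dots,\beta_k)}(F)\|_2
\end{equation*}
for every $m \geq 0$ and every multi-index $\beta$, placing $f_F$ in $\mathcal{S}(\mathbb{R}^d)$. I expect the main obstacle to be the bookkeeping in the second step: one must check that at every stage of the recursion the argument of $\delta$ really lies in some $\mathbb{D}^{1,p}$, which in turn requires controlling arbitrarily many Malliavin derivatives of $(\gamma_F^{-1})^{ij}$ via Cramer's rule and the negative-moment bound on $\det\gamma_F$. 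Once that is in place, the rest of the proof is a careful but routine iteration of the one-dimensional scheme.
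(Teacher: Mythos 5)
The paper does not prove this lemma itself: it cites \cite[Proposition 2.1.5]{Nu06} immediately before the statement. Your sketch reconstructs that standard proof — the iterated Malliavin integration by parts $E[\partial_\beta\phi(F)G]=E[\phi(F)H_\beta(F,G)]$, the choice $\Psi(y)=\int \mathbf{1}_{\{y>x\}}\psi(x)\,dx$ with $\partial_{(1,\dots,d)}\Psi=\psi$, the Fubini interchange to read off the density, the comparison with Lebesgue integration by parts for $\partial_\beta f_F$, and the use of Cramer's rule plus Meyer's inequality to keep each stage of the recursion in $\mathrm{Dom}\,\delta$ — and all of this is sound.

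The one genuine gap is in your verification of the Schwartz decay. The inequality $\mathbf{1}_{\{F>x\}}\leq \mathbf{1}_{\{F_i>x_i\}}$, combined with Cauchy--Schwarz and Chebyshev, yields $P(F_i>x_i)^{1/2}\leq \|F_i\|_{2m}^m/x_i^m$ only when $x_i>0$; as $x_i\to -\infty$ we have $P(F_i>x_i)\to 1$, so this route produces no decay in that direction, and the claimed bound $|x|^m|\partial_\beta f_F(x)|\leq C_{m,\beta}\|H\|_2$ does not follow as written. The standard fix is to observe that the same integration-by-parts identity, applied to the modified primitive obtained by replacing $\int_{-\infty}^{y_i}$ with $-\int_{y_i}^{\infty}$ for each coordinate $i$ in a chosen set $S\subset\{1,\dots,d\}$, yields the alternative representation
\begin{equation*}
\partial_\beta f_F(x)=(-1)^{k+|S|}E\Big[\prod_{i\in S}\mathbf{1}_{\{F_i\leq x_i\}}\prod_{i\notin S}\mathbf{1}_{\{F_i>x_i\}}\,H_{(1,\dots,d,\beta)}(F)\Big].
\end{equation*}
Choosing $S=\{i: x_i<0\}$ makes every indicator amenable to Chebyshev (now via $P(F_i\leq x_i)\leq \|F_i\|_{2m}^{2m}/|x_i|^{2m}$ for $x_i<0$), giving polynomial decay in every octant and hence $f_F\in\mathcal{S}(\mathbb{R}^d)$. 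With that patch your argument is complete and coincides with the one the paper delegates to \cite{Nu06}.
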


Fix $d$ natural numbers $1\leq q_{1}\leq \cdots \leq q_{d}$. We will
consider a random vector of multiple stochastic integrals: $F=(F_{1},\dots
,F_{d})=(I_{q_{1}}(f_{1}),\dots ,I_{q_{d}}(f_{d}))$, where $f_{i}\in
\mathfrak{H}^{\odot q_{i}}$. Denote
\begin{equation}
V=\left( E[F_{i}F_{j}]\right) _{1\leq i,j\leq d}\text{, }\quad Q=\mathrm{diag%
}(q_{1},\dots ,q_{d})\quad (\text{diagonal matrix of elements }q_{1},\dots
,q_{d}).  \label{VQ}
\end{equation}%
Along this section, we denote by $N=(N_{1},\dots ,N_{d})$ a standard normal
vector given by $N_{i}=I_{1}(h_{i})$, where $h_{i}\in \mathfrak{H}$ are
orthonormal. We denote by $I$ the $d$ dimensional identity matrix, and by $%
|\cdot |$ the Hilbert-Schmidt norm of a matrix. The following is the main
theorem of this section.

\begin{theorem}
\label{MultiThm0}Let $F=(F_{1},\dots ,F_{d})=(I_{q_{1}}(f_{1}),\dots
,I_{q_{d}}(f_{d}))$ be non-degenerate and let $\phi $ be the density of $N$.
Then for any multi-index $\beta $ of length $k\geq 0$, the density $f_{F}$
of $F$ satisfies%
\begin{equation}
\sup_{x\in \mathbb{R}^{d}}\left\vert \partial _{\beta }f_{F}(x)-\partial
_{\beta }\phi (x)\right\vert \leq C\left( |V-I|+\sum_{1\leq j\leq d}\sqrt{%
E[F_{j}^{4}]-3(E[F_{j}^{2}])^{2}}\right) \,,  \label{mainresult6}
\end{equation}%
where the constant $C$ depends on\ $d,V,Q,k$ and $\left\Vert \left( \det
\gamma _{F}\right) ^{-1}\right\Vert _{(k+4)2^{k+3}}$.
\end{theorem}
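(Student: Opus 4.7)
The plan is to apply the density formula of Lemma \ref{V-den} simultaneously to $F$ and to the standard Gaussian vector $N$, writing the target quantity as a single expectation difference, and then to control it by combining a Malliavin-calculus expansion of $H_\gamma(F)$ with the multivariate Stein bound referenced as Proposition \ref{Stein-prop}. First, setting $\gamma=(1,2,\dots,d,\beta_1,\dots,\beta_k)$, formula \eqref{denV2} applied to both $F$ and $N$ gives
\begin{equation*}
\partial_\beta f_F(x)-\partial_\beta\phi(x)=(-1)^k\bigl\{E[\mathbf{1}_{\{F>x\}}H_\gamma(F)]-E[\mathbf{1}_{\{N>x\}}H_\gamma(N)]\bigr\},
\end{equation*}
so everything reduces to bounding this difference uniformly in $x$.

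Next I would unroll the recursion \eqref{H1} using the factor-out identity $\delta(Gu)=G\delta(u)-\langle DG,u\rangle_\mathfrak{H}$, pushing the divergences to the outside. This expresses $H_\gamma(F)$ as a finite sum of products whose factors are entries of $\gamma_F^{-1}$, components of $F$, contractions $\langle D^aF_i,D^bF_j\rangle_{\mathfrak{H}^{\otimes(a+b-2)}}$, and iterated divergences $\delta^j(\cdots)$. For the Gaussian input $F=N$ one has $\gamma_N=I$, $D^jN=0$ for $j\geq2$, and $\delta^jN_i$ collapses to a univariate Hermite polynomial in $N_i$; hence the expansion produces an explicit multivariate Hermite polynomial $\widetilde{H}_\gamma(N)$.

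I would then introduce $\widetilde{H}_\gamma(F)$, the same polynomial evaluated at $F$, and split the target by the triangle inequality into
\begin{equation*}
\bigl|E[\mathbf{1}_{\{F>x\}}(H_\gamma(F)-\widetilde{H}_\gamma(F))]\bigr|+\bigl|E[\mathbf{1}_{\{F>x\}}\widetilde{H}_\gamma(F)]-E[\mathbf{1}_{\{N>x\}}\widetilde{H}_\gamma(N)]\bigr|.
\end{equation*}
In the first (Malliavin) piece, every surviving summand of $H_\gamma(F)-\widetilde{H}_\gamma(F)$ contains a factor of either $(\gamma_F^{-1}-I)_{ij}$ or a higher-order contraction $D^aF_i\otimes_1 D^bF_j$ with $a+b\geq3$. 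A combination of H\"older's inequality, Meyer's inequality \eqref{Meyer}, chaos hypercontractivity \eqref{Hyper}, the chaos contraction bound \eqref{contrtnBd} applied component-wise (together with its off-diagonal polarized version), and the equivalence \eqref{equi} then controls these by $|V-I|+\sum_j\sqrt{E[F_j^4]-3(E[F_j^2])^2}$. The negative-moment hypothesis $\|(\det\gamma_F)^{-1}\|_{(k+4)2^{k+3}}<\infty$ is precisely what is needed to absorb the resulting powers of $\gamma_F^{-1}$ that appear after each application of $D$ to $\gamma_F^{-1}=\frac{1}{\det\gamma_F}\mathrm{adj}(\gamma_F)$ in the unrolling. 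For the second piece I would invoke Proposition \ref{Stein-prop}, the multivariate Stein estimate for non-smooth test functions of the form $\mathbf{1}_{\{x>z\}}p(x)$ with $p$ polynomial, which delivers a bound of exactly the same shape $|V-I|+\sum_j\sqrt{E[F_j^4]-3(E[F_j^2])^2}$.

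The main obstacle will be the bookkeeping in the Malliavin piece: the recursion \eqref{H1} produces a large combinatorial expansion, and one must verify that every term of $H_\gamma(F)-\widetilde{H}_\gamma(F)$ contains at least one ``distance-from-Gaussian'' factor (either a deviation $(\gamma_F^{-1}-I)_{ij}$ or a contraction of total order $\geq3$), while the remaining factors remain uniformly bounded in the appropriate $L^p$. Tracking the exponents carefully is what dictates the specific moment assumption $\|(\det\gamma_F)^{-1}\|_{(k+4)2^{k+3}}<\infty$ in the statement. Once Proposition \ref{Stein-prop} is available as a black box, the Stein side of the argument plugs in cleanly.
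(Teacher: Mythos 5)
Your overall strategy matches the paper's: apply the density formula \eqref{denV2} to both $F$ and $N$, observe that $H_\gamma(N)$ is a polynomial $g_\gamma(N)$ in $N$, split the difference into a Stein piece $\bigl|E[\mathbf{1}_{\{F>x\}}g_\gamma(F)]-E[\mathbf{1}_{\{N>x\}}g_\gamma(N)]\bigr|$ controlled by Proposition \ref{Stein-prop} and a Malliavin piece $E[|H_\gamma(F)-g_\gamma(F)|]$ controlled through the moment assumption on $(\det\gamma_F)^{-1}$. Where you diverge is in the treatment of the Malliavin piece. You propose to directly unroll the recursion \eqref{H1} for $H_\gamma(F)$ and argue term-by-term that each summand of $H_\gamma(F)-g_\gamma(F)$ carries a "distance-from-Gaussian" factor. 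This is true in principle, but because each application of $D$ to $\gamma_F^{-1}$ via $D\gamma_F^{-1}=-\gamma_F^{-1}(D\gamma_F)\gamma_F^{-1}$ doubles the number of $\gamma_F^{-1}$ factors, the direct expansion is unwieldy and it is not obvious how to organize the exponent bookkeeping. The paper sidesteps this by inserting an intermediate quantity $K_\gamma(F)$, defined by the same recursion \eqref{K} but with $\gamma_F^{-1}$ replaced by the constant matrix $(VQ)^{-1}$. The difference $H_\gamma(F)-K_\gamma(F)$ is handled once and for all in Lemma \ref{HKLemma} by a Lipschitz-type estimate using Meyer's inequality and the norm bounds on $\gamma_F^{-1}-(VQ)^{-1}$ from \eqref{InvMtrx-}; since $(VQ)^{-1}$ is deterministic, the recursion for $K_\gamma$ admits the clean closed form of Lemma \ref{K_beta}, namely $K_\gamma=G_\gamma+T_\gamma$ with $G_\gamma$ a polynomial in $V^{-1}F$ and the entries $M_{ij}=(V^{-1}\gamma_F V^{-1}Q^{-1})_{ij}$, and $T_\gamma$ the genuinely "higher-order" terms that each contain a derivative of an $M_{ij}$. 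Both approaches lead to the same bound, but without the $K_\gamma$ intermediate you would have to track, for every term of your direct unrolling, the total power of $\gamma_F^{-1}$ accumulated through repeated differentiation — exactly the obstacle you flag at the end. You should either introduce $K_\gamma$ or set up an explicit inductive invariant on the power of $\gamma_F^{-1}$ appearing at each stage, with a matching application of \eqref{Gma-kp}, before the sketch becomes a proof.
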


\begin{proof}
Note that $\partial _{\beta }\phi (x)=\left( -1\right) ^{k}E[\mathbf{1}%
_{\left\{ N>x\right\} }H_{\left( 1,2,\dots ,d,\beta _{1},\beta _{2},\dots
,\beta _{k}\right) }(N)]$. Then, in order to estimate the difference between
$\partial _{\beta }f_{F_{n}}$ and $\partial _{\beta }\phi $, it suffices to
estimate
\begin{equation*}
E[\mathbf{1}_{\left\{ F>x\right\} }H_{\beta }(F)]-E[\mathbf{1}_{\left\{
N>x\right\} }H_{\beta }(N)]
\end{equation*}%
for all multi-index $\beta $ of length $k$ for all $k\geq d$.

Fix a multi-index $\beta $ of length $k$ for some $k\geq d$. For the above
standard normal random vector $N$, we have $\gamma _{N}=I$ and $\delta
(DN_{i})=N_{i}$. We can deduce from the expression (\ref{H1}) that $H_{\beta
}(N)=g_{\beta }(N)$, where $g_{\beta }(x)$ is a polynomial on $\mathbb{R}%
^{d} $ (see Remark \ref{g_beta_rmk}). Then,
\begin{eqnarray}
&&\left\vert E[\mathbf{1}_{\left\{ F>x\right\} }H_{\beta }(F)]-E[\mathbf{1}%
_{\left\{ N>x\right\} }H_{\beta }(N)]\right\vert  \notag \\
&\leq &\left\vert E[\mathbf{1}_{\left\{ F>x\right\} }g_{\beta }(F)]-E[%
\mathbf{1}_{\left\{ N>x\right\} }g_{\beta }(N)]\right\vert +E\left[
\left\vert H_{\beta }(F)-g_{\beta }(F)\right\vert \right]  \notag \\
&=&A_{1}+A_{2}.  \label{V-main}
\end{eqnarray}%
The term $A_{1}=\left\vert E[\mathbf{1}_{\left\{ F>x\right\} }g_{\beta }(F)-%
\mathbf{1}_{\left\{ N>x\right\} }g_{\beta }(N)]\right\vert $ will be studied
in Subsection \ref{s.stein} by using the multivariate Stein's method.
Proposition \ref{Stein-prop} will imply that $A_{1}$ is bounded by the
right-hand side of (\ref{mainresult6}).

Consider the term $A_{2}=E\left[ \left\vert H_{\beta }(F)-g_{\beta
}(F)\right\vert \right] $.\ We introduce an auxiliary term $K_{\beta }(F)$,
which is defined similar to $H_{\beta }(F)$ with $\gamma _{F}^{-1}$ replaced
by $\left( VQ\right) ^{-1}$. That is, for any multi-index $\beta =\left(
\beta _{1},\beta _{2},\dots ,\beta _{k}\right) $ of length $k\geq 0$, we
define
\begin{equation}
\left\{
\begin{array}{ll}
K_{\beta }(F)=1 & \qquad \text{if }k=0; \\
&  \\
K_{\beta }(F)=\delta \left( K_{\left( \beta _{1},\beta _{2},\dots ,\beta
_{k-1}\right) }(F)\left( \left( VQ\right) ^{-1}DF\right) _{\beta _{k}}\right)
& \qquad \text{if }k\geq 1.%
\end{array}%
\right.  \label{K}
\end{equation}%
We have
\begin{equation}
A_{2}\leq E\left[ \left\vert H_{\beta }(F)-K_{\beta }(F)\right\vert \right]
+E[\left\vert K_{\beta }(F)-g_{\beta }(F)\right\vert ]=:A_{3}+A_{4}\,.
\label{KH2}
\end{equation}%
Lemma \ref{HKLemma} below shows that the term $A_{3}=E\left[ \left\vert
H_{\beta }(F)-K_{\beta }(F)\right\vert \right] $ is bounded by the
right-hand of (\ref{mainresult6}).

It remains to estimate $A_{4}$. For this we need the following lemma which
provides an explicit expression for the term $K_{\beta }(F)$. Before stating
this lemma we need to introduce some notation. For any 
multi-index $\beta =\left( \beta _{1},\beta _{2},\dots ,\beta _{k}\right) $,
$k\geq 1$, denote by $\widehat{\beta }_{i_{1}\dots i_{m}}$ the multi-index
obtained from $\beta $ after taking away the elements $\beta _{i_{1}},\beta
_{i_{2}},\dots ,\beta _{i_{m}}$. For example, $\widehat{\beta }_{14}=\left(
\beta _{2},\beta _{3}\,,\beta _{5}\,,\dots ,\beta _{k}\right) $. For any $d$
dimensional vector $G$ we denote by $G_{\beta }$ the product $G_{\beta
_{1}}G_{\beta _{2}}\cdots G_{\beta _{k}}$ and set $G_{\beta }=1$ if the
length of $\beta $ is $0$. Denote by $\left( S_{k}^{m};0\leq m\leq \lfloor
\frac{k}{2}\rfloor \right) $ the following sets
\begin{equation}
\left\{
\begin{array}{ll}
S_{k}^{-1}=S_{k}^{0}=\varnothing &  \\
S_{k}^{m}=\left\{
\begin{array}{c}
\{(i_{1},i_{2}),\dots ,(i_{2m-1},i_{2m})\}\in \left\{ 1,2,\dots ,k\right\}
^{2m}: \\
i_{2l-1}<i_{2l}\text{ for }1\leq l\leq m\,\text{and }i_{l}\neq i_{j}\text{
if }l\neq j%
\end{array}%
\right\} &
\end{array}%
\right.  \label{S_k^m}
\end{equation}%
For each element $\{(i_{1},i_{2}),\dots ,(i_{2m-1},i_{2m})\}\in S_{k}^{m}$,
we emphasize that the $m$ pairs of indices are unordered. In other words,
for $m\geq 1$, the set $S_{k}^{m}$ can be viewed as the set of all
partitions of $\{1,2,\dots ,k\}$ into $m$ pairs and $k-2m$ singletons.

Denote $M=V^{-1}\gamma _{F}V^{-1}Q^{-1}$ for $V$ and $Q$ given in $(\ref{VQ}%
) $ and denote $M_{ij}$ the $(i,j)$-th entry of $M$. Denote by $D_{\beta
_{i}}$ the Malliavin derivative in the direction of $\left(
V^{-1}Q^{-1}DF\right) _{\beta _{i}}=V^{-1}Q^{-1}DF_{\beta _{i}}$, that is,
\begin{equation}
D_{\beta _{i}}G=\left\langle DG,\left( V^{-1}Q^{-1}DF\right) _{\beta
_{i}}\right\rangle _{\mathfrak{H}}  \label{D_beta}
\end{equation}%
for any random variable $G\in \mathbb{D}^{1,2}$. %
%

\begin{lemma}
\label{K_beta}Let $F$ be a non-degenerate random vector. For a multi-index $%
\beta =(\beta _{1},\dots ,\beta _{k})$ of length $k\geq 0$, $K_{\beta }(F)$
defined by $(\ref{K})$ can be computed as follows:
\begin{equation}
K_{\beta }(F)=G_{\beta }(F)+T_{\beta }(F),  \label{K-expsn}
\end{equation}%
where 
%
%
\begin{equation}
G_{\beta }(F)=\sum_{m=0}^{\lfloor k/2\rfloor }\left( -1\right)
^{m}\sum_{\{(i_{1},i_{2}),\dots ,(i_{2m-1},i_{2m})\}\in S_{k}^{m}}\left(
V^{-1}F\right) _{\widehat{\beta }_{i_{1}\cdots i_{2m}}}M_{^{_{\beta
_{i_{1}}\beta _{i_{2}}}}}\cdots M_{^{\beta _{i_{2m-1}}\beta _{2m}}},
\label{G-beta-k}
\end{equation}%
and $T_{\beta }(F)$ are defined recursively by
\begin{eqnarray}
T_{\beta }(F) &=&\left( V^{-1}F\right) _{\beta _{k}}T_{\widehat{\beta }%
_{k}}(F)-D_{\beta _{k}}T_{\widehat{\beta }_{k}}(F)  \label{KT} \\
&&-\sum_{\{(i_{1},i_{2}),\dots ,(i_{2m-1},i_{2m})\}\in S_{k-1}^{m}}\left(
V^{-1}F\right) _{\widehat{\beta }_{ki_{1}\cdots i_{2m}}}D_{\beta _{k}}\left(
M_{^{\beta _{i_{1}}\beta _{i_{2}}}}\cdots M_{^{_{\beta _{i_{2m-1}}\beta
_{2m}}}}\right) ,  \notag
\end{eqnarray}%
for $k\geq 2$ and $T_{1}(F)=T_{2}(F)=0$.
\end{lemma}


\begin{proof}
For simplicity, we write $K_{\beta }$, $G_{\beta }$ and $T_{\beta }$ for $%
K_{\beta }(F)$, $G_{\beta }(F)$ and $T_{\beta }(F)$, respectively. By using
the fact that $\delta \left( \left( \left( VQ\right) ^{-1}DF\right) _{\beta
_{i}}\right) =\left( V^{-1}F\right) _{\beta _{i}}$ we obtain from (\ref{K})
that
\begin{equation}
K_{\beta }=\left( V^{-1}F\right) _{\beta _{k}}K_{\widehat{\beta }%
_{k}}-D_{\beta _{k}}K_{\widehat{\beta }_{k}}.  \label{K-1}
\end{equation}%
If if $k=1$, namely, $\beta =(\beta _{1})$, then
\begin{equation*}
K_{\beta }=\left( V^{-1}F\right) _{\beta _{1}}=G_{\beta }.
\end{equation*}%
If $k=2$, namely, $\beta =(\beta _{1},\beta _{2})$, then%
\begin{equation*}
K_{\beta }=\left( V^{-1}F\right) _{\beta }-M_{^{\beta _{1}\beta
_{2}}}=G_{\beta }\,.
\end{equation*}%
Hence, the identity (\ref{K-expsn}) is true for $k=1,2$. Assume now (\ref%
{K-expsn}) is true for all multi-index of length less than or equal to $k$.
Let $\beta =(\beta _{1},\dots ,\beta _{k+1})$. Then, (\ref{K-1}) implies%
\begin{equation}
K_{\beta }=\left( V^{-1}F\right) _{\beta _{k+1}}\left( G_{\widehat{\beta }%
_{k+1}}+T_{\widehat{\beta }_{k+1}}\right) -D_{\beta _{k+1}}\left( G_{%
\widehat{\beta }_{k+1}}+T_{\widehat{\beta }_{k+1}}\right) .  \label{K-2}
\end{equation}%
Noticing that
\begin{equation*}
D_{\beta _{k+1}}\left( V^{-1}F\right) _{\widehat{\beta }_{(k+1)i_{1}\cdots
i_{2m}}}=\sum_{j\in \left\{ 1,\dots ,k\right\} \backslash \left\{
i_{1},\dots ,i_{2m}\right\} }\left( V^{-1}F\right) _{\widehat{\beta }%
_{(k+1)ji_{1}\cdots i_{2m}}}M_{^{\beta _{j}\beta _{k+1}}},
\end{equation*}%
we have
\begin{eqnarray}
&&D_{\beta _{k+1}}G_{\widehat{\beta }_{k+1}}=B_{\beta }  \label{D-beta-k+1}
\\
&&+\sum_{m=0}^{\lfloor k/2\rfloor }\left( -1\right)
^{m}\sum_{\{(i_{1},i_{2}),\dots ,(i_{2m-1},i_{2m})\}\in S_{k}^{m}}\left(
V^{-1}F\right) _{\widehat{\beta }_{(k+1)i_{1}\cdots i_{2m}}}D_{\beta
_{k+1}}\left( M_{^{\beta _{i_{1}}\beta _{i_{2}}}}\cdots M_{^{\beta
_{i_{2m-1}}\beta _{2m}}}\right) ,  \notag
\end{eqnarray}%
where we let
\begin{equation*}
B_{\beta }=\sum_{m=0}^{\lfloor k/2\rfloor }\left( -1\right) ^{m}\sum
_{\substack{ \{(i_{1},i_{2}),\dots ,(i_{2m-1},i_{2m})\}\in S_{k}^{m},  \\ %
j\in \left\{ 1,\dots ,k\right\} \backslash \left\{ i_{1},\dots
,i_{2m}\right\} }}\left( V^{-1}F\right) _{\widehat{\beta }%
_{j(k+1)i_{1}\cdots i_{2m}}}M_{^{\beta _{j}\beta _{k+1}}}M_{^{\beta
_{i_{1}}\beta _{i_{2}}}}\cdots M_{^{\beta _{i_{2m-1}}\beta _{i_{2m}}}}.
\end{equation*}%
Substituting the expression (\ref{D-beta-k+1}) for $D_{\beta _{k+1}}G_{%
\widehat{\beta }_{k+1}}$ into (\ref{K-2}) and using (\ref{KT}) we obtain
\begin{equation*}
K_{\beta }=\left( V^{-1}F\right) _{\beta _{k+1}}G_{\widehat{\beta }%
_{k+1}}-B_{\beta }+T_{\beta }.
\end{equation*}%
To arrive at (\ref{K-expsn}) it remains to verify
\begin{equation}
G_{\beta }=\left( V^{-1}F\right) _{\beta _{k+1}}G_{\widehat{\beta }%
_{k+1}}-B_{\beta }.  \label{G-Beta1}
\end{equation}%
Introduce the following notation
\begin{equation}
C_{k+1}^{m}=\left\{ \left\{ (i_{1},i_{2}),\dots
,(i_{2m-3},i_{2m-2}),(j,k+1)\right\} :\{(i_{1},i_{2}),\dots
,(i_{2m-3},i_{2m-2})\}\in S_{k}^{m-1}\right\}  \label{C_k_m}
\end{equation}%
for $1\leq m\leq \lfloor \frac{k}{2}\rfloor $. Then, $S_{k+1}^{m}$ can be
decomposed as follows%
\begin{equation}
S_{k+1}^{m}=S_{k}^{m}\cup C_{k+1}^{m}.  \label{Set=}
\end{equation}

We consider first the case when $k$ is even. In this case, noticing that for
any element in $\{(i_{1},i_{2}),\dots ,(i_{2m-1},i_{2m})\}\in S_{k}^{\lfloor
\frac{k}{2}\rfloor }$, $\left\{ 1,\dots ,k\right\} \backslash \left\{
i_{1},\dots ,i_{2m}\right\} =\varnothing $. For any collection of indices $%
i_1, \dots, i_{2m} \subset \{1,2, \dots, k\}$, we set
\begin{equation*}
\Phi_{i_1 \dots i_{2m}}=\left( V^{-1}F\right) _{\widehat{\beta }_{
i_{1}\cdots i_{2m}}} M_{^{\beta _{i_{1}}\beta _{i_{2}}}}\cdots M_{^{\beta
_{i_{2m-1}}\beta _{i_{2m}}}}.
\end{equation*}
Then, we have%
\begin{eqnarray}
~ &-B_{\beta }=&\sum_{m=0}^{\lfloor \frac{k}{2}\rfloor -1}\left( -1\right)
^{m+1}\sum_{\substack{ \{(i_{1},i_{2}),\dots ,(i_{2m-1},i_{2m})\}\in
S_{k}^{m},  \\ j\in \left\{ 1,\dots ,k\right\} \backslash \left\{
i_{1},\dots ,i_{2m}\right\} }}\Phi_{j(k+1)i_1 \dots i_{2m}}  \notag \\
&=&\sum_{m=1}^{\lfloor \frac{k}{2}\rfloor }\left( -1\right) ^{n}\sum
_{\substack{ \{(i_{1},i_{2}),\dots ,(i_{2m-3},i_{2m-2})\}\in S_{k}^{m-1},
\\ j\in \left\{ 1,\dots ,k\right\} \backslash \left\{ i_{1},\dots
,i_{2n-2}\right\} }}\Phi_{j(k+1)i_1 \dots i_{2m-2}}  \label{G-Beta2} \\
&=&\sum_{m=1}^{\lfloor \frac{k+1}{2}\rfloor }\left( -1\right) ^{m}\sum
_{\substack{ \left\{ (i_{1},i_{2}),\dots
,(i_{2m-3},i_{2m-2}),(j,k+1)\right\}  \\ \in C_{k+1}^{m}}}\Phi_{j(k+1)i_1
\dots i_{2m-2}}\,,  \notag
\end{eqnarray}%
where in the last equality we used (\ref{C_k_m}) and the fact that $\lfloor
\frac{k}{2}\rfloor =\lfloor \frac{k+1}{2}\rfloor $ since $k$ is even. Taking
into account that $\left( V^{-1}F\right) _{\beta _{k+1}}\left(
V^{-1}F\right) _{\widehat{\beta }_{(k+1)i_{1}\cdots i_{2m}}}=\left(
V^{-1}F\right) _{\widehat{\beta }_{i_{1}\cdots i_{2m}}}$, we obtain from (%
\ref{G-beta-k}) that
\begin{equation}
\left( V^{-1}F\right) _{\beta _{k+1}}G_{\widehat{\beta }_{k+1}}=\sum_{m=0}^{%
\lfloor k/2\rfloor }\left( -1\right) ^{m}\sum_{\{(i_{1},i_{2}),\dots
,(i_{2m-1},i_{2m})\}\in S_{k}^{m}}\left( V^{-1}F\right) _{\widehat{\beta }%
_{i_{1}\cdots i_{2m}}}M_{^{\beta _{i_{1}}\beta _{i_{2}}}}\cdots M_{^{\beta
_{i_{2m-1}}\beta _{i_{2m}}}}.  \label{G-Beta3}
\end{equation}%
Now combining (\ref{G-Beta2}) and (\ref{G-Beta3}) with (\ref{Set=}) and
using again $\lfloor \frac{k}{2}\rfloor =\lfloor \frac{k+1}{2}\rfloor $ we
obtain
\begin{eqnarray*}
\left( V^{-1}F\right) _{\beta _{k+1}}G_{\widehat{\beta }_{k+1}}-B_{\beta }
&=&\sum_{m=0}^{\lfloor \left( k+1\right) /2\rfloor }\left( -1\right)
^{m}\sum_{\{(i_{1},i_{2}),\dots ,(i_{2m-1},i_{2m})\}\in S_{k}^{m}}\Phi_{
i_1\dots i_{2m}} \\
&&+\sum_{m=1}^{\lfloor \left( k+1\right) /2\rfloor }\left( -1\right)
^{m}\sum_{\left\{ (i_{1},i_{2}),\dots ,(i_{2m-3},i_{2m-2}),(j,k+1)\right\}
\in C_{k+1}^{m}}\Phi_{ i_1\dots i_{2m}} \\
&=&\sum_{m=0}^{\lfloor \left( k+1\right) /2\rfloor }\left( -1\right)
^{m}\sum_{\{(i_{1},i_{2}),\dots ,(i_{2m-1},i_{2m})\}\in S_{k+1}^{m}}\Phi_{
i_1\dots i_{2m}} \\
&=&G_{\beta }
\end{eqnarray*}%
as desired. This verifies (\ref{G-Beta1}) for the case $k$ is even.\ The
case when $k$ is odd can be verified similarly. Thus, we have proved (\ref%
{K-expsn}) by induction.
\end{proof}

\begin{remark}
\label{g_beta_rmk}For the random vector $N\sim N(0,I)$, we have $\gamma
_{N}=VQ=I$, so $H_{\beta }(N)=K_{\beta }(N)$. Then, it follows from Lemma $%
\ref{K_beta}$ that $H_{\beta }(N)=K_{\beta }(N)=g_{\beta }(N)$ with the
function $g_{\beta }(x):\mathbb{R}^{d}\rightarrow \mathbb{R}$ given by
\begin{equation}
g_{\beta }(x)=\sum_{m=0}^{\lfloor k/2\rfloor }\left( -1\right)
^{m}\sum_{\{(i_{1},i_{2}),\dots ,(i_{2m-1},i_{2m})\}\in S_{k}^{m}}x_{%
\widehat{\beta }_{i_{1}\cdots i_{2m}}}\delta _{^{\beta _{i_{1}}\beta
_{i_{2}}}}\cdots \delta _{^{_{\beta _{i_{2m-1}}\beta _{2m}}}},
\label{g-beta}
\end{equation}%
where we used $\delta _{ij}$ to denote the Kronecker symbol (without
confusion with the divergence operator). Notice that
\begin{equation*}
g_\beta(x)= \prod_{i=1}^d H_{k_i}(x_i),
\end{equation*}
where $H_{k_i}$ is the $k_i$th Hermite polynomial and for each $i=1,\dots, d$%
, $k_i$ is the number of components of $\beta$ equal to $i$.
\end{remark}

Let us return to the proof of Theorem \ref{MultiThm0} of estimating the term
$A_{4}$. From (\ref{K-expsn}) we can write
\begin{equation}
A_{4}=E\left[ \left\vert K_{\beta }(F)-g_{\beta }(F)\right\vert \right] \leq
E\left[ |G_{\beta }(F)-g_{\beta }(F)|\right] +E\left[ |T_{\beta }(F)|\right]
\,.  \label{A4}
\end{equation}%
Observe from the expression (\ref{KT}) that $T_{\beta }(F)$ is the sum of
terms of the following form
\begin{equation}
\left( V^{-1}F\right) _{\beta _{i_{1}}\beta _{i_{2}}\cdots \beta
_{i_{s}}}D_{\beta _{k_{1}}}D_{\beta _{k_{2}}}\cdots D_{\beta
_{k_{t}}}(\prod_{i}^{r}M_{^{\beta _{j_{i}}\beta _{l_{i}}}})  \label{KT-term}
\end{equation}%
for some $\left\{ i_{1},\dots ,i_{s},k_{1},\dots ,k_{t},j_{1},l_{1},\dots
j_{r},l_{r}\right\} \subset \left\{ 1,2,\dots ,k\right\} $ and $t\geq 1$.
Applying Lemma \ref{Variance} with (\ref{Hyper}) and (\ref{HyperMeyer}) we
obtain
\begin{equation}
E\left[ \left\vert T_{\beta }(F)\right\vert \right] \leq C\sum_{1\leq l\leq
d}\left\Vert \left\Vert DF_{l}\right\Vert _{\mathfrak{H}%
}^{2}-q_{l}E[F_{l}^{2}]\right\Vert _{2}^{\frac{1}{2}}.  \label{T_k(F)}
\end{equation}%
In order to compare $g_{\beta }(F)$ with $G_{\beta }(F)$, from (\ref{g-beta}%
) we can write $g_{\beta }(F)$ as
\begin{equation*}
g_{\beta }(F)=\sum_{m=0}^{\lfloor k/2\rfloor }\left( -1\right)
^{m}\sum_{\{(i_{1},i_{2}),\dots ,(i_{2m-1},i_{2m})\}\in S_{k}^{m}}F_{%
\widehat{\beta }_{i_{1}\cdots i_{2m}}}\delta _{\beta _{i_{1}}\beta
_{i_{2}}}\cdots \delta _{^{\beta _{i_{2m-1}}\beta _{2m}}}\,.
\end{equation*}%
Then, it follows from hypercontractivity property (\ref{Hyper}) that
\begin{equation*}
E\left[ \left\vert G_{\beta }(F)-g_{\beta }(F)\right\vert \right] \leq
C\left( \left\vert V^{-1}-I\right\vert +\left\Vert M-I\right\Vert
_{2}\right) ,
\end{equation*}%
where the constant $C$ depends on $k,V$ and $Q$. From $V^{-1}-I=V^{-1}\left(
I-V\right) $ we have $\left\vert V^{-1}-I\right\vert \leq C\left\vert
V-I\right\vert $, where $C$ depends on $V$. We also have $M-I=V^{-1}\left(
\gamma _{F}-VQ\right) V^{-1}Q^{-1}+V^{-1}-I$. Then, Lemma \ref{Variance}
implies that
\begin{eqnarray*}
\Vert M-I\Vert _{2} &\leq &C\left( \Vert \gamma _{F}-VQ\Vert
_{2}+|V^{-1}-I|\right) \\
&\leq &C\left( \sum_{1\leq l\leq d}\left\Vert \left\Vert DF_{l}\right\Vert _{%
\mathfrak{H}}^{2}-q_{l}EF_{l}^{2}\right\Vert _{2}+|V-I|\right) ,
\end{eqnarray*}%
where the constant $C$ depends on $k,V$ and $Q$. Therefore
\begin{equation}
E\left[ \left\vert G_{\beta }(F)-g_{\beta }(F)\right\vert \right] \leq
C\left( \left\vert V-I\right\vert +\sum_{1\leq l\leq d}\left\Vert \left\Vert
DF_{l}\right\Vert _{\mathfrak{H}}^{2}-q_{l}EF_{l}^{2}\right\Vert _{2}^{\frac{%
1}{2}}\right) \,.  \label{Gkgk}
\end{equation}%
Combining it with (\ref{T_k(F)}) we obtain from (\ref{A4}) that%
\begin{equation*}
A_{4}\leq C\left( \left\vert V-I\right\vert +\sum_{1\leq l\leq d}\left\Vert
\left\Vert DF_{l}\right\Vert _{\mathfrak{H}}^{2}-q_{l}EF_{l}^{2}\right\Vert
_{2}^{\frac{1}{2}}\right) \,,
\end{equation*}%
where the constant $C$ depends on $d,V,Q$. This completes the estimation of
the term $A_{4}$.
\end{proof}

\subsection{Sobolev norms of $\protect\gamma _{F}^{-1}$}

In this subsection we estimate the Sobolev norms of $\gamma _{F}^{-1}$, the
inverse of the Malliavin matrix $\gamma _{F}$ for a random variable $F$ of
multiple stochastic integrals. We begin with the following estimate on the
variance and Sobolev norms of $\left( \gamma _{F}\right) _{ij}=\left\langle
DF_{i},DF_{j}\right\rangle _{\mathfrak{H}}$, $1\leq i,j\leq d$, following
the approach of \cite{NoNo11,NP12,NPRv10}.

\begin{lemma}
\label{Variance}Let $F=I_{p}(f)\,$\ and $G=I_{q}(g)$ with $f\in \mathfrak{H}%
^{\odot p}$ and $g\in \mathfrak{H}^{\odot q}$ for $p,q\geq 1$. Then for all $%
k\geq 0$ there exists a constant $C_{p,q,k}$ such that%
\begin{eqnarray}
&&\left\Vert D^{k}\left( \left\langle DF,DG\right\rangle _{\mathfrak{H}}-%
\sqrt{pq}E[FG]\right) \right\Vert _{2}  \label{D<>} \\
&\leq &C_{p,q,k}(\left\Vert F\right\Vert _{2}^{2}+\left\Vert G\right\Vert
_{2}^{2})(\left\Vert \left\Vert DF\right\Vert _{\mathfrak{H}}^{2}-pE\left[
F^{2}\right] \right\Vert _{2}^{\frac{1}{2}}+\left\Vert \left\Vert
DG\right\Vert _{\mathfrak{H}}^{2}-pE\left[ G^{2}\right] \right\Vert _{2}^{%
\frac{1}{2}}).  \notag
\end{eqnarray}
\end{lemma}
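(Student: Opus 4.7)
The plan is to reduce the $D^{k}$-estimate to an $L^{2}$-estimate via the hypercontractivity-Meyer bound (\ref{HyperMeyer}), and then control that $L^{2}$-norm through the multiplication formula and standard contraction estimates. Writing $DF=pI_{p-1}(f)$ and $DG=qI_{q-1}(g)$ as $\mathfrak{H}$-valued multiple integrals and integrating the product formula (\ref{MltFml}) in the $\mathfrak{H}$-variable gives
\[
\langle DF, DG\rangle_{\mathfrak{H}} = pq\sum_{r=0}^{(p-1)\wedge(q-1)} r!\binom{p-1}{r}\binom{q-1}{r}\, I_{p+q-2-2r}\bigl(f\widetilde{\otimes}_{r+1}g\bigr).
\]
The only scalar term occurs when $p+q-2-2r=0$, which forces $p=q$ and $r=p-1$, and equals $p\cdot p!\langle f,g\rangle_{\mathfrak{H}^{\otimes p}} = \sqrt{pq}\,E[FG]$; for $p\ne q$ one also has $E[FG]=0$ by orthogonality. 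Hence
\[
R := \langle DF,DG\rangle_{\mathfrak{H}} - \sqrt{pq}\,E[FG] \ \in\ \bigoplus_{l=1}^{p+q-2}\mathcal{H}_{l},
\]
and (\ref{HyperMeyer}) yields $\|D^{k}R\|_{2}\le C_{p,q,k}\|R\|_{2}$.

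By orthogonality of multiple integrals of different orders,
\[
\|R\|_{2}^{2} = \sum_{s} c_{p,q,s}\,\|f\widetilde{\otimes}_{s}g\|_{\mathfrak{H}^{\otimes(p+q-2s)}}^{2},
\]
the sum running over $1\le s\le p\wedge q$ with $p+q-2s>0$, and $\|f\widetilde{\otimes}_{s}g\|\le\|f\otimes_{s}g\|$. Expanding $\|f\otimes_{s}g\|^{2}$ in an orthonormal basis of $\mathfrak{H}$ and treating the resulting double sum as a Hilbert-Schmidt inner product yields the classical contraction inequality
\[
\|f\otimes_{s}g\|^{2} \le \|f\otimes_{p-s}f\|_{\mathfrak{H}^{\otimes 2s}}\cdot\|g\otimes_{q-s}g\|_{\mathfrak{H}^{\otimes 2s}}.
\]
For $1\le s\le (p\wedge q)-1$ both $p-s$ and $q-s$ lie in $\{1,\dots,p-1\}$ and $\{1,\dots,q-1\}$ respectively, and an argument analogous to the one used in the proof of (\ref{contrtnBd})---expanding $\|DF\|_{\mathfrak{H}}^{2}-pE[F^{2}]$ into multiple integrals indexed by $f\widetilde{\otimes}_{r}f$ and using the symmetric-$f$ identity $\|f\otimes_{r}f\|=\|f\otimes_{p-r}f\|$ together with the block symmetry already present in $f\otimes_{r}f$---gives $\|f\otimes_{p-s}f\|\le C_{p}\bigl\|\|DF\|_{\mathfrak{H}}^{2}-pE[F^{2}]\bigr\|_{2}$, and similarly for $g$. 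At the boundary $s=p\wedge q$ with $p\ne q$, say $p<q$, we have $p-s=0$, so $\|f\otimes_{0}f\|=\|f\|_{\mathfrak{H}^{\otimes p}}^{2}=\|F\|_{2}^{2}/p!$, and the bound becomes $\|f\otimes_{p}g\|^{2}\le C\|F\|_{2}^{2}\bigl\|\|DG\|_{\mathfrak{H}}^{2}-qE[G^{2}]\bigr\|_{2}$; the case $q<p$ is symmetric.

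Set $A:=\bigl\|\|DF\|_{\mathfrak{H}}^{2}-pE[F^{2}]\bigr\|_{2}$ and $B:=\bigl\|\|DG\|_{\mathfrak{H}}^{2}-qE[G^{2}]\bigr\|_{2}$. Summing the bounds above gives $\|R\|_{2}^{2}\le C_{p,q}\bigl(AB+\|F\|_{2}^{2}B+\|G\|_{2}^{2}A\bigr)$. Combining with the hypercontractivity bound $A\le C_{p}\|F\|_{2}^{2}$ (which controls the $\sqrt{AB}$ term by $\|F\|_{2}B^{1/2}$) and then applying the elementary inequality $\sqrt{x+y+z}\le\sqrt{x}+\sqrt{y}+\sqrt{z}$ followed by AM-GM repackaging yields $\|R\|_{2}\le C_{p,q,k}(\|F\|_{2}^{2}+\|G\|_{2}^{2})(A^{1/2}+B^{1/2})$, which together with $\|D^{k}R\|_{2}\le C_{p,q,k}\|R\|_{2}$ proves the claim.

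The main technical obstacle is the contraction step: Cauchy-Schwarz produces the \emph{unsymmetrized} contractions $\|f\otimes_{r}f\|$, whereas the variance of $\|DF\|_{\mathfrak{H}}^{2}$ is naturally expressed through the \emph{symmetrized} $\|f\widetilde{\otimes}_{r}f\|^{2}$; reconciling the two via the identity $\|f\otimes_{r}f\|=\|f\otimes_{p-r}f\|$ and the block symmetry of $f\otimes_{r}f$, and then repackaging the final estimate into the clean product form via AM-GM, is the principal bookkeeping challenge. Once these contraction bounds are in hand, the remaining steps are the standard chaos/hypercontractivity machinery.
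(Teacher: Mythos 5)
Your overall route mirrors the paper's: expand $\langle DF,DG\rangle_{\mathfrak{H}}$ via the multiplication formula, compute the $L^{2}$ norm of the centered quantity by orthogonality, and control the resulting norms $\|f\widetilde{\otimes}_{r}g\|$ by $\|f\otimes_{p-r}f\|$ and $\|g\otimes_{q-r}g\|$ via Cauchy--Schwarz. Your reduction of the $D^{k}$ estimate to the $k=0$ case via (\ref{HyperMeyer}) is a valid and in fact cleaner variant of what the paper does (the paper differentiates the explicit multiple-integral expansion term by term and then applies the same contraction bounds). So the two proofs essentially coincide in structure.

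There is, however, a gap at exactly the step you flag as the ``principal bookkeeping challenge.'' The variance of $\|DF\|_{\mathfrak{H}}^{2}$ expands (via (\ref{contractionE}) with $k=l=1$) as a positive combination of the \emph{symmetrized} self-contraction norms $\|f\widetilde{\otimes}_{r}f\|^{2}$, whereas Cauchy--Schwarz leaves you holding the \emph{unsymmetrized} $\|f\otimes_{p-s}f\|$. The identity $\|f\otimes_{r}f\|=\|f\otimes_{p-r}f\|$ stays entirely on the unsymmetrized side, and ``block symmetry'' of $f\otimes_{r}f$ only gives the wrong-way inequality $\|f\widetilde{\otimes}_{r}f\|\le\|f\otimes_{r}f\|$; it does not bound $\|f\otimes_{r}f\|$ by a multiple of $\|f\widetilde{\otimes}_{r}f\|$, and such a pointwise reversal fails for general symmetric $f$. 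The standard way to close the gap is to pass through the fourth-moment identity (\cite[Lemma 5.2.4]{NP12}), which writes $E[F^{4}]-3\sigma^{4}$ as a positive combination of the \emph{unsymmetrized} $\|f\otimes_{r}f\|^{2}$, and then invoke (\ref{equi}) to trade $E[F^{4}]-3\sigma^{4}$ for $\mathrm{Var}(\|DF\|_{\mathfrak{H}}^{2})$; this yields $\|f\otimes_{s}f\|^{2}\le C_{p}\bigl\|\|DF\|_{\mathfrak{H}}^{2}-pE[F^{2}]\bigr\|_{2}^{2}$ for $1\le s\le p-1$ directly. (You are in good company: the paper's displayed equation (\ref{Var5}) appears to have a typo --- tracing (\ref{w1})--(\ref{w2}) gives $\|f\widetilde{\otimes}_{r}f\|^{2}$ and a fourth power on the binomial --- so the paper's own proof silently relies on the same detour.) A secondary, smaller point: your final repackaging actually yields $\|R\|_{2}\le C(\|F\|_{2}+\|G\|_{2})(A^{1/2}+B^{1/2})$, with the first power of the norms rather than the stated $(\|F\|_{2}^{2}+\|G\|_{2}^{2})$; the two are interchangeable only when the $L^{2}$ norms are bounded below, which is the regime in which the paper applies this lemma, so this is a blemish in the paper's statement rather than an error in your argument.
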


\begin{proof}
Without lost of generality, we assume $p\leq q$. Applying (\ref{MltFml})
with the fact that $DI_{p}(f)=pI_{p-1}(f)$ we have
\begin{eqnarray}
\left\langle DF,DG\right\rangle _{\mathfrak{H}} &=&pq\left\langle
I_{p-1}(f),I_{q-1}(g)\right\rangle _{\mathfrak{H}}  \label{<>} \\
&=&pq\sum_{r=0}^{p-1}r!\binom{p-1}{r}\binom{q-1}{r}I_{p+q-2-2r}\left( f%
\widetilde{\otimes _{r+1}}g\right)  \notag \\
&=&pq\sum_{r=1}^{p}\left( r-1\right) !\binom{p-1}{r-1}\binom{q-1}{r-1}%
I_{p+q-2r}\left( f\widetilde{\otimes _{r}}g\right) .  \notag
\end{eqnarray}%
Note that $E[FG]=0$ if $p<q$ and $E[FG]=\left\langle f,g\right\rangle _{%
\mathfrak{H}^{\otimes p}}=f\widetilde{\otimes _{p}}g$ if $p=q$. Then%
\begin{equation*}
\left\langle DF,DG\right\rangle _{\mathfrak{H}}-\sqrt{pq}E(FG)=pq%
\sum_{r=1}^{p}(1-\delta _{qr})\left( r-1\right) !\binom{p-1}{r-1}\binom{q-1}{%
r-1}I_{p+q-2r}\left( f\widetilde{\otimes _{r}}g\right) ,
\end{equation*}%
where $\delta _{qr}$ is again the Kronecker symbol. It follows that
\begin{eqnarray}
&&E\left[ \left\langle DF,DG\right\rangle _{\mathfrak{H}}-\sqrt{pq}E[FG]%
\right] ^{2}  \label{Var2} \\
&=&p^{2}q^{2}\sum_{r=1}^{p}(1-\delta _{qr})(r-1)!^{2}\binom{p-1}{r-1}^{2}%
\binom{q-1}{r-1}^{2}(p+q-2r)!\left\Vert f\widetilde{\otimes _{r}}%
g\right\Vert _{\mathfrak{H}^{\otimes (p+q-2r)}}^{2}.  \notag
\end{eqnarray}%
Note that if $r<p\leq q$, then (see also \cite[(6.2.7)]{NP12})%
\begin{eqnarray}
\left\Vert f\widetilde{\otimes _{r}}g\right\Vert _{\mathfrak{H}^{\otimes
(p+q-2r)}}^{2} &\leq &\left\Vert f\otimes _{r}g\right\Vert _{\mathfrak{H}%
^{\otimes (p+q-2r)}}^{2}=\left\langle f\otimes _{p-r}f,g\otimes
_{q-r}g\right\rangle _{\mathfrak{H}^{\otimes 2r}}  \notag \\
&\leq &\frac{1}{2}(\left\Vert f\otimes _{p-r}f\right\Vert _{\mathfrak{H}%
^{\otimes 2r}}^{2}+\left\Vert g\otimes _{q-r}g\right\Vert _{\mathfrak{H}%
^{\otimes 2r}}^{2}),  \label{Var3}
\end{eqnarray}%
and if $r=p<q$,
\begin{equation}
\left\Vert f\widetilde{\otimes _{p}}g\right\Vert _{\mathfrak{H}^{\otimes
(q-p)}}^{2}\leq \left\Vert f\otimes _{p}g\right\Vert _{\mathfrak{H}^{\otimes
(q-p)}}^{2}\leq \left\Vert f\right\Vert _{\mathfrak{H}^{\otimes
p}}^{2}\left\Vert g\otimes _{q-p}g\right\Vert _{\mathfrak{H}^{\otimes 2p}}.
\label{Var4}
\end{equation}%
From (\ref{w1}) and (\ref{w2}) it follows that
\begin{equation}
\left\Vert \left\Vert DF\right\Vert _{\mathfrak{H}}^{2}-pE\left[ F^{2}\right]
\right\Vert _{2}^{2}=p^{4}\sum_{r=1}^{p-1}(r-1)!^{2}\binom{p-1}{r-1}%
^{2}(2p-2r)!\left\Vert f\otimes _{r}f\right\Vert _{\mathfrak{H}^{\otimes
(2p-2r)}}^{2}\text{.}  \label{Var5}
\end{equation}%
Combining (\ref{Var2})--(\ref{Var5}) we obtain%
\begin{eqnarray*}
&&E\left[ \left\langle DF,DG\right\rangle _{\mathfrak{H}}-\sqrt{pq}E\left[ FG%
\right] \right] ^{2} \\
&\leq &C_{p,q}(\left\Vert \left\Vert DF\right\Vert _{\mathfrak{H}}^{2}-pE%
\left[ F^{2}\right] \right\Vert _{2}^{2}+\left\Vert F\right\Vert
_{2}^{2}\left\Vert \left\Vert DG\right\Vert _{\mathfrak{H}}^{2}-pE\left[
G^{2}\right] \right\Vert _{2}).
\end{eqnarray*}%
Then (\ref{D<>}) with $k=0$ follows from $\left\Vert \left\Vert
DF\right\Vert _{\mathfrak{H}}^{2}-pE\left[ F^{2}\right] \right\Vert _{2}\leq
C_{p}\left\Vert F\right\Vert _{2}^{2}$, which is implied by (\ref{HyperMeyer}%
). From (\ref{<>}) we deduce
\begin{equation*}
D^{k}\left\langle DF,DG\right\rangle _{\mathfrak{H}}=pq\sum_{r=1}^{p\wedge
\lbrack (p+q-k)/2]}\left( r-1\right) !\binom{p-1}{r-1}\binom{q-1}{r-1}\frac{%
p+q-2r}{p+q-k-2r}I_{p+q-k-2r}\left( f\widetilde{\otimes _{r}}g\right) .
\end{equation*}%
Then it follows from (\ref{Var3})--(\ref{Var5}) that
\begin{eqnarray*}
&&E\left\Vert D^{k}\left\langle DF,DG\right\rangle _{\mathfrak{H}%
}\right\Vert _{\mathfrak{H}^{\otimes k}}^{2} \\
&=&p^{2}q^{2}\sum_{r=1}^{p\wedge \lbrack (p+q-k)/2]}\left( r-1\right) !^{2}%
\binom{p-1}{r-1}^{2}\binom{q-1}{r-1}^{2}\frac{\left( p+q-2r\right) !^{2}}{%
\left( p+q-k-2r\right) !}\left\Vert f\widetilde{\otimes _{r}}g\right\Vert _{%
\mathfrak{H}^{\otimes (p+q-2r)}}^{2} \\
&\leq &C_{p,q}(\left\Vert \left\Vert DF\right\Vert _{\mathfrak{H}}^{2}-pE%
\left[ F^{2}\right] \right\Vert _{2}^{2}+\left\Vert F\right\Vert
_{2}^{2}\left\Vert \left\Vert DG\right\Vert _{\mathfrak{H}}^{2}-pE\left[
G^{2}\right] \right\Vert _{2}).
\end{eqnarray*}%
This completes the proof.
\end{proof}

The following lemma gives estimates on the Sobolev norms of the entries of $%
\gamma _{F}^{-1}$.

\begin{lemma}
\label{Gma-1_norm}Let $F=(F_{1},\dots ,F_{d})=(I_{q_{1}}(f_{1}),\dots
,I_{q_{d}}(f_{d}))$ be non-degenerate and let $\gamma _{F}=\left(
\left\langle DF_{i},DF_{j}\right\rangle _{\mathfrak{H}}\right) _{1\leq
i,j\leq d}$. Set $V=\left( E[F_{i}F_{j}]\right) _{1\leq i,j\leq d}$. Then
for any real number $p>1$,
\begin{equation}
\left\Vert \gamma _{F}^{-1}\right\Vert _{p}\leq C\left\Vert \left( \det
\gamma _{F}\right) ^{-1}\right\Vert _{2p},  \label{Gma-p}
\end{equation}%
where the constant $C$ depends on $q_{1},\dots ,q_{d},d,p$ and $V$.
Moreover, for any integer $k\geq 1$ and any real number $p>1$%
\begin{equation}
\left\Vert \gamma _{F}^{-1}\right\Vert _{k,p}\leq C\left\Vert \left( \det
\gamma _{F}\right) ^{-1}\right\Vert _{(k+2)2p}^{k+1}\sum_{i=1}^{d}\left\Vert
\left\Vert DF_{i}\right\Vert _{\mathfrak{H}}^{2}-q_{i}E\left[ F_{i}^{2}%
\right] \right\Vert _{2},  \label{Gma-kp}
\end{equation}%
where the constant $C$ depends on $q_{1},\dots ,q_{d},d,p,k$ and $V$.
\end{lemma}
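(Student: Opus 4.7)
The plan is to use Cramer's rule $\gamma_F^{-1} = (\det \gamma_F)^{-1}\mathrm{adj}(\gamma_F)$. Each entry of the adjugate is a polynomial of degree $d-1$ in the inner products $(\gamma_F)_{ij} = \langle DF_i, DF_j\rangle_{\mathfrak{H}}$, and applying the multiplication formula (\ref{MltFml}) to $DF_i = q_i I_{q_i-1}(f_i)$ shows that each such inner product lies in a finite direct sum of Wiener chaoses whose orders are controlled by the $q_i$. Consequently each adjugate entry belongs to a finite sum of chaoses, so hypercontractivity (\ref{HyperMeyer}) bounds all its $L^r$ norms by $L^2$ norms, which are in turn controlled by $V$ and the $q_i$ via Cauchy--Schwarz. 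Then (\ref{Gma-p}) follows at once from H\"older's inequality $\|\gamma_F^{-1}\|_p \le \|(\det \gamma_F)^{-1}\|_{2p}\,\|\mathrm{adj}(\gamma_F)\|_{2p}$.

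For (\ref{Gma-kp}) the starting point is the matrix identity $D(\gamma_F^{-1}) = -\gamma_F^{-1}(D\gamma_F)\gamma_F^{-1}$, obtained by differentiating $\gamma_F\gamma_F^{-1} = I$. I would then iterate Leibniz's rule to write $D^k(\gamma_F^{-1})$ as a finite linear combination of matrix products of the schematic form
\[
\gamma_F^{-1}\,(D^{j_1}\gamma_F)\,\gamma_F^{-1}\,(D^{j_2}\gamma_F)\,\gamma_F^{-1}\cdots (D^{j_m}\gamma_F)\,\gamma_F^{-1},
\]
indexed by compositions $j_1+\cdots+j_m = k$ with each $j_l \ge 1$ and $1\le m\le k$; every such term contains $m+1\le k+1$ copies of $\gamma_F^{-1}$ and $m$ derivative factors. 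A generalized H\"older inequality with exponents adapted to $(k+2)p$ distributes the $L^p$ norm over these factors: each $\gamma_F^{-1}$ placed in $L^{(k+2)p}$ contributes, via (\ref{Gma-p}), a factor $\|(\det\gamma_F)^{-1}\|_{2(k+2)p}$, and after padding $m+1$ up to the worst case $k+1$ this produces the advertised factor $\|(\det\gamma_F)^{-1}\|_{(k+2)2p}^{k+1}$. The remaining H\"older slot holds the product $\prod_l D^{j_l}\gamma_F$; since this tensor lies in a finite sum of chaoses by (\ref{MltFml}), hypercontractivity reduces its $L^{(k+2)p}$ norm to an $L^2$ norm, which Lemma \ref{Variance} then controls in terms of $\sum_i \|\,\|DF_i\|_{\mathfrak{H}}^2 - q_i E[F_i^2]\|_2$.

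The main obstacle is reconciling the exponent of the variance quantity $\varepsilon := \sum_{i=1}^d \|\,\|DF_i\|_{\mathfrak{H}}^2 - q_i E[F_i^2]\|_2$. A direct application of the stated bound of Lemma \ref{Variance} to each of the $m$ derivative factors yields only $\varepsilon^{m/2}$, and for $m\ge 2$ the extra $(m-1)/2$ powers of $\varepsilon^{1/2}$ can be absorbed into the constant using the hypercontractive bound $\varepsilon \le C$. To upgrade the remaining $\varepsilon^{1/2}$ to the first power of $\varepsilon$ required in the statement, one has to exploit the sharper form of the estimate hidden in the proof of Lemma \ref{Variance} (the passage through equations (\ref{Var2})--(\ref{Var5})), which on at least one carefully chosen factor produces the contraction norm $\|\,\|DF_i\|_{\mathfrak{H}}^2 - q_i E[F_i^2]\|_2$ without a square root, at the cost of a single Cauchy--Schwarz cross term absorbed into the constant via the remaining factors. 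The rest of the argument is routine bookkeeping of H\"older exponents and chaos orders.
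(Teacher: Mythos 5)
Your approach coincides with the paper's for both parts: (\ref{Gma-p}) via the adjugate $\gamma_F^{-1}=(\det\gamma_F)^{-1}\gamma_F^\ast$, hypercontractivity (\ref{HyperMeyer}) to control the adjugate entries, then H\"older; (\ref{Gma-kp}) via the identity (\ref{DkGm-2}), H\"older on the resulting products of $\gamma_F^{-1}$ and $D^j\gamma_F$, (\ref{Gma-p}) on each $\gamma_F^{-1}$ factor, and (\ref{HyperMeyer}) plus Lemma \ref{Variance} on the derivative factors. The exponent bookkeeping (at most $k+1$ copies of $\gamma_F^{-1}$, producing $\Vert(\det\gamma_F)^{-1}\Vert_{(k+2)2p}^{k+1}$) is exactly what the paper does in its iteration.

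The worry you raise about the power of $\varepsilon$ is genuine, but mis-diagnosed in one respect. The paper's own one-line justification (citing (\ref{HyperMeyer}) and (\ref{D<>})) only yields $\sum_i\varepsilon_i^{1/2}$, not $\sum_i\varepsilon_i$, because (\ref{D<>}) as stated carries the exponent $\tfrac12$. Your plan to recover the full power by going into (\ref{Var2})--(\ref{Var5}) does work for the diagonal entries $D^j\langle DF_i,DF_i\rangle$, where (\ref{Var5}) gives $\varepsilon_i^2$ and thus $\varepsilon_i$ after the square root. But for an off-diagonal entry with $q_i<q_j$, the $r=q_i$ contraction contributes through (\ref{Var4}) only a single power of $\Vert f_j\otimes_{q_j-q_i}f_j\Vert$, so the best you get there is $\varepsilon_j^{1/2}$; your ``single Cauchy--Schwarz cross term absorbed into the constant'' does not remove this. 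In short, (\ref{Gma-kp}) is cleanly provable only with the exponent $\tfrac12$ on each $\varepsilon_i$, which matches what every downstream use of the lemma (Lemma \ref{A1-V}, Lemma \ref{HKLemma}, Theorem \ref{MultiThm0}) actually requires; nothing in the rest of the argument is affected.
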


\begin{proof}
Let $\gamma _{F}^{\ast }$ be the adjugate matrix of $\gamma _{F}$. Note that
H\"{o}lder inequality and (\ref{HyperMeyer}) imply
\begin{equation*}
\left\Vert \left\langle DF_{i},DF_{j}\right\rangle _{\mathfrak{H}%
}\right\Vert _{p}\leq \left\Vert DF_{i}\right\Vert _{2p}\left\Vert
DF_{j}\right\Vert _{2p}\leq C_{V,p}
\end{equation*}%
for all $1\leq i,j\leq d$, $p\geq 1$. Applying Holder's inequality we obtain
that the $p$ norm of $\gamma _{F}^{\ast }$ is also bounded by a constant. A
further application of Holder's inequality to $\gamma _{F}^{-1} =\left( \det
\gamma _{F}\right) ^{-1} \gamma _{F}^{\ast } $ yields
\begin{equation}
\left\Vert \gamma _{F}^{-1} \right\Vert _{p}\leq \left\Vert \left( \det
\gamma _{F}\right) ^{-1}\right\Vert _{2p}\left\Vert \ \gamma _{F}^{\ast }
\right\Vert _{2p}\leq C_{ V ,p}\left\Vert \left( \det \gamma _{F}\right)
^{-1}\right\Vert _{2p},  \label{Gma-Lp}
\end{equation}%
which implies (\ref{Gma-p}).

Since $F$ is non-degenerate, then (see \cite[Lemma 2.1.6]{Nu06}) $\left(
\gamma _{F}^{-1}\right) _{^{ij}}$ belongs to $\mathbb{D}^{\infty }$ for all $%
i,j$ and
\begin{equation}
D\left( \gamma _{F}^{-1}\right) _{^{ij}}=-\sum_{m,n=1}^{d}\left( \gamma
_{F}^{-1}\right) _{^{im}}\left( \gamma _{F}^{-1}\right) _{^{nj}}D\left(
\gamma _{F}\right) _{mn}.  \label{DkGm-2}
\end{equation}%
Then, applying H\"{o}lder's inequality we obtain
\begin{eqnarray*}
\left\Vert D\left( \gamma _{F}^{-1}\right) \right\Vert _{p} &\leq
&\left\Vert \gamma _{F}^{-1}\right\Vert _{3p}^{2}\left\Vert D\gamma
_{F}\right\Vert _{3p}. \\
&\leq &C_{V,p}\left\Vert \left( \det \gamma _{F}\right) ^{-1}\right\Vert
_{6p}^{2}\sum_{i=1}^{d}\left\Vert \left\Vert DF_{i}\right\Vert _{\mathfrak{H}%
}^{2}-q_{i}E\left[ F_{i}^{2}\right] \right\Vert _{2},
\end{eqnarray*}%
where in the second inequality we used (\ref{Gma-p}) and
\begin{equation*}
\left\Vert D\gamma _{F}\right\Vert _{3p}\leq C_{V,p}\left\Vert D\gamma
_{F}\right\Vert _{2}\leq C_{V,p}\sum_{i=1}^{d}\left\Vert \left\Vert
DF_{i}\right\Vert _{\mathfrak{H}}^{2}-q_{i}E\left[ F_{i}^{2}\right]
\right\Vert _{2}
\end{equation*}%
for all $p\geq 1$, which follows from (\ref{HyperMeyer}) and (\ref{D<>}).
This implies (\ref{Gma-kp}) with $k\,=1$. For higher order derivatives, (\ref%
{Gma-kp}) follows from repeating the use of (\ref{DkGm-2}), (\ref{HyperMeyer}%
) and (\ref{D<>}).
\end{proof}

The following lemma estimates the difference $\gamma _{F}^{-1}-V^{-1}Q^{-1}$.

\begin{lemma}
Let $F=(F_{1},\dots ,F_{d})=(I_{q_{1}}(f_{1}),\dots ,I_{q_{d}}(f_{d}))$ be a
non-degenerate random vector with $1\leq q_{1}\leq \cdots \leq q_{d}$ and $%
f_{i}\in \mathfrak{H}^{\odot q_{i}}$. Let $\gamma _{F}$ be the Malliavin
matrix of $F$. Recall the notation of $V$ and $Q$ in $(\ref{VQ})$. Then, for
every integer $k\geq 1$ and any real number $p>1$ we have
\begin{equation}
\left\Vert \gamma _{F}^{-1}-V^{-1}Q^{-1}\right\Vert _{k,p}\leq C\left\Vert
\left( \det \gamma _{F}\right) ^{-1}\right\Vert _{(k+2)2p}^{k+1}\sum_{1\leq
l\leq d}\left\Vert \left\Vert DF_{l}\right\Vert _{\mathfrak{H}}^{2}-q_{l}E%
\left[ F_{l}^{2}\right] \right\Vert _{2}^{\frac{1}{2}},  \label{InvMtrx-}
\end{equation}%
where the constant $C$ depends on $d,V,Q,p$ and $k$.
\end{lemma}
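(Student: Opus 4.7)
My plan is to reduce everything to the algebraic identity
\begin{equation*}
\gamma_{F}^{-1}-V^{-1}Q^{-1}\;=\;-\,\gamma_{F}^{-1}\bigl(\gamma_{F}-VQ\bigr)V^{-1}Q^{-1},
\end{equation*}
so that the smallness of the left-hand side is driven entirely by the smallness of $\gamma_{F}-VQ$. The two factors $\gamma_{F}^{-1}$ and $\gamma_{F}-VQ$ play very different roles: the first is the ``bad'' random rational function of the entries of $\gamma_{F}$ (hence the appearance of $\|(\det\gamma_{F})^{-1}\|$ to some power), while the second is a finite Wiener-chaos object whose $L^{2}$ norm can be controlled by Lemma \ref{Variance}.

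First I would apply the Leibniz/product rule for Sobolev norms together with H\"older's inequality (as in the proofs of Lemma \ref{Gma-1_norm}) to get, for conjugate indices $\tfrac{1}{p}=\tfrac{1}{r_{1}}+\tfrac{1}{r_{2}}$,
\begin{equation*}
\bigl\|\gamma_{F}^{-1}-V^{-1}Q^{-1}\bigr\|_{k,p}\leq C\,\bigl\|\gamma_{F}^{-1}\bigr\|_{k,r_{1}}\bigl\|\gamma_{F}-VQ\bigr\|_{k,r_{2}}.
\end{equation*}
Since every entry of $\gamma_{F}-VQ$ is a finite sum of multiple integrals of orders at most $q_{i}+q_{j}-2$ (the zeroth-chaos part being cancelled exactly when $q_{i}=q_{j}$ by $(VQ)_{ij}=q_{i}E[F_{i}F_{j}]$, and vanishing by orthogonality when $q_{i}\neq q_{j}$), hypercontractivity~(\ref{HyperMeyer}) gives
\begin{equation*}
\bigl\|\gamma_{F}-VQ\bigr\|_{k,r_{2}}\leq C\,\bigl\|\gamma_{F}-VQ\bigr\|_{2},
\end{equation*}
and then Lemma \ref{Variance}, applied entry by entry with the observation that $\sqrt{q_{i}q_{j}}E[F_{i}F_{j}]=(VQ)_{ij}$ in both the $q_{i}=q_{j}$ and $q_{i}\neq q_{j}$ cases, yields
\begin{equation*}
\bigl\|\gamma_{F}-VQ\bigr\|_{2}\leq C\sum_{l=1}^{d}\bigl\|\,\|DF_{l}\|_{\mathfrak{H}}^{2}-q_{l}E[F_{l}^{2}]\bigr\|_{2}^{1/2}.
\end{equation*}

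The other factor $\|\gamma_{F}^{-1}\|_{k,r_{1}}$ is handled directly by the previous Lemma \ref{Gma-1_norm}: for $k=0$ use (\ref{Gma-p}), and for $k\geq 1$ use (\ref{Gma-kp}), which produces the power $\|(\det\gamma_{F})^{-1}\|^{k+1}$. By taking $r_{2}$ large (permissible because of hypercontractivity, which makes all $r_{2}\geq 2$ equivalent for $\gamma_{F}-VQ$), I can push $r_{1}$ as close to $p$ as I wish, which lets me absorb the shift of $L^{p}$-indices into the generic constant and arrive at the claimed index $(k+2)2p$.

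Putting the two bounds together gives, for $k\geq 1$, a right-hand side containing the extra factor $\sum_{l}\|\,\|DF_{l}\|_{\mathfrak{H}}^{2}-q_{l}E[F_{l}^{2}]\|_{2}$ coming from (\ref{Gma-kp}); since each $F_{l}$ lies in a fixed chaos, this factor is uniformly bounded by a constant depending only on $q_{l}$ and $V$, so it can be swallowed into $C$, leaving only the desired $\tfrac{1}{2}$-power term. The main technical obstacle will be bookkeeping the H\"older indices so that the power $k+1$ of $\|(\det\gamma_{F})^{-1}\|$ and the index $(k+2)2p$ both emerge cleanly; otherwise the argument is a direct chain of: (i) algebraic identity, (ii) product rule plus H\"older, (iii) hypercontractivity on the chaos factor, (iv) Lemma \ref{Variance}, (v) Lemma \ref{Gma-1_norm}.
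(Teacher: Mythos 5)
Your proof is correct and follows the paper's own argument almost exactly: both rest on the identity $\gamma_F^{-1}-V^{-1}Q^{-1}=-\gamma_F^{-1}(\gamma_F-VQ)V^{-1}Q^{-1}$ (valid because $V$ and $Q$ commute, $V$ being block-diagonal across distinct chaos orders), followed by H\"older, hypercontractivity (\ref{HyperMeyer}), Lemma \ref{Variance}, and Lemma \ref{Gma-1_norm}. The one place you diverge is the treatment of $k\ge 1$: the paper simply notes that $V^{-1}Q^{-1}$ is deterministic, so the higher derivatives $D^j(\gamma_F^{-1}-V^{-1}Q^{-1})=D^j\gamma_F^{-1}$ ($j\ge 1$) are already bounded by (\ref{Gma-kp}) at the same index $p$, reducing all the work to $k=0$; your Leibniz/H\"older factorization of the full $\|\cdot\|_{k,p}$-norm forces $r_1>p$ strictly and so actually lands on the moment index $(k+2)2r_1$ rather than the stated $(k+2)2p$ — harmless in every application, but a small overshoot that the paper's observation about the deterministic matrix sidesteps.
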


\begin{proof}
In view of Lemma \ref{Gma-1_norm}, we only need to consider the case when $%
k=0$ because $V$ and $Q$ are deterministic matrices. Note that
\begin{equation*}
\gamma _{F}^{-1}-V^{-1}Q^{-1}=\ \gamma _{F}^{-1}\ (VQ-\gamma
_{F})V^{-1}Q^{-1}.
\end{equation*}%
Then, applying Holder's inequality we have
\begin{equation*}
\left\Vert \gamma _{F}^{-1}-V^{-1}Q^{-1}\right\Vert _{p}\leq
C_{V,Q}\left\Vert \gamma _{F}^{-1}\right\Vert _{2p}\left\Vert VQ-\gamma
_{F}\right\Vert _{2p}.
\end{equation*}%
Note that (\ref{HyperMeyer}) and (\ref{D<>}) with $k=0$ imply
\begin{equation*}
\left\Vert VQ-\gamma _{F}\right\Vert _{2p}\leq C_{V,Q,p}\left\Vert VQ-\gamma
_{F}\right\Vert _{2}\leq C_{V,Q,p}\sum_{i=1}^{d}\left\Vert \left\Vert
DF_{i}\right\Vert _{\mathfrak{H}}^{2}-q_{i}E\left[ F_{i}^{2}\right]
\right\Vert _{2}^{\frac{1}{2}}.
\end{equation*}%
Then, applying (\ref{Gma-Lp}) we obtain
\begin{equation}
\left\Vert \gamma _{F}^{-1}-V^{-1}Q^{-1}\right\Vert _{p}\leq
C_{d,V,Q,p}\sum_{i=1}^{d}\left\Vert \left\Vert DF_{i}\right\Vert _{\mathfrak{%
H}}^{2}-q_{i}E\left[ F_{i}^{2}\right] \right\Vert _{2}^{\frac{1}{2}}
\label{InvMtrx-1}
\end{equation}%
as desired.
\end{proof}


\subsection{Technical estimates\label{s.stein}}

In this subsection, we study the terms $A_{1}=\left\vert
E[h(F)]-E[h(N)]\right\vert $ in Equation (\ref{V-main}) and $A_{3}=E\left[
\left\vert H_{\beta }(F)-K_{\beta }(F)\right\vert \right] $ in (\ref{KH2}).
For $A_{1}$, we shall use the multivariate Stein's method to give an
estimate for a large class of non-smooth test functions $h$.

\begin{lemma}
\label{SM-prop}Let $h:\mathbb{R}^{d}\rightarrow \mathbb{R}$ be an almost
everywhere continuous function such that $\left\vert h(x)\right\vert \leq
c\left( \left\vert x\right\vert ^{m}+1\right) $ for some $m,c>0$. Let $%
F=(F_{1},\dots ,F_{d})$ be non-degenerate with $E[F_{i}]=0,1\leq i\leq d$
and denote $N\sim N(0,I)$. Then there exists a constant $C_{m,c}$ depending
on $m$ and $c$ such that \
\begin{equation}
\left\vert E[h(F)]-E[h(N)]\right\vert \leq C_{m,c}\left( \left\Vert
F\right\Vert _{2m}^{m}+1\right) \sum_{i,j,k=1}^{d}\left\Vert \delta \left(
A_{ij}\left( \gamma _{F}^{-1}\right) _{jk}DF_{k}\right) \right\Vert _{2},
\label{SM-multi}
\end{equation}%
where $\gamma _{F}^{-1}$ is the inverse of the Malliavin matrix of $F$ and
\begin{equation}
A_{ij}=\delta _{ij}-\left\langle DF_{j},-DL^{-1}F_{i}\right\rangle _{%
\mathfrak{H}}.  \label{A_ij}
\end{equation}
\end{lemma}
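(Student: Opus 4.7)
The plan is to combine a multivariate version of Stein's method with Malliavin integration by parts. The first step is to solve the multivariate Stein equation
\[
\Delta f(x)-\langle x,\nabla f(x)\rangle=h(x)-E[h(N)]
\]
associated to the standard Gaussian $N$ on $\mathbb{R}^{d}$ via the Ornstein--Uhlenbeck semigroup representation
\[
f(x)=\int_{0}^{\infty}\bigl(E[h(N)]-E[h(e^{-t}x+\sqrt{1-e^{-2t}}\,Z)]\bigr)\,dt,
\]
where $Z\sim N(0,I)$. Since the Gaussian convolution smooths $h$, the function $f$ is $C^{\infty}$ on $\mathbb{R}^{d}$ even though $h$ is only almost everywhere continuous. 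Writing
\[
\partial_{i}f(x)=-\int_{0}^{\infty}\frac{e^{-t}}{\sqrt{1-e^{-2t}}}\,E\!\left[h(e^{-t}x+\sqrt{1-e^{-2t}}\,Z)\,Z_{i}\right]dt
\]
(obtained by integrating by parts against the Gaussian density to move the derivative off $h$) and using $|h(y)|\leq c(|y|^{m}+1)$, one finds an estimate of the form $|\partial_{i}f(x)|\leq C_{m,c}(|x|^{m}+1)$; both factors in the integrand decay fast enough in $t$ for convergence.

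With this Stein solution in hand, I plan to compute $E[h(F)-h(N)]=E[\Delta f(F)-\langle F,\nabla f(F)\rangle]$ and use the Malliavin integration-by-parts identity (\ref{MS-equ}), applied componentwise and using $E[F_{i}]=0$, to obtain
\[
E[F_{i}\,\partial_{i}f(F)]=\sum_{j=1}^{d}E\!\left[\partial_{ij}f(F)\,\langle DF_{j},-DL^{-1}F_{i}\rangle_{\mathfrak{H}}\right].
\]
Collecting diagonal and off-diagonal terms, and recalling the definition (\ref{A_ij}) of $A_{ij}$, this yields
\[
E[h(F)-h(N)]=\sum_{i,j=1}^{d}E\!\left[\partial_{ij}f(F)\,A_{ij}\right].
\]
At this point we only control the first derivatives of $f$, so the next step is to remove the second derivative using the nondegeneracy of $F$.

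The idea is to exploit $D(\partial_{i}f(F))=\sum_{l}\partial_{il}f(F)\,DF_{l}$. Taking the $\mathfrak{H}$-inner product with $DF_{k}$ and multiplying by $(\gamma_{F}^{-1})_{jk}$ gives, after summation over $k$, the identity
\[
\partial_{ij}f(F)=\sum_{k=1}^{d}(\gamma_{F}^{-1})_{jk}\,\bigl\langle D(\partial_{i}f(F)),DF_{k}\bigr\rangle_{\mathfrak{H}}.
\]
Multiplying by $A_{ij}$, taking expectation, and invoking the duality (\ref{duality}) with $u_{ijk}=A_{ij}(\gamma_{F}^{-1})_{jk}DF_{k}$ produces
\[
E\!\left[\partial_{ij}f(F)\,A_{ij}\right]=\sum_{k=1}^{d}E\!\left[\partial_{i}f(F)\,\delta\bigl(A_{ij}(\gamma_{F}^{-1})_{jk}DF_{k}\bigr)\right].
\]
A Cauchy--Schwarz estimate combined with the derivative bound $\|\partial_{i}f(F)\|_{2}\leq C_{m,c}(\|F\|_{2m}^{m}+1)$ derived in the first step then yields (\ref{SM-multi}).

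The main obstacles are technical: (a) verifying that the OU semigroup representation and its derivatives are well defined for merely a.e.-continuous $h$ of polynomial growth, and rigorously deriving the bound on $\partial_{i}f$ and (if needed for higher $m$) on intermediate quantities; and (b) justifying the Malliavin duality step, namely checking that $u_{ijk}\in\mathrm{Dom}\,\delta$ with $\delta(u_{ijk})\in L^{2}$, which requires the hypothesis $(\det\gamma_{F})^{-1}\in\cap_{p\geq 1}L^{p}$ together with $F\in\mathbb{D}^{\infty}$. The cleanest way to handle (b) will be to first establish the identity for smooth compactly supported $h$ by standard Malliavin calculus, and then pass to the limit via a mollification $h\ast\rho_{\varepsilon}$, using the uniform polynomial bound on $h$ and dominated convergence to recover the general case.
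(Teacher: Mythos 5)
Your proposal is correct and follows essentially the same route as the paper: solve the multivariate Stein equation via a Gaussian smoothing representation (your Ornstein--Uhlenbeck-semigroup form is a change of variables from the paper's $\int_0^1 \frac{1}{2t}E[h_\varepsilon(\sqrt{t}x+\sqrt{1-t}N)]\,dt$), derive the polynomial bound on $\partial_i f$, use $E[F_i\,\partial_i f(F)]=\sum_j E[\partial_{ij}f(F)\langle DF_j,-DL^{-1}F_i\rangle_{\mathfrak{H}}]$ to arrive at $\sum_{ij}E[\partial_{ij}f(F)A_{ij}]$, then eliminate the second derivative through the Malliavin-matrix identity $\partial_{ij}f(F)=\sum_k(\gamma_F^{-1})_{jk}\langle DF_k,D\partial_i f(F)\rangle_{\mathfrak{H}}$, dualize, and apply Cauchy--Schwarz. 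The paper organizes the approximation by first truncating and mollifying $h$ to a Lipschitz $h_\varepsilon$, proving the bound for $h_\varepsilon$, and passing to the limit at the very end; you propose the same regularization as a final step, so the only difference is in presentation order.
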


\begin{proof}
For $\varepsilon >0$, let
\begin{equation*}
h_{\varepsilon }(x)=(\mathbf{1}_{\left\{ \left\vert \cdot \right\vert <\frac{%
1}{\varepsilon }\right\} }h)\ast \rho _{\varepsilon }(x)=\int_{\mathbb{R}%
^{d}}\mathbf{1}_{\left\vert y\right\vert <\frac{1}{\varepsilon }}h(y)\rho
_{\varepsilon }(x-y)dy.
\end{equation*}%
where $\rho _{\varepsilon }$ is the standard mollifier. That is, $\rho
_{\varepsilon }(x)=\frac{1}{\varepsilon ^{d}}\rho (\frac{x}{\varepsilon }),$
where $\rho (x)=C\mathbf{1}_{\left\{ \left\vert x\right\vert <1\right\}
}\exp (\frac{1}{\left\vert x\right\vert ^{2}-1})$ and the constant $C$ is
such that $\int_{\mathbb{R}^d} \rho (x)dx=1$. Then $h_{\varepsilon }$ is
Lipschitz continuous. Hence, the solution $f_{\varepsilon }$ to the
following \emph{Stein's equation}:
\begin{equation}
\Delta f_{\varepsilon }(x)-\left\langle x,\nabla f_{\varepsilon
}(x)\right\rangle _{\mathbb{R}^{d}}=h_{\varepsilon }(x)-E[h_{\varepsilon
}(N)]\text{ }  \label{Stein-equMulti}
\end{equation}%
exists and its derivative has the following expression \cite[Page 82]{NP12}%
\begin{eqnarray}
\partial _{i}f_{\varepsilon }(x) &=&\frac{\partial }{\partial x_{i}}%
\int_{0}^{1}\frac{1}{2t}E[h_{\varepsilon }(\sqrt{t}x+\sqrt{1-t}N)]dt
\label{di-f} \\
&=&\int_{0}^{1}E[h_{\varepsilon }(\sqrt{t}x+\sqrt{1-t}N)N_{i}]\frac{1}{2%
\sqrt{t}\sqrt{1-t}}dt.  \notag
\end{eqnarray}%
It follows directly from the polynomial growth of $h$ that
\begin{equation}
\left\vert h_{\varepsilon }(x)\right\vert \leq C_{1}\left\vert x\right\vert
^{m}+C_{2}  \label{h_epsilon_bd}
\end{equation}%
for all $\varepsilon <1$, where $C_{1},C_{2}>0$ are two constants depending
on $c$ and $m$. Then, from (\ref{Stein-equMulti}) we can write
\begin{equation*}
\left\vert \partial _{i}f_{\varepsilon }(x)\right\vert \leq C_{1}\left\vert
x\right\vert ^{m}+C_{2},
\end{equation*}%
with two possibly different constants $C_1$, and $C_2$. Hence,
\begin{equation}
\left\Vert \partial _{i}f_{\varepsilon }(F)\right\Vert _{2}\leq
C_{1}\left\Vert F\right\Vert _{2m}^{m}+C_{2}.  \label{f_epsilon}
\end{equation}
Meanwhile, note that for $1\leq i\leq d$,
\begin{eqnarray*}
E[F_{i}\partial _{i}f_{\varepsilon }(F)] &=&E[LL^{-1}F_{i}\partial
_{i}f_{\varepsilon }(F)]=E[\left\langle -DL^{-1}F_{i},D\partial
_{i}f_{\varepsilon }(F)\right\rangle ] \\
&=&\sum_{j=1}^{d}E[\left\langle -DL^{-1}F_{i},\partial _{ij}f_{\varepsilon
}(F)DF_{j}\right\rangle ].
\end{eqnarray*}%
Then, replacing $x$ by $F$ and taking expectation in Equation $(\ref%
{Stein-equMulti})$ yields
\begin{equation}
\left\vert E[h_{\varepsilon }(F)]-E[h_{\varepsilon }(N)]\right\vert
=\left\vert \sum_{i,j=1}^{d}E\left[ \partial _{ij}^{2}f_{\varepsilon
}(F)A_{ij}\right] \right\vert .  \label{h_epsilon}
\end{equation}%
Notice that%
\begin{equation*}
\left\langle DF_{i},D\partial _{i}f_{\varepsilon }(F)\right\rangle _{%
\mathfrak{H}}=\left\langle DF_{i},\sum_{j=1}^{d}\partial
_{ij}^{2}f_{\varepsilon }(F)DF_{j}\right\rangle _{\mathfrak{H}%
}=\sum_{j=1}^{d}\partial _{ij}^{2}f_{\varepsilon }(F)\left\langle
DF_{i},DF_{j}\right\rangle _{\mathfrak{H}}
\end{equation*}%
for all $1\leq i,k\leq d$, which implies
\begin{equation*}
\partial _{ij}f_{\varepsilon }(F)=\sum_{k=1}^{d}\left( \gamma
_{F}^{-1}\right) _{jk}\left\langle DF_{k},D\partial _{i}f_{\varepsilon
}(F)\right\rangle _{\mathfrak{H}},
\end{equation*}%
and hence%
\begin{eqnarray*}
\sum_{i,j=1}^{d}E\left[ \partial _{ij}^{2}f_{\varepsilon }(F)A_{ij}\right]
&=&\sum_{i,j=1}^{d}E\left[ A_{ij}\left\langle \sum_{k=1}^{d}\left( \gamma
_{F}^{-1}\right) _{jk}DF_{k},D\partial _{i}f_{\varepsilon }(F)\right\rangle
_{\mathfrak{H}}\right] \\
&=&\sum_{i,j=1}^{d}E\left[ \partial _{i}f_{\varepsilon }(F)\delta \left(
A_{ij}\sum_{k=1}^{d}\left( \gamma _{F}^{-1}\right) _{jk}DF_{k}\right) \right]%
.
\end{eqnarray*}%
Substituting this expression in (\ref{h_epsilon}) and using (\ref{f_epsilon}%
) we obtain
\begin{eqnarray*}
\left\vert E[h_{\varepsilon }(F)]-E[h_{\varepsilon }(N)]\right\vert
&=&\sum_{i,j,k=1}^{d}E\left[ \partial _{i}f_{\varepsilon }(F)\delta \left(
A_{ij}\left( \gamma _{F}^{-1}\right) _{jk}DF_{k}\right) \right] \\
&\leq &\sum_{i,j,k=1}^{d}\left\Vert \partial _{i}f_{\varepsilon
}(F)\right\Vert _{2}\left\Vert \delta \left( A_{ij}\left( \gamma
_{F}^{-1}\right) _{jk}DF_{k}\right) \right\Vert _{2}\, \\
&\leq &\left( C_{1}\left\Vert F\right\Vert _{2m}^{m}+C_{2}\right)
\sum_{i,j,k=1}^{d}\left\Vert \delta \left( A_{ij}\left( \gamma
_{F}^{-1}\right) _{jk}DF_{k}\right) \right\Vert _{2}.
\end{eqnarray*}%
Then, we can conclude the proof by observing that
\begin{equation*}
\lim_{\varepsilon \rightarrow 0}\left\vert E[h_{\varepsilon
}(F)]-E[h_{\varepsilon }(N)]\right\vert =\left\vert
E[h(F)]-E[h(N)]\right\vert ,
\end{equation*}%
which follows from (\ref{h_epsilon_bd}) and the fact that $h_{\varepsilon
}\rightarrow h$ almost everywhere.
\end{proof}

The next lemma gives an estimate for $\left\Vert \delta \left( A_{ij}\left(
\gamma _{F}^{-1}\right) _{jk}DF_{k}\right) \right\Vert _{2}$ when $F$ is a
vector of multiple stochastic integrals.

\begin{lemma}
\label{A1-V}Let $F=(F_{1},\dots ,F_{d})=(I_{q_{1}}(f_{1}),\dots
,I_{q_{d}}(f_{d}))$, where $f_{i}\in \mathfrak{H}^{\odot q_{i}}$, be
non-degenerate and denote $N\sim N(0,I)$. Recall the notation of $V$ and $Q$
in $(\ref{VQ})$ and $A_{ij}$ in $(\ref{A_ij})$. Then, for all $1\leq
i,j,k\leq d$ we have%
\begin{eqnarray}
\left\Vert \delta \left( A_{ij}\left( \gamma _{F}^{-1}\right)
_{jk}DF_{k}\right) \right\Vert _{2} &\leq &C\left\Vert \left( \det \gamma
_{F}\right) ^{-1}\right\Vert _{12}^{3}  \label{Stein_delta} \\
&&\times \left( \ \left\vert V-I\right\vert +\sum_{i=1}^{d}\left\Vert
\left\Vert DF_{i}\right\Vert _{\mathfrak{H}}^{2}-q_{i}E\left[ F_{i}^{2}%
\right] \right\Vert _{2}^{\frac{1}{2}}\right) ,\,  \notag
\end{eqnarray}%
where the constant $C$ depends on $d,V,Q$.
\end{lemma}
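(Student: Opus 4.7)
Since $F_i = I_{q_i}(f_i)$ belongs to the $q_i$-th Wiener chaos, $-L^{-1}F_i = \tfrac{1}{q_i}F_i$ and therefore $-DL^{-1}F_i = \tfrac{1}{q_i}DF_i$, giving
\begin{equation*}
A_{ij} = \delta_{ij} - \frac{1}{q_i}(\gamma_F)_{ij}.
\end{equation*}
Because $E[(\gamma_F)_{ij}]$ equals $q_i E[F_iF_j]$ when $q_i = q_j$ and vanishes otherwise, a short case analysis gives $|E[A_{ij}]| \le |V-I|$. The centered variable $A_{ij} - E[A_{ij}]$ lies in a finite sum of Wiener chaoses (of orders at most $q_i+q_j-2$), so the hypercontractivity estimate (\ref{Hyper}) together with Lemma \ref{Variance} yields, for every $p \ge 2$ and a constant $C$ depending on $V$, $Q$, $p$,
\begin{equation*}
\|A_{ij}\|_p \le |V-I| + C\sum_{l=1}^d \bigl\|\,\|DF_l\|_{\mathfrak{H}}^2 - q_l E[F_l^2]\bigr\|_2^{1/2},\quad \|DA_{ij}\|_p \le C\sum_{l=1}^d \bigl\|\,\|DF_l\|_{\mathfrak{H}}^2 - q_l E[F_l^2]\bigr\|_2^{1/2}.
\end{equation*}

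Next I would apply the factorization (\ref{factorOut}) with scalar factor $G = A_{ij}(\gamma_F^{-1})_{jk}$ and $\mathfrak{H}$-valued $u = DF_k$. Using $\delta(DF_k) = -LF_k = q_k F_k$ this gives
\begin{equation*}
\delta\bigl(A_{ij}(\gamma_F^{-1})_{jk}DF_k\bigr) = q_k A_{ij}(\gamma_F^{-1})_{jk} F_k - (\gamma_F^{-1})_{jk}\langle DA_{ij}, DF_k\rangle_{\mathfrak{H}} - A_{ij}\langle D(\gamma_F^{-1})_{jk}, DF_k\rangle_{\mathfrak{H}}.
\end{equation*}
The Malliavin derivative of the inverse matrix expands as $D(\gamma_F^{-1})_{jk} = -\sum_{m,n}(\gamma_F^{-1})_{jm}(\gamma_F^{-1})_{nk}D(\gamma_F)_{mn}$, so the last summand further decomposes as a sum of products of two entries of $\gamma_F^{-1}$, $A_{ij}$, and the contraction $\langle D(\gamma_F)_{mn}, DF_k\rangle_{\mathfrak{H}}$.

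I would then take $L^2$ norms and apply H\"older's inequality to each summand, arranging the exponents so that every entry of $\gamma_F^{-1}$ is measured in $L^6$; by Lemma \ref{Gma-1_norm} each such factor contributes $C\|(\det\gamma_F)^{-1}\|_{12}$. The high-moment norms of $F_k$, $DF_k$, $D^2F_k$ are absolute constants by (\ref{HyperMeyer}); Lemma \ref{Variance} combined with (\ref{HyperMeyer}) gives $\|\langle D\gamma_{mn}, DF_k\rangle\|_p \le C\sum_l \|\cdots\|_2^{1/2}$; and $\|A_{ij}\|_p$, $\|DA_{ij}\|_p$ are controlled by the bounds of the first paragraph. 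The worst summand carries three factors of $\gamma_F^{-1}$ -- one from $(\gamma_F^{-1})_{jk}$ and two from the expansion of $D(\gamma_F^{-1})_{jk}$ -- producing the factor $\|(\det\gamma_F)^{-1}\|_{12}^3$. Because the quantities $\|\,\|DF_l\|^2 - q_lE[F_l^2]\|_2$ are a priori bounded in terms of $V$ and $Q$ (again by hypercontractivity), any cross-products can be reabsorbed into a single factor $|V-I| + \sum_l \|\cdots\|_2^{1/2}$, and collecting the three summands yields the claimed estimate.

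The main technical difficulty is the exponent bookkeeping in H\"older's inequality: every entry of $\gamma_F^{-1}$ must be kept in an $L^p$ with $2p \le 12$ so that the inverse-determinant norm remains $\|(\det\gamma_F)^{-1}\|_{12}$, while the remaining slots in the partition of unity $1 = \sum 1/p_i$ must simultaneously accommodate a centered $A_{ij}$ (or its derivative), one of $F_k$, $DF_k$, $D^2F_k$, and, in the third summand, an extra contraction $\langle D\gamma_{mn}, DF_k\rangle$. Once this balance is achieved -- choosing H\"older exponents in $\{6,12,\dots\}$ is enough -- the remaining computations are routine applications of the estimates gathered above.
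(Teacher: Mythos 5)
Your argument is correct in substance but follows a genuinely different route than the paper. The paper applies Meyer's inequality (\ref{Meyer}) at the outset to write $\|\delta(A_{ij}(\gamma_F^{-1})_{jk}DF_k)\|_2 \le C\|A_{ij}(\gamma_F^{-1})_{jk}DF_k\|_{1,2}$, then uses H\"older's inequality and (\ref{HyperMeyer}) to reduce this to $C\|A_{ij}\|_{1,2}\|(\gamma_F^{-1})_{jk}\|_{1,4}$, which it controls with Lemma \ref{Variance} and Lemma \ref{Gma-1_norm}. You instead compute the divergence exactly via (\ref{factorOut}) together with $\delta(DF_k)=q_kF_k$, expand $D(\gamma_F^{-1})_{jk}$ using (\ref{DkGm-2}), and estimate the three resulting summands term by term. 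Both arguments rest on the same underlying ingredients (Lemma \ref{Variance}, Lemma \ref{Gma-1_norm}, hypercontractivity) and both yield a bound of the claimed form; yours is more concrete but requires more delicate exponent bookkeeping, while the Meyer-inequality route packages the divergence estimate in a single step and keeps the H\"older bookkeeping shorter. One small slip in your accounting: the summand $-A_{ij}\langle D(\gamma_F^{-1})_{jk}, DF_k\rangle_{\mathfrak{H}}$ carries only \emph{two} entries of $\gamma_F^{-1}$ after the expansion of $D(\gamma_F^{-1})_{jk}$, not three --- the $(\gamma_F^{-1})_{jk}$ you also count sits in the other two summands, not in this one. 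This miscount does not affect the soundness of your argument; if anything you obtain a slightly sharper power of $\|(\det\gamma_F)^{-1}\|_{12}$ than the statement asks for.
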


\begin{proof}
Applying Meyer's inequality (\ref{Meyer}) we have
\begin{equation*}
\left\Vert \delta \left( A_{ij}\left( \gamma _{F}^{-1}\right)
_{jk}DF_{k}\right) \right\Vert _{2}\leq \left\Vert A_{ij}\left( \gamma
_{F}^{-1}\right) _{jk}DF_{k}\right\Vert _{2}+\left\Vert D\left( A_{ij}\left(
\gamma _{F}^{-1}\right) _{jk}DF_{k}\right) \right\Vert _{2}.
\end{equation*}%
Applying Holder's inequality and (\ref{HyperMeyer}) we have%
\begin{equation*}
\left\Vert A_{ij}\left( \gamma _{F}^{-1}\right) _{jk}DF_{k}\right\Vert
_{2}\leq \left\Vert A_{ij}\right\Vert _{2}\left\Vert \left( \gamma
_{F}^{-1}\right) _{jk}\right\Vert _{4}\left\Vert DF_{k}\right\Vert _{4}\leq
C_{d,V,Q}\left\Vert A_{ij}\right\Vert _{2}\left\Vert \left( \gamma
_{F}^{-1}\right) _{jk}\right\Vert _{4}.
\end{equation*}%
Similarly, Holder's inequality and (\ref{HyperMeyer}) imply
\begin{eqnarray*}
&&\left\Vert D\left( A_{ij}\left( \gamma _{F}^{-1}\right) _{jk}DF_{k}\right)
\right\Vert _{2}\leq C_{d,V,Q}{\LARGE [}\left\Vert DA_{ij}\right\Vert
_{2}\left\Vert \left( \gamma _{F}^{-1}\right) _{jk}\right\Vert _{4} \\
&&~~~~~+\left\Vert A_{ij}\right\Vert _{2}\left\Vert D\left( \gamma
_{F}^{-1}\right) _{jk}\right\Vert _{4}+\left\Vert A_{ij}\right\Vert
_{2}\left\Vert \left( \gamma _{F}^{-1}\right) _{jk}\right\Vert _{4}{\LARGE ]}%
.
\end{eqnarray*}%
Combining the above inequalities we obtain
\begin{equation*}
\left\Vert \delta \left( A_{ij}\left( \gamma _{F}^{-1}\right)
_{jk}DF_{k}\right) \right\Vert _{2}\leq C_{d,V,Q}\left\Vert
A_{ij}\right\Vert _{1,2}\left\Vert \left( \gamma _{F}^{-1}\right)
_{jk}\right\Vert _{1,4}.
\end{equation*}%
Note that
\begin{equation*}
A_{ij}=\delta _{ij}-\left\langle DF_{j},-DL^{-1}F_{i}\right\rangle _{%
\mathfrak{H}}=\delta _{ij}-V_{ij}+V_{ij}-\frac{1}{q_{i}}\left\langle
DF_{j},-DF_{i}\right\rangle _{\mathfrak{H}}\text{. }
\end{equation*}%
Then, it follows from Lemma \ref{Variance} that
\begin{equation*}
\left\Vert A_{ij}\right\Vert _{1,2}\leq C_{d,V,Q}\left( \left\vert
V-I\right\vert +\sum_{i=1}^{d}\left\Vert \left\Vert DF_{i}\right\Vert _{%
\mathfrak{H}}^{2}-q_{i}E\left[ F_{i}^{2}\right] \right\Vert _{2}^{\frac{1}{2}%
}\right) .
\end{equation*}%
Then, the lemma follows by taking into account of (\ref{Gma-kp}) with $k=1$.
\end{proof}

As a consequence of the above two lemmas, we have the following result.

\begin{proposition}
\label{Stein-prop}Let $h:\mathbb{R}^{d}\rightarrow \mathbb{R}$ be an almost
everywhere continuous function such that $\left\vert h(x)\right\vert \leq
c\left( \left\vert x\right\vert ^{m}+1\right) $ for some $m,c>0$. Let $%
F=(F_{1},\dots ,F_{d})=(I_{q_{1}}(f_{1}),\dots ,I_{q_{d}}(f_{d}))$, where $%
f_{i}\in \mathfrak{H}^{\odot q_{i}}$, be non-degenerate and denote $N\sim
N(0,I)$. Recall the notation of $V$ and $Q$ in $(\ref{VQ})$. Then \
\begin{eqnarray}
\left\vert E[h(F)]-E[h(N)]\right\vert &\leq &C\left\Vert \left( \det \gamma
_{F}\right) ^{-1}\right\Vert _{12}^{3}  \label{SM_inq} \\
&&\times \left( \left\vert V-I\right\vert +\sum_{i=1}^{d}\left\Vert
\left\Vert DF_{i}\right\Vert _{\mathfrak{H}}^{2}-q_{i}E\left[ F_{i}^{2}%
\right] \right\Vert _{2}^{\frac{1}{2}}\right) ,  \notag
\end{eqnarray}%
where the constant $C$ depends on $d,V,Q,m,c$.
\end{proposition}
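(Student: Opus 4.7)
The plan is to combine Lemma \ref{SM-prop} and Lemma \ref{A1-V} almost verbatim, and then to absorb the factor $\|F\|_{2m}^m+1$ into the constant using hypercontractivity. Since $F_i=I_{q_i}(f_i)$ with $q_i\geq 1$, the zero mean hypothesis of Lemma \ref{SM-prop} is satisfied, so as a first step I apply Lemma \ref{SM-prop} to obtain
\begin{equation*}
\bigl|E[h(F)]-E[h(N)]\bigr|
\leq C_{m,c}\bigl(\|F\|_{2m}^{m}+1\bigr)\sum_{i,j,k=1}^{d}
\bigl\|\delta\bigl(A_{ij}(\gamma_{F}^{-1})_{jk}DF_{k}\bigr)\bigr\|_{2}.
\end{equation*}

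Next, I would apply the hypercontractivity bound (\ref{Hyper}) componentwise: for each $i$, $\|F_i\|_{2m}\leq C_{q_i,m}\|F_i\|_2 = C_{q_i,m}\sqrt{V_{ii}}$, so $\|F\|_{2m}^m+1$ is bounded by a constant depending only on $m$, $Q$ and $V$. This collapses the prefactor into the final constant $C$.

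Then I would invoke Lemma \ref{A1-V} on each of the $d^3$ summands to obtain, for every triple $(i,j,k)$,
\begin{equation*}
\bigl\|\delta\bigl(A_{ij}(\gamma_{F}^{-1})_{jk}DF_{k}\bigr)\bigr\|_{2}
\leq C\,\bigl\|(\det\gamma_{F})^{-1}\bigr\|_{12}^{3}
\left(|V-I|+\sum_{l=1}^{d}\bigl\|\|DF_{l}\|_{\mathfrak{H}}^{2}-q_{l}E[F_{l}^{2}]\bigr\|_{2}^{1/2}\right),
\end{equation*}
with a constant depending only on $d,V,Q$. Summing over $(i,j,k)$ and combining with the previous display yields (\ref{SM_inq}).

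There is no genuine obstacle in this argument: the two lemmas are designed precisely to be composed in this fashion, and the only small check is that the growth factor $\|F\|_{2m}^m+1$ arising in the Stein step really is uniformly controlled. This is immediate from hypercontractivity since the components of $F$ live in fixed Wiener chaoses and have variances given by the diagonal of $V$; thus this factor can be absorbed into the constant $C$ depending on $d,V,Q,m,c$, as stated.
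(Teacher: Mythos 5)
Your proposal is correct and is essentially identical to the paper's argument: the paper simply states that Proposition \ref{Stein-prop} follows ``as a consequence of the above two lemmas,'' namely Lemma \ref{SM-prop} and Lemma \ref{A1-V}, which is precisely the composition you carry out. Your explicit verification via hypercontractivity that $\Vert F\Vert_{2m}^{m}+1$ is controlled by a constant depending on $d,Q,V,m$ is the right (and only) small check needed, and it is sound.
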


In the following, we estimate the term $A_{3}=E\left[ \left\vert H_{\beta
}(F)-K_{\beta }(F)\right\vert \right] $ in (\ref{KH2}), where $H_{\beta }(F)$
and $K_{\beta }(F)$ are defined in (\ref{H1}) and (\ref{K}), respectively.

\begin{lemma}
\label{HKLemma}Let $F=(F_{1},\dots ,F_{d})=(I_{q_{1}}(f_{1}),\dots
,I_{q_{d}}(f_{d}))$ be non-degenerate. Let $\beta =(\beta _{1},\dots ,\beta
_{k})$ be a multi-index of length $k\geq 1$. Let $H_{\beta }(F)$ and $%
K_{\beta }(F)$ be defined by $(\ref{H1})$ and $(\ref{K})$, respectively.
Then there exists a constant $C$ depending on $d,V,Q,k$ such that
\begin{eqnarray}
E\left[ \left\vert H_{\beta }(F)-K_{\beta }(F)\right\vert \right] &\leq
&C\left\Vert \left( \det \gamma _{F}\right) ^{-1}\right\Vert
_{(k+4)2^{k+3}}^{k\left( k+2\right) }  \label{HK0} \\
&&\times \sum_{i=1}^{d}\left\Vert \left\Vert DF_{i}\right\Vert _{\mathfrak{H}%
}^{2}-q_{i}E\left[ F_{i}^{2}\right] \right\Vert _{2}^{\frac{1}{2}}.  \notag
\end{eqnarray}
\end{lemma}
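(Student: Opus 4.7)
My plan is to argue by induction on the length $k$ of the multi-index $\beta$. The key observation is that $H_\beta(F)$ and $K_\beta(F)$ are built by the same recursion, differing only in that the random matrix $\gamma_F^{-1}$ is replaced by its deterministic counterpart $(VQ)^{-1}$; hence the entire discrepancy is driven by the Sobolev closeness between these two matrices, already quantified by (\ref{InvMtrx-}).

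For the base case $k=1$, the identity $H_{(\beta_1)}(F) - K_{(\beta_1)}(F) = \sum_{j=1}^{d} \delta\bigl( ((\gamma_F^{-1})^{\beta_1 j} - ((VQ)^{-1})^{\beta_1 j}) DF_j \bigr)$, combined with Meyer's inequality (\ref{Meyer}), H\"older's inequality, the hypercontractivity bound (\ref{HyperMeyer}), and (\ref{InvMtrx-}) with $k=1$, yields the claim. For the inductive step, I add and subtract $K_{\widehat{\beta}_k}(F)(\gamma_F^{-1})^{\beta_k j}DF_j$ inside the divergence, obtaining the splitting $H_\beta(F) - K_\beta(F) = A + B$ with $A = \sum_j \delta\bigl( (H_{\widehat{\beta}_k}(F) - K_{\widehat{\beta}_k}(F))(\gamma_F^{-1})^{\beta_k j} DF_j \bigr)$ and $B = \sum_j \delta\bigl( K_{\widehat{\beta}_k}(F) ((\gamma_F^{-1})^{\beta_k j} - ((VQ)^{-1})^{\beta_k j}) DF_j \bigr).$ Meyer's inequality bounds the $L^1$ norm of each of $A$ and $B$ by a $\mathbb{D}^{1,p}$ norm of the corresponding divergence argument; H\"older's inequality then splits this into three factors. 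For $A$, the three factors are handled, respectively, by the inductive hypothesis (applied to a shorter multi-index), by the Sobolev bound (\ref{Gma-kp}) on $\gamma_F^{-1}$, and by (\ref{HyperMeyer}) applied to $DF_j$. For $B$, the three factors are handled by the explicit decomposition (\ref{K-expsn}) of Lemma \ref{K_beta} together with (\ref{HyperMeyer}) (which yields a bound on $K_{\widehat{\beta}_k}(F)$ in any $\mathbb{D}^{1,p}$ in terms of a power of $\|(\det\gamma_F)^{-1}\|$), by (\ref{InvMtrx-}) applied to $\gamma_F^{-1} - (VQ)^{-1}$, and again by (\ref{HyperMeyer}).

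The main obstacle will be the bookkeeping of the exponent on $\|(\det\gamma_F)^{-1}\|$ and of the integrability index. Each recursion step picks up extra powers of $\|(\det\gamma_F)^{-1}\|$ from the bounds (\ref{Gma-kp}) and (\ref{InvMtrx-}), while each H\"older splitting forces the integrability indices to roughly double. Summing these contributions from $k=1$ up to the given length should match the stated exponent $k(k+2)$ and integrability index $(k+4)2^{k+3}$. A subtlety is that the square-root rate $\|\cdot\|_2^{1/2}$ must be preserved rather than compounded: term $A$ inherits it from the inductive hypothesis without introducing a new factor, while term $B$ generates it once from (\ref{InvMtrx-}) (the $K_{\widehat{\beta}_k}(F)$ and $DF_j$ factors contribute only deterministic constants to the rate), so only one such factor appears in the final bound.
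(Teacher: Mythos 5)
Your overall strategy—telescoping the recursion for $H_\beta-K_\beta$ and driving the bound by the Sobolev closeness of $\gamma_F^{-1}$ to $(VQ)^{-1}$—is the right idea and mirrors the paper's. However, there is a genuine gap in how you set up the induction. Your inductive hypothesis is the lemma itself, an $L^1(\Omega)$ estimate on $H_{\widehat\beta_k}(F)-K_{\widehat\beta_k}(F)$. But after applying Meyer's inequality and H\"older to your term $A$, the factor you need to control is $\|H_{\widehat\beta_k}(F)-K_{\widehat\beta_k}(F)\|_{1,p'}$ for some $p'>1$, i.e. a $\mathbb{D}^{1,p'}$ norm, not an $L^1$ norm. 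The $L^1$ inductive hypothesis cannot supply that. Worse, when you iterate once more, you will need a $\mathbb{D}^{2,p''}$ bound, and so on; at each level the number of derivatives required grows by one and the integrability index roughly triples. So the induction cannot close in $L^1$: you must strengthen the inductive statement to a family of bounds on $\|H_\beta(F)-K_\beta(F)\|_{s,p}$ for all $s\geq 0$ and $p>1$. This is exactly what the paper does by establishing the Sobolev-norm recursion $(\ref{H-K_sp})$ and specializing to $s=0$, $p=2$ only at the end.

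Two further remarks. First, your telescoping pairs $(H_{\widehat\beta_k}-K_{\widehat\beta_k})$ with $\gamma_F^{-1}$ and $K_{\widehat\beta_k}$ with $(\gamma_F^{-1}-(VQ)^{-1})$, whereas the paper pairs $(H_{\widehat\beta_k}-K_{\widehat\beta_k})$ with the deterministic $(VQ)^{-1}$ and $H_{\widehat\beta_k}$ with $(\gamma_F^{-1}-(VQ)^{-1})$. Both identities are valid, but the paper's choice is cleaner: pairing the recursive term with a deterministic matrix means that branch contributes no additional powers of $\|(\det\gamma_F)^{-1}\|$, so the exponent $k(k+2)$ and index $(k+4)2^{k+3}$ accumulate entirely from the bound $(\ref{H_beta_sp})$ on $\|H_{\widehat\beta}\|_{s,p}$ in the other branch. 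With your pairing, the recursive branch also picks up powers of $\|(\det\gamma_F)^{-1}\|$ via $(\ref{Gma-kp})$, so the bookkeeping you deferred ("should match the stated exponent") is not automatic and needs to be carried out explicitly to confirm the exact exponents. Second, your claim that $K_{\widehat\beta_k}(F)$ is bounded "in terms of a power of $\|(\det\gamma_F)^{-1}\|$" is inaccurate: by Lemma $\ref{K_beta}$, $K_\beta(F)$ is a polynomial built from $V^{-1}F$ and $M=V^{-1}\gamma_F V^{-1}Q^{-1}$ (it involves $\gamma_F$, never $\gamma_F^{-1}$), so $(\ref{HyperMeyer})$ bounds all its Sobolev norms by constants. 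This slip happens to be harmless—replacing the claimed bound by a constant only improves matters—but it reflects a misreading of the structure of $K_\beta$.
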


\begin{proof}
To simplify notation, we write $H_{\beta }$ and $K_{\beta }$ for $H_{\beta
}(F)$ and $K_{\beta }(F)$, respectively. From (\ref{H1}) and (\ref{K}) we
see that
\begin{equation*}
H_{\beta }-K_{\beta }=\delta \left( H_{\widehat{\beta }_{k}}\left( \gamma
_{F}^{-1}DF\right) _{^{\beta _{k}}}-K_{\widehat{\beta }_{k}}\left( \left(
VQ\right) ^{-1}DF\right) _{\beta _{k}}\right) ,
\end{equation*}%
where $\widehat{\beta }_{k}=(\beta _{1},\dots ,\beta _{k-1})$. For any $%
s\geq 0,p>1$, using Meyer's inequality (\ref{Meyer}) we obtain
\begin{eqnarray*}
&&\left\Vert H_{\beta }-K_{\beta }\right\Vert _{s,p} \\
&\leq &C_{s,p}\left\Vert H_{\widehat{\beta }_{k}}\left( \gamma
_{F}^{-1}DF\right) _{\beta _{k}}-K_{\widehat{\beta }_{k}}\left( \left(
VQ\right) ^{-1}DF\right) _{\beta _{k}}\right\Vert _{s+1,p} \\
&\leq &C_{s,p}\left\Vert \left( H_{\widehat{\beta }_{k}}-K_{\widehat{\beta }%
_{k}}\right) \left( \left( VQ\right) ^{-1}DF\right) _{\beta _{k}}\right\Vert
_{s+1,p} \\
&&+C_{s,p}\left\Vert H_{\widehat{\beta }_{k}}\left( \left( \gamma
_{F}^{-1}-\left( VQ\right) ^{-1}\right) DF\right) _{\beta _{k}}\right\Vert
_{s+1,p}.
\end{eqnarray*}%
Then, H\"{o}lder's inequality yields
\begin{eqnarray*}
&&\left\Vert H_{\beta }-K_{\beta }\right\Vert _{s,p} \\
&\leq &\left\Vert H_{\widehat{\beta }_{k}}-K_{\widehat{\beta }%
_{k}}\right\Vert _{s+1,2p}\left\Vert \left( \left( VQ\right) ^{-1}DF\right)
_{\beta _{k}}\right\Vert _{s+1,2p} \\
&&+\left\Vert H_{\widehat{\beta }_{k}}\right\Vert _{s+1,2p}\left\Vert \left(
\left( \gamma _{F}^{-1}-\left( VQ\right) ^{-1}\right) DF\right) _{\beta
_{k}}\right\Vert _{s+1,2p}.
\end{eqnarray*}%
Note that (\ref{HyperMeyer}) implies $\left\Vert \left( \left( VQ\right)
^{-1}DF\right) _{\beta _{k}}\right\Vert _{s+1,2p}\leq C_{d,V,Q,s,p}$. Also
note that (\ref{HyperMeyer}), H\"{o}lder's inequality and (\ref{InvMtrx-})
indicate
\begin{equation*}
\left\Vert \left( \left( \gamma _{F}^{-1}-\left( VQ\right) ^{-1}\right)
DF\right) _{\beta _{k}}\right\Vert _{s+1,2p}\leq C_{d,V,Q,s,p}\Delta
\left\Vert \left( \det \gamma _{F}\right) ^{-1}\right\Vert _{(s+3)8p}^{s+2}.
\end{equation*}%
where we denote
\begin{equation*}
\Delta :=\sum_{1\leq l\leq d}\left\Vert \left\Vert DF_{l}\right\Vert _{%
\mathfrak{H}}^{2}-q_{l}E\left[ F_{l}^{2}\right] \right\Vert _{2}^{\frac{1}{2}%
}
\end{equation*}%
to simplify notation. Thus we obtain
\begin{eqnarray}
\left\Vert H_{\beta }-K_{\beta }\right\Vert _{s,p} &\leq
&C_{d,V,Q,s,p}\left\Vert H_{\widehat{\beta }_{k}}-K_{\widehat{\beta }%
_{k}}\right\Vert _{s+1,2p}  \label{H-K_sp} \\
&&+C_{d,V,Q,s,p}\Delta \left\Vert H_{\widehat{\beta }_{k}}\right\Vert
_{s+1,2p}\left\Vert \left( \det \gamma _{F}\right) ^{-1}\right\Vert
_{(s+3)8p}^{s+2}.  \notag
\end{eqnarray}%
Similarly, from Meyer's inequality (\ref{Meyer}), H\"{o}lder's inequality
and (\ref{HyperMeyer}) we obtain by iteration
\begin{eqnarray}
\left\Vert H_{\beta }\right\Vert _{s,p} &\leq &C_{s,p}\left\Vert H_{\widehat{%
\beta }_{k}}\left( \gamma _{F}^{-1}DF\right) _{\beta _{k}}\right\Vert
_{s+1,p}  \notag \\
&\leq &C_{d,V,Q,s,p}\left\Vert H_{\widehat{\beta }_{k}}\right\Vert
_{s+1,2p}\left\Vert \left( \det \gamma _{F}\right) ^{-1}\right\Vert
_{(s+3)8p}^{s+2}  \notag \\
&&\cdots  \notag \\
&\leq &C_{d,V,Q,s,p,k}\left\Vert \left( \det \gamma _{F}\right)
^{-1}\right\Vert _{(s+k+1)2^{k+2}p}^{k\left( s+k\right) }.  \label{H_beta_sp}
\end{eqnarray}%
Applying (\ref{H_beta_sp}) into (\ref{H-K_sp}) and by iteration we can
obtain
\begin{equation}
\left\Vert H_{\beta }-K_{\beta }\right\Vert _{s,p}\leq
C_{d,V,Q,s,p,k}\left\Vert \left( \det \gamma _{F}\right) ^{-1}\right\Vert
_{(2s+k+4)2^{k+2}p}^{k\left( 2s+k+2\right) }\Delta .  \notag
\end{equation}%
Now (\ref{HK0}) follows by taking $s=0$, $p=2$ in the above inequality.
\end{proof}

\section{Uniform estimates for densities of general random variables\label%
{GenrlConv}}

\label{SectionGen}

In this section, we study the uniform convergence of densities for general
random variables. We first characterize the convergence of densities with
quantitative bounds for a sequence of centered random variables, using the
density formula (\ref{Fmla3}). In the second part of this section, a short
proof of the uniform convergence of densities (without quantitative bounds)
is given, using a compactness argument based on the assumption that the
sequence converges in law.

\subsection{Convergence of densities with quantitative bounds}

In this subsection, we estimate the rate of uniform convergence for
densities of general random variables. The idea is to use the density
formula (\ref{Fmla3}).

We use the following notations throughout this section.
\begin{equation*}
\bar{w}=\left\langle DF,-DL^{-1}F\right\rangle _{\mathfrak{H}},~\bar{u}=-%
\bar{w}^{-1}DL^{-1}F.
\end{equation*}

The following technical lemma is useful.

\begin{lemma}
Let $F\in \mathbb{D}^{2,s}$ with $s\geq 4$ such that $E\left[ F\right] =0$
and $E[F^{2}]=\sigma ^{2}$. Let $m$ be the largest even integer less than or
equal to $\frac{s}{2}$. Then there is a positive constant $C_{m}$ such that
for any $t\leq m$,
\begin{equation}
\left\Vert \bar{w}-\sigma ^{2}\right\Vert _{t}\leq \left\Vert \bar{w}-\sigma
^{2}\right\Vert _{m}\leq C_{m}\left\Vert D\bar{w}\right\Vert _{m}\leq
C_{m}\left\Vert D\bar{w}\right\Vert _{s/2}\,.  \label{GDw}
\end{equation}
\end{lemma}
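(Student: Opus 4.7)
The stated chain contains three inequalities. The first, $\|\bar w - \sigma^2\|_t \leq \|\bar w - \sigma^2\|_m$ for $t \leq m$, and the third, $\|D\bar w\|_m \leq \|D\bar w\|_{s/2}$ (since $m \leq s/2$), are simply monotonicity of $L^p$-norms on the probability space via Jensen's inequality. The heart of the matter is the middle bound
$$\|\bar w - \sigma^2\|_m \leq C_m \|D\bar w\|_m,$$
an $L^m$ Poincar\'e-type inequality. Evenness of $m$ will be essential.

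First I would identify $E[\bar w] = \sigma^2$. By (\ref{DeltaDL}) and $E[F]=0$, we have $\delta(-DL^{-1}F) = LL^{-1}F = F$, so the duality (\ref{duality}) with test variable $F$ gives
$$E[\bar w] = E[\langle DF, -DL^{-1}F\rangle_{\mathfrak{H}}] = E[F\,\delta(-DL^{-1}F)] = E[F^2] = \sigma^2.$$
Setting $G := \bar w - \sigma^2$, being mean-zero $G$ satisfies the analogous identity $G = \delta(-DL^{-1}G)$.

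The Poincar\'e estimate is then obtained by testing $G$ against $G^{m-1}$. Since $m$ is even, $\|G\|_m^m = E[G^m]$, and combining the duality (\ref{duality}) with the chain rule $DG^{m-1} = (m-1)G^{m-2}DG$ gives
\begin{align*}
\|G\|_m^m &= E\bigl[G^{m-1}\,\delta(-DL^{-1}G)\bigr]
= (m-1)\,E\bigl[G^{m-2}\langle DG, -DL^{-1}G\rangle_{\mathfrak{H}}\bigr].
\end{align*}
Applying Cauchy--Schwarz pointwise, H\"older's inequality with exponents $(m/(m-2), m, m)$, and finally the Mehler-based bound (\ref{NP1}) in the form $\bigl\|\|DL^{-1}G\|_{\mathfrak{H}}\bigr\|_m \leq \bigl\|\|DG\|_{\mathfrak{H}}\bigr\|_m$, one obtains
$$\|G\|_m^m \leq (m-1)\,\|G\|_m^{m-2}\,\bigl\|\|DG\|_{\mathfrak{H}}\bigr\|_m^2.$$
Dividing by $\|G\|_m^{m-2}$ (the case $\|G\|_m = 0$ being trivial) and using $DG = D\bar w$, the middle inequality follows with $C_m = \sqrt{m-1}$.

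The main technical hurdle is justifying the duality step, which demands suitable Sobolev regularity of $G^{m-1}$ and compatible integrability of $-DL^{-1}G \in \mathrm{Dom}\,\delta$. A preliminary step establishes $\bar w \in \mathbb{D}^{1,s/2}$ via the product rule $D\bar w = -D^2F \otimes_1 DL^{-1}F - DF \otimes_1 D^2L^{-1}F$ together with (\ref{NP1})--(\ref{NP2}), Meyer's inequality (\ref{Meyer}), and H\"older, exploiting $F \in \mathbb{D}^{2,s}$. The integration by parts with $G^{m-1}$ is then legitimized by a truncation approximation $\varphi_n(G) G^{m-2} \to G^{m-1}$ with $\varphi_n$ smooth and compactly supported, passing to the limit with the $L^m$-control on $G$ and the $L^{s/2}$-control on $D\bar w$.
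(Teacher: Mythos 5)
Your proof is essentially the paper's proof with the cited Poincar\'e inequality expanded into a self-contained derivation: the paper establishes $E[\bar w]=\sigma^{2}$ exactly as you do, notes $\bar w\in\mathbb{D}^{1,s/2}$ from (\ref{rci}) and (\ref{Dw-}), and then simply invokes the infinite-dimensional Poincar\'e inequality $E[(G-E[G])^{m}]\le(m-1)^{m/2}E[\|DG\|_{\mathfrak H}^{m}]$ from \cite[Lemma 5.3.8]{NP12} for even $m$ and $G\in\mathbb{D}^{1,m}$; your constant $\sqrt{m-1}$ matches this after taking $m$-th roots. Your re-derivation of that lemma via duality, the chain rule, H\"older with exponents $(m/(m-2),m,m)$, and (\ref{NP1}) is the standard argument behind the citation, so the mathematical content is the same.

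One small wrinkle in your regularization step: the approximation $\varphi_n(G)G^{m-2}\to G^{m-1}$ with $\varphi_n$ compactly supported does not place $\varphi_n(G)G^{m-2}$ in $\mathbb{D}^{1,2}$, because $G^{m-2}$ is unbounded (and a priori only in $L^{m/(m-2)}$), so the duality formula (\ref{duality}) is not directly applicable to it. The fix is to truncate the power function itself: choose $\psi_n\in C^{1}_{b}$ with $\psi_n(x)=x^{m-1}$ on $|x|\le n$, $|\psi_n(x)|\le|x|^{m-1}$, and $|\psi_n'(x)|\le(m-1)|x|^{m-2}$. Then $\psi_n(G)\in\mathbb{D}^{1,2}$ (it is bounded with $\|D\psi_n(G)\|_{\mathfrak H}\in L^{s/2}\subset L^{2}$), the duality identity applies, and the dominations $|\psi_n(G)G|\le|G|^{m}\in L^{1}$ and $|\psi_n'(G)\langle DG,-DL^{-1}G\rangle_{\mathfrak H}|\le(m-1)|G|^{m-2}\|DG\|_{\mathfrak H}\|DL^{-1}G\|_{\mathfrak H}\in L^{1}$ (by the very H\"older bound you already computed) let you pass to the limit. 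With that adjustment your argument is complete.
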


\begin{proof}
It suffices to show the above second inequality. From the integration by
parts formula in Malliavin calculus it follows
\begin{equation*}
\sigma ^{2}=E[F^{2}]=E\left[ \left\langle DF,-DL^{-1}F\right\rangle _{%
\mathfrak{H}}\right] =E\left[ \bar{w}\right] \,.
\end{equation*}%
Note that from (\ref{rci}) and (\ref{Dw-}) we have $\bar{w}\in \mathbb{D}^{1,%
\frac{s}{2}}$. Then the lemma follows from the following
infinite-dimensional Poincar\'{e} inequality \cite[Lemma 5.3.8]{NP12}:
\begin{equation*}
E[(G-E\left[ G\right] )^{m}]\leq \left( m-1\right) ^{m/2}E\left[ \left\Vert
DG\right\Vert _{\mathfrak{H}}^{m}\right] ,
\end{equation*}%
for any even integer $m$ and $G\in \mathbb{D}^{1,m}$.
\end{proof}

The next theorem gives a bound for the uniform distance between the density
of a random variable $F$ and the normal density.

\begin{theorem}
\label{general-Rate}Let $F\in \mathbb{D}^{2,s}$ with $s\geq 8$ such that $E%
\left[ F\right] =0$, $E[F^{2}]=\sigma ^{2}$. Suppose $\,M^{r}:=E\left[
\left\vert \bar{w}\right\vert ^{-r}\right] <\infty $, where $\bar{w}%
=\left\langle DF,-DL^{-1}F\right\rangle _{\mathfrak{H}}$ and $r>2$. Assume $%
\frac{2}{r}+\frac{4}{s}=1$.
Then $F$ admits a density $f_{F}(x)$ and there is a constant $C_{r,s,\sigma
,M}$ depending on $r,s,\sigma $ and $M$ such that
\begin{equation}
\sup_{x\in \mathbb{R}}\left\vert f_{F}(x)-\phi (x)\right\vert \leq
C_{r,s,\sigma ,M}\left\Vert F\right\Vert _{1,s}^2\left\Vert \left\Vert
D^{2}F\right\Vert _{op}\right\Vert _{0,s},  \label{GRate}
\end{equation}%
where $\phi (x)$ is the density of $N\sim N(0,\sigma ^{2})$ and $\left\Vert
D^{2}F\right\Vert _{op}$ indicates the operator norm of $D^{2}F$ introduced
in $(\ref{rci})$.
\end{theorem}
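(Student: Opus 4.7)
The plan is to mimic the proof of Theorem~\ref{qRateThm}, replacing the multiple-integral identity $\delta(u)=qFw^{-1}-\langle Dw^{-1},DF\rangle_{\mathfrak{H}}$ by its general counterpart for $\bar{u}=-\bar{w}^{-1}DL^{-1}F$. The hypothesis $2/r+4/s=1$ is precisely $1/s+2/r+3/s=1$, so taking $p=s$ the assumptions of Proposition~\ref{density2} are met and deliver the density formula $f_F(x)=E[\mathbf{1}_{\{F>x\}}\delta(\bar{u})]$. Using the factor-out rule \eref{factorOut} together with $\delta DL^{-1}F=-LL^{-1}F=-F$ (from \eref{DeltaDL} and $E[F]=0$) and $D\bar{w}^{-1}=-\bar{w}^{-2}D\bar{w}$, a direct computation gives
\begin{equation*}
\delta(\bar{u})=\bar{w}^{-1}F-\bar{w}^{-2}\langle D\bar{w},DL^{-1}F\rangle_{\mathfrak{H}}.
\end{equation*}

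Combining this with $\phi(x)=\sigma^{-2}E[\mathbf{1}_{\{N>x\}}N]$, I would then split
\begin{align*}
f_F(x)-\phi(x)&=\sigma^{-2}E\bigl[\mathbf{1}_{\{F>x\}}F\bar{w}^{-1}(\sigma^{2}-\bar{w})\bigr]-E\bigl[\mathbf{1}_{\{F>x\}}\bar{w}^{-2}\langle D\bar{w},DL^{-1}F\rangle_{\mathfrak{H}}\bigr]\\
&\quad +\sigma^{-2}\bigl(E[\mathbf{1}_{\{F>x\}}F]-E[\mathbf{1}_{\{N>x\}}N]\bigr)=:A_{1}+A_{2}+A_{3},
\end{align*}
in strict analogy with the decomposition \eref{qRate0}.

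Each term is estimated by H\"older's inequality with exponents tuned to $2/r+4/s=1$. For $A_{1}$ I would take exponents $(s,r,2s/(s+2))$ so as to peel off $\|F\|_{s}$, $\|\bar{w}^{-1}\|_{r}=M$, and $\|\bar{w}-\sigma^{2}\|_{2s/(s+2)}$; for $A_{2}$ I would use $(r/2,s/3,s)$ to separate $\|\bar{w}^{-2}\|_{r/2}=M^{2}$, $\|D\bar{w}\|_{s/3}$, and $\|DL^{-1}F\|_{s}$, and then invoke \eref{NP1} to dominate the last factor by $\|DF\|_{s}$; for $A_{3}$ I would apply Lemma~\ref{MS-ctrl} with $h(z)=z\mathbf{1}_{\{z>x\}}$ (so $k=1$, $a=1$, $b=0$) to reduce it to $C\sigma^{-3}\|\sigma^{2}-\bar{w}\|_{2}$. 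In all three cases $\|\bar{w}-\sigma^{2}\|_{\bullet}$ is controlled through the Poincar\'e-type inequality \eref{GDw} by $\|D\bar{w}\|_{s/2}$, which in turn is dominated by means of \eref{Dw-} as $\|D\bar{w}\|_{s/2}\le C\|DF\|_{s}\,\|\,\|D^{2}F\|_{op}\,\|_{s}$. Using $\|DF\|_{s}\le \|F\|_{1,s}$ together with the trivial bound $\|F\|_{1,s}\ge \sigma$ (which rewrites the residual single factor of $\|DF\|_{s}$ from $A_{3}$ as $\sigma^{-1}\|F\|_{1,s}^{2}$), one regroups everything into the claimed shape $C_{r,s,\sigma,M}\|F\|_{1,s}^{2}\,\|\,\|D^{2}F\|_{op}\,\|_{s}$.

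The main obstacle will be the bookkeeping of H\"older exponents: the balance $2/r+4/s=1$ is exactly what is required so that $A_{2}$, which inherits two factors of $\bar{w}^{-1}$ from $\bar{u}$ and one derivative $D\bar{w}$, can still be closed by $\|D\bar{w}\|_{b}$ with $b\le s/2$ on one side and $\|DF\|_{s}$ on the other, without losing the crucial control of $\|D\bar{w}\|_{s/2}$ by the Malliavin derivatives of $F$ via \eref{Dw-}. A secondary nuisance is verifying that Proposition~\ref{density2} genuinely applies with $p=s$; since only the density formula itself (rather than the decay estimate \eref{fBd2}) is needed, one can bypass any hidden need for $E[|F|^{2s}]<\infty$ by directly checking $\bar{u}\in\mathbb{D}^{1,p'}$ from \eref{deltaBd2} under the stated moment hypotheses.
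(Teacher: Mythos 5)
Your proposal follows the paper's own proof essentially step for step: Proposition \ref{density2} for the density formula, the factor-out computation of $\delta(\bar{u})$ (the paper writes it as $F\bar{w}^{-1}+\langle D\bar{w}^{-1},DL^{-1}F\rangle_{\mathfrak H}$, identical to your form after $D\bar{w}^{-1}=-\bar w^{-2}D\bar w$), the same three-term decomposition against $\phi(x)=\sigma^{-2}E[\mathbf 1_{\{N>x\}}N]$, H\"older plus the Poincar\'e bound \eref{GDw} and \eref{Dw-} for $A_1,A_2$, and Lemma \ref{MS-ctrl} for $A_3$. The only deviations (exponents $(r/2,s/3,s)$ instead of the paper's $(r/2,s/2,s/2)$ for $A_2$, and the explicit remark that $\|F\|_{1,s}\ge\sigma$ to pack $A_3$ into the $\|F\|_{1,s}^2$ form) are cosmetic and lead to the same estimate.
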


\begin{proof}
It follows from Proposition \ref{density2} that $F$ admits a density given
by $f_{F}(x)=E\left[ \mathbf{1}_{\left\{ F>x\right\} }\delta \left( \bar{u}%
\right) \right] $. 
Then
\begin{equation}
\sup_{x\in \mathbb{R}}\left\vert f_{F}(x)-\phi (x)\right\vert =\sup_{x\in
\mathbb{R}}\left\vert E\left[ \mathbf{1}_{\left\{ F>x\right\} }\delta \left(
\bar{u}\right) \right] -\sigma ^{-2}E[\mathbf{1}_{\left\{ N>x\right\}
}N]\right\vert .  \label{GRate0}
\end{equation}%
Note that, from (\ref{factorOut})
\begin{equation*}
\delta (\bar{u})=\delta (-DL^{-1}F\bar{w}^{-1})=F\bar{w}^{-1}+\left\langle D%
\bar{w}^{-1},DL^{-1}F\right\rangle _{\mathfrak{H}}.
\end{equation*}%
Then
\begin{eqnarray}
&&\left\vert E\left[ \sigma ^{2}\mathbf{1}_{\left\{ F>x\right\} }\delta
\left( \bar{u}\right) \right] -E[\mathbf{1}_{\left\{ N>x\right\}
}N]\right\vert  \notag \\
&\leq &E\left[ \left\vert F\bar{w}^{-1}(\sigma ^{2}-\bar{w})\right\vert %
\right] +\sigma ^{2}E\left[ \left\vert \left\langle D\bar{w}%
^{-1},DL^{-1}F\right\rangle _{\mathfrak{H}}\right\vert \right]  \notag \\
&&+\left\vert E\left[ F\mathbf{1}_{\left\{ F>x\right\} }-N\mathbf{1}%
_{\left\{ N>x\right\} }\right] \right\vert .  \label{GRate1}
\end{eqnarray}%
Note that for $t=\left( \frac{1}{r}+\frac{3}{s}\right) ^{-1}$, we have $%
\frac{s}{2}-t\geq 2$, so there exists an even integer $m\in \lbrack t,\frac{s%
}{2}]$. Also, we have $\frac{1}{r}+\frac{1}{s}+\frac{1}{t}=1$. Then, we can
apply H\"{o}lder's inequality and (\ref{GDw}) to obtain
\begin{eqnarray}
E\left[ \left\vert F\bar{w}^{-1}(\bar{w}-\sigma ^{2})\right\vert \right]
&\leq &\left\Vert F\right\Vert _{s}\left\Vert \bar{w}^{-1}\right\Vert
_{r}\left\Vert \bar{w}-\sigma ^{2}\right\Vert _{t}  \notag \\
&\leq &C_{r,s}\left\Vert F\right\Vert _{s}\left\Vert \bar{w}^{-1}\right\Vert
_{r}\left\Vert D\bar{w}\right\Vert _{s/2}\,.  \label{three-1}
\end{eqnarray}
Meanwhile, applying H\"{o}lder's inequality and (\ref{NP1}) we have%
\begin{eqnarray}
E\left[ \left\vert \bar{w}^{-2}\left\langle D\bar{w},-DL^{-1}F\right\rangle
_{\mathfrak{H}}\right\vert \right] &\leq &\left\Vert \bar{w}^{-1}\right\Vert
_{r}^{2}\left\Vert D\bar{w}\right\Vert _{\frac{s}{2}}\left\Vert
DL^{-1}F\right\Vert _{\frac{s}{2}}  \notag \\
&\leq &\left\Vert \bar{w}^{-1}\right\Vert _{r}^{2}\left\Vert D\bar{w}%
\right\Vert _{\frac{s}{2}}\left\Vert DF\right\Vert _{s}.  \label{three-2}
\end{eqnarray}%
Also, applying Lemma \ref{MS-ctrl} for $h(y)=$ $y\mathbf{1}_{\{y>x\}}$ and (%
\ref{GDw}) we have%
\begin{equation}
\left\vert E\left[ F\mathbf{1}_{F>x}-N\mathbf{1}_{N>x}\right] \right\vert
\leq C_{\sigma }\left\Vert \sigma ^{2}-\bar{w}\right\Vert _{2}\leq C_{\sigma
}\left\Vert D\bar{w}\right\Vert _{s/2}\,.  \label{three-3}
\end{equation}
Applying the estimates (\ref{three-1})-(\ref{three-3}) to (\ref{GRate1}) we
have
\begin{equation}
\left\vert E\left[ \sigma ^{2}\mathbf{1}_{F>x}\delta \left( \bar{u}\right) %
\right] -E[\mathbf{1}_{N>x}N]\right\vert \leq C_{r,s,\sigma ,M}\left\Vert
F\right\Vert _{1,s}\left\Vert D\bar{w}\right\Vert _{s/2}.  \label{GRateCore}
\end{equation}%
Combining (\ref{GRate0}), (\ref{GRateCore}) and (\ref{Dw-}) one gets%
\begin{equation*}
\sup_{x\in \mathbb{R}}\left\vert f_{F}(x)-\phi (x)\right\vert \leq
C_{r,s,\sigma ,M}\left\Vert F\right\Vert _{1,s}^2\left\Vert \left\Vert
D^{2}F\right\Vert _{op}\right\Vert _{s}.
\end{equation*}%
This completes the proof.
\end{proof}

\begin{corollary}
\label{general} Let $\left\{ F_{n}\right\} _{n\in \mathbb{N}}\subset \mathbb{%
D}^{2,s}$ with $s\geq 8$ such that $E\left[ F_{n}\right] =0$ and $%
\lim_{n\rightarrow \infty }E[F_{n}^{2}]=\sigma ^{2}$. Assume $%
E[F_{n}^{2}]\geq \delta>0$ for all $n$. For $r>2$ such that $\frac{2}{r}+%
\frac{4}{s}=1$, assume

\begin{itemize}
\item[(i)] $M_{1}=\sup_{n}\left\Vert F_{n}\right\Vert _{1,s}<\infty .$

\item[(ii)] $M_{2}=\sup_{n}E\left\vert \left\langle
DF_{n},-DL^{-1}F_{n}\right\rangle _{\mathfrak{H}}\right\vert ^{-r}<\infty .$

\item[(iii)] $E\left\Vert D^{2}F_{n}\right\Vert _{op}^{s}\rightarrow 0$ as $%
n\rightarrow \infty $.
\end{itemize}

\noindent Then each $F_{n}$ admits a density $f_{F_{n}}(x)$ and ,
\begin{equation}
\sup_{x\in \mathbb{R}}\left\vert f_{F_{n}}(x)-\phi (x)\right\vert \leq
C\left( \left\Vert \left\Vert D^{2}F_{n}\right\Vert _{op}\right\Vert
_{s}+\left\vert E[F_{n}^{2}]-\sigma ^{2}\right\vert \right) ,  \label{gen1}
\end{equation}%
where the constant $C$ depends on $\sigma ,M_{1},M_{2}$ and $\delta $.
Moreover, if $\,M_{3}=\sup_{n}\left\Vert F_{n}\right\Vert _{2s}<\infty $,
then for any $k\geq 1$ and $\alpha \in (\frac{1}{2},k)$,%
\begin{equation*}
\left\Vert f_{F_{n}}-\phi \right\Vert _{L^{k}(\mathbb{R})}\leq C\left(
\left\Vert \left\Vert D^{2}F_{n}\right\Vert _{op}\right\Vert _{s}+\left\vert
E[F_{n}^{2}]-\sigma ^{2}\right\vert \right) ^{\frac{k-\alpha }{k}},
\end{equation*}%
where the constant $C$ depends on $\sigma ,M_{1},M_{2},M_{3},\alpha $ and $%
\delta $.
\end{corollary}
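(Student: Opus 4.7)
The plan is to reduce this corollary to Theorem \ref{general-Rate} (the quantitative estimate for a single random variable) by a standard triangle inequality argument, and then upgrade the uniform bound to an $L^{k}$ bound by interpolating against the uniform pointwise control on $f_{F_n}$.

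First I would apply Theorem \ref{general-Rate} to each $F_{n}$ with $\sigma _{n}^{2}=E[F_{n}^{2}]$ in place of $\sigma^{2}$. Hypotheses (i) and (ii) give $\sup_{n}\|F_{n}\|_{1,s}\le M_{1}$ and $\sup_{n}E|\bar w_{n}|^{-r}\le M_{2}^{r}$, and the condition $\frac{2}{r}+\frac{4}{s}=1$ is assumed. Since $\sigma _{n}^{2}\in[\delta,K]$ for some $K$ (convergence to $\sigma^{2}$ plus the uniform lower bound), the constant $C_{r,s,\sigma _{n},M}$ in Theorem \ref{general-Rate}, which has polynomial dependence on $\sigma_{n}$ and $M$, is bounded by a constant $C$ depending only on $\sigma,M_{1},M_{2},\delta ,r,s$. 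Denoting by $\phi _{n}$ the density of $N(0,\sigma _{n}^{2})$, this gives
\begin{equation*}
\sup_{x\in\mathbb{R}}|f_{F_{n}}(x)-\phi_{n}(x)|\le C\,\|F_{n}\|_{1,s}^{2}\bigl\|\,\|D^{2}F_{n}\|_{op}\bigr\|_{s}\le C'\bigl\|\,\|D^{2}F_{n}\|_{op}\bigr\|_{s}.
\end{equation*}

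Next I would control $|\phi_{n}(x)-\phi(x)|$. A direct computation (mean value theorem in the parameter $\sigma$ applied to the Gaussian density) yields $\sup_{x}|\phi_{n}(x)-\phi(x)|\le C_{\sigma,\delta}|\sigma _{n}-\sigma|$, and then $|\sigma_{n}-\sigma|\le \frac{|\sigma _{n}^{2}-\sigma ^{2}|}{\sigma_{n}+\sigma}\le \frac{1}{2\sqrt{\delta}}|E[F_{n}^{2}]-\sigma ^{2}|$. Combining with the previous display via the triangle inequality gives \eref{gen1}.

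For the $L^{k}$ bound I would use Proposition \ref{density2} together with hypothesis $M_{3}<\infty$. Choosing parameters $p'=s/4$ in formula \eref{fBd2} (so that $\frac{1}{p'}+\frac{2}{r}+\frac{3}{s}=1$ is automatic from $\frac{2}{r}+\frac{4}{s}=1$, and $2p'=s/2\le 2s$), the boundedness estimate gives $f_{F_{n}}(x)\le C\bigl(1\wedge |x|^{-2}\|F_{n}\|_{2p'}^{2}\bigr)\le C(1\wedge |x|^{-2})$, with $C$ depending on $\sigma ,M_{1},M_{2},M_{3},\delta$. Since $\phi(x)\le C(1\wedge |x|^{-2})$ as well, the function $(f_{F_{n}}+\phi)^{\alpha}$ is integrable whenever $\alpha>\tfrac12$. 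Then, for $\alpha\in(\tfrac12,k)$,
\begin{equation*}
\bigl|f_{F_{n}}(x)-\phi(x)\bigr|^{k}\le \bigl|f_{F_{n}}(x)-\phi(x)\bigr|^{k-\alpha}\bigl(f_{F_{n}}(x)+\phi(x)\bigr)^{\alpha},
\end{equation*}
and integrating gives $\|f_{F_{n}}-\phi\|_{L^{k}}^{k}\le C\,\sup_{x}|f_{F_{n}}-\phi|^{k-\alpha}$, from which \eref{gen1} yields the claimed $L^{k}$ estimate with exponent $(k-\alpha)/k$.

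The routine parts are the triangle inequality and the interpolation, and the only place requiring care is the extraction of a uniform constant: namely checking that the implicit constant in Theorem \ref{general-Rate} depends on $\sigma$ and $M$ only in a way that is controlled by the uniform bounds $\delta\le\sigma _{n}^{2}\le K$ and $M_{n}\le M_{2}$. This is the main (but quite minor) bookkeeping obstacle; everything else is a direct application of the results established earlier.
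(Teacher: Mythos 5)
Your proposal takes essentially the same route as the paper's (extremely terse) proof: apply Theorem \ref{general-Rate} to each $F_n$ with variance $\sigma_n^2=E[F_n^2]$, compare $\phi_n$ to $\phi$ by a mean-value estimate, and then pass from the uniform bound to the $L^k$ bound by the interpolation trick from the proof of Corollary \ref{qConv}, using the pointwise decay $f_{F_n}(x)\le C(1\wedge|x|^{-2})$ coming from Proposition \ref{density2}. The reduction and the bookkeeping of constants are exactly what the paper has in mind.

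One arithmetic slip: you claim that $p'=s/4$ makes the constraint $\frac1{p'}+\frac2r+\frac3s=1$ of Proposition \ref{density2} automatic from $\frac2r+\frac4s=1$, but this is wrong. Substituting $\frac2r=1-\frac4s$ into $\frac1p+\frac2r+\frac3s=1$ gives $\frac1p-\frac1s=0$, so the correct (and only) choice is $p=s$, not $s/4$; your choice $p=s/4$ gives $\frac1p+\frac2r+\frac3s=1+\frac3s>1$, which violates the hypothesis of Proposition \ref{density2}. The conclusion you want is unchanged, and in fact the correct choice $p=s$ fits the hypothesis $M_3=\sup_n\|F_n\|_{2s}<\infty$ exactly, since then $\|F_n\|_{2p}=\|F_n\|_{2s}\le M_3$. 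Aside from this, the argument is sound and matches the paper.
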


\begin{remark}
By the \textquotedblleft random contraction inequality\textquotedblright\ $(%
\ref{rci})$, a sufficient condition for $\mathrm{(iii)}$ is $E\left\Vert
D^{2}F_{n}\otimes _{1}D^{2}F_{n}\right\Vert _{\mathfrak{H}^{\otimes
2}}^{s/2}\rightarrow 0$ or $E\left\Vert D^{2}F_{n}\right\Vert _{\mathfrak{H}%
^{\otimes 2}}^{s}\rightarrow 0$.
\end{remark}

\begin{proof}[Proof of Corollary $\protect\ref{general}$]
It follows from Theorem \ref{general-Rate} and Proposition \ref{density2}
with an argument similar to Corollary \ref{qConv}.
\end{proof}

\subsection{Compactness argument}

In general, convergence in law does not imply convergence of the
corresponding densities even if they exist. The following theorem specifies
some additional conditions which ensure that convergence in law will imply
convergence of densities.

\begin{theorem}
\label{gThm}Let $\left\{ F_{n}\right\} _{n\in \mathbb{N}}$ be a sequence of
random variables in $\mathbb{D}^{2,s}$ satisfying any one of the following
two conditions:
\begin{equation}
\sup_{n}\left\Vert F_{n}\right\Vert _{2,s}+\sup_{n}\left\Vert
F_{n}\right\Vert _{2p}+\sup_{n}\left\Vert \left\Vert DF_{n}\right\Vert _{%
\mathfrak{H}}^{-2}\right\Vert _{r}<\infty  \label{gThm1}
\end{equation}%
for some $p, r,s>1$ satisfying $\frac 1p+ \frac{1}{r}+\frac{1}{s}=1$,{\ or }
\begin{equation}
\sup_{n}\left\Vert F_{n}\right\Vert _{2,s} +\sup_{n}\left\Vert \left\vert
\left\langle DF_{n},-DL^{-1}F_{n}\right\rangle _{\mathfrak{H}}\right\vert
^{-1}\right\Vert _{r}<\infty  \label{GThm2}
\end{equation}%
for some $r,s>1$ satisfying $\frac{2}{r}+\frac{4}{s}=1$.

Suppose in addition that $F_{n}\rightarrow N\sim N(0,\sigma ^{2})$ in law.
Then each $F_{n}$ admits a density $f_{F_{n}}\in C(\mathbb{R})$ given by
either $(\ref{Fmla1})$ or $(\ref{Fmla3})$, and
\begin{equation*}
\sup_{x\in \mathbb{R}}\left\vert f_{F_{n}}(x)-\phi (x)\right\vert
\rightarrow 0
\end{equation*}%
as $n\rightarrow \infty $, where $\phi $ is the density of $N$.
\end{theorem}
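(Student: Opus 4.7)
The plan is a compactness argument: establish uniform local regularity of the family $\{f_{F_n}\}$, upgrade to uniform decay at infinity via tightness, and identify the only possible sup-norm limit as $\phi$.

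\textbf{Step 1 (uniform boundedness and H\"{o}lder continuity).} Under condition \eqref{gThm1}, Theorem \ref{density} applies to each $F_n$ with the same exponents $p,r,s$, producing a continuous density $f_{F_n}$ via \eqref{Fmla1}; the estimates \eqref{fBd} and \eqref{fHld} are controlled uniformly in $n$ by the suprema in \eqref{gThm1}. Under condition \eqref{GThm2}, the constraint $\frac{2}{r}+\frac{4}{s}=1$ with $r,s>1$ forces $r>2$ and $s>4$, and the choice $p:=s$ then satisfies $p>1$ together with $\frac{1}{p}+\frac{2}{r}+\frac{3}{s}=1$, so Proposition \ref{density2} yields a continuous density $f_{F_n}$ via \eqref{Fmla3} whose constant $K_0$ in \eqref{fBd2}--\eqref{fHld2} is uniformly bounded in $n$. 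In both cases, there exist $K, C<\infty$ and $\alpha\in(0,1)$ independent of $n$ with $\sup_n\|f_{F_n}\|_{\infty}\le K$ and $|f_{F_n}(x)-f_{F_n}(y)|\le C|x-y|^{\alpha}$ for every $n$ and every $x,y\in\mathbb{R}$.

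\textbf{Step 2 (uniform decay at infinity via tightness).} Convergence $F_n\to N$ in law makes $\{F_n\}$ tight: for every $\varepsilon>0$ there exists $R_\varepsilon$ with $\sup_n P(|F_n|>R_\varepsilon)<\varepsilon$. Writing $M:=f_{F_n}(x_0)$, non-negativity together with the H\"{o}lder estimate forces $f_{F_n}\ge M/2$ on $[x_0-\delta,x_0+\delta]$ with $\delta:=(M/(2C))^{1/\alpha}$, so $P(F_n\in[x_0-\delta,x_0+\delta])\ge M\delta$. Since $\delta\le D:=(K/(2C))^{1/\alpha}$, choosing $|x_0|>R_\varepsilon+D$ places the interval in $\{|x|>R_\varepsilon\}$ and yields $M^{1+1/\alpha}\le(2C)^{1/\alpha}\varepsilon$, hence $\sup_n\sup_{|x|>R_\varepsilon+D}f_{F_n}(x)\to 0$ as $\varepsilon\to 0$. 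This is the main subtlety: under \eqref{GThm2}, the bound \eqref{fBd2} does not by itself yield uniform $|x|^{-2}$-decay (since $\sup_n\|F_n\|_{2s}$ is not controlled by the hypotheses), so equivanishing at infinity must be extracted from tightness combined with the uniform H\"{o}lder estimate.

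\textbf{Step 3 (Arzel\`{a}--Ascoli and identification of the limit).} Steps 1 and 2 supply uniform boundedness, equicontinuity, and equivanishing at infinity, which by the Arzel\`{a}--Ascoli theorem make $\{f_{F_n}\}$ relatively compact in $C_{0}(\mathbb{R})$ with the sup norm. Let $f_{F_{n_k}}\to g$ uniformly for some $g\in C_{0}(\mathbb{R})$; then for every $\psi\in C_{c}(\mathbb{R})$, uniform convergence on the support of $\psi$ gives $\int\psi\, f_{F_{n_k}}\to\int\psi\, g$, while convergence in law gives $\int\psi\, f_{F_{n_k}}=E[\psi(F_{n_k})]\to E[\psi(N)]=\int\psi\,\phi$, so $g=\phi$ by continuity. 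Since every subsequence of $\{f_{F_n}\}$ admits a further subsequence converging uniformly to $\phi$, the whole sequence converges in sup norm.
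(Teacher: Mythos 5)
Your proposal is correct, and it follows the same overall strategy as the paper: establish uniform boundedness and uniform H\"older continuity of $\{f_{F_n}\}$ from the density formulae of Section 3, then extract a uniformly convergent subsequence via Arzel\`a--Ascoli and identify the limit with $\phi$ using convergence in law, concluding by the subsequence principle. Where your argument genuinely improves on the paper's is Step 2. The paper obtains uniform decay at infinity directly from the $|x|^{-2}$ tail in $(\ref{fBd})$ (resp.\ $(\ref{fBd2})$) and then derives $L^1$ convergence; this works transparently under $(\ref{gThm1})$ because $\sup_n\|F_n\|_{2p}<\infty$ is assumed, but under $(\ref{GThm2})$ the only admissible choice in Proposition~\ref{density2} is $p=s$, so that tail bound would require $\sup_n\|F_n\|_{2s}<\infty$, which is \emph{not} implied by $\sup_n\|F_n\|_{2,s}<\infty$ and is not hypothesized. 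The paper's remark that the second case ``can be treated identically'' therefore glosses over a real gap, and the only part of Proposition~\ref{density2} one may safely extract under $(\ref{GThm2})$ is the uniform bound $f_{F_n}\le K_0$ and the H\"older estimate $(\ref{fHld2})$, both of which hold without finite $2p$-moments. Your tightness argument --- a lower bound $f_{F_n}\ge M/2$ on an interval of length comparable to $(M/C)^{1/\alpha}$ around any point where the density equals $M$, combined with $\sup_n P(|F_n|>R_\varepsilon)<\varepsilon$ --- supplies the missing equivanishing at infinity from the weak-convergence hypothesis alone, and is valid under both sets of conditions. The subsequent Arzel\`a--Ascoli step in $C_0(\mathbb{R})$ and the identification of the limit by pairing with $C_c(\mathbb{R})$ test functions are standard and correct. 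In short: same compactness skeleton, but your handling of decay at infinity via tightness is more robust and in fact necessary to make the argument rigorous in the $(\ref{GThm2})$ case.
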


\begin{proof}
We assume (\ref{gThm1}). The other condition can be treated identically.
From Theorem \ref{density} it follows that the density formula (\ref{Fmla1})
holds for each $n$ and for all $x,y\in \mathbb{R}$
\begin{equation*}
|f_{F_{n}}(x)|\leq C(1\wedge x^{-2}),
\end{equation*}%
\begin{equation*}
|f_{F_{n}}(x)-f_{F_{n}}(y)|\leq C\left\vert x-y\right\vert ^{\frac{1}{p}}.
\end{equation*}%
Hence the sequence $\left\{ f_{F_{n}}\right\} \subset C(\mathbb{R})$ is
uniformly bounded and equi-continuous. Then applying Azel\`{a}-Ascoli
theorem, we obtain a subsequence $\{f_{F_{n_{k}}}\}$ which converges
uniformly to a continuous function $f$ on $\mathbb{R}$ such that $0\leq
f(x)\leq C(1\wedge x^{-2})$. Then $f_{F_{n_{k}}}\rightarrow f$ in $L^{1}(%
\mathbb{R})$ \thinspace as $k\rightarrow \infty $ with $\left\Vert
f\right\Vert _{L^{1}(\mathbb{R})}=\lim_{k}\left\Vert
f_{F_{n_{k}}}\right\Vert _{L^{1}(\mathbb{R})}=1$. This implies that $f$ is a
density function. Then $f$ must be $\phi $ because $F_{n}$ converges to $N$
in law. Since the limit is unique for any subsequence, we get the uniform
convergence of $f_{F_{n}}$ to $\phi $.
\end{proof}

\begin{corollary}
Let $\left\{ F_{n}\right\} _{n\in \mathbb{N}}$ be a sequence of centered
random variables in $\mathbb{D}^{2,4}$ with the following Wiener chaos
expansions: $F_{n}=\sum_{q=1}^{\infty }J_{q}F_{n}$. Suppose that

\begin{itemize}
\item[(i)] $\lim_{Q\rightarrow \infty }\lim \sup_{n\rightarrow \infty
}\sum_{q=Q+1}^{\infty }E[\left\vert J_{q}F_{n}\right\vert ^{2}]=0.$

\item[(ii)] for every $q\geq 1$, $\lim_{n\rightarrow \infty }E[\left(
J_{q}F_{n}\right) ^{2}]=\sigma _{q}^{2}$.

\item[(iii)] $\sum_{q=1}^{\infty }\sigma _{q}^{2}=\sigma ^{2}$.

\item[(iv)] for all $q\geq 1$, $\left\langle D\left( J_{q}F_{n}\right)
,D(J_{q}F_{n})\right\rangle _{\mathfrak{H}}\longrightarrow q\sigma _{q}^{2}$%
, in $L^{2}(\Omega )$ as $n\rightarrow \infty .$

\item[(v)] $\sup_{n}\left\Vert F_{n}\right\Vert _{2,4}+\sup_{n}E[\left\Vert
DF_{n}\right\Vert _{\mathfrak{H}}^{-8}]<\infty $.
\end{itemize}

Then each $F_{n}$ admits a density $f_{F_{n}}(x)$ and
\begin{equation*}
\sup_{x\in \mathbb{R}}\left\vert f_{F_{n}}(x)-\phi (x)\right\vert
\rightarrow 0
\end{equation*}%
as $n\rightarrow \infty $, where $\phi $ is the density of $N(0,\sigma ^{2})$%
.
\end{corollary}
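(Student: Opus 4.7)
The plan is to verify the two hypotheses of Theorem~\ref{gThm}: the integrability condition (\ref{gThm1}) and convergence in distribution to $N(0,\sigma^2)$. For the integrability I would take $s=4$, $p=2$, $r=4$, which satisfy $\tfrac{1}{p}+\tfrac{1}{r}+\tfrac{1}{s}=1$. Hypothesis (v) then directly gives $\sup_n\|F_n\|_{2,4}<\infty$ (which majorizes $\sup_n\|F_n\|_{2p}=\sup_n\|F_n\|_4$) together with
\begin{equation*}
\sup_n\bigl\|\,\|DF_n\|_{\mathfrak{H}}^{-2}\bigr\|_4=\sup_n\bigl(E[\|DF_n\|_{\mathfrak{H}}^{-8}]\bigr)^{1/4}<\infty,
\end{equation*}
so (\ref{gThm1}) holds and each $F_n$ has a continuous density $f_{F_n}$ by Theorem~\ref{density}. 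All that remains is $F_n\to N(0,\sigma^2)$ in law.

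For the convergence in law I would run a chaos-truncation argument. Fix $Q\ge 1$ and set $F_n^{Q}=\sum_{q=1}^{Q}J_q F_n$. Because the Wiener chaoses are mutually orthogonal, hypothesis (ii) implies that the covariance matrix of the vector $(J_1 F_n,\ldots,J_Q F_n)$ converges to $\diag(\sigma_1^2,\ldots,\sigma_Q^2)$. For each $q\ge 2$, hypothesis (iv) combined with the Nualart--Peccati equivalence ((ii)-(iii) from the introduction) yields the marginal convergence $J_q F_n\to N(0,\sigma_q^2)$ in law; the case $q=1$ is automatic since $J_1 F_n$ is Gaussian with variance tending to $\sigma_1^2$. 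The Peccati--Tudor multivariate fourth moment theorem then upgrades the marginals to joint convergence
\begin{equation*}
(J_1 F_n,\ldots,J_Q F_n)\longrightarrow (N_1,\ldots,N_Q)\quad \text{in law,}
\end{equation*}
where $N_1,\ldots,N_Q$ are independent with $N_q\sim N(0,\sigma_q^2)$, and hence $F_n^{Q}\to N\bigl(0,\sum_{q=1}^{Q}\sigma_q^2\bigr)$ in law.

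Finally, hypothesis (i) gives $\limsup_n E[(F_n-F_n^{Q})^2]=\limsup_n\sum_{q>Q}E[(J_q F_n)^2]\to 0$ as $Q\to\infty$, while (iii) gives $\sum_{q=1}^{Q}\sigma_q^2\to\sigma^2$. A standard $\varepsilon/3$ argument applied to bounded Lipschitz test functions (sending $n\to\infty$ for fixed $Q$, then $Q\to\infty$) upgrades the finite-chaos convergence to $F_n\to N(0,\sigma^2)$ in law, and Theorem~\ref{gThm} then delivers the uniform convergence $\sup_x|f_{F_n}(x)-\phi(x)|\to 0$. The main obstacle I anticipate is the joint multi-chaos step, which is precisely why hypotheses (ii) and (iv) are formulated for each $q$ separately and why (i) is imposed as uniform $L^2$ tail control; all the density-side analytical work---negative-moment bounds, H\"{o}lder continuity and the Arzel\`{a}--Ascoli compactness reduction---is already bundled into Theorem~\ref{gThm}, so no further estimate of that type is needed.
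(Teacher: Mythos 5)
Your approach matches the paper's exactly: take $s=4$, $p=2$, $r=4$ so that hypothesis (v) supplies both $\sup_n\|F_n\|_{2,4}<\infty$ and $\sup_n\|\|DF_n\|_{\mathfrak{H}}^{-2}\|_4<\infty$, verifying (\ref{gThm1}), and then conclude via Theorem~\ref{gThm} once convergence in law to $N(0,\sigma^2)$ is in hand. The only difference is that the paper dispatches the convergence-in-law step in a single line by citing Theorem~8 of Nualart and Ortiz-Latorre \cite{NuOL08}, whereas you sketch a proof of that result (chaos truncation, the Peccati--Tudor multivariate fourth-moment theorem for the finite block $(J_1F_n,\ldots,J_QF_n)$ using orthogonality of the chaoses and hypotheses (ii) and (iv), and an $\varepsilon/3$ argument driven by (i) and (iii)); your sketch is correct and is essentially the argument behind the cited theorem.
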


\begin{proof}
It has been proved by Nualart and Ortiz-Latorre in \cite[Theorem 8]{NuOL08}
that under conditions (i)--(iv), $F_{n}$ converges to $N\sim N(0,\sigma
^{2}) $ in law. The condition (v) implies (\ref{gThm1}) with $s=4,p=2,r=4$.
Then we can conclude from Theorem \ref{gThm}.
\end{proof}

\section{Applications\label{Application}}

The main difficulty in applying Theorem \ref{qRateThm} or Theorem \ref%
{MultiThm0} is the verification of the non-degeneracy condition of the
Malliavin matrix:
$\sup_{n}E[\left\Vert DF_{n}\right\Vert _{\mathfrak{H}}^{-p}]<\infty $ or $%
\sup_{n}E[\left\vert \det \gamma _{F_{n}}\right\vert ^{-p}]<\infty $,
respectively. In this section we consider the particular case of random
variables in the second Wiener chaos and we find sufficient conditions for $%
\sup_{n}E[\left\Vert DF_{n}\right\Vert _{\mathfrak{H}}^{-p}]<\infty $. As an
application we consider the problem of estimating the drift parameter in an
Ornstein-Uhlenbeck process.

A general approach to verify $E[G^{-p}]<\infty $ for some positive random
variable and for some $p\geq 1$ is to obtain a small ball probability
estimate of the form
\begin{equation}
P(G\leq \varepsilon )\leq C\varepsilon ^{\alpha }\quad \text{for some }%
\alpha >p\text{ and for all }\varepsilon \in (0,\varepsilon _{0}),
\label{small-pr}
\end{equation}%
where $\varepsilon _{0}>0$ and $C>0$ is a constant that may depend on $%
\varepsilon _{0}$ and $\alpha $. We refer to the paper by Li and Shao \cite%
{LS01} for a survey on this topic. However, finding upper bounds of this
type is a challenging topic, and the application of small ball probabilities
to Malliavin calculus is still an under-explored domain.

\subsection{Random variables in the second Wiener chaos}

\label{s.7.1}

A random variable $F$ in the second Wiener chaos can always be written as $%
F=I_{2}(f)$ where $f\in \mathfrak{H}^{\odot 2}$. Without loss of generality
we can assume that
\begin{equation}
f=\sum_{i=1}^{\infty }\lambda _{i}e_{i}\otimes e_{i},  \label{f-expansion}
\end{equation}%
where $\left\{ \lambda _{i},i\geq 1\right\} $ verifying $\left\vert \lambda
_{1}\right\vert \geq \left\vert \lambda _{2}\right\vert \geq \dots \geq
\left\vert \lambda _{n}\right\vert \geq \dots $ are the eigenvalues of the
Hilbert-Schmidt operator corresponding to $f$ and $\left\{ e_{i},i\geq
1\right\} $ are the corresponding eigenvectors forming an orthonormal basis
of $\mathfrak{H}$. Then, we have $F=I_{2}(f)=\sum_{i=1}^{\infty }\lambda
_{i}(I_{1}(e_{i})^{2}-1)$,
\begin{equation}
DF=2\sum_{i=1}^{\infty }\lambda _{i}I_{1}(e_{i})e_{i}  \label{DF-expsn}
\end{equation}%
and
\begin{equation}
\left\Vert DF\right\Vert _{\mathcal{\mathfrak{H}}}^{2}=4\sum_{i=1}^{\infty
}\lambda _{i}^{2}I_{1}(e_{i})^{2}.  \label{DF-expsn2}
\end{equation}

For random variables of the form in (\ref{DF-expsn2}), i.e., $G=\left(
\sum_{i=1}^{\infty }\lambda _{i}^{2}X_{i}^{2}\right) ^{\frac{1}{2}}$,
Hoffmann-J\o rgensen, Shepp and Dudley \cite{HSD79} used the volume of the
small ball $B_{n}(0,\varepsilon )$ (the $\mathbb{R}^{n}$ ball centered at $0$
with radius $\varepsilon )$ to control $P(G\leq \varepsilon )$ as
\begin{equation}
P(G\leq \varepsilon )\leq P(\sum_{i=1}^{n}\lambda _{i}^{2}X_{i}^{2}\leq
\varepsilon ^{2})\leq (2\pi )^{-\frac{n}{2}}\varepsilon ^{n}\left\vert
B_{n}(0,1)\right\vert \prod_{i=1}^{n}\lambda _{i}^{-1}.  \label{smb-ctrl}
\end{equation}%
They proved that $P(G\leq \varepsilon )$ converges to zero at the rate $%
O(\varepsilon ^{n})$ for all $n$ as $\varepsilon \rightarrow 0$, under some
implicit conditions on $\left\{ \lambda _{i},i\geq 1\right\} $. This idea
can be used here to prove inequality (\ref{neg-mmt}) in the following lemma.
However, our case is much simpler, and we shall use the Gamma function to give
an alternative proof which leads to a necessary and sufficient condition for
$E[G^{-p}]<\infty $.

\begin{lemma}
\label{neg-moment}Let $G=\left( \sum_{i=1}^{\infty }\lambda
_{i}^{2}X_{i}^{2}\right) ^{\frac{1}{2}}$, where $\left\{ \lambda
_{i}\right\} _{i\geq 1}$ satisfies $\left\vert \lambda _{i}\right\vert \geq
\left\vert \lambda _{i+1}\right\vert $ for all $i\geq 1$ and $\left\{
X_{i}\right\} _{i\geq 1}$ are i.i.d standard normal. Fix an $\alpha >1$.
Then, $E[G^{-2\alpha }]<\infty $ if and only if there exists an integer $%
N>2\alpha $ such that $\left\vert \lambda _{N}\right\vert >0$ and in this
case there exists a constant $C_{\alpha }$ depending only on $\alpha $ such
that
\begin{equation}
E[G^{-2\alpha }]\leq C_{\alpha }N^{-\alpha }|\lambda _{N}|^{-2\alpha }.
\label{neg-mmt}
\end{equation}
\end{lemma}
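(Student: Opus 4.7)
The statement is essentially a comparison lemma: the tail of $G$ near zero is controlled by a chi-squared distribution built from the first $N$ eigenvalues. I would exploit the ordering $|\lambda_1|\ge |\lambda_2|\ge\cdots$ to reduce the infinite sum to a finite chi-squared, and then invoke the explicit density of $\chi^2_N$ to compute negative moments.

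\emph{Sufficiency.} Fix $N>2\alpha$ with $|\lambda_N|>0$. Since $|\lambda_i|\ge|\lambda_N|$ for $1\le i\le N$, truncating and bounding from below gives
\begin{equation*}
G^{2}\ \ge\ \sum_{i=1}^{N}\lambda_i^{2}X_i^{2}\ \ge\ \lambda_N^{2}\sum_{i=1}^{N}X_i^{2}\ =\ \lambda_N^{2}\,Y_N,
\end{equation*}
where $Y_N\sim\chi^2_N$. Hence $G^{-2\alpha}\le |\lambda_N|^{-2\alpha}Y_N^{-\alpha}$ and I would compute $E[Y_N^{-\alpha}]$ directly from the gamma density of $Y_N$, obtaining
\begin{equation*}
E[Y_N^{-\alpha}]\ =\ \frac{\Gamma(N/2-\alpha)}{2^{\alpha}\Gamma(N/2)},
\end{equation*}
which is finite exactly when $N>2\alpha$. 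Taking expectations gives the bound, and the $N^{-\alpha}$ decay in \eqref{neg-mmt} comes from applying Stirling's formula (or a Gautschi-type inequality) to control $\Gamma(N/2-\alpha)/\Gamma(N/2)\le C_{\alpha}\,N^{-\alpha}$ uniformly in integers $N>2\alpha$.

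\emph{Necessity.} I argue by contrapositive. Suppose no integer $N>2\alpha$ has $|\lambda_N|>0$. By monotonicity of $(|\lambda_i|)$ this means there is $N_0\le 2\alpha$ such that $\lambda_i=0$ for all $i>N_0$. Then $G$ is truly a finite sum and I obtain the reverse comparison
\begin{equation*}
G^{2}\ =\ \sum_{i=1}^{N_0}\lambda_i^{2}X_i^{2}\ \le\ \lambda_1^{2}\sum_{i=1}^{N_0}X_i^{2},
\end{equation*}
so $G^{-2\alpha}\ge |\lambda_1|^{-2\alpha}Y_{N_0}^{-\alpha}$. Since $N_0/2-\alpha\le 0$, the gamma integral defining $E[Y_{N_0}^{-\alpha}]$ diverges at the origin, forcing $E[G^{-2\alpha}]=\infty$. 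The degenerate case $N_0=0$ is immediate as $G\equiv 0$.

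\emph{Main obstacle.} There is no deep difficulty; the whole argument is a one-line chi-squared sandwich once the eigenvalue ordering is used. The only genuinely technical step is extracting the sharp $N^{-\alpha}$ dependence in \eqref{neg-mmt}, which requires a uniform-in-$N$ control of the Gamma ratio rather than a crude bound. A standard consequence of Stirling (or the inequality $\Gamma(s+a)/\Gamma(s)\ge s^{a}$ applied to $s=N/2-\alpha$) suffices, and the remaining factor $2^{-\alpha}$ is absorbed into the universal constant $C_{\alpha}$.
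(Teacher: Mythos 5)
Your argument is correct, and it lands on exactly the same bound
$(2\lambda_N^2)^{-\alpha}\,\Gamma(N/2-\alpha)/\Gamma(N/2)$
as the paper, but it takes a slightly more elementary path. The paper writes
$G^{-2\alpha}$ via the Gamma integral representation
$\lambda^{-\alpha}=\Gamma(\alpha)^{-1}\int_0^\infty e^{-\lambda y}y^{\alpha-1}\,dy$,
exchanges expectation and integral, uses $E[e^{-tX_i^2}]=(1+2t)^{-1/2}$ to get
$\int_0^\infty y^{\alpha-1}\prod_{i=1}^N(1+2\lambda_i^2y)^{-1/2}\,dy$,
and only then invokes the ordering $|\lambda_i|\ge|\lambda_N|$ to reduce to a Beta integral.
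You instead apply the ordering at the level of the random variables
($G^2\ge\lambda_N^2 Y_N$ with $Y_N\sim\chi^2_N$) and compute $E[Y_N^{-\alpha}]$
directly from the chi-squared density. Both routes exploit the same monotonicity in the same place,
so the content is identical; your version bypasses the Laplace-transform/Fubini step,
which makes the sufficiency direction a one-line sandwich, and your necessity argument
(reverse sandwich $G^2\le\lambda_1^2Y_{N_0}$, then divergence of $\int_0^1 y^{N_0/2-\alpha-1}dy$)
is likewise a more direct rephrasing of the paper's divergent-integral computation.
One point worth flagging: you correctly single out the only genuinely nontrivial technical step,
namely showing $\Gamma(N/2-\alpha)/\Gamma(N/2)\le C_\alpha N^{-\alpha}$ uniformly over integers
$N>2\alpha$; the paper silently passes from the Gamma-ratio expression to the statement
$(\ref{neg-mmt})$ without justifying the uniform $N^{-\alpha}$ decay, so your explicit appeal
to Stirling or a Gautschi-type inequality actually tightens the exposition. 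Just make sure to
state that for $N$ in the finite range $(2\alpha,3\alpha]$, say, the ratio is bounded by a
finite maximum depending only on $\alpha$, while for $N\ge 3\alpha$ the asymptotic
$\Gamma(s-\alpha)/\Gamma(s)\sim s^{-\alpha}$ takes over; this is what makes $C_\alpha$ legitimately
a constant depending only on $\alpha$.
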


\begin{proof}
Notice $\lambda ^{-\alpha }=\frac{1}{\Gamma (\alpha )}\int_{0}^{\infty
}e^{-\lambda y}y^{\alpha -1}dy$ and $E[e^{-tX_{i}^{2}}] =\frac{1}{\sqrt{1+2t}%
}$ for all $t>0$. If there exists $N>2\alpha $ such that $\left\vert \lambda
_{N}\right\vert >0$, then
\begin{eqnarray}
E[G^{-2\alpha }] &\leq &E\left[ \left( \sum_{i=1}^{N}\lambda
_{i}^{2}X_{i}^{2}\right) ^{-\alpha }\right] =\frac{1}{\Gamma (\alpha )}E%
\left[ \int_{0}^{\infty }e^{-y\sum_{i=1}^{N}\lambda
_{i}^{2}X_{i}^{2}}y^{\alpha -1}dy\right]  \notag \\
&=&\frac{1}{\Gamma (\alpha )}\int_{0}^{\infty }y^{\alpha
-1}\prod_{i=1}^{N}(1+2\lambda _{i}^{2}y)^{-\frac{1}{2}}dy.
\label{negMmt-ineq}
\end{eqnarray}%
Since $\lambda _{i}^{2}$ is non increasing in $i$ and $N>2\alpha $, using
the change of variables $1+2\lambda _{N}^{2}y=z$ we have
\begin{eqnarray*}
&&\int_{0}^{\infty }y^{\alpha -1}\prod_{i=1}^{N}(1+2\lambda _{i}^{2}y)^{-%
\frac{1}{2}}dy\leq \int_{0}^{\infty }y^{\alpha -1}(1+2\lambda _{N}^{2}y)^{-%
\frac{N}{2}}dy \\
&=&\left( 2\lambda _{N}^{2}\right) ^{-\alpha }\int_{1}^{\infty }\left(
z-1\right) ^{\alpha -1}z^{-\frac{N}{2}}dz=\left( 2\lambda _{N}^{2}\right)
^{-\alpha }\int_{1}^{\infty }\left( \frac{z-1}{z}\right) ^{\alpha
-1}z^{\alpha -1-\frac{N}{2}}dz \\
&= & \left( 2\lambda _{N}^{2}\right) ^{-\alpha } \int_0^1 (1-x)^{\alpha-1}
x^{\frac N2-\alpha-1} dx = \left( 2\lambda _{N}^{2}\right) ^{-\alpha } \frac{
\Gamma(\alpha) \Gamma(\frac N2-\alpha)}{ \Gamma(N/2)},
\end{eqnarray*}%
which implies (\ref{neg-mmt}).

On the other hand, if $\left\vert \lambda _{i}\right\vert =0$ for all $%
i>2\alpha $, let $N\leq 2\alpha $ be the largest nonnegative integer such
that $\left\vert \lambda _{N}\right\vert >0$. Then, the inequality in (\ref%
{negMmt-ineq}) becomes an equality. Using again that $\left\{ \lambda
_{i}^{2}\right\} _{i\geq 1}$ is a decreasing sequence we have
\begin{equation*}
\int_{0}^{\infty }y^{\alpha -1}\prod_{i=1}^{N}(1+2\lambda _{i}^{2}y)^{-\frac{%
1}{2}}dy\geq (1+2\lambda _{1}^{2})^{-\frac{N}{2}}(\int_{0}^{1}y^{\alpha
-1}dy+\int_{1}^{\infty }y^{\alpha -1-\frac{N}{2}}dy)=\infty ,
\end{equation*}%
and we conclude that $E[G^{-2\alpha}]=\infty$. This completes the proof.
\end{proof}

The following theorem describes the distance between the densities of $%
F=I_{2}(f)$ and $N(0,E[F^{2}])$.

\begin{theorem}
Let $F=I_{2}(f)$ with $f\in \mathfrak{H}^{\odot 2}$ given in $(\ref%
{f-expansion})$. Assume that there exists $N> 6m + 6\left( \lfloor \frac{m}{2%
}\rfloor \vee1\right)$, for some integer $m\ge 0$, such that $\lambda
_{N}\neq 0$. Then $F$ admits an $m$ times continuously differentiable
density $f_F$. Furthermore, if $\phi (x)$ denotes the density of $%
N(0,E[F^{2}] )$, then for $k=0,1,\dots ,m$,
\begin{equation*}
\sup_{x\in \mathbb{R}}\left\vert f_{F}^{(k)}(x)-\phi ^{(k)}(x)\right\vert
\leq C \left( \sum_{i=1}^{\infty }\lambda _{i}^{4}\right) ^{\frac{1}{2}}\leq
C \left( E[F^{4}]-3\left( E[F^{2}]\right) ^{2}\right) ^{\frac{1}{2}},
\end{equation*}%
where the constant $C $ depends on $N$ and $\lambda _{N} $.
\end{theorem}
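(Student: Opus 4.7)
The plan is to verify the hypotheses of Theorem \ref{qRateThm} (for $k=0$) and Theorem \ref{qDeriRate} (for $1\le k\le m$) in the case $q=2$, by using Lemma \ref{neg-moment} to produce the required negative moments of $\|DF\|_{\mathfrak{H}}$, and then to translate the right-hand side from $\sqrt{E[F^4]-3\sigma^4}$ into $(\sum_i\lambda_i^4)^{1/2}$ via a direct moment computation in the second chaos.

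The first step is to extract a good $\beta$. Because the hypothesis says $N>6m+6(\lfloor m/2\rfloor\vee 1)$, I can pick a real number $\beta$ with
\[
6m+6(\lfloor m/2\rfloor\vee 1)<\beta<N.
\]
By $(\ref{DF-expsn2})$, $\tfrac12\|DF\|_{\mathfrak{H}}=(\sum_i\lambda_i^2 X_i^2)^{1/2}$ with $X_i:=I_1(e_i)$ i.i.d.\ standard normal. Applying Lemma \ref{neg-moment} with $2\alpha=\beta$ (so $\alpha>1$, and $N>2\alpha$ by construction), I obtain
\[
E\bigl[\|DF\|_{\mathfrak{H}}^{-\beta}\bigr]\le C_{\beta} N^{-\beta/2}|\lambda_N|^{-\beta}<\infty,
\]
so $M_\beta(F)<\infty$ with a bound depending only on $\beta$, $N$, and $|\lambda_N|$.

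The second step is to feed this into the chaos estimates. For $k=0$, Theorem \ref{qRateThm} (which needs only $M_6(F)<\infty$, automatic here since $\beta\ge 6$) gives
\[
\sup_{x\in\mathbb{R}}\left|f_F(x)-\phi(x)\right|\le C\sqrt{E[F^4]-3\sigma^4}.
\]
For $1\le k\le m$, Theorem \ref{qDeriRate} with $q=2$ and this $\beta$ yields $f_F\in C^m(\mathbb{R})$ with
\[
\sup_{x\in\mathbb{R}}\left|f_F^{(k)}(x)-\phi^{(k)}(x)\right|\le C\sqrt{E[F^4]-3\sigma^4},
\]
where the constant depends on $q=2$, $m$, $\sigma^2=2\sum_i\lambda_i^2$, and $M_\beta(F)$, and therefore ultimately only on $N$ and $\lambda_N$ (noting that $\sigma^2$ is controlled by $\lambda_N$ from below through $\sigma^2\ge 2\lambda_N^2$ and that the $\lambda_i^2$'s are bounded by $\lambda_1^2$ but the relevant $\sigma$ appears only in the constant, as stated).

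The third step is to identify the right-hand side. Writing $F=\sum_i\lambda_i(X_i^2-1)$, independence of the $X_i$'s together with $E[(X_i^2-1)^2]=2$ and $E[(X_i^2-1)^4]=60$ gives
\[
E[F^4]=48\sum_i\lambda_i^4+12\Bigl(\sum_i\lambda_i^2\Bigr)^{\!2},\qquad 3\sigma^4=12\Bigl(\sum_i\lambda_i^2\Bigr)^{\!2},
\]
so $E[F^4]-3\sigma^4=48\sum_i\lambda_i^4$. (Equivalently, this follows from $(\ref{equi})$ applied to $\|DF\|_{\mathfrak{H}}^2-2\sigma^2=4\sum_i\lambda_i^2(X_i^2-1)$.) Combining this identity with the estimates of the second step produces both inequalities in the conclusion.

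There is no real analytical obstacle here: the density formula, its derivatives, and the uniform bound are all already packaged in Theorems \ref{qRateThm} and \ref{qDeriRate}, while the negative moment condition is reduced to Lemma \ref{neg-moment}. The only point that needs checking is the matching of thresholds: the hypothesis $N>6m+6(\lfloor m/2\rfloor\vee 1)$ is chosen precisely so that one can insert $\beta$ strictly between $6m+6(\lfloor m/2\rfloor\vee 1)$ and $N$, making both the Malliavin calculus regularity requirement and the Lemma \ref{neg-moment} integer condition simultaneously satisfied.
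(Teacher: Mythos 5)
Your proof is correct and follows essentially the same route as the paper: reduce the negative moment condition to Lemma \ref{neg-moment} applied to $\tfrac12\|DF\|_{\mathfrak{H}}=(\sum_i\lambda_i^2X_i^2)^{1/2}$, pick $\beta$ strictly between $6m+6(\lfloor m/2\rfloor\vee1)$ and $N$, and then invoke the chaos estimates of Section 4. The only cosmetic differences are that you identify $E[F^4]-3\sigma^4=48\sum_i\lambda_i^4$ by a direct moment computation for the independent summands $\lambda_i(X_i^2-1)$, whereas the paper computes $\mathrm{Var}(\|DF\|_{\mathfrak{H}}^2)=32\sum_i\lambda_i^4$ and passes through the two-sided inequality (\ref{equi}); and you handle $k=0$ explicitly via Theorem \ref{qRateThm} rather than leaving it implicit in Theorem \ref{qDeriRate} (which, as stated, covers only $k\ge 1$) — a small but genuine improvement in care over the paper's one-line proof.
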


\begin{proof}
Taking into account of (\ref{DF-expsn2}), we have
\begin{equation}
\mathrm{Var}\left( \Vert DF\Vert _{\mathfrak{H}}^{2}\right) =E\left\vert
4\sum_{i=1}^{\infty }\lambda _{i}^{2}\left( I_{1}(e_{i})^{2}-1\right)
\right\vert ^{2}=32\sum_{i=1}^{\infty }\lambda _{i}^{4}.  \label{qSigma-w}
\end{equation}%
From (\ref{DF-expsn2}) and Lemma \ref{neg-moment} it follows that
\begin{equation}
E[\left\Vert DF\right\Vert _{\mathfrak{H}}^{-\beta }]\leq C_{\beta
/2}N^{-\beta /2}|\lambda _{N}|^{-\beta },  \label{DF-negmmt}
\end{equation}%
for all $\beta <N$. Then, the theorem follows from Theorem \ref{qDeriRate},
taking into account (\ref{qSigma-w}).
\end{proof}

Now we are ready to prove convergence of densities of random variables in
the second Wiener chaos. Consider a sequence $F_{n}=I_{2}(f_{n})$ with $%
f_{n}\in \mathfrak{H}^{\odot 2}$, which can be written as
\begin{equation}
f_{n}=\sum_{i=1}^{\infty }\lambda _{n,i}e_{n,i}\otimes e_{n,i},  \label{fT}
\end{equation}%
where $\left\{ \lambda _{n,i},i\geq 1\right\} $ verifies $\left\vert \lambda
_{n,i}\right\vert \geq \left\vert \lambda _{n,i+1}\right\vert $ for all $%
i\geq 1$ and $\left\{ e_{n,i},i\geq 1\right\} $ are the corresponding
eigenvectors.

\begin{theorem}
\label{I2Th}Let $F_{n}=I_{2}(f_{n})$ with $f_{n}\in $ $\mathfrak{H}^{\odot
2} $ given by $(\ref{fT})$. Assume that $\left\{ \lambda _{n,i}\right\}
_{n,i\in \mathbb{N}}$ \ satisfies

\begin{itemize}
\item[(i)] $\sigma ^{2}:=2\lim_{n\rightarrow \infty }\sum_{i=1}^{\infty }
\lambda _{n,i} ^2>0$;

\item[(ii)] $\lim_{n\rightarrow \infty }\sum_{i=1}^{\infty } \lambda _{n,i}
^4=0$;

\item[(iii)] $\inf_n \left(\sup_{i>6m + 6\left( \lfloor \frac{m}{2}\rfloor
\vee1\right)} |\lambda_{n,i}| \sqrt{i} \right)>0$ for some integer $m\ge 0$.
\end{itemize}

Then, each $F_{n}$ admits a density function $f_{F_{n}}\in C^{m }\left(
\mathbb{R}\right) $. Furthermore, for $k=0,1,\dots ,m $ and if $\phi $
denotes the density of the law $N(0,\sigma^2)$, the derivatives of $%
f_{F_{n}}^{(k)}$ converge uniformly to the derivatives of $\phi$ with a
rate given by
\begin{equation*}
\sup_{x\in \mathbb{R}}\left\vert f_{F_{n}}^{(k)}(x)-\phi
^{(k)}(x)\right\vert \leq C \left[ \left (\sum_{i=1}^{\infty } \lambda
_{n,i} ^{4} \right)^{\frac{1}{2}}+\left\vert 2\sum_{i=1}^{\infty } \lambda
_{n,i} ^{2}-\sigma ^{2}\right\vert ^{\frac 12} \right],
\end{equation*}%
where $C $ is a constant depending only on $m $ and the infimum appearing in
condition (iii).
\end{theorem}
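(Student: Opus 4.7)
The plan is to reduce the statement directly to Corollaries \ref{qConv} (for $k=0$) and \ref{qDeriConv} (for $1\le k\le m$) with $q=2$ and $F_n=I_2(f_n)$. This requires checking four things: (a) $\sigma_n^2 := E[F_n^2]\to\sigma^2$, (b) uniform two-sided bounds $0<\delta\le\sigma_n^2\le K$, (c) $E[F_n^4]-3\sigma_n^4\to 0$, and (d) a uniform bound $\sup_n E[\|DF_n\|_{\mathfrak{H}}^{-\beta}]<\infty$ for some $\beta>\beta_0 := 6m+6(\lfloor m/2\rfloor\vee 1)$.

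Items (a)--(c) are routine consequences of the spectral representation. By the It\^o isometry $\sigma_n^2 = 2\|f_n\|_{\mathfrak{H}^{\otimes 2}}^2 = 2\sum_i \lambda_{n,i}^2$, so (a) follows from (i); for (b) the limit $\sigma^2>0$ gives the bounds for large $n$, while (iii) forces $F_n\not\equiv 0$ for every $n$. Writing $F_n=\sum_i \lambda_{n,i}(X_{n,i}^2-1)$ with $X_{n,i}:=I_1(e_{n,i})$ i.i.d.\ $N(0,1)$, a direct computation (or combining $\mathrm{Var}(\|DF_n\|_{\mathfrak{H}}^2)=32\sum_i\lambda_{n,i}^4$ from (\ref{qSigma-w}) with the equivalence (\ref{equi})) gives $E[F_n^4]-3\sigma_n^4 = 48\sum_i \lambda_{n,i}^4 \to 0$ by (ii).

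The substantive step is (d), and this is what condition (iii) is engineered for. From (\ref{DF-expsn2}),
\[
\tfrac{1}{4}\|DF_n\|_{\mathfrak{H}}^2 = \sum_{i=1}^\infty \lambda_{n,i}^2 X_{n,i}^2,
\]
with $\{X_{n,i}\}_i$ i.i.d.\ standard normal and $|\lambda_{n,i}|$ non-increasing in $i$ by the setup (\ref{fT}). Let $c:=\inf_n\sup_{i>\beta_0}|\lambda_{n,i}|\sqrt{i}>0$ and fix any $\beta\in(\beta_0,\beta_0+1)$. For each $n$ pick an integer $i_n>\beta_0$ with $|\lambda_{n,i_n}|\sqrt{i_n}\ge c/2$; since $i_n\ge \beta_0+1>\beta$ and $\lambda_{n,i_n}\neq 0$, Lemma \ref{neg-moment} with $N=i_n$ and $\alpha=\beta/2$ yields
\[
E\bigl[\|DF_n\|_{\mathfrak{H}}^{-\beta}\bigr] \le C_\beta\, i_n^{-\beta/2}|\lambda_{n,i_n}|^{-\beta} \le C_\beta\,(2/c)^\beta,
\]
uniformly in $n$. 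Precisely the $\sqrt{i}$ weight in (iii) cancels the $N^{-\beta/2}$ in the conclusion of Lemma \ref{neg-moment}, which is why the hypothesis is stated in that form; this cancellation is the one slightly delicate point of the argument.

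Having verified (a)--(d), Corollaries \ref{qConv} and \ref{qDeriConv} give, for $k=0,1,\dots,m$,
\[
\sup_{x\in\mathbb{R}}\bigl|f_{F_n}^{(k)}(x)-\phi^{(k)}(x)\bigr| \le C\Bigl(\sqrt{E[F_n^4]-3\sigma_n^4}+|\sigma_n-\sigma|\Bigr).
\]
Substituting $E[F_n^4]-3\sigma_n^4=48\sum_i\lambda_{n,i}^4$ and using $|\sqrt{a}-\sqrt{b}|\le\sqrt{|a-b|}$ for $a,b\ge 0$ to bound $|\sigma_n-\sigma|\le|\sigma_n^2-\sigma^2|^{1/2}=|2\sum_i\lambda_{n,i}^2-\sigma^2|^{1/2}$ yields the stated rate, completing the proof.
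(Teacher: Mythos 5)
Your proposal is correct and follows essentially the same route as the paper's (very terse) proof: compute $\sigma_n^2=2\sum_i\lambda_{n,i}^2$, relate $E[F_n^4]-3\sigma_n^4$ to $\sum_i\lambda_{n,i}^4$ via $(\ref{qSigma-w})$, bound the negative moments via $(\ref{DF-negmmt})$ with condition (iii), and apply Corollary \ref{qDeriConv}. You fill in the details more carefully than the paper does, and in particular you are right to invoke Corollary \ref{qConv} as well, since Corollary \ref{qDeriConv} is stated only for $m\geq 1$ and $k=1,\dots,m$, whereas the theorem covers $k=0$ and $m=0$; the paper's one-line proof cites only \ref{qDeriConv}, leaving that endpoint case implicit.
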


\begin{proof}[\textbf{Proof of Theorem }$\protect\ref{I2Th}$]
Note that $E[(I_{1}(e_{n,i})^{2}-1)(I_{1}(e_{n,j})^{2}-1)]=2\delta _{ij}$.
Thus,
\begin{equation*}
\sum_{i=1}^{\infty }\lambda _{n,i}^{2}=\left\Vert f_{n}\right\Vert _{%
\mathfrak{H}^{\odot 2}}^{2}=\frac{1}{2}E[F_{n}^{2}].
\end{equation*}
Then, the result follows from (\ref{qSigma-w}), (\ref{DF-negmmt}) and
Corollary \ref{qDeriConv}.
\end{proof}

\medskip Condition (iii) in Theorem $\ref{I2Th}$ means that there exist a
positive constant $\delta>0$ such that for each $n$ we can find an index $%
i(n)>6m + 6\left( \lfloor \frac{m}{2}\rfloor \vee1\right)$ with $%
|\lambda_{n,i(n)}| \sqrt{i(n)} \ge \delta$.

\begin{remark} \label{I2example}
It is interesting to compare Theorem $\ref{I2Th}$ with the case when%
\begin{equation*}
\displaystyle{\lambda }_{n,i}=%
\begin{cases}
\frac{1}{\sqrt{n}} & \quad \hbox{if}\ 1\leq i\leq n; \\
0 & \quad \hbox{if}\ i\geq n+1,%
\end{cases}%
\end{equation*}
which corresponds to classical case of sum of independent and identically
distributed random variables. In this case all the conditions of Theorem $\ref
{I2Th}$ are satisfied with $\sigma ^{2}=2$.
Moreover we have $\sum_{i=1}^{\infty }\lambda _{n,i}^{4}=\frac{1}{n}$ and $%
\sum_{i=1}^{\infty }\lambda _{n,i}^{2}=1$. Then we obtain a Berry--Essen type
bound for the derivatives of the density. Namely, we have $\sup_{x\in
\mathbb{R}}\left\vert f_{F_{n}}^{(k)}(x)-\phi ^{(k)}(x)\right\vert \leq
\frac{C}{\sqrt{n}}$ for sufficiently large $n$, which provides the right rate of
convergence.
\end{remark}

\subsection{Parameter estimation in Ornstein-Uhlenbeck processes}

Consider the following Ornstein-Uhlenbeck process
\begin{equation*}
X_{t}=-\theta \int_{0}^{t}X_{s}ds+\gamma B_{t},
\end{equation*}%
where $\theta >0$ is an unknown parameter, $\gamma >0$ is known and $%
B=\{B_{t},0\leq t<\infty \}$ is a standard Brownian motion. Assume that the
process $X=\{X_{t},0\leq t\leq T\}$ can be observed continuously in the time
interval $[0,T]$. Then the least squares estimator (or the maximum
likelihood estimator) of $\theta $ is given by $\displaystyle\widehat{\theta
}_{T}=\frac{\int_{0}^{T}X_{t}dX_{t}}{\int_{0}^{T}X_{t}^{2}dt}$. It is known
(see for example, \cite{LipS01}, \cite{Kut04}) that, as $T$ tends to
infinity, $\widehat{\theta }_{T}$ converges to $\theta $ almost surely and
\begin{equation}
\sqrt{T}(\widehat{\theta }_{T}-\theta )=-\frac{TF_{T}}{%
\int_{0}^{T}X_{t}^{2}dt}\overset{\mathcal{L}}{\rightarrow }N(0,2\theta ),
\label{FT_conv}
\end{equation}%
where
\begin{equation}  \label{FT0}
F_{T}=I_{2}(f_{T})=\int_{0}^{T}\int_{0}^{T}f_{T}(t,s)dB_{t}dB_{s},
\end{equation}%
with
\begin{equation}  \label{FT}
f_{T}(t,s)=\frac{\gamma ^{2}}{2\sqrt{T}}e^{-\theta \left\vert t-s\right\vert
}.
\end{equation}
Recently, Hu and Nualart \cite{HN10SPL} extended this result to the case
where $B$ is a fractional Brownian motion with Hurst parameter $H\in \lbrack
\frac{1}{2},\frac{3}{4})$, which includes the standard Brownian motion case.
Since $\frac{1}{T}\int_{0}^{T}X_{t}^{2}dt\rightarrow \frac{1}{2}\gamma
^{2}\theta ^{-1}$ almost surely as $T$ tends to infinity, the main effort in
proving (\ref{FT_conv}) is to show the convergence in law of $F_{T}$ to the
normal law $N(0,\frac{\gamma ^{4}}{2\theta })$. We shall prove that the
density of $F_{T}$ converges as $T$ tends to infinity to the density of the
normal distribution $N(0,\frac{\gamma ^{4}}{2\theta })$.

\begin{theorem}
\label{OUthm}Let $F_{T}$ be given by $\left( \ref{FT}\right) $ and let $\phi
$ be the density of the law $N(0,\sigma ^{2})$, where $\sigma ^{2}=\frac{%
\gamma ^{4}}{2\theta }$. Then for each $T>0$, $F_{T}$ has a smooth
probability density $f_{F_{T}}$ and for any $k\geq 0$,
\begin{equation*}
\sup_{x\in \mathbb{R}}\left\vert f_{F_{T}}^{(k)}(x)-\phi
^{(k)}(x)\right\vert \leq CT^{-\frac{1}{2}},
\end{equation*}%
where the constant $C$ depends on $k$, $\gamma $ and $\theta $.
\end{theorem}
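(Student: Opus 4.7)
The plan is to deduce the conclusion from Theorem \ref{I2Th} applied to the family $\{F_T\}_{T\ge T_0}$, handling bounded $T$ separately. Writing $F_T=I_2(f_T)$ with $f_T(t,s)=(\gamma^2/(2\sqrt T))e^{-\theta|t-s|}$ on $L^2([0,T])$, the first step is to diagonalize the associated Hilbert--Schmidt operator $K_T$. The eigenvalue equation $\int_0^T e^{-\theta|t-s|}g(s)\,ds=\mu g(t)$ reduces, after differentiating twice, to the ODE $g''(t)=(\theta^2-2\theta/\mu)g(t)$ with Robin boundary conditions $g'(0)=\theta g(0)$ and $g'(T)=-\theta g(T)$. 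The oscillatory branch ($\mu<2/\theta$, $\omega^2:=2\theta/\mu-\theta^2>0$) gives the transcendental equation $\tan(\omega T)=2\theta\omega/(\omega^2-\theta^2)$, whose positive roots $\omega_n$ satisfy $\omega_n T=n\pi+2\arctan(\theta/\omega_n)$, so $\omega_n=n\pi/T+O(1/T)$. Hence the eigenvalues of $f_T$ are $\lambda_{T,n}=(\gamma^2/(2\sqrt T))\cdot 2\theta/(\omega_n^2+\theta^2)$ for $n\ge 1$.

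Next I would verify the three hypotheses of Theorem \ref{I2Th}. For (i), Fubini gives
\[
\sum_{n\ge 1}\lambda_{T,n}^2=\|f_T\|_{\mathfrak H^{\otimes 2}}^2=\frac{\gamma^4}{4T}\int_0^T\!\!\int_0^T e^{-2\theta|t-s|}\,dt\,ds=\frac{\gamma^4}{4\theta}-O(T^{-1}),
\]
so $\sigma^2:=2\lim_{T\to\infty}\sum_n\lambda_{T,n}^2=\gamma^4/(2\theta)$ with $|2\sum_n\lambda_{T,n}^2-\sigma^2|=O(T^{-1})$. For (ii), a similar identification yields $\sum_n\lambda_{T,n}^4=\|f_T\otimes_1 f_T\|_{\mathfrak H^{\otimes 2}}^2=(\gamma^8/(16 T^2))\,\mathrm{tr}(K_T^4)$; bounding $\mathrm{tr}(K_T^4)=\iiiint_{[0,T]^4}\prod_{j=1}^4e^{-\theta|t_j-t_{j+1}|}\,dt$ by fixing $t_1$ and integrating the other three variables over $\mathbb R$ shows $\mathrm{tr}(K_T^4)=O(T)$, whence $\sum_n\lambda_{T,n}^4=O(T^{-1})$. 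For (iii), choose $i(T):=\lceil T\rceil$; once $T$ is large enough that $i(T)>6m+6(\lfloor m/2\rfloor\vee 1)$, one has $\omega_{i(T)}\to\pi$, so $\mu_{i(T)}$ is trapped between two positive constants and
\[
|\lambda_{T,i(T)}|\sqrt{i(T)}\;\gtrsim\;T^{-1/2}\cdot \sqrt{T}\;=\;\mathrm{const}>0
\]
uniformly in $T\ge T_0(m)$.

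Theorem \ref{I2Th} then produces $\sup_x|f_{F_T}^{(k)}(x)-\phi^{(k)}(x)|\le C\bigl[(\sum_n\lambda_{T,n}^4)^{1/2}+|2\sum_n\lambda_{T,n}^2-\sigma^2|^{1/2}\bigr]=O(T^{-1/2})$ for every $k\le m$ and $T\ge T_0(m)$. Since $m$ is arbitrary, the rate holds for all $k$. For the remaining range $T\in(0,T_0(m)]$, smoothness of $f_{F_T}$ follows from Proposition \ref{kthD} because $f_T$ has infinitely many nonzero eigenvalues, and a continuity-in-$T$ argument shows that $T^{1/2}\sup_x|f_{F_T}^{(k)}(x)-\phi^{(k)}(x)|$ is bounded on each such compact interval, so these $T$ can be absorbed into the constant $C=C(k,\gamma,\theta)$.

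The main obstacle is the uniform lower bound in (iii): producing an index $i(T)$ of order $T$ with $|\lambda_{T,i(T)}|\sqrt{i(T)}$ bounded below independently of $T$. This hinges on the asymptotic $\omega_n\sim n\pi/T$; the helpful observation is that rewriting $\tan(\omega T)=2\theta\omega/(\omega^2-\theta^2)$ as $\omega T=n\pi+2\arctan(\theta/\omega)$ makes the $O(1/T)$ correction explicit and shows that for $n\asymp T$ both $\omega_n$ and $\mu_n$ are of order $1$, which is precisely what is needed to translate the $T^{-1}$ rate for the fourth cumulant into the $T^{-1/2}$ rate for the densities and their derivatives.
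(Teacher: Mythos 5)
Your proposal takes essentially the same route as the paper: diagonalize the Hilbert--Schmidt operator associated with $f_T$, derive the Sturm--Liouville eigenvalue problem and the resulting transcendental equation, extract the asymptotic $\lambda_{T,i}\asymp \gamma^2\theta/(\sqrt T(\theta^2+(i\pi/T)^2))$ (the paper's Lemma~\ref{LQ}), use an index $i(T)\asymp T$ to verify the lower bound in condition~(iii) of Theorem~\ref{I2Th} and thereby obtain uniform negative moments, and then invoke the second-chaos convergence result. Your verification of the $O(T^{-1})$ size of the fourth cumulant via $\sum_n\lambda_{T,n}^4=\|f_T\otimes_1 f_T\|^2=(\gamma^8/(16T^2))\,\mathrm{tr}(K_T^4)$ is an equivalent (and slightly cleaner) route to the paper's direct computation of $E[F_T^4]-3(E[F_T^2])^2$ by integration by parts; both give the same $O(T^{-1})$ estimate.

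The one place where your argument is looser than the paper's is the treatment of bounded $T$. Because you chose $i(T)=\lceil T\rceil$, condition~(iii) only becomes available once $T$ exceeds the threshold $6m+6(\lfloor m/2\rfloor\vee 1)$, and you patch the remaining range with an unproven ``continuity-in-$T$'' claim on $(0,T_0(m)]$. That interval is not compact, and as $T\to 0^+$ the variance $E[F_T^2]\sim\tfrac{\gamma^4}{2}T$ degenerates, so $M_\beta(F_T)\to\infty$ and the constants coming from Proposition~\ref{kthD} blow up; it is not automatic that the blow-up is dominated by $T^{-1/2}$. The paper sidesteps this by taking $i(T)=\lfloor\beta+1\rfloor+\lfloor T\rfloor$, which keeps $i(T)$ above the threshold for all $T$ and makes the ratio $T/i(T)$ stay in a fixed window once $T\ge 1$. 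You could remove the gap by likewise shifting your index (e.g.\ $i(T)=\lceil 6m+6(\lfloor m/2\rfloor\vee1)+T\rceil$) rather than appealing to an unproved continuity statement; with that fix your argument matches the paper's.
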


Before proving the theorem, let us first analyze the asymptotic behavior of
the eigenvalues of $f_{T}$. The Hilbert space corresponding to Brownian
motion $B$ is $\mathfrak{H}=L^{2}([0,T])$. Let $Q_{T}:L^{2}([0,T])%
\rightarrow L^{2}([0,T])$ be the Hilbert-Schmidt operator associated to $%
f_{T}$, that is,
\begin{equation}
\left( Q_{T}\varphi \right) (t)=\int_{0}^{T}f_{T}(t,s)\varphi (s)ds
\label{QT}
\end{equation}%
for $\varphi \in L^{2}[0,T]$. The operator $Q_{T}$ has eigenvalues $\lambda
_{T,1}>\lambda _{T,2}>\cdots \geq 0$ and $\sum_{i=1}^{\infty }\lambda
_{T,i}^{2}<\infty $. The following lemma provides upper and lower bounds for
these eigenvalues.

\begin{lemma}
\label{LQ} Fix $T>0$. Let $f_{T}$ be given by $(\ref{FT})$ and $Q_{T}$ be
given by $(\ref{QT})$. The eigenvalues $\lambda _{T,i}$ of $Q_{T}$ (except
maybe one) satisfy the following estimates
\begin{equation}
\frac{\gamma ^{2}\theta }{\sqrt{T}\left( \theta ^{2}+\left( \frac{i\pi +%
\frac{\pi }{2}}{T}\right) ^{2}\right) }<\lambda _{T,i}<\frac{\gamma
^{2}\theta }{\sqrt{T}\left( \theta ^{2}+\left( \frac{i\pi -\frac{\pi }{2}}{T}%
\right) ^{2}\right) }.  \label{Lamda-Tk}
\end{equation}
\end{lemma}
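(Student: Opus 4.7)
The plan is to convert the integral eigenvalue equation into a second-order ODE with Robin-type boundary conditions, solve the ODE explicitly to reduce the eigenvalue problem to a transcendental equation for a frequency parameter $\omega$, and then locate the roots of that equation by monotonicity and the intermediate value theorem.

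First, writing $f_T(t,s) = (\gamma^2/2\sqrt{T})e^{-\theta|t-s|}$ and splitting the integral at $s=t$, the eigenvalue equation $\lambda\varphi(t) = \int_0^T f_T(t,s)\varphi(s)\,ds$ can be differentiated twice in $t$. The Leibniz-rule boundary terms cancel because the kernel is continuous across $s=t$, yielding the ODE
$$\lambda\varphi''(t) = \Bigl(\theta^2\lambda - \tfrac{\gamma^2\theta}{\sqrt{T}}\Bigr)\varphi(t).$$
Evaluating the once-differentiated equation at $t=0$ and $t=T$ produces the Robin boundary conditions $\varphi'(0) = \theta\varphi(0)$ and $\varphi'(T) = -\theta\varphi(T)$.

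Next, I would rule out exponentially structured eigenfunctions: substituting $\varphi(t) = Ae^{\mu t} + Be^{-\mu t}$ with $\mu > 0$ into the two Robin conditions forces $e^{2\mu T} = (\mu-\theta)^2/(\mu+\theta)^2 < 1$, incompatible with $\mu T > 0$. Hence every eigenvalue admits the representation $\lambda = \gamma^2\theta/[\sqrt{T}(\theta^2+\omega^2)]$ with $\omega>0$, corresponding to an oscillatory eigenfunction $\varphi(t) = A\cos(\omega t)+B\sin(\omega t)$. The left Robin condition immediately gives $B\omega = \theta A$, and substituting into the right condition, after applying the double-angle identity $\tan(2\arctan(\theta/\omega)) = 2\theta\omega/(\omega^2-\theta^2)$, reduces to the transcendental equation
$$\omega T = 2\arctan(\theta/\omega) + (i-1)\pi, \qquad i=1,2,\ldots$$

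Finally, set $g_i(\omega) := \omega T - 2\arctan(\theta/\omega) - (i-1)\pi$. Since $g_i'(\omega) = T + 2\theta/(\omega^2+\theta^2) > 0$, each $g_i$ has a unique positive root $\omega_i$, and the claimed two-sided bound on $\lambda_{T,i}$ is equivalent by the change of variables $\lambda_{T,i} = \gamma^2\theta/[\sqrt{T}(\theta^2+\omega_i^2)]$ to $(i-1/2)\pi/T < \omega_i < (i+1/2)\pi/T$. By monotonicity of $g_i$, each inequality reduces to a sign check at an endpoint: one finds
$$g_i\bigl((i+\tfrac12)\tfrac{\pi}{T}\bigr) = \tfrac{3\pi}{2} - 2\arctan\bigl(T\theta/((i+\tfrac12)\pi)\bigr), \quad g_i\bigl((i-\tfrac12)\tfrac{\pi}{T}\bigr) = \tfrac{\pi}{2} - 2\arctan\bigl(T\theta/((i-\tfrac12)\pi)\bigr),$$
and these have definite signs (the first positive, the second negative) for every index except the single one at which the argument of $\arctan$ crosses the threshold value $1$, i.e., at which $\omega_i \approx \theta$. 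This exceptional index accounts for the ``except maybe one'' qualifier in the lemma.

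The main obstacle will be the careful sign analysis at the interval endpoints, together with ensuring that the ODE reduction captures precisely the eigenfunctions of the original integral operator (no spurious solutions and no missing ones)---the latter follows from the invertibility of the reduction steps. A secondary technical point is showing uniqueness of the exceptional index, which is precisely the content of the monotonicity argument combined with the fact that the threshold condition $\omega_i \approx \theta$ is crossed at most once in the increasing sequence $(\omega_i)_{i\geq 1}$.
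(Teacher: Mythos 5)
Your proof follows essentially the same route as the paper: reduce the integral eigenvalue problem to a second-order ODE with Robin boundary conditions, solve it explicitly, and locate the roots of the resulting transcendental equation in windows of length $\pi/T$. You add a step the paper omits---explicitly ruling out exponentially-structured eigenfunctions---and in fact your boundary conditions $\varphi'(0)=\theta\varphi(0)$, $\varphi'(T)=-\theta\varphi(T)$ are the correct ones; evaluating \eqref{eigen1} and \eqref{eigen2} at $t=0$ and $t=T$ gives exactly those, whereas \eqref{NeumannC} in the paper transposes $\varphi$ and $\varphi'$ (this misprint does not change the form of the final estimate, since the resulting transcendental equation has the same root structure).

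There is, however, a gap in your endpoint sign analysis. You assert that $g_i\bigl((i-\tfrac12)\tfrac{\pi}{T}\bigr)=\tfrac{\pi}{2}-2\arctan\bigl(T\theta/((i-\tfrac12)\pi)\bigr)$ is negative for every index except the single one at which the $\arctan$ argument crosses the threshold $1$. That is not what happens: the argument $T\theta/((i-\tfrac12)\pi)$ decreases monotonically in $i$, so this quantity is negative only for $i$ with $(i-\tfrac12)\pi<T\theta$ and is \emph{positive for all} larger $i$. Consequently the lower bound $\omega_iT>(i-\tfrac12)\pi$---equivalently the upper bound on $\lambda_{T,i}$ in \eqref{Lamda-Tk}---fails for every $i$ beyond roughly $T\theta/\pi$, not for just one exceptional index. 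To be fair, the paper's own proof has the analogous imprecision: it locates one root in each window $((i-\tfrac12)\pi,(i+\tfrac12)\pi)$ but does not verify that this window-indexed enumeration matches the decreasing-magnitude enumeration $\lambda_{T,1}>\lambda_{T,2}>\cdots$ that the lemma is stated for. What you do prove cleanly, and what is actually used in the proof of Theorem \ref{OUthm}, is that $g_i\bigl((i+\tfrac12)\tfrac{\pi}{T}\bigr)>0$ for all $i$, hence $\omega_iT<(i+\tfrac12)\pi$ and the lower bound of \eqref{Lamda-Tk} holds for every $i$; so the application is unaffected, but your proof, like the paper's, does not establish the two-sided estimate with only one exception as stated.
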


\begin{proof}
Consider the eigenvalue problem $Q_{T}\varphi =\lambda \varphi $, that is,%
\begin{equation}
\int_{0}^{T}f_{T}(t,s)\varphi (s)ds=\frac{\gamma ^{2}}{2\sqrt{T}}\left(
\int_{0}^{t}e^{-\theta (t-s)}\varphi (s)ds+\int_{t}^{T}e^{-\theta
(s-t)}\varphi (s)ds\right) =\lambda \varphi (t).  \label{eigen1}
\end{equation}%
Then, $\phi \,$\ is differentiable and%
\begin{equation}
\frac{\gamma ^{2}\theta }{2\sqrt{T}}\left( -\int_{0}^{t}e^{-\theta
(t-s)}\varphi (s)ds+\int_{t}^{T}e^{-\theta (s-t)}\varphi (s)ds\right)
=\lambda \varphi ^{\prime }(t).  \label{eigen2}
\end{equation}%
Differentiating again we have%
\begin{equation*}
\frac{\gamma ^{2}\theta }{2\sqrt{T}}\left( -2\varphi (t)+\theta
\int_{0}^{t}e^{-\theta (t-s)}\varphi (s)ds+\theta \int_{t}^{T}e^{-\theta
(s-t)}\varphi (s)ds\right) =\lambda \varphi ^{\prime \prime }(t).
\end{equation*}%
Comparing this expression with (\ref{eigen1}), we obtain
\begin{equation}
(\theta ^{2}-\frac{\gamma ^{2}\theta }{\sqrt{T}\lambda })\varphi (t)=\varphi
^{\prime \prime }(t).  \label{eigenEqu}
\end{equation}%
Also, from (\ref{eigen1}) and (\ref{eigen2}) it follows that
\begin{equation}
\varphi (0)=\theta \varphi ^{\prime }(0),~\varphi (T)=-\theta \varphi
^{\prime }(T).  \label{NeumannC}
\end{equation}%
Equations (\ref{eigenEqu}) and (\ref{NeumannC}) form a Sturm-Liouville
system. Its general solution is of the form
\begin{equation*}
\varphi (t)=C_{1}\sin \mu t+C_{2}\cos \mu t,
\end{equation*}%
where $C_{1}$ and $C_{2}$ are constants, and $\mu >0$ is an eigenvalue of
the Sturm-Liouville system. By eliminating the constants $C_{1}$ and $C_{2}$
from (\ref{eigenEqu}) and (\ref{NeumannC}) we obtain
\begin{equation}
-\mu ^{2}=\theta ^{2}-\frac{\gamma ^{2}\theta }{\sqrt{T}\lambda }.
\label{muLamda}
\end{equation}%
Then, the desired estimates on the eigenvalues of $Q_{T}\varphi =\lambda
\varphi $will follow form estimates on $\mu$. Note that the Neumann
condition (\ref{NeumannC}) yields
\begin{equation*}
(\mu ^{2}\theta ^{2}-1)\sin \mu T=2\mu \theta \cos \mu T.
\end{equation*}%
If we write $x=\mu \theta >0$ (since $\mu ,\theta >0$), the above equation
becomes%
\begin{equation*}
(x^{2}-1)\sin \frac{x}{\theta }T=2x\cos \frac{x}{\theta }T.
\end{equation*}
The solution $x=1$ corresponds to the eigenvalue $\mu =\frac 1 \theta$. If $%
x\not =1$, then $\cos \frac{x}{\theta }T\neq 0$ and
\begin{equation}
\tan \frac{x}{\theta }T=\frac{2x}{x^{2}-1}.  \label{SLequ}
\end{equation}%
For any $i\in \mathbb{Z}_{+}$, there is exactly one solution $x_{i}$ to (\ref%
{SLequ}) such that $\frac{x_{i}}{\theta }T\in (i\pi -\frac{\pi }{2},i\pi +%
\frac{\pi }{2})$. Corresponding to each $x_{i}$ is an eigenvalue $\mu _{i}=%
\frac{x_{i}}{\theta }$ of the Sturm-Liouville system, satisfying $%
\displaystyle\frac{i\pi -\frac{\pi }{2}}{T}<\mu _{i}<\frac{i\pi +\frac{\pi }{%
2}}{T}$. The corresponding eigenvalue $\lambda _{i}$ of $Q_{T}$ obtained
from Equation (\ref{muLamda}) satisfies the estimate (\ref{Lamda-Tk}).
\end{proof}

\begin{proof}[Proof of Theorem $\protect\ref{OUthm}$]
For each $T$, let us compute the second moment of $F_{T}$.
\begin{eqnarray*}
E\left[ F_{T}^{2}\right] &=&\left\Vert f_{T}\right\Vert _{\mathfrak{H}%
^{\otimes 2}}^{2}=\int_{0}^{T}\int_{0}^{T}f_{T}(t,s)^{2}dsdt \\
&=&\frac{\gamma ^{4}}{4T}\int_{0}^{T}\int_{0}^{t}e^{-2\theta \left(
t-s\right) }dsdt=\frac{\gamma ^{4}}{2\theta }-\frac{\gamma ^{4}}{8\theta T}%
(1-e^{-2\theta T}).
\end{eqnarray*}%
Also, noticing that $F_{T}=I_{2}(f_{T})=\delta ^{2}(f_{T})$ and
\begin{equation*}
D_{s}D_{t}F_{T}^{3}=3F_{T}^{2}f_{T}(t,s)+6F_{T}I_{1}(f(\cdot ,t))\otimes
I_{1}(f(\cdot ,s)),
\end{equation*}%
and using the duality between $\delta $ and $D$, we can write
\begin{eqnarray*}
E\left[ F_{T}^{4}\right] &=&E\left[ \left\langle
f_{T},D^{2}F_{T}^{3}\right\rangle _{\mathfrak{H}^{\otimes 2}}\right] =3E%
\left[ F_{T}^{2}\left\langle f_{T},f_{T}\right\rangle _{\mathfrak{H}%
^{\otimes 2}}\right] \\
&&+6E\left[ F_{T}\left\langle f_{T}(t,s),I_{1}(f_{T}(\cdot ,t))\otimes
I_{1}(f_{T}(\cdot ,s))\right\rangle _{\mathfrak{H}^{\otimes 2}}\right] \\
&=&3\left( E\left[ F_{T}^{2}\right] \right) ^{2}+6A,
\end{eqnarray*}%
where
\begin{eqnarray*}
A &=&E\left[ F_{T}\left\langle f_{T}(t,s),I_{1}(f_{T}(\cdot ,t))\otimes
I_{1}(f_{T}(\cdot ,s))\right\rangle _{\mathfrak{H}^{\otimes 2}}\right] \\
&=&\left\langle f_{T}(u,v),\left\langle f_{T}(t,s),f_{T}(u,t)\otimes
f_{T}(v,s)\right\rangle _{\mathfrak{H}^{\otimes 2}}\right\rangle _{\mathfrak{%
H}^{\otimes 2}} \\
&=&\frac{\gamma ^{8}}{16T^{2}}\int_{0}^{T}\int_{0}^{T}\int_{0}^{T}%
\int_{0}^{T}e^{-\theta \left( \left\vert u-v\right\vert +\left\vert
t-s\right\vert +\left\vert u-t\right\vert +\left\vert v-s\right\vert \right)
}dudvdtds.
\end{eqnarray*}%
Because the integrand is symmetric, we have
\begin{equation*}
A=\frac{\gamma ^{8}}{16T^{2}}4!\int_{0}^{T}du\int_{0}^{u}dv\int_{0}^{v}ds%
\int_{0}^{s}dt~e^{-2\theta \left( u-t\right) }\leq CT^{-1}.
\end{equation*}%
Then, in order to complete the proof by applying Corollary \ref{qDeriConv},
we only need to verify that condition $\mathrm{(iii)}$ of Theorem \ref{I2Th}
holds for any integer $m\geq 1$, which implies the uniform boundedness of
the negative moments
\begin{equation*}
\sup_{T>0}E\left[ \left\Vert DF_{T}\right\Vert _{\mathcal{\mathfrak{H}}%
}^{-\beta }\right] <\infty
\end{equation*}%
for any $\beta >0$. Fix $\beta >0$, and for each $T$, let $i(T)=\lfloor
\beta +1\rfloor +\lfloor T\rfloor $. Then, the lower bound in (\ref{Lamda-Tk}%
) yields
\begin{eqnarray*}
\sqrt{i(T)}\lambda _{T,i(T)} &\geq &\frac{\sqrt{i(T)}\gamma ^{2}/\theta }{%
\sqrt{T}\left( 1+\left( \frac{\left( i+1/2\right) \pi }{T\theta }\right)
^{2}\right) } \\
&\geq &\frac{\sqrt{i(T)}\gamma ^{2}/\theta }{\sqrt{T}\left( 1+\left( \frac{%
i(T)}{T}\right) ^{2}4\frac{\pi ^{2}}{\theta ^{2}}\right) }\geq \frac{\gamma
^{2}/\theta }{\max_{\left( \beta +2\right) ^{-1}\leq r\leq 1}g(r)}>0,
\end{eqnarray*}%
where in the last inequality we made the substitution $r^{-1}=\frac{i(T)}{T}$
and set $\,$%
\begin{equation*}
g(r):=\sqrt{r}(1+r^{-2}4\frac{\pi ^{2}}{\theta ^{2}}).
\end{equation*}%
This implies condition (iii) and the proof of the theorem is complete.
\end{proof}

\subsection{Multidimensional Case}

Now we give an example for random vectors. Let $X=\{X(h),h\in \mathfrak{H}\}
$ be an isonormal Gaussian process associated with the Hilbert space $%
\mathfrak{H}$. Suppose that $\{ e_{ij}, 1\le i\le d, j\ge 1\}$ is a sequence
of orthonormal elements in $\mathfrak{H}$. Set $e_k=(e_{1k} ,\dots, e_{dk})$
for any $k\ge 1$.  Let $A_n$ be a sequence of $d\times d$ invertible
matrices such that $A_n\rightarrow I$ as $n\rightarrow \infty$. For any $%
k\ge 1$ define Define
\begin{equation*}
\xi_{nk} =\left(%
\begin{matrix}
\xi_{1nk} \\
\vdots \\
\xi_{dnk}%
\end{matrix}%
\right) =A_n \left(%
\begin{matrix}
e_{1k} \\
\vdots \\
e_{dk}%
\end{matrix}%
\right)
\end{equation*}
and, for any $j=1,\dots, d$ set
\begin{equation*}
F_{jn}=\sum_{k=1}^\infty {\lambda}_{jnk} I_2(\xi_{jnk} ^{\otimes 2})=
\sum_{k=1}^\infty {\lambda}_{jnk} \left[ \tilde \xi_{jnk}^2-\|\xi_{jnk}\| ^2%
\right]\,,
\end{equation*}
where $\tilde \xi_{jnk}=I_1(\xi_{jnk})=X(\xi_{jnk})$ and ${\lambda}_{jnk}$
are real numbers which will be specified later. We plan to use Theorem \ref%
{MultiThm0} to study the convergence of the random vectors $F_n=(F_{1n}
\dots F_{dn})$. For this we can follow the approach of Section \ref{s.7.1},
the main extra work being to prove the existence of a uniform bound for the
negative moments of the Malliavin covariance matrices. We have
\begin{equation*}
DF_{jn}=2\sum_{k=1}^\infty {\lambda}_{jnk} \tilde \xi_{jnk} \xi_{jnk}\,.
\end{equation*}
Thus
\begin{equation}
\langle DF_{in}, DF_{jn}\rangle_ {\mathfrak{H}} =4\sum_{k=1}^\infty {\lambda}%
_{ink}{\lambda}_{jnk} \tilde \xi_{ink}\tilde \xi_{jnk} {\alpha}_{ijn}\,,
\label{DF}
\end{equation}
where ${\alpha}_{ijn}$ is the $(i,j)$th entry of the matrix ${\alpha}%
_n=A_nA_n^T$. Consider the matrix $\beta_n:=\left(\beta_{ijn}\right)_{1\le
i,j\le d}$ given by
\begin{equation*}
\beta_{ijn}:=4\sum_{k=1}^\infty {\lambda}_{ink}{\lambda}_{jnk} \tilde
\xi_{ink}\tilde \xi_{jnk}\,.
\end{equation*}
Then from \hbox{(\ref{DF})}, we see that $\langle DF_n, DF_n\rangle
=\left(\langle DF_{in} \,, DF_{jn}\rangle_{\mathfrak{H}} \right)_{1\le
i,j\le d}$ is the Hadamard product of the nonnegative definite matrices ${%
\alpha}_n$ and $\beta_n$. By the Oppenheim's inequality for Hadamard
product, and taking into account that $\det(\alpha_n)$ converges to one,
there exists a constant $c>0$ such that
\begin{equation*}
\det(\langle DF_n, DF_n\rangle)\ge \det({\alpha}_n) \prod_{j=1}^d
\beta_{jjn} \ge c \prod_{j=1}^d \beta_{jjn},
\end{equation*}
for all $n$. Note $\beta_{jjn}= 4\sum_{k=1}^\infty {\lambda}_{jnk}^2\left(
\tilde \xi_{jnk} \right)^2$ and $\xi_{jnk}\rightarrow e_{jk}$. Thus we can
follow Section \ref{s.7.1} to verify the conditions that allow us to apply
Theorem \ref{MultiThm0}. We will write down the theorem and omit the
details. In the following we denote by $\phi_\sigma $ the density of the law
$N(0,\hbox{diag}(\sigma^2_1\,, \dots\,, \sigma_d^2))$ and $\partial _{{\alpha%
}_1} \cdots \partial _{{\alpha}_d} f(x)= \frac{\partial^{|{\alpha}|}} {%
\partial x_1^{{\alpha}_1}\cdots\partial x_d^{{\alpha}_d} } f(x)$ with $|{%
\alpha}|={\alpha}_1+\cdots+{\alpha}_d$.

\begin{theorem}
Let $A_n$ be a sequence of $d\times d$ invertible matrices such that $%
A_n\rightarrow I$ and let $F_n=(F_{1n}, \dots, F_{dn})$ be defined as above.
We assume the ${\lambda}_{jnk}$ satisfy the following conditions for any $%
1\le j\le d$.

\begin{itemize}
\item[(i)] $\sigma_j ^{2}:=\lim_{n\rightarrow \infty }\sum_{k=1}^{\infty }
\lambda _{jnk} ^2>0$;

\item[(ii)] $\lim_{n\rightarrow \infty }\sum_{k=1}^{\infty } \lambda _{jnk}
^4=0$;

\item[(iii)] $\inf_{j,n} \left(\sup_{k>6m + 6\left( \lfloor \frac{m}{2}%
\rfloor \vee1\right)} |\lambda_{jnk}| \sqrt{k} \right)>0$ for some integer $%
m\ge 0$.
\end{itemize}

Then, each $F_{n}$ admits a density function $f_{F_{n}}\in C^{m }\left(
\mathbb{R}^d \right) $. Furthermore, for any ${\alpha}=({\alpha}_1, \dots,{%
\alpha}_d) $, with $|{\alpha}|\le m$, the derivatives of $\partial _{{\alpha}%
_1} \cdots \partial _{{\alpha}_d} f_{F_{n}} $ converge uniformly to the
derivatives of $\partial _{{\alpha}_1} \cdots \partial _{{\alpha}_d}
\phi_\sigma $ with a rate given by
\begin{equation*}
\sup_{x\in \mathbb{R}}\left\vert \partial _{{\alpha}_1} \cdots \partial _{{%
\alpha}_d} f_{F_{n}} (x)-\partial _{{\alpha}_1} \cdots \partial _{{\alpha}%
_d} \phi_\sigma (x)\right\vert \leq C \sum_{j=1}^d \left[ \left
(\sum_{k=1}^{\infty } \lambda _{jnk} ^{4} \right)^{\frac{1}{2}}+\left\vert
\sum_{i=1}^{\infty } \lambda _{jnk} ^{2}-\sigma ^{2}\right\vert ^{\frac 12} %
\right],
\end{equation*}%
where $C $ is a constant depending only on $m $ and the infimum appearing in
condition (iii).
\end{theorem}

\section{Appendix\label{Appdx}}

In this section, we present the omitted proofs and some technical results.

\begin{proof}[Proof of Lemma \protect\ref{solu-ctrl}]
Since $\int_{-\infty }^{\infty }\{h(y)-E[h(N)]\}e^{-y^{2}/(2\sigma
^{2})}dy=0 $, we have
\begin{equation*}
\int_{-\infty }^{x}\{h(y)-E[h(N)]\}e^{-y^{2}/(2\sigma
^{2})}dy=-\int_{x}^{\infty }\{h(y)-E[h(N)]\}e^{-y^{2}/(2\sigma ^{2})}dy.
\end{equation*}%
Hence
\begin{equation*}
\left\vert \int_{-\infty }^{x}\{h(y)-E[h(N)]\}e^{-y^{2}/(2\sigma
^{2})}dy\right\vert \leq \int_{\left\vert x\right\vert }^{\infty
}[ay^{k}+b+E\left\vert h(N)\right\vert ]e^{-y^{2}/(2\sigma ^{2})}dy.
\end{equation*}%
By using the representation (\ref{solu}) of $f_{h}$ and Stein's equation (%
\ref{Stein's equ}) we have
\begin{eqnarray}
\left\vert f_{h}^{\prime }(x)\right\vert &\leq &\left\vert
h(x)-E[h(N)]\right\vert +\frac{\left\vert x\right\vert }{\sigma ^{2}}%
e^{x^{2}/(2\sigma ^{2})}\left\vert \int_{-\infty
}^{x}\{h(y)-E[h(N)]\}e^{-y^{2}/(2\sigma ^{2})}dy\right\vert  \notag \\
&\leq &a\left\vert x\right\vert ^{k}+b+E\left\vert h(N)\right\vert +\frac{1}{%
\sigma ^{2}}e^{x^{2}/(2\sigma ^{2})}\int_{\left\vert x\right\vert }^{\infty
}y[ay^{k}+b+E\left\vert h(N)\right\vert ]e^{-y^{2}/(2\sigma ^{2})}dy  \notag
\\
&=&a\left\vert x\right\vert ^{k}+\left( b+E\left\vert h(N)\right\vert
\right) \left( 1+\frac{1}{\sigma ^{2}}s_{1}(x)\right) +\frac{a}{\sigma ^{2}}%
s_{k+1}(x),  \label{f'-bd}
\end{eqnarray}%
where we let $s_{k}(x)=e^{x^{2}/(2\sigma ^{2})}\int_{\left\vert x\right\vert
}^{\infty }y^{k}e^{-y^{2}/(2\sigma ^{2})}dy$ for any integer $k\geq 0$.

Note that $E\left\vert h(N)\right\vert \leq aE\left\vert N\right\vert
^{k}+b\leq C_{k}a\sigma ^{k}+b$ and
\begin{equation*}
s_{1}(x)=e^{x^{2}/(2\sigma ^{2})}\int_{x}^{\infty }ye^{-\frac{y^{2}}{2\sigma
^{2}}}dy=\sigma ^{2}
\end{equation*}
for all $x\in \mathbb{R}$. Using integration by parts, we see by induction
that for any integer $k\geq 1 $,
\begin{eqnarray*}
&&s_{k+1}(x)=e^{x^{2}/(2\sigma ^{2})}\int_{\left\vert x\right\vert }^{\infty
}y^{k+1}e^{-y^{2}/(2\sigma ^{2})}dy \\
&=&\sigma ^{2}e^{x^{2}/(2\sigma ^{2})}\int_{\left\vert x\right\vert
}^{\infty }y^{k}d(-e^{-y^{2}/(2\sigma ^{2})})=\sigma ^{2}[\left\vert
x\right\vert ^{k}+k~s_{k-1}(x)].
\end{eqnarray*}%
Then if $k\geq 1$ is even, we have
\begin{equation*}
s_{k+1}(x)\leq C_{k}\sigma ^{2}[\left\vert x\right\vert ^{k}+\sigma
^{2}\left\vert x\right\vert ^{k-2}+\cdots +\sigma ^{k-2}s_{1}(x)]\leq
C_{k}\sigma ^{2}\sum_{i=0}^{k}\sigma ^{k-i}\left\vert x\right\vert ^{i}.
\end{equation*}%
If $k\geq 1$ is odd, we have%
\begin{equation*}
s_{k+1}(x)\leq C_{k}\sigma ^{2}[\left\vert x\right\vert ^{k}+\sigma
^{2}\left\vert x\right\vert ^{k-2}+\cdots +\sigma ^{k-1}(\left\vert
x\right\vert +s_{0}(x))]\leq C_{k}\sigma ^{2}\sum_{i=0}^{k}\sigma
^{k-i}\left\vert x\right\vert ^{i},
\end{equation*}%
where we used the fact that $s_{0}(x)\leq s_{0}(0)=\sqrt{\frac{\pi }{2}}%
\sigma $ for all $x\in \mathbb{R}$ (indeed, when $x\geq 0$ we have $%
s_{0}^{\prime }(x)=\frac{x}{\sigma ^{2}}e^{x^{2}/(2\sigma
^{2})}\int_{x}^{\infty }e^{-y^{2}/(2\sigma ^{2})}dy-1\leq e^{x^{2}/(2\sigma
^{2})}\int_{x}^{\infty }\frac{y}{\sigma ^{2}}e^{-\frac{y^{2}}{2\sigma ^{2}}%
}dy-1=0$; similarly when $x<0$, $s_{0}^{\prime }(x)\geq 0$). Putting the
above estimates into (\ref{f'-bd}) we complete the proof.
\end{proof}

\begin{proof}[Proof of Lemma $\protect\ref{rem1}$]
We shall prove these properties by induction. From $T_{1}=T_{2}=0$, (\ref%
{gHermite}) and (\ref{tk}) we know that $T_{3}=D_{u}^{2}\delta _{u}$, with $%
J_{3}=\left\{ (0,0,1)\right\} $; and $T_{4}=\delta _{u}D_{u}^{2}\delta _{u}+$
$D_{u}^{3}\delta _{u}$, with $J_{4}=$ $\left\{ (1,0,1,0),(0,0,0,1)\right\} $%
. Now suppose the statement is true for all $T_{l}$ with $l\leq k-1$ for $%
k\geq 5$. We want to prove the multi-indices of $T_{k}$ satisfy $(a)$--$(c)$%
. This will be done by studying the three operations, $\delta _{u}T_{k-1}$, $%
D_{u}T_{k-1}$ and $\partial _{\lambda }H_{k-1}(D_{u}\delta _{u},\delta
_{u})D_{u}^{2}\delta _{u}$, in expression $(\ref{tk})$.

For the term $\partial _{\lambda }H_{k-1}(D_{u}\delta _{u},\delta
_{u})D_{u}^{2}\delta _{u}$, we observe from (\ref{gHermite}) that
\begin{equation*}
\partial _{\lambda }H_{k-1}(D_{u}\delta _{u},\delta _{u})D_{u}^{2}\delta
_{u}=D_{u}^{2}\delta _{u}\sum_{1\leq i\leq \lfloor (k-1)/2\rfloor
}ic_{k-1,i}\delta _{u}^{k-1-2i}(D_{u}\delta _{u})^{i-1},
\end{equation*}%
whose terms have multi-indices $(k-1-2i,i-1,1,0,\dots ,0)\in \mathbb{N}^{k}$
for $1\leq i\leq \lfloor \frac{k-1}{2}\rfloor $. Then, it is straightforward
to check that these multi-indices satisfy $(a)$, $(b)$ and $(c)$.

The term $\delta _{u}T_{k-1}$ shifts the multi-index $(i_{0},i_{1},\dots
,i_{k-2})\in J_{k-1}$ to $(i_{0}+1,i_{1},\dots ,i_{k-2},0)\in \mathbb{N}^{k}$%
, which obviously satisfies $(a)$, $(b)$ and $(c)$, due to the induction
hypothesis.

The third term $D_{u}T_{k-1}$ shifts the multi-index $(i_{0},i_{1},\dots
,i_{k-2})\in J_{k-1}$ to either $\alpha =(i_{0}-1,i_{1}+1,\dots
,i_{k-2},0)\in \mathbb{N}^{k}$ if $i_{0}\geq 1$, or to
\begin{equation*}
\beta =%
\begin{cases}
(i_{0},i_{1},\dots ,i_{j_{0}}-1,i_{j_{0}+1}+1,\dots ,i_{k-2},0), & \text{for
}1\leq j_{0}\leq k-3; \\
(i_{0},i_{1},\dots ,i_{j_{0}}-1,1), & \text{for }j_{0}=k-2,%
\end{cases}%
\end{equation*}%
if $i_{j_{0}}\geq 1$. It is easy to check that $\beta $ satisfies properties
$(a)$, $(b)$ and $(c)$ and $\alpha$ satisfies properties $(b)$ and $(c)$. We
are left to verify that $\alpha $ satisfies property $(c)$. That is, we want
to show that
\begin{equation}  \label{gg}
1+\sum_{j=1}^{k-2}i_{j}\leq \lfloor \frac{k-1}{2}\rfloor .
\end{equation}%
If $k$ is odd, say $k=2m+1$ for some $m\geq 2$, (\ref{gg}) is true because $%
(i_{0},i_{1},\dots ,i_{k-2})\in J_{k-1}$, which implies by induction
hypothesis that $\sum_{j=1}^{k-2}i_{j}\leq \lfloor \frac{k-2}{2}\rfloor =m-1$%
. If $k$ is even, say $k=2m+2$, (\ref{gg}) is true because the following
claim asserts that if $i_0 \ge 1$, then $\sum_{j=1}^{k-2}i_{j}<\lfloor \frac{%
k-2}{2}\rfloor =m$.

\textbf{Claim:} For $(i_{0},i_{1},\dots ,i_{2m})\in J_{2m+1}$ with $m\geq 1$%
, if $\sum_{j=1}^{2m}i_{j}=m$ then $i_{0}=0$.

Indeed, suppose $(i_{0},i_{1},\dots ,i_{2m})\in J_{2m+1}$, $%
\sum_{j=1}^{2m}i_{j}=m$ and $i_{0}\geq 1$. We are going to show that leads
to a contradiction. First notice that $i_{1}\geq 1$, otherwise $i_{1}=0$ and
$\sum_{j=2}^{2m}i_{j}=m$, which is not possible because
\begin{equation*}
i_{0}+2m\leq i_{0}+\sum_{j=1}^{2m}ji_{j}\leq 2m.
\end{equation*}%
Also, we must have $i_{2m}=0$, because otherwise property $(a)$ implies $%
i_{2m}=1$ and $i_{0}=i_{1}=\dots =i_{2m-1}=0$. Now we trace back to its
parent multi-indices in $J_{2m}$ by reversing the three operations. Of the
three operations, we can exclude $\partial _{\lambda }H_{2m}(D_{u}\delta
_{u},\delta _{u})D_{u}^{2}\delta _{u}$ and $\delta _{u}T_{2m}$, because $%
\partial _{\lambda }H_{2m}(D_{u}\delta _{u},\delta _{u})D_{u}^{2}\delta _{u}$
generates $\left( 2m-2j,j-1,1,0,\dots ,0\right) $ with $1\leq j\leq m$,
where $j$ must be $m$; and $\delta _{u}T_{2m}$ traces it back to $%
(i_{0}-1,i_{1},\dots ,i_{2m-1})\in J_{2m}$, where $i_{1}+\dots +i_{2m-1}=m>$
$\lfloor \frac{2m-1}{2}\rfloor $. Therefore, its parent multi-index in $%
J_{2m}$ must come from the operation $D_{u}T_{2m}$ and hence must be $%
(i_{0}+1,i_{1}-1,\dots ,i_{2m-1})\in J_{2m}$. Note that for this
multi-index,\ $i_{1}-1+\dots +i_{2m-1}=m-1$. Repeating the above process we
will end up at $(i_{0}+i_{1},0,i_{2}\dots ,i_{2m-i_{1}})\in J_{2m+1-i_{1}}$
with $i_{2}+\dots +i_{2m-i_{1}}=m-i_{1}$, which contradicts the property $%
(b) $ of $J_{2m+1-i_{1}}$ because
\begin{equation*}
i_{0}+2m-i_{1}\leq i_{0}+i_{1}+\sum_{j=2}^{2m-i_{1}}ji_{j}\leq 2m-i_{1}.
\end{equation*}
\end{proof}

Recall that we denote $D_{DF}w^{-1}=\left\langle Dw^{-1},DF\right\rangle _{%
\mathfrak{H}}$ and $D_{DF}^{k}w^{-1}=\left\langle
D(D_{DF}^{k-1}w^{-1}),DF\right\rangle _{\mathfrak{H}}$ for any $k\geq 2$.
The following lemma estimates the $L^{p}(\Omega )$ norms of $%
D_{DF}^{k}w^{-1} $.

\begin{lemma}
\label{DDFw} Let $F=I_{q}(f)$ with $q\geq 2$ satisfying $E[F^{2}]=\sigma
^{2} $. For any $\beta\ge 1$ we define and $M_\beta=\left( E\left\Vert
DF\right\Vert _{\mathfrak{H}}^{-\beta}\right) ^{1/\beta} $. Set $%
w=\left\Vert DF\right\Vert _{\mathfrak{H}}^{2}$.

\begin{itemize}
\item[(i)] If $M_\beta<\infty$ for some $\beta \ge 6$, then for any $1\le r
\le \frac {2\beta}{\beta+6}$
\begin{equation}
\left\| D_{DF} w^{-1} \right\| _{r } \le CM^3_\beta \left \| q\sigma^2-w
\right\|_2.  \label{DDFw-a}
\end{equation}

\item[(ii)] If $k\ge 2$ and $M_\beta<\infty$ for some $\beta \ge 2k+4$, then
for any $1< r < \frac {2\beta}{\beta+2k+4}$
\begin{equation}
\left\| D^k_{DF} w^{-1} \right\| _{r } \le C\left( \sigma^{2k-2} \vee 1
\right) \left(M^{k+2}_\beta \vee 1\right) \left \| q\sigma^2-w \right\|_2.
\label{DDFw-b}
\end{equation}

\item[(iii)] If $k\ge 1$and $M_\beta<\infty$ for any $\beta>k+2$, then for
any $1<r<\frac \beta{k+2}$
\begin{equation}
\left\| D^k_{DF} w^{-1} \right\| _{r } \le C\left( \sigma^{2k } \vee 1
\right) \left(M^{k+2}_\beta \vee 1\right).  \label{DDFw-c}
\end{equation}
\end{itemize}
\end{lemma}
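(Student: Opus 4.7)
The plan is to establish the three estimates by a common induction on $k$, using an explicit recursive structure for $D^k_{DF} w^{-1}$. Throughout I rely on three ingredients: (a) the identity $Dw = 2\, D^2 F \otimes_1 DF$; (b) the hypercontractivity estimate \eqref{Hyper}, which gives $\|D^a F\|_p \leq C_{q,a,p} \sigma$ for $a \leq q$ and any $p \geq 2$; and (c) the contraction estimate \eqref{contrtnBd}, which yields $\|D^a F \otimes_1 D^c F\|_2 \leq C_{a,c,q} \|q\sigma^2 - w\|_2$ whenever $a+c \geq 3$.

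For part (i), the base case is direct. From the chain rule, $D_{DF} w^{-1} = -w^{-2} D_{DF} w = -2 w^{-2} \langle D^2 F, DF \otimes DF\rangle_{\mathfrak{H}^{\otimes 2}}$, and Cauchy-Schwarz on the inner product gives
\[
|D_{DF} w^{-1}| \;\leq\; 2\, w^{-3/2}\, \|D^2 F \otimes_1 DF\|_{\mathfrak{H}}.
\]
Applying Hölder's inequality with exponents $p$ and $2$ (so $1/r = 1/p + 1/2$) and choosing $p = \beta/3$ (which requires $\beta \geq 6$), the first factor is $\|w^{-3/2}\|_p = M_\beta^3$ and the second is bounded by $C\|q\sigma^2-w\|_2$. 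This yields (i) for $1 \leq r \leq \frac{2\beta}{\beta+6}$.

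For (ii) and (iii), I proceed by induction on $k \geq 1$, showing that $D^k_{DF} w^{-1}$ is a finite linear combination of terms of the form $w^{-s} \prod_{i=1}^{m}\mathcal{S}_i$, where $1 \leq s \leq k+1$ and each $\mathcal{S}_i$ is a scalar contraction of one factor $D^{a_i} F$ (with $a_i \geq 2$) against a tensor product of $DF$'s. The induction step uses the product rule: $D_{DF}$ applied to $w^{-s}$ produces $-s w^{-s-1} D_{DF}w$ (introducing a fresh $\langle D^2 F, DF\otimes DF\rangle$), while $D_{DF}$ applied to a single $\mathcal{S}_i$ either raises the order $a_i$ by one and contracts the new index with $DF$, or converts a $DF$ factor into $D^2 F$ contracted with another $DF$. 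In either case the structural form is preserved, and crucially, every resulting term contains at least one factor of the form $\langle D^a F \otimes_l D^c F, \cdot\,\rangle$ with $a+c \geq 3$.

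With this representation in hand, each term is bounded by Hölder's inequality: the factor $\|w^{-s}\|_{p_0} = M_{2s p_0}^{2s}$ is controlled by $M_\beta^{2s}$ (and absorbed into $M_\beta^{k+2}\vee 1$) once $2sp_0 \leq \beta$, and the derivative factors $\|D^{a_i}F\|_{p_i}$ are bounded by $C\sigma$ via \eqref{Hyper}. For part (iii), the $\sigma$-weight in the worst-case term tallies to $\sigma^{2k}$, while the worst-case exponent $s = k+1$ forces $r < \beta/(k+2)$ after the remaining Hölder exponents are absorbed into the hypercontractive bounds, which is consistent with the hypothesis $\beta > k+2$. For part (ii), we keep one distinguished factor $\|D^a F \otimes_1 D^c F\|_2 \leq C\|q\sigma^2 - w\|_2$ intact in the $L^2$ norm; this replaces one $\sigma^2$ by $\|q\sigma^2 - w\|_2$ and tightens the admissible range of $r$ to $r < \frac{2\beta}{\beta+2k+4}$, matching the stated hypothesis $\beta \geq 2k+4$. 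The principal obstacle will be the combinatorial bookkeeping—verifying rigorously that the maximal exponent of $w^{-1}$ produced by $k$ applications of $D_{DF}$ never exceeds $k+1$, and that the non-trivial contraction invariant is preserved under each differentiation—together with choosing Hölder exponents consistent with the stated $\beta, r$ constraints.
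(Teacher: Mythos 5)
Your part (i) matches the paper's argument exactly, and your overall plan for (ii)--(iii) — an inductive structural expansion for $D_{DF}^k w^{-1}$, then H\"older's inequality combined with the contraction bound (\ref{contrtnBd}) and hypercontractivity — is the same route the paper takes.

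However, there is a genuine gap in your exponent bookkeeping that, as stated, makes the argument fail. You claim the representation produces terms $w^{-s}\prod_i\mathcal{S}_i$ with $1\le s\le k+1$, and that $\left\Vert w^{-s}\right\Vert_{p_0}=M_{2sp_0}^{2s}$ is "absorbed into $M_\beta^{k+2}\vee 1$ once $2sp_0\le\beta$." But if $s$ can be as large as $k+1$, the relevant power is $2s=2k+2$, which exceeds $k+2$ for every $k\ge 1$; you would need $\beta>2(k+1)$, not $\beta>k+2$, and the stated hypotheses of parts (ii) and (iii) would be too weak to proceed. The paper's proof hinges on a cancellation you have not made explicit: each $\mathcal{S}_i$ contracts a higher-order tensor against a tensor product of $DF$'s, so Cauchy--Schwarz on $\mathcal{S}_i$ produces a \emph{positive} power $w^{a_i/2}$ of $\left\Vert DF\right\Vert_{\mathfrak{H}}$. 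These positive powers partially cancel $w^{-s}$, and after accounting for them the effective negative exponent is $\le\frac{k+2}{2}$ (in the paper's notation, $l+1-\tfrac{i}{2}\le\tfrac{k}{2}+1$ in (\ref{eq1})), which is exactly what makes $M_\beta^{k+2}$ — and the hypotheses $\beta>k+2$ and $\beta\ge 2k+4$ — the right thresholds. Without isolating and using this cancellation, the bound is quantitatively wrong.

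There is also a structural inaccuracy: you first describe each $\mathcal{S}_i$ as a contraction of a \emph{single} $D^{a_i}F$ against tensor products of $DF$'s, and claim this form is preserved under $D_{DF}$. It is not: when $D_{DF}$ hits a $DF$ slot inside $\mathcal{S}_i$, it produces $D^2F\otimes_1 DF$ in that slot, and the term becomes a contraction of $D^{a_i}F\otimes_1 D^2F$ — two higher-order derivatives contracted together — against $DF$'s. The correct inductive class (as in the paper's formula (\ref{Dkw})) must therefore allow products of contractions $D^{\alpha_j}F\otimes_1 D^{\beta_j}F$ with $\alpha_j+\beta_j\ge 3$; you do mention this shape later when invoking (\ref{contrtnBd}), but it contradicts your stated inductive hypothesis. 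Both issues would surface if you attempted to close the induction rigorously.
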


\begin{proof}
Note that $D_{DF}w^{-1}=\left\langle Dw^{-1},DF\right\rangle _{\mathfrak{H}%
}=-2w^{-2}\left\langle D^{2}F\otimes _{1}DF,DF\right\rangle $. Then
\begin{equation*}
\left\vert D_{DF}w^{-1}\right\vert \leq 2w^{-\frac{3}{2}}\left\Vert
D^{2}F\otimes _{1}DF\right\Vert _{\mathfrak{H}}.
\end{equation*}%
Applying H\"{o}lder's inequality with $\frac{1}{r}=\frac{1}{p}+\frac{1}{2}$,
yields
\begin{equation*}
\left\Vert D_{DF}w^{-1}\right\Vert _{r}\leq 2\left( E(w^{-\frac{3p}{2}%
})\right) ^{\frac{1}{p}}\left\Vert D^{2}F\otimes _{1}DF\right\Vert _{2},
\end{equation*}%
which implies (\ref{DDFw-a}) by choosing $p\leq \beta /3$ and taking into
account (\ref{contrtnBd}). Notice that we need $1\geq \frac{1}{r}\geq \frac{3%
}{\beta }+\frac{1}{2}=\frac{\beta +6}{2\beta }$.

Consider now the case $k\geq 2$. From the pattern indicated by the first
three terms,
\begin{eqnarray*}
D_{DF}w^{-1} &=&\left\langle Dw^{-1},DF\right\rangle _{\mathfrak{H}}\text{, }
\\
D_{DF}^{2}w^{-1} &=&\left\langle D^{2}w^{-1},(DF)^{\otimes 2}\right\rangle _{%
\mathfrak{H}^{\otimes 2}}+\left\langle Dw^{-1}\otimes DF,D^{2}F\right\rangle
_{\mathfrak{H}^{\otimes 2}}, \\
D_{DF}^{3}w^{-1} &=&\left\langle D^{3}w^{-1},(DF)^{\otimes 3}\right\rangle _{%
\mathfrak{H}^{\otimes 3}}+3\left\langle D^{2}w^{-1}\otimes DF,D^{2}F\otimes
DF\right\rangle _{\mathfrak{H}^{\otimes 3}} \\
&&+\left\langle Dw^{-1}\otimes D^{2}F,D^{2}F\otimes DF\right\rangle _{%
\mathfrak{H}^{\otimes 3}}+\left\langle Dw^{-1}\otimes (DF)^{\otimes
2},D^{3}F\right\rangle _{\mathfrak{H}^{\otimes 3}},
\end{eqnarray*}%
we can prove by induction that
\begin{equation*}
\left\vert D_{DF}^{k}w^{-1}\right\vert \leq C\sum_{i=1}^{k}\left\Vert
D^{i}w^{-1}\right\Vert _{\mathfrak{H}^{\otimes i}}\left\Vert DF\right\Vert _{%
\mathfrak{H}}^{i}\left(
\sum_{\sum_{j=1}^{k}i_{j}=k-i}\prod_{j=1}^{k}\left\Vert D^{j}F\right\Vert _{%
\mathfrak{H}^{\otimes j}}^{i_{j}}\right) .
\end{equation*}%
By (\ref{HyperMeyer}), for any $p>1$, $\left\Vert D^{j}F\right\Vert _{p}\leq
C\left\Vert F\right\Vert _{2}=C\sigma $. Applying\ H\"{o}lder's inequality
and assuming that $s>r$, we have,
\begin{equation}
\left\Vert D_{DF}^{k}w^{-1}\right\Vert _{r}\leq C\sum_{i=1}^{k}\left\Vert
\left\Vert D^{i}w^{-1}\right\Vert _{\mathfrak{H}^{\otimes i}}\left\Vert
DF\right\Vert _{\mathfrak{H}}^{i}\right\Vert _{s}\sigma ^{k-i}.
\label{DDFw1}
\end{equation}%
We are going to see that $\left\Vert DF\right\Vert _{\mathfrak{H}}^{i}$ will
contribute to compensate the singularity of $\left\Vert
D^{i}w^{-1}\right\Vert _{\mathfrak{H}^{\otimes i}}$. First by induction one
can prove that for $1\leq i\leq m$, $D^{i}w^{-1}$ has the following
expression%
\begin{equation}
D^{i}w^{-1}=\sum_{l=1}^{i}(-1)^{l}\sum_{(\alpha ,\beta )\in
I_{i,l}}w^{-(l+1)}\bigotimes_{j=1}^{l}\left( D^{\alpha _{j}}F\otimes
_{1}D^{\beta _{j}}F\right) ,  \label{Dkw}
\end{equation}%
where $I_{i,l}=\{(\alpha ,\beta )\in \mathbb{N}^{2l}:\alpha _{j}+\beta
_{j}\geq 3,\sum_{j=1}^{l}(\alpha _{j}+\beta _{j})=i+2l\}$. In fact, for $i=1$%
,
\begin{equation*}
Dw^{-1}=-2w^{-2}D^{2}F\otimes _{1}DF,
\end{equation*}%
which is of the above form because $I_{1,1}=\{(1,2),(2,1)\}$. Suppose that (%
\ref{Dkw}) holds for some $i\leq m-1$. Then,
\begin{eqnarray*}
D^{i+1}w^{-1} &=&\sum_{l=1}^{i}(-1)^{l+1}2(l+1)\sum_{(\alpha ,\beta )\in
I_{i,l}}w^{-(l+2)}(D^{2}F\otimes _{1}DF)\bigotimes_{j=1}^{l}\left( D^{\alpha
_{j}}F\otimes _{1}D^{\beta _{j}}F\right) \\
&&+\sum_{l=1}^{i}(-1)^{l}\sum_{(\alpha ,\beta )\in
I_{i,l}}w^{-(l+1)}\sum_{h=1}^{l}(D^{\alpha _{j}+1}F\otimes _{1}D^{\beta
_{j}}F+D^{\alpha _{j}}F\otimes _{1}D^{\beta _{j}+1}F) \\
&&\times \bigotimes_{j=1,j\not=h}^{l}\left( D^{\alpha _{j}}F\otimes
_{1}D^{\beta _{j}}F\right) ,
\end{eqnarray*}%
which is equal to
\begin{equation*}
\sum_{l=1}^{i+1}(-1)^{l}\sum_{(\alpha ,\beta )\in
I_{i+1,l}}w^{-(l+1)}\bigotimes_{j=1}^{l}\left( D^{\alpha _{j}}F\otimes
_{1}D^{\beta _{j}}F\right) .
\end{equation*}
From (\ref{Dkw}) for any $i=1,\dots ,k$ we can write
\begin{equation}
\left\Vert D^{i}w^{-1}\right\Vert _{\mathfrak{H}^{\otimes i}}\left\Vert
DF\right\Vert _{\mathfrak{H}}^{i}\leq \sum_{l=1}^{i}w^{-(l+1)+\frac{i}{2}%
}\sum_{(\alpha ,\beta )\in I_{i,l}}\prod_{j=1}^{l}\left\Vert D^{\alpha
_{j}}F\otimes _{1}D^{\beta _{j}}F\right\Vert _{\mathfrak{H}^{\otimes \alpha
_{j}+\beta _{j}-2}},  \label{eq1}
\end{equation}%
where $I_{i,l}=\{(\alpha ,\beta )\in \mathbb{N}^{l}\times \mathbb{N}%
^{l}:\alpha _{j}+\beta _{j}\geq 3,\sum_{j=1}^{l}(\alpha _{j}+\beta
_{j})=i+2l\}$. Note that by (\ref{HyperMeyer}),
\begin{equation*}
\left\Vert D^{\alpha _{j}}F\otimes _{1}D^{\beta _{j}}F\right\Vert _{p}\leq
C\left\Vert F\right\Vert _{2}^{2}=C\sigma ^{2}
\end{equation*}%
for all $p\geq 1$ and all $\alpha _{j},\beta _{j}$. This inequality will be
applied to all but one of the contraction terms in the product $%
\prod_{j=1}^{l}\left\Vert D^{\alpha _{j}}F\otimes _{1}D^{\beta
_{j}}F\right\Vert _{\mathfrak{H}^{\otimes \alpha _{j}+\beta _{j}-2}}$. We
decompose the sum in (\ref{eq1}) into two parts. If the index $l$ satisfies $%
l\leq \frac{i}{2}-1$, then the exponent of $w$ is nonnegative, and the $p$%
-norm of $w$ can be estimated by a constant times $\sigma ^{2}$, while for $%
\frac{i}{2}-1<l$ this exponent is negative. Then, using H\"{o}lder's
inequality and assuming that $\frac{1}{s}=\frac{1}{p}+\frac{1}{2}$, we
obtain
\begin{eqnarray}
&&\left\Vert \left\Vert D^{i}w^{-1}\right\Vert _{\mathfrak{H}^{\otimes
i}}\left\Vert DF\right\Vert _{\mathfrak{H}}^{i}\right\Vert _{s}  \notag \\
&\leq &C\left( \mathbf{1}_{\{i\geq 2\}}\sigma ^{i-2}+\sum_{\frac{i}{2}%
-1<l\leq i}\left\Vert w^{-(l+1)+\frac{i}{2}}\right\Vert _{p}\sigma
^{2(l-1)}\right) \left\Vert D^{\alpha _{1}}F\otimes _{1}D^{\beta
_{1}}F\right\Vert _{2}.  \label{DDFw2}
\end{eqnarray}%
Note that for $l\leq i\leq k$, $l+1-\frac{i}{2}\leq \frac{k}{2}+1$.
Therefore, for $\frac{i}{2}-1<l\leq i$
\begin{equation*}
\left\Vert w^{-(l+1)+\frac{i}{2}}\right\Vert _{p}=M_{2(l+1-\frac{i}{2}%
)p}^{2l+2-i}\leq M_{(k+2)p}^{2l+2-i}\leq M_{(k+2)p}^{k+2}\vee 1.
\end{equation*}%
Therefore, using (\ref{contrtnBd}) we obtain
\begin{equation}
\left\Vert \left\Vert D^{i}w^{-1}\right\Vert _{\mathfrak{H}^{\otimes
i}}\left\Vert DF\right\Vert _{\mathfrak{H}}^{i}\right\Vert _{s}\leq C\left(
(\sigma ^{2i-2}\vee 1)(M_{(k+2)p}^{k+2}\vee 1)\right) \left\Vert q\sigma
^{2}-w\right\Vert _{2}.  \label{eq2}
\end{equation}%
Combining (\ref{eq2}) and (\ref{DDFw1}) and choosing $p$ such that $%
(k+2)p\leq \beta $ we get (\ref{DDFw-b}). Note that we need
\begin{equation*}
1>\frac{1}{r}>\frac{k+2}{\beta }+\frac{1}{2}=\frac{\beta +2k+4}{2\beta },
\end{equation*}%
which holds if $1<r<\frac{2\beta }{\beta +2k+4}$. The proof of part (iii) is
similar and omitted.
\end{proof}

The next lemma gives estimates on $D_{u}^{k}\delta _{u}$ for $k\ge 0$.

\begin{lemma}
\label{Dudelta-u} Let $F=I_{q}(f)$ with $q\geq 2$ satisfying $%
E[F^{2}]=\sigma ^{2}$. For any $\beta \geq 1$ we define $M_{\beta }=\left(
E\left\Vert DF\right\Vert _{\mathfrak{H}}^{-\beta }\right) ^{1/\beta }$ and
denote $w=\left\Vert DF\right\Vert _{\mathfrak{H}}^{2}$.

\begin{itemize}
\item[(i)] If $M_\beta<\infty$ for some $\beta>3$, then for any $1<s<\frac
\beta 3$,
\begin{equation}
\| \delta _{u} \|_{s } \le C( \sigma ^2\vee1) (M^{3} _\beta \vee1).
\label{Dudelta-u0}
\end{equation}

\item[(ii)] If $k\ge 1$ and $M_\beta<\infty$ for some $\beta>3k+3$, then for
any $1<s<\frac \beta {3k+3}$,
\begin{equation}
\|D^k_{u} \delta _{u} \|_{s } \le C_\sigma (M^{3k+3} _\beta \vee1).
\label{Dudelta-uka}
\end{equation}

\item[(iii)] If $k\geq 2$ and $M_{\beta }<\infty $ for some $\beta >6k+6$,
then for any $1<s<\frac{2\beta }{\beta +6k+6}$,
\begin{equation}
\Vert D_{u}^{k}\delta _{u}\Vert _{s}\leq C_{\sigma }(M_{\beta }^{3k+3}\vee
1)\Vert q\sigma ^{2}-w\Vert _{2}.  \label{Dudelta-uk}
\end{equation}
\end{itemize}
\end{lemma}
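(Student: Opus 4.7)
The plan is to reduce everything to the explicit expression $\delta_u = qFw^{-1} - D_{DF}w^{-1}$ (from \eqref{delta-u}) together with the fact that, since $D_{DF}$ is a derivation, one has the operator identity $D_u = w^{-1}D_{DF}$. This converts $D_u^k\delta_u$ into an iterated application of the derivation $D_{DF}$, scaled by powers of $w^{-1}$, which can then be expanded via the product rule into a finite sum of monomials in the ``atomic'' quantities $w^{-1}$, $F$, contractions of $D^jF$, and $D_{DF}^i w^{-1}$. Every bound in the lemma will then come from Hölder's inequality applied to one such monomial, using Lemma \ref{DDFw} for the factors $D_{DF}^i w^{-1}$, the negative-moment hypothesis for $w^{-1}$, and the hypercontractivity bound \eqref{HyperMeyer} for the remaining pieces.

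For (i), I would simply write $\|\delta_u\|_s \le q\|Fw^{-1}\|_s + \|D_{DF}w^{-1}\|_s$, split the first term via Hölder as $\|F\|_{2s}\|w^{-1}\|_{2s}$ (using $2s<\beta$ so $\|w^{-1}\|_{2s}\le M_\beta^2$ and $\|F\|_{2s}\le C\sigma$ by \eqref{Hyper}), and apply \eqref{DDFw-c} with $k=1$ to the second term; the exponent condition $s<\beta/3$ is exactly what both estimates require. For (ii), I would prove by induction on $k$ that $D_u^k\delta_u$ is a finite sum of monomials of the form
\[
c \, w^{-n} F^{\epsilon} \prod_{\ell}\bigl(D_{DF}^{i_\ell}w^{-1}\bigr)^{m_\ell},
\]
where $\epsilon\in\{0,1\}$, $n\ge 1$, and $\sum_\ell i_\ell m_\ell \le k+1$; the induction step uses $D_u(w^{-n})=-nw^{-(n+1)}D_{DF}w^{-1}$, the identity $D_{DF}F=w$, and the Leibniz rule for $D_{DF}$ on the remaining factors. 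Bounding each monomial by Hölder with exponents $p_0,p_1,\{q_\ell\}$ (one for $w^{-n}$, one for $F$, one for each $D_{DF}^{i_\ell}w^{-1}$ factor) summing to $1/s$, then controlling each by $M_\beta^n$ (requiring $np_0<\beta$), $C\sigma$, and \eqref{DDFw-c} (requiring $(i_\ell+2)q_\ell<\beta$), a routine count shows that $s<\beta/(3k+3)$ suffices.

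For (iii), the crucial structural claim is that for $k\ge 2$ every monomial in the expansion of $D_u^k\delta_u$ contains at least one factor $D_{DF}^{i}w^{-1}$ with $i\ge 1$. This can be verified by induction on $k$: for $k=1$ we have $D_u\delta_u = qw^{-1}+qFw^{-1}D_{DF}w^{-1}-w^{-1}D_{DF}^2 w^{-1}$, whose only ``bad'' term is $qw^{-1}$; applying $D_u$ once more kills it up to a factor $qw^{-1}D_{DF}w^{-1}$, so at $k=2$ every term carries a $D_{DF}^i w^{-1}$ with $i\ge 1$, and subsequent applications of $D_u$ preserve this property via the product rule. Once this is established, I single out one such factor and bound it using the sharp estimate \eqref{DDFw-b} of Lemma \ref{DDFw}, which produces the advertised factor $\|q\sigma^2-w\|_2$, and estimate the remaining factors exactly as in (ii), using \eqref{DDFw-c} and hypercontractivity. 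A term-by-term count shows the exponent condition $s<2\beta/(\beta+6k+6)$ accommodates all the Hölder constraints.

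The main obstacle is the combinatorial bookkeeping in (ii) and (iii): one has to enumerate the orders $i_\ell$ produced by iterating the Leibniz rule and check that the constraint $\sum_\ell i_\ell m_\ell \le k+1$ (analogous to the constraints $(a)$ and $(c)$ in Remark \ref{rem1}) allows the Hölder exponents to be chosen so that each individual $q_\ell$ lies strictly below $\beta/(i_\ell+2)$ while the reciprocals still sum to $1/s$. Once this bookkeeping is carried out in parallel with the $T_k$ calculation already performed in the appendix for Remark \ref{rem1}, all three parts of the lemma follow immediately.
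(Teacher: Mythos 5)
Your strategy coincides with the paper's: reduce everything to the identity $\delta_u=qFw^{-1}-D_{DF}w^{-1}$ and the operator identity $D_u=w^{-1}D_{DF}$, expand $D_u^k\delta_u$ by the Leibniz rule into monomials in $w^{-1}$, $F$ and $D_{DF}^i w^{-1}$, and close each estimate with Hölder's inequality plus Lemma~\ref{DDFw}, using the sharp bound \eqref{DDFw-b} on one singled-out factor to produce $\|q\sigma^2-w\|_2$ in part~(iii). This is exactly what the paper does, merely packaged slightly differently (the paper writes out $D_u^k$ as an operator sum over index sets $I_{l,k}$ and uses a closed form for $D_{DF}^{i_0}\delta_u$, while you work monomial by monomial), and your structural claim that for $k\ge 2$ every monomial carries at least one $D_{DF}^i w^{-1}$ factor is the right observation and is what the paper's index bookkeeping ($i_0\ge 1$, $\sum_{j\ge1}i_j=k-l\ge1$ unless $l=k$ with $i_0=k\ge2$) encodes.

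Two small slips worth flagging. In (i), the split $\|Fw^{-1}\|_s\le\|F\|_{2s}\|w^{-1}\|_{2s}$ actually requires $4s\le\beta$ (since $\|w^{-1}\|_{2s}=M_{4s}^2$, not $M_{2s}^2$), so as written your argument only covers $s<\beta/4$; the fix is to give $w^{-1}$ an exponent $r<\beta/2$ and push the slack onto $F$ via hypercontractivity, which then makes the second term's requirement $s<\beta/3$ the binding one. In (ii)--(iii), the invariant $\sum_\ell i_\ell m_\ell\le k+1$ by itself does not close the Hölder budget: you additionally need control on the power $n$ of $w^{-1}$ and on the total number of $D_{DF}^i w^{-1}$ factors, and the quantity that actually closes the count is $2n+2\sum_\ell m_\ell+\sum_\ell i_\ell m_\ell\le 3k+3$, which does hold and is preserved under each application of $D_u$ (each case of the Leibniz rule raises it by at most $3$). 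So the ``routine count'' you deferred is correct, but the bookkeeping variable you named is not the one that does the work.
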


\begin{proof}
Recall that $\delta _{u}=qFw^{-1}-D_{DF}w^{-1}$. Then for any $r>s$,
\begin{equation*}
\Vert \delta _{u}\Vert _{s}\leq C\left( \sigma \Vert w^{-1}\Vert _{r}+\Vert
D_{DF}w^{-1}\Vert _{s}\right) .
\end{equation*}%
Then, $\Vert w^{-1}\Vert _{r}=M_{2r}^{2}$ and the result follows by applying
Lemma \ref{DDFw} (iii) with $k=1$ and by choosing $r<\frac{\beta }{3}$.

To show (ii) and (iii) we need to find a useful expression for $%
D_{u}^{k}\delta _{u}$. Consider the operator $D_{u}=w^{-1}D_{DF}$. We claim
that for any $k\geq 1$ the iterated operator $D_{u}^{k}$ can be expressed as
\begin{equation}
D_{u}^{k}=\sum_{l=1}^{k}w^{-l}\sum_{\mathbf{i}\in I_{l,k}}b_{\mathbf{i}}%
\left[ \prod_{j=1}^{k-l}D_{DF}^{i_{j}}w^{-1}\right] D_{DF}^{i_{0}},
\label{Du2}
\end{equation}%
where $b_{\mathbf{i}}>0$ are real numbers and
\begin{equation*}
I_{l,k}=\{\mathbf{i}=(i_{0},i_{1},\dots ,i_{l}):i_{0}\geq 1,i_{j}\geq
0\,\,\forall \,\,j=1,\dots ,l,\sum_{j=0}^{k-l}i_{j}=k\}.
\end{equation*}%
In fact, this is clearly true for $k=1$. Assume (\ref{Du2}) holds for a
given $k$. Then
\begin{eqnarray*}
D_{u}^{k+1} &=&w^{-1}D_{DF}D^{k}u \\
&=&\sum_{l=1}^{k}lw^{-l}D_{DF}w^{-1}\sum_{\mathbf{i}\in I_{l,k}}b_{\mathbf{i}%
}\left[ \prod_{j=1}^{k-l}D_{DF}^{i_{j}}w^{-1}\right] D_{DF}^{i_{0}} \\
&&+\sum_{l=1}^{k}w^{-l-1}\sum_{\mathbf{i}\in I_{l,k}}b_{\mathbf{i}}\left[
\sum_{h=1}^{k-l}D_{DF}^{i_{h}+1}w^{-1}\prod_{j=1,j%
\not=h}^{k-l}D_{DF}^{i_{j}}w^{-1}\right] D_{DF}^{i_{0}} \\
&&+\sum_{l=1}^{k}w^{-l-1}\sum_{\mathbf{i}\in I_{l,k}}b_{\mathbf{i}}\left[
\prod_{j=1}^{k-l}D_{DF}^{i_{j}}w^{-1}\right] D_{DF}^{i_{0}+1}.
\end{eqnarray*}%
Shifting the indexes, this can be written as
\begin{eqnarray*}
D_{u}^{k+1} &=&\sum_{l=1}^{k}lw^{-l}D_{DF}w^{-1}\sum_{\mathbf{i}\in
I_{l,k}}b_{\mathbf{i}}\left[ \prod_{j=1}^{k-l}D_{DF}^{i_{j}}w^{-1}\right]
D_{DF}^{i_{0}} \\
&&+\sum_{l=2}^{k+1}w^{-l}\sum_{\mathbf{i}\in I_{l-1,k}}b_{\mathbf{i}}\left[
\sum_{h=1}^{k+1-l}D_{DF}^{i_{h}+1}w^{-1}\prod_{j=1,j%
\not=h}^{k+1-l}D_{DF}^{i_{j}}w^{-1}\right] D_{DF}^{i_{0}} \\
&&+\sum_{l=2}^{k+1}w^{-l}\sum_{\mathbf{i}\in I_{l-1,k}}b_{\mathbf{i}}\left[
\prod_{j=1}^{k+1-l}D_{DF}^{i_{j}}w^{-1}\right] D_{DF}^{i_{0}+1}.
\end{eqnarray*}%
It easy to check that this coincides with
\begin{equation*}
\sum_{l=1}^{k+1}w^{-l}\sum_{\mathbf{i}\in I_{l,k+1}}b_{\mathbf{i}}\left[
\prod_{j=1}^{k+1-l}D_{DF}^{i_{j}}w^{-1}\right] D_{DF}^{i_{0}}.
\end{equation*}%
Also, note that $\delta _{u}=qFw^{-1}+D_{DF}w^{-1}$ and
\begin{equation*}
D_{DF}\delta _{u}=q+qFD_{DF}w^{-1}+D_{DF}^{2}w^{-1}.
\end{equation*}%
By induction we can show that for any $i_{0}\geq 1$
\begin{equation}
D_{DF}^{i_{0}}\delta _{u}=q\delta
_{1i_{0}}+q%
\sum_{j=1}^{i_{0}-1}c_{i,j}D_{DF}^{i_{0}-1-j}wD_{DF}^{j}w^{-1}+qFD_{DF}^{i_{0}}w^{-1}+D_{DF}^{i_{0}+1}w^{-1},
\label{DDFdelta}
\end{equation}%
where $\delta _{1i_{0}}$ is the Kronecker symbol. Combining (\ref{Du2}) and (%
\ref{DDFdelta}) we obtain%
\begin{eqnarray*}
D_{u}^{k}\delta _{u} &=&\sum_{l=1}^{k}w^{-l}\sum_{\mathbf{i}\in I_{l,k}}b_{%
\mathbf{i}}\left[ \prod_{j=1}^{k-l}D_{DF}^{i_{j}}w^{-1}\right] \times \Bigg[%
q\delta _{1i_{0}} \\
&&+q%
\sum_{j=1}^{i_{0}-1}c_{i,_{0}j}D_{DF}^{i_{0}-1-j}wD_{DF}^{j}w^{-1}+qFD_{DF}^{i_{0}}w^{-1}+D_{DF}^{i_{0}+1}w^{-1}%
\Bigg ].
\end{eqnarray*}%
Next we shall apply H\"{o}lder's inequality to estimate $\left\Vert
D_{u}^{k}\delta _{u}\right\Vert _{s}$. Notice that for $l=k$, $i_{0}=k\geq 2$%
. Therefore,
\begin{eqnarray*}
\left\Vert D_{u}^{k}\delta _{u}\right\Vert _{s} &\leq &C_{\sigma
}\sum_{l=1}^{k-1}\sum_{\mathbf{i}\in I_{l,k}}\Vert w^{-l}\Vert
_{p}\prod_{j=1}^{k-l}\Vert D_{DF}^{i_{j}}w^{-1}\Vert _{r_{j}}\left( \delta
_{1i_{0}}+\max_{1\leq h\leq i_{0}+1}\Vert D_{DF}^{h}w^{-1}\Vert
_{r_{0}}\right) \\
&&+C_{\sigma }\Vert w^{-k}\Vert _{p}\max_{1\leq h\leq k+1}\Vert
D_{DF}^{h}w^{-1}\Vert _{\rho _{0}}=B_{1}+B_{2},
\end{eqnarray*}%
assuming that for $l=1,\dots ,k-1$, $\frac{1}{s}>\frac{1}{p}+\sum_{j=0}^{k-l}%
\frac{1}{r_{j}}$ and $\frac{1}{s}>\frac{1}{p}+\frac{1}{\rho _{0}}$, and
where $C_{\sigma }$ denotes a function of $\sigma $ of the form $C(1+\sigma
^{M})$.

Let us consider first the term $B_{1}$. Note that if $i_{0}=1$ there is at
least one factor of the form $\Vert D_{DF}^{r_{j}}w^{-1}\Vert _{r_{j}}$ in
the above product, because $\sum_{j=1}^{k-l}i_{j}=k-1\geq 1$. Then, we will
apply the inequality (\ref{DDFw-b}) to one of these factors and the
inequality (\ref{DDFw-c}) to the remaining ones. The estimate (\ref{DDFw-c})
requires $\frac{1}{r_{j}}>\frac{i_{j}+2}{\beta }$ for $j=1,\dots ,k-l$ and $%
\frac{1}{r_{0}}>\frac{i_{0}+3}{\beta }$. On the other hand, the estimate (%
\ref{DDFw-b}) requires $\frac{1}{r_{j}}>\frac{i_{j}+2}{\beta }+\frac{1}{2}$
for $j=1,\dots ,k-l$ and $\frac{1}{r_{0}}>\frac{i_{0}+3}{\beta }+\frac{1}{2}$%
. Then, choosing $p$ such that $2pl<\beta $, and taking into account that $%
\sum_{j=0}^{k-l}i_{j}=k$ we obtain the inequalities
\begin{equation*}
\frac{1}{s}>\frac{1}{p}+\sum_{j=1}^{k-l}\frac{i_{j}+2}{\beta }+\frac{i_{0}+3%
}{\beta }+\frac{1}{2}>\frac{3k+3}{\beta }+\frac{1}{2}.
\end{equation*}%
Hence, if $s<\frac{2\beta }{\beta +6k+6}$ we can write
\begin{equation*}
B_{1}\leq C_{\sigma }\sum_{l=1}^{k-1}M_{\beta
}^{2l}\prod_{j=1}^{k-l}(M_{\beta }^{i_{j}+2}\vee 1)(M_{\beta }^{i_{0}+3}\vee
1)\Vert q\sigma ^{2}-w^{-1}\Vert _{2}\leq C_{\sigma }(M_{\beta }^{3k+3}\vee
1)\Vert q\sigma ^{2}-w^{-1}\Vert _{2}.
\end{equation*}%
For the term $B_{2}$ we use the estimate (\ref{DDFw-b}) assuming $2pk<\beta $
and
\begin{equation*}
\frac{1}{s}>\frac{1}{p}+\frac{k+3}{\beta }+\frac{1}{2}>\frac{3k+3}{\beta }+%
\frac{1}{2}.
\end{equation*}%
This leads to the same estimate and the proof of (\ref{Dudelta-uk}) is
complete. To show the estimate (\ref{Dudelta-uka}) we proceed as before but
using the inequality (\ref{DDFw-c}) for all the factors. In this case the
summand $\frac{1}{2}$ does not appear and we obtain (\ref{Dudelta-uka}).
\end{proof}

\bigskip
$\mathbf{Acknowledgement}$ We would like to thank the anonymous referee's
constructive comments and bringing the topic about entropic CLT to our
attention. 







\end{document}